\documentclass[11pt,reqno]{amsart}
\usepackage{amssymb,mathrsfs,color,mathtools,empheq, verbatim, epstopdf}
\usepackage{pinlabel}
\mathtoolsset{showonlyrefs}
\usepackage{hyperref} 
\usepackage{enumerate, bbm}
\usepackage{tensor}
\usepackage{graphicx}
\usepackage{xcolor} 
\usepackage{tensor}
\usepackage{slashed}
\usepackage{cite}
\usepackage[shortlabels]{enumitem}
\usepackage{geometry}
\numberwithin{equation}{section}

\newtheorem{thm}{Theorem}[section]
\newtheorem{cor}[thm]{Corollary}
\newtheorem{lem}[thm]{Lemma}
\newtheorem{prop}[thm]{Proposition}

\theoremstyle{definition} 
\newtheorem{rem}[thm]{Remark}

\theoremstyle{remark}
\newtheorem{claim}[thm]{Claim}

\def\bS {\mathbb{S}}
\def\bT {\mathbb{T}}

\def\cE {\mathcal{E}}

\def\cH {\mathcal{H}}

\def\cM {\mathcal{M}}

\def\cR {\mathcal{R}}

\newcommand{\la}{\langle}
\newcommand{\ra}{\rangle}
\newcommand{\La}{\big\langle}
\newcommand{\Ra}{\big\rangle}

\newcommand{\wto}{\rightharpoonup}

\newcommand{\Id}{\operatorname{Id}}

\newcommand{\vd}{\mathrm{d}}
\newcommand{\udr}{\,r\vd r}
\newcommand{\vD}{\mathrm{D}}

\definecolor{deepgreen}{cmyk}{1,0,1,0.5}

\newcommand{\E}{\mathcal{E}}

\newcommand{\C}{\mathbb{C}}

\newcommand{\R}{\mathbb{R}}

\newcommand{\Z}{\mathbb{Z}}

\newcommand{\al}{\alpha}
\newcommand{\be}{\beta}

\newcommand{\de}{\delta}

\newcommand{\om}{\omega}
\newcommand{\lam}{\lambda}
\newcommand{\te}{\theta}

\newcommand{\ta}{\tau}
\newcommand{\ka}{\kappa}

\newcommand{\De}{\Delta}

\newcommand{\Ga}{\Gamma}
\newcommand{\Lam}{\Lambda}

\newcommand{\p}{\partial}
\newcommand{\na}{\nabla}

\makeatletter

\newcommand{\Rmnum}[1]{\expandafter\@slowromancap\romannumeral #1@}
\makeatother

\newcommand{\ti}{\widetilde}

\newcommand{\U}{\underline}

\newcommand{\Del}[1]{}

\newcommand{\mif}{{\ \ \text{if} \ \ }}

\newcommand{\mas}{{\ \ \text{as} \ \ }}

\renewcommand\Re{\mathrm{Re}\,}
\renewcommand\Im{\mathrm{Im}\,}

\newcommand{\dr}{\, \mathrm{d}r}

\definecolor{green}{rgb}{0,0.8,0}

\renewcommand{\Re}{\mathrm{Re}}
\renewcommand{\Im}{\mathrm{Im}}

\newcommand{\ud}{\mathrm{d}}

\newcommand{\0}{\emptyset}

\newcommand{\veps}{\varepsilon}

\newcommand{\calE}{\mathcal E}

\newcommand{\calH}{\mathcal H}
\newcommand{\calI}{\mathcal I}

\newcommand{\calL}{\mathcal L}
\newcommand{\calM}{\mathcal M}

\newcommand{\calQ}{\mathcal Q}
\newcommand{\calR}{\mathcal R}

\newcommand{\calU}{\mathcal U}
\newcommand{\calV}{\mathcal V}

\newcommand{\ULam}{\U{\Lam}}

\allowdisplaybreaks[4]

\begin{document}
	\parindent=12pt
	\title[ ]{Construction of infinite time bubble tower solutions for energy critical equivariant Schr\"odinger maps}

\author[Y. Yin]{Yuchen Yin}
\address{School of Mathematical Sciences,
University of Science and Technology of China, Hefei 230026, Anhui, China.}
\email{yuchenyin@mail.ustc.edu.cn}
    
\begin{abstract}
   We prove the existence of infinite time bubble tower solutions for energy critical $k$-equivariant Schr\"odinger maps $\mathbb{R}^2\to \mathbb{S}^2$ with $k\geq 3$. More precisely, for any integers $k\geq 3$ and $N\geq 1$, we construct a $k$-equivariant solution that is global in one time direction and decomposes asymptotically into a superposition of $N$ bubbles with scales $\lambda_j(t)\sim|t|^{-\beta_j}$, where $\beta_j=\frac{1}{2}\left(\left(\frac{k}{k-2}\right)^{N-j}-1\right)$.  Our approach is a backward construction combined with modulation analysis and high order energy estimates. We use Darboux coordinates adapted to the multi bubble profile to express the remainder in a complex scalar form. A key ingredient is the  construction of a correction that cancels the leading effect of the nonlinear defect.
\end{abstract}
\maketitle
\tableofcontents
\section{Introduction}\label{Sec:1}

\subsection{Setting of the problem}

We consider the Schr\"odinger map
\begin{align}
    \label{SM}
    \begin{cases}
        \p_t u=u\wedge \De u,\\
        u(0)=u_0\in \dot H^1,
    \end{cases}\ (t,x)\in \R\times \R^2,\ u(t,x)\in \bS^2
\end{align}

The equation ~\eqref{SM} is  related to the Landau-Lifshitz equation in ferromagnetism and is a special case of the Schr\"odinger flow for maps from a Riemannian manifold to a K\"ahler manifold. 

The energy \footnote{We follow the convention in \cite{MRR-11},  where the energy is defined without the factor $\frac{1}{2}$ .  With this convention, the stationary solution $Q_k$ introduced below has energy $E(Q_k)=8\pi |k|$ . }
\begin{align}\label{eq:energy-def}
    E(u)=\int_{\R^2}|\na u(x)|^2\,\ud x
\end{align}
is conserved along the flow.

The equation ~\eqref{SM} can be rephrased as a Hamiltonian system. Following \cite{DW-98-SM} and denoting by $J_{u}$ the map defined by $J_{u}v=u\wedge v$ for $v\in \R^3$, we have
\begin{align}\label{SM-H}
    \p_t u=-\frac{1}{2}J_{u}\circ\vD E(u),
\end{align}
and the associated symplectic form is 
\begin{align}
    \om_{u}(\xi,\eta)=\int_{\R^2}(J_{u}\xi)\cdot \eta\ud x=\int_{\R^2}u\cdot (\xi\wedge \eta)\ud x
\end{align}
for functions $\xi,\eta$ taking values in $T_{u}\bS^2$.

The equation ~\eqref{SM} is energy-critical in the sense that ~\eqref{SM} and ~\eqref{eq:energy-def} are invariant under the scaling transform
\begin{align}
    u_{\lam}(t,x)=u\left(\frac{t}{\lam^2},\frac{x}{\lam}\right),\ \lam\in\R_{>0}.
\end{align}

In this paper, we are concerned with equivariant solutions. For an integer $k$,  the $k$-equivariant solution has the form
\begin{align}
    u(t,x)=e^{k\theta R}(u_1(t,r),u_2(t,r),u_3(t,r))^{\top},
\end{align}
where $R=\begin{pmatrix}
    0&-1&0\\1 &0&0\\0&0&0
\end{pmatrix}$, and  $(r,\te)$ are the polar coordinates on the plane. The homotopy degree of the map  is
\begin{align}
    \rm{deg}(u)\coloneq\frac{1}{4\pi}\int_{\R^2}u\cdot\left(\p_{x_1}u\wedge\p_{x_2}u\right)\ud x.
\end{align}
The spatial reflection $u(x_1,x_2)\to u(x_1,-x_2)$ maps $k$-equivariant solutions to $(-k)$-equivariant solutions. Therefore we restrict to $k\geq 0$.

The equation ~\eqref{SM} admits stationary solutions for each $k\in\Z$:
\begin{align}
    Q_k(\te,r)=
e^{k\te R}
\begin{pmatrix}
\sin\phi_k\\
0\\
\cos\phi_k
\end{pmatrix},
\end{align}
where $\phi_k(r)=2\arctan r^k$.  The solution $Q_k$ has degree $k$. From \cite{Bogomolnyi},  the harmonic maps $Q_k$ minimize the Dirichlet energy ~\eqref{eq:energy-def} with $E(Q_k)=8\pi|k|$.

\subsection{Background}

 Schr\"odinger maps have been studied extensively over the past decades. Local existence for smooth initial data goes back to Sulem, Sulem and Bardos \cite{SSB-SM-86}; see also the works of Ding and Wang \cite{DW-01-SM},  McGahagan \cite{Mc07-Cauchy}. The global theory for small data at the critical regularity  was developed by Bejenaru, Ionescu, Kenig, and Tataru \cite{BIKT-SM-11} for target manifold being $\bS^2$,  and was extended by Li \cite{Li-21-SM},\cite{Li-23-SM} to general compact K\"ahler targets. For general large data, Rodnianski, Rubinstein  and Staffilani \cite{RRS-09-SM}  proved the gloabl wellposedness when the domain manifold is $\R^1$ or $\bS^1$.

Under equivariant symmetry with target manifold being $\bS^2$, more results have been established.  For $1$-equivariant Schr\"odinger maps, Bejenaru, Ionescu, Kenig, and Tataru \cite{BIKT-13} proved that solutions with energy less than $E(Q_1)$ are global in $\dot H^1\cap\dot H^3$ and scatter. For $k\geq 3$, $Q_k$ is shown to be asymptotically stable under $k$-equivariant perturbation by Gustafson, Kang, Nakanishi and Tsai \cite{GKT-07-SM}, \cite{GKT-08-SM}, \cite{GNT-10-SM}.      The stability property of $Q_k$  was studied by  Bejenaru  and  Tataru \cite{BT-14-SM} for $k=1$ where $Q_1$ was shown to be unstable in $\dot H^1$ but to be stable under $1$-equivariant perturbation in a space $Y$ such that $H^1\subset Y\subset \dot H^1$, and by Bejenaru,  Pillai and Tataru \cite{BPT} for $k=2$  where $Q_2$ was shown to be stable under $2$-equivariant perturbation in $\dot B_{2,1}^1$.

\bigskip

A guiding principle for the description of long time dynamics of Schr\"odinger maps is the soliton resolution conjecture,  which predicts that every solution with finite energy resolves into a superposition of asymptotically decoupling harmonic maps and free radiation. Such results have been established in several models, including equivariant energy critical wave maps and radial focusing energy critical wave equations, see \cite{JL-WM-SRC}, \cite{DKM-NLW-SRC-ODD}, \cite{JL-NLW-SRC}, \cite{DJKM-NLW-nonradial} and the references therein. For Schr\"odinger maps, however, the soliton resolution conjecture remains open.

From the quantitative perspective, the interaction between bubbles and radiation has been extensively studied. For $1$-equivariant Schr\"odinger map, Merle, Rapha\"el and Rodnianski \cite{MRR-11} proved the existence of a $1$-codimensional set of smooth well-localized initial data, which generates type II blowup solutions of blow rate of order $ \frac{|T-t|}{|\log(T-t)|^2}$, and Perelman \cite{Galina-SM} constructed the solutions with prescribed blow up rate $t^{\frac{1}{2}+\nu}$ with arbitrary $\nu>1$  following the method in \cite{KST-08}.

However, the dynamics involving the interaction among bubbles have been less studied. In particular, prior to the present work, no multi bubble construction has been established for Schr\"odinger maps to the author's knowledge. As for focusing energy critical Schr\"odinger equations, two bubble solutions have been constructed by Jendrej \cite{J-17-NLS} and by Jendrej, Li and Xu \cite{JLX-NLS-26}. 

\bigskip

There is also a physical motivation for the study of  bubble tower solutions. They can be viewed as an analogue of an  $n\pi$-skyrmion  appearing in the study of topological magnetic textures. In the micromagnetic interpretation, a single harmonic map corresponds to a Belavin-Polyakov skyrmion, whereas an alternating bubble tower  produces a sequence of concentric magnetization reversals at different spatial scales.  See \cite{Phy-1}, \cite{Phy-2}, \cite{Phy-3} for physics results. We emphasize, however, that this analogy is only qualitative.

\subsection{Main result}

Our goal is to provide the first construction of bubble tower solutions to ~\eqref{SM} with arbitrary number of bubbles.

Before we state our main result, we introduce some notation. Since $k$ is fixed in the rest of the paper, we will suppress the subscript $k$ of $Q_k$, $\phi_k$ and write $Q=Q_k$, $\phi=\phi_k$ for convenience.

We use the convention $e_1=(1,0,0)^{\top},e_2=(0,1,0)^{\top},e_3=(0,0,1)^{\top}$.  Define $Q_{\lam}=Q(\te,\lam^{-1}r)$. For $\vec p=(\lam_1,\dots,\lam_N,\te_1,\dots,\te_N)$ we define
\begin{align}
    \calQ_{\vec p}=\calQ_{\vec \lam,\vec \te}=\calQ_{(\lam_1,\dots,\lam_N,\te_1,\dots,\te_N)}=\sum_{j=1}^N(-1)^{j-1}\left(e^{\te_j R}Q_{\lam_j}-e_3\right)+e_3,\ \calU_{\vec p}=\frac{\calQ_{\vec p}}{|\calQ_{\vec p}|}\ \mif\ |\calQ_{\vec p}|\neq 0.
\end{align}
In the parameter regime considered in this paper, $|\calQ_{\vec p}|$ is uniformly bounded away from zero, so that $\calU_{\vec p}$ is well defined. The parameters $\vec p$  are usually taken to depend on $t$, and we slightly abuse the notation $\calQ(t,\te,r)=\calQ_{\vec p(t)}(\te,r)$, $\calU(t,\te,r)=\calU_{\vec p(t)}(\te,r)$ when the parameters are clear from the text. 

To measure the difference between the solution and the $\calQ_{\vec p}$, for a vector-valued function $u(\te,r)=e^{k\te R}v(r)$, we introduce the space
\begin{align}
        \|u\|_{\dot \calH^1}^2=2\pi\int_0^{\infty}\left(|v_r|^2+\frac{k^2}{r^2}|v|^2\right)\udr.
    \end{align}
For definition of other spaces, see Section~\ref{subsec:notation}.

\begin{thm}\label{thm:main}
    For any integers $k\geq 3$ and $N\geq 1$, there exists a time $T_0\ll 1$ such that there exists a solution $u\in C_t\left((-\infty,T_0];\dot H_x^1\cap\dot H_x^3\right)$ to \eqref{SM} satisfying 
    \begin{align}\label{eq:main}
    \left\|u(t)-\calQ_{\vec p(t)}\right\|_{\dot \calH^1}\to 0\ \mas \ t\to -\infty,
\end{align}
where 
\begin{align}
    \lam_j(t)=q_j|t|^{-\be_j},\te_j=-\frac{1+(-1)^{j}}{2}\frac{\pi}{2}.
\end{align}
The constants $\be_j=\frac{1}{2}\left(\left(\frac{k}{k-2}\right)^{N-j}-1\right)$, $q_N=1$,
\begin{align}\label{eq:def-beta-q}
    q_j=\left(\frac{\be_j}{\ka_k}q_{j+1}^k\right)^{\frac{1}{k-2}},\ \ka_k=\frac{4k^2}{\pi}\sin \frac{\pi}{k}.
\end{align}

\end{thm}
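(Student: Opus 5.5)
The plan is a backward construction: approximate solutions are built from final data at times $T_n\to-\infty$, controlled uniformly on $[T_n,T_0]$, and then passed to a compactness limit. Fix a sequence $T_n\to-\infty$. Let $u_n$ be the solution of \eqref{SM} with data at $t=T_n$ equal to $\calU_{\vec p^{\,*}(T_n)}$, plus a small correction that kills the leading part of the nonlinear defect. Here $\vec p^{\,*}(t)=(q_j|t|^{-\be_j},\te_j)$ are the explicit target parameters, and the first step is to see where they come from.

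\textbf{Formal dynamics.} Project the equation onto the generalized kernel of the linearized operator around each bubble $e^{\te_j R}Q_{\lam_j}$, i.e.\ onto the scaling and rotation modes $\Lam Q$ and $RQ$. The interaction of bubble $j$ with the larger bubble $j+1$ enters through the tail $\phi(r)\sim 2\pi-2r^{-k}$ of the outer profile evaluated at scale $\lam_j$. Since $\lam_j\gg\lam_{j+1}$ as $t\to-\infty$, the bubbles with smaller index are the concentrated ones and $\lam_{N}$ is the outermost. Pairing the defect with $\Lam Q$, or with $RQ$ after the Schrödinger rotation $J$, produces the leading modulation law
\begin{align}
\lam_j'\,(\text{or } \te_j')\approx \ka_k\,\frac{\lam_j^{k-1}}{\lam_{j+1}^{k}}\times(\text{phase factor in } \te_j-\te_{j+1}),
\end{align}
with $\ka_k$ given by an explicit integral of the form $\int r^{k-1}\sin\phi\,\dots$. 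This integral yields $\frac{4k^2}{\pi}\sin\frac{\pi}{k}$, and it is finite precisely because $k\ge3$. The alternating phases $\te_j=-\frac{1+(-1)^j}{2}\frac\pi2$ are chosen so that this interaction drives pure scaling and the phases stay fixed. Inserting the power ansatz and matching exponents gives $\be_j+1=k\be_{j+1}-(k-1)\be_j$, hence $(1+2\be_j)=\frac{k}{k-2}(1+2\be_{j+1})$ with $\be_N=0$, and the coefficient condition gives \eqref{eq:def-beta-q}. I would also check that this ODE system is stable backward in time, or has an explicit unstable direction to be handled later.

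\textbf{Darboux coordinates and bootstrap.} Write $u=\calU_{\vec p(t)}+$(remainder) and, through a frame adapted to the multi-bubble profile, encode the remainder as a complex scalar $\varepsilon$. Fix $\vec p(t)$ by orthogonality conditions to $\Lam Q_{\lam_j}$ and $R Q_{\lam_j}$, suitably localized. The equation for $\varepsilon$ then reads
\begin{align}
i\p_t\varepsilon-\calL_{\vec p}\varepsilon=\text{Mod}(t)+\Psi+N(\varepsilon),
\end{align}
with $\Psi$ the profile defect. The key ingredient is a correction $\calU\to\calU+\chi$, where $\chi$ solves $\calL_{\lam_j}\chi=-P\Psi$ near each scale. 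After this correction the remaining defect is of size $o\bigl(|t|^{-1}\cdot\text{scale}\bigr)$, which is integrable in the energy norms.

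Bootstrap on $[T_n,T_0]$:
\begin{align}
\|\varepsilon\|_{\dot\calH^1}\le|t|^{-\delta},\qquad \|\varepsilon\|_{\text{high}}\le \lam_1(t)^{-2}|t|^{-1-\delta},\qquad |\lam_j/\lam_j^*-1|\le|t|^{-\delta}.
\end{align}
\begin{itemize}
\item The modulation equations close from the orthogonality conditions.
\item The $\dot\calH^1$ bound follows from energy conservation together with coercivity of the linearized energy around $\calQ$, which holds for $k\ge3$ once modes are removed.
\item The high-order bound uses a $\dot H^3$-level energy for the factorized operator $A^*A$ (the Bejenaru–Tataru/MRR approach), with time-dependent weights accounting for multiple scales.
\end{itemize}
Any unstable parameter direction is handled by choosing the final data via a Brouwer/topological argument. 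In the limit $n\to\infty$, uniform bounds, weak compactness and local well-posedness in $\dot H^1\cap\dot H^3$ give a solution satisfying \eqref{eq:main}.

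\textbf{Main obstacle.} I expect the hardest step to be the high-order energy estimate with several scales. Cross-scale interaction terms, where the linearized operator around bubble $j$ acts on a remainder localized at scale $\lam_{j+1}$, have to be shown to be lower order. The nonlinear-defect correction also has to be accurate enough that its error is integrable in time. I would first try localizing the energy functional with cutoffs at the geometric mean scales $\sqrt{\lam_j\lam_{j+1}}$ and use the $r^{-k}$ decay, with $k\ge3$, to absorb the cross terms.
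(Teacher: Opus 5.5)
Your overall architecture matches the paper's: backward construction from data at $T_n$, Darboux coordinates adapted to $\calU$, localized orthogonality, energy conservation for $\dot\calH^1$, a $\dot\calH^3$-level energy, a correction for the static defect $T_{\calU}(\calU\wedge\Delta\calU)$, Brouwer shooting on $\lam_1,\dots,\lam_{N-1}$, and compactness. Your formal computation also lands on the right $\be_j,q_j$.

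The genuine gap is in closing the $\dot\calH^3$ bound, and the obstruction is not the cross-scale interaction you single out. When $\p_t$ falls on $\calH_{\calU}$ (or on the frame), the derivative of the energy produces terms like $\frac{|\lam_j'|}{\lam_j}\int_0^\infty F_j^2r^{-2}|z|_{-2}^2\,r\,\mathrm{d}r$. This is roughly $\frac{\be_j}{|t|}$ times the $\dot\calH^3$ energy at $r\sim\lam_j$, with an $O(1)$ constant and no sign. These terms are exactly of the size the bootstrap can afford ($|t|^{-2-2\veps_0}$ in the paper), and Gronwall cannot absorb them:
\begin{itemize}
\item the energy is coercive only with a constant $c_L$ that depends on the localization $L$;
\item $\be_1=\frac12((k/(k-2))^{N-1}-1)$ is unbounded in $N$;
\item the forcing, of size $|t|^{-2}\|z\|_{\calH^3}$, forbids a bootstrap decay rate fast enough to beat the Gronwall exponent.
\end{itemize}
Cutoffs at $\sqrt{\lam_j\lam_{j+1}}$ do nothing to these terms, since they carry $F_j^2$. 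The cross-scale terms are in fact handled by the pointwise bounds on $\calE_{\calU}$ (factors $F_mF_\ell$) and by the weights $\de_0^{N-j}$.

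The missing idea is a localized Morawetz functional added with a small coefficient: $\calI_3=\calE_3(z)+\ti\de\,\calM_3(y,y)$, with $\calM_3(y,y)=\sum_j\de_0^{N-j}\La A_{\lam_j}\calH_{\calU}y,i\Lam_{\lam_j}A_{\lam_j}y\Ra$. Here $y=z+\eta$ is the remainder with the static defect removed; your profile correction would play this role. By the repulsivity of $\ti H_{\lam}^2=(A_{\lam}A_{\lam}^*)^2$ (Lemma~\ref{lem:Morawetz-one}, one place where $k\ge3$ is used), its derivative yields $-c_1\ti\de\sum_j\de_0^{N-j}\|A_{\lam_j}y\|^2_{\mathrm{Mor}_2(\lam_j)}$. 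This coefficient does not decay in $t$, so it absorbs all terms carrying the decaying factors $|\lam_j'|$, $\Ga(t)$ or $|t|^{-1/2-\veps_0}$. Using it requires two further ingredients:
\begin{itemize}
\item the coercivity of $A_{\lam}$ in $\mathrm{Mor}$ norms modulo the localized kernel directions (Lemma~\ref{lem:Alam-coercive}, together with the frame comparison of Lemma~\ref{lem:frame-compare});
\item Lemma~\ref{lem:b-lam-close}, which bounds $b_j^{\lam}-\lam_j\lam_j'$ and $b_j^{\te}-\lam_j^2\te_j'$ by the Morawetz quantity with an $o_L(1)$ factor. This is where the logarithmic cutoff matters for $k=3$, and it is needed because the rough modulation constants are not comparable with $c_L$.
\end{itemize}

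Two further points also fail as you set things up.

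\emph{Quasilinearity.} With $\varepsilon$ the Darboux coordinate of $u$ itself, the equation is quasilinear. Its principal part is $-iG_z\calH_{\calU}z$ with $G_z-\mathrm{Id}=O(|z|^2)$, and $\calR_{\mathrm{NL}}$ contains the derivative term $-2\Im(\overline{T_{\calU}(\p_r\calU)}z)\p_rz$. Treating $N(\varepsilon)$ as a perturbation in a $\dot H^3$ energy therefore loses a derivative. The paper's $\calE_{3,0}$ contains the extra term $2\pi\Re\int_0^\infty\overline{\p_r(\calH_{\calU}w)}(G_z-\mathrm{Id})\p_r(\calH_{\calU}w)\,r\,\mathrm{d}r$. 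The top-order terms then cancel through $G_ziG_z=i$ and integrations by parts using real coefficients.

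\emph{Bootstrap scaling.} If $\|\cdot\|_{\mathrm{high}}$ means $\dot\calH^3$, your bound $\lam_1^{-2}|t|^{-1-\delta}\sim|t|^{2\be_1-1-\delta}$ is far too weak; for $k=3$, $N=2$ it is already $|t|^{1-\delta}$.
\begin{itemize}
\item Through $\lam_j|\lam_j'|\lesssim o(1)\lam_j^2\|g\|_{\dot\calH^3}+\dots$ it no longer keeps $\Ga(t)$ small.
\item The refined laws for $\hat\lam_j,\hat\te_j$ must have errors $o(|t|^{-\veps_j})\lam_j^2/|t|$, yet they contain $\lam_j^2\|g\|_{\dot\calH^1}\|g\|_{\dot\calH^3}$.
\end{itemize}
The paper instead uses the scale-free bound $\|g\|_{\dot\calH^3}\le|t|^{-1/2-\veps_0}$, which works because all $\lam_j\le\lam_N=1$, with $\veps_0<\min\{\frac12,\frac1{k-2}\}$. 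It also uses a hierarchy $\veps_1<\dots<\veps_N$ rather than a single $\delta$. Writing $\lam_j=\lam_j^{\infty}(1+a_j)$, the linearized law $a_j'\approx\frac{\be_j}{|t|}((k-2)a_j-ka_{j+1})$ couples $a_j$ to $a_{j+1}$ at leading order, and the hierarchy makes that coupling lower order at the exit time.

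Putting the correction into the profile, rather than into $y$ and the term $2\La i\calH_{\calU}T_{\calU}(\calU\wedge\Delta\calU),\calH_{\calU}w\Ra$ of $\calE_3$ as the paper does, is a legitimate variant. However, your projection $P$ must be realized through auxiliary parameters like $b_j^{\lam},b_j^{\te}$. A correction solving against the full defect, $\p_t\calU$ included, would make $\p_t\chi$ depend on $\lam_j''$.

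Minor slips:
\begin{itemize}
\item It should be $\lam_j\ll\lam_{j+1}$, not $\gg$.
\item The outer bubble acts through its core expansion $\phi(r/\lam_{j+1})\approx2(r/\lam_{j+1})^k$, not its tail, and $\phi\to\pi$, not $2\pi$.
\item The matching condition is $(k-2)\be_j=1+k\be_{j+1}$.
\item $\ka_k$ is already finite for $k=2$.
\end{itemize}
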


\begin{rem}
    Using Hardy's inequality, ~\eqref{eq:main} also holds in $L^{\infty}$.
\end{rem}
\begin{rem}
    To the author's  knowledge, this is the first construction of bubble tower solutions for Schr\"odinger maps. Our result can be viewed as an analogue to solutions constructed in \cite{HK-26-WM} for wave maps. The author expects that the other results in the study of wave type equations such as \cite{JK-25-WM}, \cite{KP-26-WM}  can be adapted to Schr\"odinger maps.
\end{rem}
\begin{rem}
    For $k=2$, we expect infinite time bubble solutions exist in view of the formal modulation system ~\eqref{eq:modulation-formal}. Note that \cite{JLX-NLS-26} constructed two-bubble solutions for Schr\"odinger equations in dimension $6$, which corresponds to the case $k=2$. See also \cite{JK-classification-6d} for the study of harmonic map heat flows when $k=2$. For $k=1$, we refer to Shen \cite{Shen-26-NLW}, Kim \cite{Kim-HMHF-26-no-bubble-tree}, where they separately proved the nonexistence of solutions with more than one bubble  for radial focusing energy critical wave equations in dimension $3$ and for $1$-equivariant harmonic map heat flows and  radial focusing energy critical heat equations in dimension $3,4,5$. The author expects that the same conclusion holds for $1$-equivariant Schr\"odinger map.
\end{rem}
\begin{rem}
    Multi bubble solutions have been extensively studied in other dispersive models. We mention some related works. For energy-critical equivariant wave maps,  in \cite{JK-25-WM} Jendrej and Krieger constructed a $2$-equivariant solution that blows up in finite time with two concentrating concentric bubbles plus a radiation, which was extended to arbitrary number of bubbles by Krieger and Palacios in \cite{KP-26-WM}.  In \cite{HK-26-WM} Hwang and Kim constructed $k$-equivariant pure multi bubble solutions that are global in one time direction when $k\geq 3$. For energy critical wave equations, see Jendrej  \cite{J-17-NLW},  Martel and Merle \cite{MM-NLW-16}, \cite{MM-NLW-26}, Jendrej and Martel\cite{JM-NLW-20},  Gu, Jendrej and Zhao \cite{GJZ-26-construction-4d-NLW}. For classification results, see \cite{JL-18-threshold-two-bubble-WM}, \cite{JZZ-26-rigidity-5d-NLW}, \cite{MM-26-classification-5d-NLW}.  We also mention \cite{CZ-13-NLW},  \cite{MR-NLS-18-mass}, \cite{Merle-NLS-90}.     As for harmonic map heat flows as the parabolic counterpart of ~\eqref{SM}, see \cite{DPW-20}, \cite{KM-classification}, \cite{TaegyuKim-26}, \cite{JK-classification-6d} for some results.
\end{rem}

\subsection{Strategy of the proof}

Our overall framework is inspired by \cite{Merle-NLS-90}, \cite{Martel-05-KdV},  \cite{J-17-NLS}, \cite{MR-NLS-18-mass},  while the technical implementation is closer in spirit to that of \cite{HK-26-WM}. There are, however, several differences in the present setting. In particular, the $k$-equivariant Schr\"odinger map equation is not a scalar equation, and the underlying geometric structure gives rise to a number of new issues that have no direct analogue in the scalar setting. Nevertheless, we describe the full proof strategy below, emphasizing along the way the new ingredients required by ~\eqref{SM}.

\bigskip

In general, we want to construct a sequence of solutions $u_n$ such that the lifespan of $u_n$ contains $[T_n,T_0]$ for a sequence $T_n\to-\infty$ and a uniform $T_0$. Each of the solutions stays close to the given multi bubble configuration. Finally, the desired solution is obtained by compactness argument.

To construct $u_n$, one has to choose suitable initial data at $T_n$. Since $\calQ$ does not lie in $\bS^2$, it is natural to consider $\calU=\frac{\calQ}{|\calQ|}$.  We use modulation technique to decompose the solution into a superposition of rescaled and rotated bubbles plus an infinite dimensional remainder along the flow, and the problem is then reduced to describing the dynamics of the parameters of the bubbles and control the energy of the remainder. Precisely we use the decomposition of the form
\begin{align}
    u(t)=\calU_{\vec p(t)}+g(t),
\end{align}
where $g$ satisfies suitable orthogonal conditions, see ~\eqref{eq:g-ortho}, where we use localized version due to the slow decay of the kernels when $k=3,4$. Note that we decompose the solution as $u=\calU+g$ rather than as $u=\calQ+g$, which is more convenient since the initial data at $T_n$ of $g$ is zero, and the identity $g\cdot \calU=-\frac{1}{2}|g|^2$  simplifies our analysis. We also note that we choose a logarithm cutoff function in the orthogonal conditions. This is due to the slow decay of the ground state $Q$ when $k=3$. The usual cutoff function of the form $\chi(r/L)$ is not sufficient for the proof of Lemma~\ref{lem:b-lam-close}, and the logarithm cutoff function provides an extra decay of the derivative of $\chi_L$. Such logarithm cutoff function was also used to construct suitable Morawetz corrector in \cite{KK-CSS}.  

Differentiating the orthogonal conditions leads to a system of $\lam'_j(t)$ and $\te_j'(t)$. Ignoring the cutoff function and other error terms we obtain a formal ODE system:
\begin{equation}\label{eq:modulation-formal}
    \begin{aligned}
        \begin{cases}
        \lam_j\lam_j'=\ka_k(-1)^{j-1}\left[ \left(\frac{\lam_j}{\lam_{j+1}}\right)^k\sin\left(\te_j-\te_{j+1}\right)-\left(\frac{\lam_{j-1}}{\lam_{j}}\right)^k\sin\left(\te_{j-1}-\te_{j}\right) \right],\\
        \lam_j^2\te_j'=k\ka_k(-1)^{j-1}\left[ \left(\frac{\lam_j}{\lam_{j+1}}\right)^k\cos\left(\te_j-\te_{j+1}\right)-\left(\frac{\lam_{j-1}}{\lam_{j}}\right)^k\cos\left(\te_{j-1}-\te_{j}\right) \right]
    \end{cases}
    \end{aligned}
\end{equation}
where we adopt the convention  $\lam_0=0,\lam_{N+1}=\infty$  .This suggests the asymptotic behaviors of the parameters as $\lam_j(t)\sim q_j|t|^{-\be_j}$, $\te_j(t)\sim-\frac{1+(-1)^{j}}{2}\frac{\pi}{2}$, where $q_j$ and $\be_j$ are defined in ~\eqref{eq:def-beta-q}.

The remainder $g$ enters the actual modulation equations for the parameters at quadratic and higher orders, and the quadratic part is hard to control only in the critical energy norm. More results addressing this issue have been obtained for semilinear energy critical focusing wave equations or energy critical wave maps. One approach is to combine a virial correction with energy conservation to control the modulation equations. See \cite{J-17-NLW}, \cite{JL-18-threshold-two-bubble-WM}, \cite{JL-WM-SRC}, \cite{JL-NLW-SRC}, \cite{JZZ-26-rigidity-5d-NLW}, \cite{GJZ-26-construction-4d-NLW} for examples. From the construction perspective, another natural strategy is to work in a higher regularity space, which provides more room for closing the bootstrap argument. Such strategy was already used in \cite{MRR-11}, \cite{RR-WM-12}, \cite{RS-HMHF-14}, \cite{Jeong-WM-25} for some examples, and was recently applied to construct bubble tower solutions for energy critical equivariant wave maps in \cite{HK-26-WM}. This is also the strategy adopted in the present paper. 

\bigskip

We consider the bootstrap regime where the Sobolev regularity of $g$ is controlled up to order $3$, which is natural due to the scaling relation $\p_t\sim \De_x$. The difficulty is that the equation satisfied by $g$ is actually quasilinear, and we have to choose a suitable energy functional to avoid loss of derivative. Although such a functional can be constructed directly from the equation for $g$ from a purely analytic point of view,  we choose to derive it instead from the underlying geometry. 

More precisely,  we work in a local Darboux coordinate chart adapted to $\calU$, in which the deviation between $u$ and $\calU$ can be naturally understood as a tangent vector. Using the notation in Section ~\ref{subsec:Darboux} with $v,\calV$ replaced by $u,\calU$ and suppressing the superscript, we can derive the evolution equation for $z=\Phi^{-1}(u)$. Using the Hamiltonian form ~\eqref{SM-H}, we immediately obtain $iG_z\De z$ as the principal part of pullback of $-\frac{1}{2}J_{u}\circ\vD E(u)$. On the other hand, since the pullback of the Hessian of energy at $z=0$ is $\calH_{\calU}$ up to constant, we use $-iG_z\calH_{\calU}z$ as the principal part. Since in complex plane we have the canonical isometry $T_w\C \cong \C$ for any $w\in \C$, we shall also regard $z$ and its derivatives as the tangent vectors at the origin, which leads naturally to the definition of $\calE_{3,0}$ in ~\eqref{eq:def-energy-functional-E3}.  

However, the control of $\calE_{3,0}$ cannot be closed by itself due to two main difficulties.  The first is that the time derivative of $\calE_{3,0}$ contains the terms involving modulation parameters which are not integrable in time. We adapt the Morawetz functional for wave maps introduced in \cite{HK-26-WM} to our setting to solve this. Here again, the Morawetz functional should be viewed as a symplectic form $\om(\xi,\na_{X}\eta)$ with a special vector field $X$. This interpretation explain the motivation of the early introduction of Darboux coordinates, in which the expression of the symplectic form is simplified, as shown in ~\eqref{eq:Darboux}.  The Morawetz functional we used is defined in  ~\eqref{eq:def-Morawetz-functional}. We  note that logarithm cutoff is also used in the proof of Morawetz estimates near one bubble, see Lemma~\ref{lem:Morawetz-one}.

The second comes to the nonlinearity of the equation ~\eqref{SM} which generates the error from $\p_t\calU-\calU\wedge\De\calU$. This is a new obstruction in our setting, which can be neglected in wave maps as in \cite{HK-26-WM} due to the different scaling relation for $t,x$. We will show how to overcome this problem heuristically. Ignoring the quasilinear nature and other things, we consider the toy model
\begin{align}
    \p_tz=-iHz+F-F_1,
\end{align}
where $z$, $H$, $F$, $F_1$ is the expression of $g$, the linearized operator $-\De-|\na\calU|^2$ , $\calU\wedge\De\calU$, $\p_t\calU$  under certain coordinate chart. The operator $H$ is self-adjoint and $\R$-linear.  Consider the energy functional $\La Hz,H^2 z \Ra $, taking times derivatives and ignore the term containing $\p_t H$ yields that
\begin{align}
    \frac{\ud  }{\ud t}\La Hz,H^2z \Ra=& 2\La H^2z,H(-iHz+F-F_1)  \Ra\\
    =&2\La H^2z,[H,-i]Hz \Ra+2\La H^2 z,H(F-F_1) \Ra.
\end{align}
The term containing commutator is harmless since the bad terms in $H$ cancel in $[H,i]$. Since $\p_t\calU=\sum_{j}\left(\lam_j'\p_{\lam_j}\calU+\te_j'\p_{\te_j}\calU\right)$, and $\p_{\lam_j}\calU, \p_{\te_j}\calU$ is the approximate kernel of $H$ due to the symmetry of ~\eqref{SM}, the only bad term is $ 2\La H^2 z,HF\Ra=2\La iHz,iH^2F   \Ra$, which leads to the correction term
$ \La iH^2F,z \Ra $, as defined in ~\eqref{eq:def-energy-functional-E3}.  The term $\p_t\calU-\calU\wedge\De\calU$ also generates an error in estimating the time derivative of Morawetz functional, where $F_1$ cannot be canceled. It seems difficult to find a suitable correction on the functional, and the method to overcome this is to add a correction $\eta$ to $z$ and use $z+\eta$ instead of $z$ in the Morawetz correction. Formally we want to find $\eta$ such that
\begin{align}
    \p_t(z+\eta)=-iH(z+\eta),
\end{align}
 which compared with the equation satisfied by $z$ yields
\begin{align}
    \p_t\eta=-iH\eta-F+F_1.
\end{align} 
We prefer to solve the stationary equation
\begin{align}
    -iH\eta=F-F_1
\end{align}
at each time $t$ and estimate $\p_t \eta$ using $\eta=(-iH)^{-1}(F-F_1)$. However, even if we manage to solve this stationary equation, we will have to estimate $\lam_j''$ and $\te_j''$ when estimating $\p_t\eta$ due to the expression of $F_1$. We solve this difficulty by introducing $2N$ parameters $b_j^{\lam}$, $b_j^{\te}$ to replace $ \sum_{j}\left(\lam_j'\p_{\lam_j}\calU+\te_j'\p_{\te_j}\calU\right)$ by $\sum_{j}\left(\frac{b_j^{\lam}}{\lam_j}\p_{\lam_j}\calU+\frac{b_j^{\te}}{\lam_j^2}\p_{\te_j}\calU\right) $, and solve the new equation
\begin{align}
    -iH\eta=F-F_{1,b}.
\end{align}
The auxiliary parameters $b_j^{\lam}$ and $b_j^{\te}$ can be uniquely determined once we impose suitable orthogonal conditions. By taking time derivatives of these conditions we obtain that $b_j^{\lam}$ and $b_j^{\te}$ have controllable time derivatives. Inserting the equation satisfied by $\eta$, $b_j^{\lam}$ and $b_j^{\te}$ into modulation equations ~\eqref{eq:rough-modulation-lambda-raw} and ~\eqref{eq:rough-modulation-theta-raw} yields that $b_j^{\lam}$ and $b_j^{\te}$ are close to $\lam_j\lam_j'$ and $\lam_j^2\te_j'$ respectively. For details, see Lemma ~\ref{lem:correction-z-eta} and Lemma~\ref{lem:b-lam-close}. 

The  parameters $b_j^{\lam}$ and $b_j^{\te}$ play another important role in closing the high regularity estimates. Since the $(\dot\calH^3)^*$ norm of the localized kernel in the orthogonal conditions tends to $\infty$ as $L\to \infty$ when $k=3,4$,  the implicit constants may depend on the size of the cutoff function, see ~\eqref{eq:g-ortho}, ~\eqref{eq:HU-coercove-H3}, ~\eqref{eq:E-30-coercive}. As a consequence, rough modulation estimates ~\eqref{eq:lam'-rough-mod} and ~\eqref{eq:te'-rough-mod} are not sufficient by themselves to control some critical terms of order $|t|^{-2-2\veps_0}$ appearing in the  time derivative of the energy functional, since there is no quantitative relation between the implicit constants in  ~\eqref{eq:lam'-rough-mod} and ~\eqref{eq:te'-rough-mod} and $c_L$ in ~\eqref{eq:E-30-coercive}. However, the parameters $b_j^{\lam}$ $b_j^{\te}$ have more precise estimates, and the difference $b_j^{\lam}-\lam_j\lam_j'$, $b_j^{\te}-\lam_j^2\te_j'$ can be controlled by some terms that can be absorbed by the Morawetz functional and other terms that is sufficiently small and can absorb the constants about $L$, which makes it possible to control the terms containing $|\lam_j'|$ or $|\te_j'|$ when rough modulation estimates ~\eqref{eq:lam'-rough-mod} and ~\eqref{eq:te'-rough-mod} are not sufficient. See the estimates for $(\rm{A})$ and $(\rm{B})$ in Section ~\ref{subsec:I3} for example.

\bigskip

After improving the estimates for $g$, the estimates for parameters $\lam_j$ and $\te_j$ can be improved easily. A formal calculation on the time derivatives of $\frac{\lam_j-\lam_j^{\infty}}{\lam_j^{\infty}}$ and $\te_j-\te_j^{\infty}$ yields that for $1\leq j\leq N$, $\te_j$ is stable and can be improved directly using the modulation equation. The same thing holds for $\lam_N$. For $1\leq j\leq N-1$, $\lam_j$ is unstable and can be determined using a classical topological shooting argument. However, the parameters $\lam_j(t)$ and $\te_j(t)$ are imprecise proxies for the dynamics since the orthogonal conditions are imposed related to the localized kernels.  We introduce refined parameters $\hat \lam_j$ and $\hat\te_j$ in ~\eqref{eq:def-refined-parameter} in spirit of (4.46) in \cite{JL-WM-SRC} and of (5.49) in \cite{JL-NLW-SRC}. The dynamics of refined parameters can be described explicitly and we use topological shooting method with respect to $\hat\lam_j$. Then we get the solution $u_n$ defined on $[T_n,T_0]$, and the desired solution is obtained by compactness arguments.

\subsection{Notation}\label{subsec:notation}
    For $\be_j$ and $q_j$ defined in Theorem ~\ref{thm:main},  we define
    \begin{align}
        \lam_j^{\infty}(t)=q_j|t|^{-\be_j},\ \te_j^{\infty}=-\frac{1+(-1)^{j}}{2}\frac{\pi}{2}.
    \end{align}

    Choose a smooth cutoff function $\chi(r)$ such that $\chi=1$ for $-\infty\leq r\leq 0$, $\chi=0$ for $r\geq 1$ and $0\leq \chi\leq 1$. Define
    \begin{align}
        \chi_{L}(r)=\begin{cases}
            \chi(\frac{\log r-\log L}{\log L}),\ r\neq 0,\\
            1,\ r=0.
        \end{cases},\quad   \chi_{L,j}=\chi_L(r/\lam_j)=\chi_{L}(|x|/\lam_j).
    \end{align}

    For $u(\te,r)=e^{k\te R}v(r)$, $\dot H^1$ and $\dot H^3$ denote the standard Sobolev spaces.
    \begin{align}
        \|u\|_{\dot H^1}^2&=2\pi\int_0^{\infty}\left(|v_r|^2+\frac{k^2}{r^2}(v_1^2+v_2^2)\right)\udr
        =2\pi\int_0^{\infty}\left(|\p_r u|^2+\frac{k^2}{r^2}|Ru|^2\right)\udr.
    \end{align}
    \begin{align}
        \|u\|_{\dot H^3}^2=\|\De_{\R^2} u\|_{\dot H^1}^2.
    \end{align}
    We define
    \begin{align}
        \|u\|_{\dot \calH^3}^2=2\pi\int_0^{\infty}\left(|\p_r^3 v|^2+\frac{|\p_r^2 v|^2}{r^2}+\frac{|\p_r v|^2}{r^4}+\frac{|v|^2}{r^6}\right)\udr.
    \end{align}

    For $z(r)=z_1(r)+iz_2(r)$  with $z_1,z_2\in \R$, we define
    \begin{align}
        |z|_{-j}=\left(\sum_{l=0}^j \frac{|\p_r^{l}z|^2}{r^{2(j-l)} }\right)^{\frac{1}{2}},\ j=1,2,3,
    \end{align}
    
    \begin{align}
        \|z\|_{\calH^j}^2=2\pi\int_0^{\infty}|z|_{-j}^2\udr,
    \end{align}
    \begin{align}
        \|z\|_{\rm{Mor}(\lam)}^2=2\pi\int_0^{\infty}\frac{\lam}{(\lam+r)^2}|z|_{-3}^2\udr,\ \|z\|_{\rm{Mor_2}(\lam)}^2=2\pi\int_0^{\infty}\frac{\lam}{(\lam+r)^2}|z|_{-2}^2\udr.
    \end{align}
    
    For $a=(a^1,a^2,a^3)$, $b=(b^1,b^2,b^3)$, we define
    \begin{align}
    \na a\cdot\na b:=\sum_{j=1}^{2}\sum_{\alpha=1}^{3}\partial_j a^{\alpha}\,\partial_j b^{\alpha}.
    \end{align}

    For vector-valued functions $u,v$, we define
    \begin{align}
        \La u,v\Ra=\int_{\R^2}u\cdot v\,\ud x.
    \end{align}

    For complex-valued functions $z,w$, we define
    \begin{align}
        \La z,w\Ra=\Re\int_{\R^2}\bar z w\,\ud x.
    \end{align}

    For vector- or complex-valued function $f$, we define
    \begin{align}
        \Lam f= x\cdot\na f,\ \underline{\Lam}f=(x\cdot\nabla+1)f,
    \end{align}
    and for $\lam>0$, we define
    \begin{align}
        f_{\lam}(x)=f\left(\frac{x}{\lam}\right),\ f_{\underline{\lam}}(x)=\frac{1}{\lam}f\left(\frac{x}{\lam}\right).
    \end{align}

    For parameters $0<\lam_1<\cdots<\lam_N$, we will always adopt the convention $\lam_0=0$, $\lam_{N+1}=\infty$. For $a>0$ we also adopt the convention $a\cdot\infty=\infty$, $\frac{a}{\infty}=0$, $\sqrt{\infty}=\infty$.
    
    For $a,b>0$, we write $a\lesssim b$ (resp. $a\gtrsim b$), if there exists a constant $C>0$ depending only on $k,N$. such that $a\leq Cb$  (resp. $a\geq Cb$). We write $a\sim b$ if $a\lesssim b$, $b\lesssim a$. We write $a\lesssim_P b$ or $a\gtrsim_P b$ if the implicit constant also depends on other parameters $P$. We use $C(P)$ to denote general constants that only depend on $k,N,P$.
    
\bigskip

\noindent
\textbf{Acknowledgments.}  The author would like to thank his advisor, Prof. Lifeng Zhao, for his valuable advice  on this work. This work was supported by NSFC Grant of China No. 12271497.

\bigskip

\noindent
\textbf{AI use disclosure.} GPT-5.6 Sol  was used to assist with language editing and to check calculations.  All calculations, estimates and arguments in the manuscript were derived and verified by the author. The author takes full responsibility for the mathematical content.

\section{Preliminaries}

\subsection{Local Cauchy theory}
In this section we recall the local Cauchy theory of ~\eqref{SM}. 

\begin{lem}[\rm{\cite[Theorem 3.3,  Corollary 4.2 ]{Mc07-Cauchy}}]
    \label{lem:Cauchy}
    Let $u_0\in\dot H_x^1\cap\dot H_x^3$, then there exists a time \\
    $T=T\left(\|u_0\|_{\dot H_x^1},\|u_0\|_{\dot H_x^3}\right)>0$ such that ~\eqref{SM} has a unique solution $u$ in $C_t([0,T];\dot H^1_x\cap\dot H^3_x)$. 

    Moreover, given a time interval $[T_1,T_2]$ and a constant $M>0$.  For any $\veps>0$, there exists $\de=\de(\veps,M,T_1,T_2)>0$ such that if $u_1,u_2\in L^{\infty}_t([T_1,T_2];\dot H^1_x\cap\dot H^3_x)$ are solutions to ~\eqref{SM} such that
    \begin{align}
        \|u_1\|_{L^{\infty}_t([T_1,T_2];\dot H^1_x\cap\dot H^3_x)}+\|u_2\|_{L^{\infty}_t([T_1,T_2];\dot H^1_x\cap\dot H^3_x)}\leq M, \ \|u_{1}(T_1)-u_{2}(T_1)\|_{L_x^2\cap\dot H_x^1}\leq \de,
    \end{align}
    then
    \begin{align}
        \|u_{1}-u_{2}\|_{L_t^{\infty}([T_1,T_2];L^2_x\cap\dot H_x^1)}\leq \veps.
    \end{align}
\end{lem}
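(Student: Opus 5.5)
This lemma is imported from McGahagan's work rather than proved in the paper, so the plan is to reconstruct the standard argument. Existence will come from a parabolic regularization, uniqueness and continuous dependence from a geometric difference estimate, and the high-regularity bound is what makes both steps close.

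\textbf{Existence.} I will regularize \eqref{SM} by the Landau--Lifshitz--Gilbert flow
\begin{align}
\p_t u^\veps=\veps\left(\De u^\veps+|\na u^\veps|^2u^\veps\right)+u^\veps\wedge\De u^\veps .
\end{align}
For each $\veps>0$ this is a parabolic system that preserves $|u^\veps|=1$, so it has local smooth solutions. The key step is an a priori bound on
\begin{align}
\|\na u^\veps\|_{L^2}^2+\|D_x^2\na u^\veps\|_{L^2}^2
\end{align}
on a time interval that does not depend on $\veps$. I will use covariant derivatives along $u$, i.e. the tangent components of $\p^2\De u$. In these variables the top-order term $u\wedge\De(\cdot)$ is skew and cancels in the energy identity, while the $\veps$-term contributes only a good sign. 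The remaining commutator terms are of the form $\na u\ast\na u\ast\na^3u$ and similar. They are bounded by $C(\|u\|_{\dot H^1\cap\dot H^3})\|u\|_{\dot H^3}^2$, using the Gagliardo--Nirenberg and Sobolev embeddings $\dot H^1\cap\dot H^3\subset W^{1,\infty}$ in two dimensions.

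Gronwall then gives a uniform time $T$ depending only on $\|u_0\|_{\dot H^1}$ and $\|u_0\|_{\dot H^3}$. Passing $\veps\to0$ by compactness (Aubin--Lions on bounded domains plus a diagonal argument) yields a solution in $L^\infty_t(\dot H^1\cap\dot H^3)$. Continuity in time follows from weak continuity combined with continuity of the norm, which comes from the energy identity and time reversibility.

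\textbf{Difference estimate.} For two solutions $u_1,u_2$ with $w=u_1-u_2$,
\begin{align}
\p_t w=w\wedge\De u_1+u_2\wedge\De w .
\end{align}
Testing with $w$ kills the second term after integration by parts, up to $\na u_2\wedge\na w$ terms. This gives
\begin{align}
\frac{\ud}{\ud t}\|w\|_{L^2}^2\lesssim \left(\|\De u_1\|_{L^\infty}+\|\na u_2\|_{L^\infty}\right)\left(\|w\|_{L^2}^2+\|w\|_{L^2}\|\na w\|_{L^2}\right).
\end{align}
For $\na w$ the naive estimate loses one derivative through the term $\na w\wedge \De u_1$, or rather through $w\wedge\na\De u_1$, which is only in $L^2$. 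I will handle this in one of two ways. The first is to work with the frame/covariant difference as McGahagan does, via a parallel transport comparison. The second is to exploit $u_1-u_2$ together with $|u_i|=1$, which gives $w\cdot(u_1+u_2)=0$, so the dangerous component is lower order.

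Gronwall on $\|w\|_{L^2\cap\dot H^1}$, with coefficients controlled by $M$, then gives
\begin{align}
\|w\|_{L^\infty_t([T_1,T_2];L^2\cap\dot H^1)}\le C(M,T_2-T_1)\,\|w(T_1)\|_{L^2\cap\dot H^1},
\end{align}
which yields both uniqueness and the stated continuous dependence with $\de=\veps/C$.

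\textbf{Main obstacle.} I expect the hardest step to be the $\dot H^1$ difference estimate without derivative loss, which is the genuinely quasilinear part; I would first try the covariant-frame approach of \cite{Mc07-Cauchy}.
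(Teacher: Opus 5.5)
\paragraph{Comparison with the paper.} Your outline is sound. For uniqueness and continuous dependence, your preferred option (a) is exactly what the paper relies on. The genuine difference is the existence scheme.

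The paper does not reprove the lemma. It takes existence from McGahagan's \emph{hyperbolic} (wave-map type) approximation and observes that this only gives $u\in L^\infty_t(\dot H^1\cap\dot H^3)$. It then upgrades to $C_t\dot H^3$ by re-running the same scheme from data $(u(t),0)$ at each time, which yields continuity of the covariant $\dot H^3$ norm; combined with weak continuity in $\dot H^3$ and continuity in $\dot H^1$, this gives strong continuity. Continuous dependence is taken from the proof of McGahagan's Theorem 4.1.

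Your Landau--Lifshitz--Gilbert regularization is a standard alternative. The regularized system is strictly parabolic, so its local theory is immediate, and the $\varepsilon$-terms are dissipative up to absorbable commutators. The cost is that the regularized flow cannot be run backward. So your step ``continuity of the norm from the energy identity and time reversibility'' must be implemented as in the paper's remark:
\begin{itemize}
\item for each $s$, restart the regularized flow from $u(s)$ and identify the limit with $u$ by uniqueness;
\item deduce $\limsup_{t\downarrow s}\|u(t)\|_{\mathrm{co}}\le\|u(s)\|_{\mathrm{co}}$ from the uniform differential inequality and lower semicontinuity;
\item obtain the left limit from the fact that $-u(-t,x)$ again solves \eqref{SM}.
\end{itemize}
The norm here is the covariant one your estimate controls. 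You return to $\|\partial^3u\|_{L^2}$ because the normal part of $\partial^3u$ is lower order.

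\paragraph{Corrections to the difference sketch.}
\begin{itemize}
\item For uniqueness you first need $w(t)\in L^2$. This follows from $\|\partial_t u_i\|_{L^2}\le\|\Delta u_i\|_{L^2}$.
\item In your $L^2$ inequality, the coefficient $\|\Delta u_1\|_{L^\infty}$ is not controlled by $M$ in two dimensions, since $\Delta u_1\in H^1$ only. However, it comes from $\langle w,w\wedge\Delta u_1\rangle$, which vanishes identically, so only $\|\nabla u_2\|_{L^\infty}$ is needed.
\item At the $\dot H^1$ level, $\langle\partial_kw,\partial_kw\wedge\Delta u_1\rangle$ also vanishes. Besides the term $\langle\partial_kw,w\wedge\partial_k\Delta u_1\rangle$ that you flagged, the genuinely quasilinear contribution is $-2\sum_{k,m}\langle\partial_k\partial_mw,\partial_ku_2\wedge\partial_mw\rangle$, coming from $\partial_ku_2\wedge\Delta w$ and $u_2\wedge\Delta\partial_kw$.
\end{itemize}

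Option (b) does handle both terms at the quadratic level. A wedge of two vectors tangent at $u_2$ (resp.\ $u_1$) is normal, so it only sees the normal part of $\partial^2w$ (resp.\ $\partial w$). That part is lower order because $u_2\cdot w=-\tfrac12|w|^2$ (resp.\ $u_1\cdot w=\tfrac12|w|^2$).

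It leaves cubic terms, however, such as $\int\alpha_{km}\,w\cdot\partial_k\partial_mw$ with $\alpha_{km}=u_2\cdot(\partial_ku_2\wedge\partial_mw)$. As they stand these are not bounded by $C(M)\|w\|_{H^1}^2$, because $\|w\|_{L^\infty}$ is not controlled by $\|w\|_{H^1}$ in two dimensions. They close after writing $w\cdot\partial_k\partial_mw=\tfrac12\partial_k\partial_m|w|^2-\partial_kw\cdot\partial_mw$ and moving both derivatives onto $\alpha_{km}$. This leaves $|w|^2$ undifferentiated and uses the a priori bound $\|\nabla^3w\|_{L^2}\le 2M$. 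That is the extra step option (b) needs for your Gronwall argument, and hence your Lipschitz bound, to go through.
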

\begin{rem}
    In fact, \cite{Mc07-Cauchy}  only proved that $u\in L^{\infty}_t([0,T];\dot H^1_x\cap\dot H^3_x)$, which, combined with ~\eqref{SM}, only implies $u\in C_t([0,T];\dot H^1_x)$ and thus $u(s)\wto u(t)$ in $\dot H_x^3$ as $s\to t$ for $t\in [0,T]$. However, for each $t\in [0,T)$ we can approximate $u$ at time $t$ with data $(u(t),0)$ using the same hyperbolic equation defined in \cite{Mc07-Cauchy}. Applying the same energy estimates in the proof of Theorem 3.3 in \cite{Mc07-Cauchy}, we obtain that the $\dot H_{\rm{co}}^3$ norm of $u$ is continuous in time, where $\dot H_{\rm{co}}^3$  is defined with covariant derivatives, which combined with the weak convergence of $u(t)$ in $\dot H_x^3$ and the continuity of $u(t)$ in $\dot H_x^1$ yields $u\in C_t([0,T);\dot H_x^3)$.  We note that the  local Cauchy theory stated in Theorem 1.1 \cite{RRS-09-SM} also says the solution is continuous  in time in the highest order Sobolev space, with no proof provided.
\end{rem}
\begin{rem}
    The continuous dependence on initial data stated above is also not presented directly in \cite{Mc07-Cauchy}, but follows by the same argument as in the proof of Theorem 4.1 in \cite{Mc07-Cauchy}.
\end{rem}

\begin{cor}
    For any initial data $u_0\in\dot H_x^1\cap\dot H_x^3$, there exists $T_+>0$ such that the solution  $u\in C_{t,\rm{loc}}([0,T_+);\dot H_x^1\cap\dot H_x^3)$ and satisfies $\limsup_{t\to T_+}\|u(t)\|_{\dot H_x^1\cap\dot H_x^3}=\infty$ if $T_+<\infty$. 
\end{cor}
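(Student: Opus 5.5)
The plan is the standard continuation argument, using Lemma~\ref{lem:Cauchy} (with time translation invariance) as the only input. Let $u$ be the solution given by Lemma~\ref{lem:Cauchy} on $[0,T]$. Define
\begin{align}
T_+=\sup\left\{\tau>0:\ \text{\eqref{SM} has a solution in } C_t([0,\tau];\dot H^1_x\cap\dot H^3_x) \text{ with } u(0)=u_0\right\}.
\end{align}
By Lemma~\ref{lem:Cauchy}, $T_+\geq T>0$. Uniqueness in Lemma~\ref{lem:Cauchy}, applied from any common starting time, shows that two solutions on overlapping intervals agree on the overlap. We will make this precise by considering the first time where they differ and restarting the local theory there. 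Hence the solutions patch together into a single $u\in C_{t,\rm{loc}}([0,T_+);\dot H^1_x\cap\dot H^3_x)$.

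For the blowup alternative, suppose $T_+<\infty$ and $\limsup_{t\to T_+}\|u(t)\|_{\dot H^1_x\cap\dot H^3_x}<\infty$. Then there is an $M$ with $\|u(t)\|_{\dot H^1_x\cap\dot H^3_x}\le M$ for all $t\in[0,T_+)$. Lemma~\ref{lem:Cauchy} gives an existence time $T(M)>0$ that depends only on bounds for the $\dot H^1$ and $\dot H^3$ norms of the data. We use monotonicity here: the time can be taken uniform over data with norms at most $M$, which is how such local theorems are proved and which we would note explicitly. Pick $t_0\in(T_+-T(M)/2,T_+)$ and solve \eqref{SM} with data $u(t_0)$ at time $t_0$, using time translation invariance of \eqref{SM}. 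This produces a solution on $[t_0,t_0+T(M)]$ which agrees with $u$ on $[t_0,T_+)$ by uniqueness. The concatenation is then a solution in $C_t([0,t_0+T(M)];\dot H^1_x\cap\dot H^3_x)$, and $t_0+T(M)>T_+$, contradicting the definition of $T_+$.

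The only delicate point is the uniformity of $T$ in the size of the data, together with the continuity of the glued solution in $\dot H^3$ at the junction time $t_0$. The continuity is automatic because both pieces are continuous in $\dot H^3$ and coincide on an open interval around $t_0$ intersected with $[t_0,T_+)$, by the preceding remark on time continuity in $\dot H^3$.
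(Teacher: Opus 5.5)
Your proposal is correct and is the standard continuation argument that the paper leaves implicit: it states the corollary without proof as a direct consequence of Lemma~\ref{lem:Cauchy}, and it needs the same point you flag, namely that the existence time is uniform over data with bounded $\dot H^1\cap\dot H^3$ norm. For the patching step, you can also get uniqueness on an arbitrary interval $[T_1,T_2]$ directly from the continuous-dependence clause of Lemma~\ref{lem:Cauchy}: if $u_1(T_1)=u_2(T_1)$, the hypothesis holds for every $\delta>0$, so $\|u_1-u_2\|_{L^\infty_t([T_1,T_2];L^2_x\cap\dot H^1_x)}\le\varepsilon$ for every $\varepsilon>0$, hence $u_1\equiv u_2$.
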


\subsection{Basic estimates for stationary solutions}

In this section we collect some basic calculations and estimates for the stationary solution $Q$. For estimates of other objects with coherent structures appearing in the rest of the paper, see Appendix \ref{Appendix:estimates}.

By definition we have
\begin{align}
    Q=e^{k\te R}\begin{pmatrix}
        \frac{2r^k}{1+r^{2k}}\\ 0 \\ \frac{1-r^{2k}}{1+r^{2k}}
    \end{pmatrix},\ RQ=e^{k\te R}\begin{pmatrix}
        0 \\ \frac{2r^k}{1+r^{2k}} \\ 0
    \end{pmatrix},\ \Lam Q=e^{k\te R}\begin{pmatrix}
        \frac{2kr^k(1-r^{2k})}{(1+r^{2k})^2}\\ 0 \\ -\frac{4kr^{2k}}{(1+r^{2k})^2}
    \end{pmatrix},
\end{align}
and we have the following identities:
\begin{align}
    Q\cdot RQ=Q\cdot\Lam Q=RQ\cdot \Lam Q=0,
\end{align}
\begin{align}
    Q\wedge RQ=-\frac{1}{k}\Lam Q,\ Q\wedge \Lam Q= kR Q,\ RQ\wedge \Lam Q=-k|RQ|^2 Q.
\end{align}
\begin{align}
     \|RQ \|_{L^2}^2=\frac{4\pi^2}{k^2\sin\left(\frac{\pi}{k}\right)},\|\Lam Q\|_{L^2}^2=\frac{4\pi^2}{\sin\left(\frac{\pi}{k}\right)},
\end{align}
\begin{align}
    \De Q=-|\na Q|^2 Q=-2r^{-2}|\Lam Q|^2 Q.
\end{align}
For $\lam>0$, $\al\in \R$, we have
\begin{align}
    e^{\al R}Q_{\lam}(\te,r)=e^{k\te R}\begin{pmatrix}
        \frac{2(r/\lam)^k}{1+(r/\lam)^{2k}}\cos \al\\ \frac{2(r/\lam)^k}{1+(r/\lam)^{2k}}\sin \al \\ \frac{1-(r/\lam)^{2k}}{1+(r/\lam)^{2k}}
    \end{pmatrix},
\end{align}
which yields
\begin{equation}\label{eq:Q-rescaled-expansion}
    \begin{aligned}
        e^{\al R}Q_{\lam}(\te,r)=\begin{cases}
        2\left(\frac{r}{\lam}\right)^ke^{k\te R}\begin{pmatrix}
            \cos \al\\ \sin \al \\ 0
        \end{pmatrix}+e_3+O\left(\left(\frac{r}{\lam}\right)^{2k}\right), &r\leq\lam, \\
        2\left(\frac{\lam}{r}\right)^ke^{k\te R}\begin{pmatrix}
            \cos \al\\ \sin \al \\ 0
        \end{pmatrix}-e_3+O\left(\left(\frac{\lam}{r}\right)^{2k}\right), &r\geq\lam,
    \end{cases}
    \end{aligned}
\end{equation}
and the remainder keeps the same order when we apply $(r\p_r)^a$ for $1\leq a\leq 4$.\\
For $\lam>0$ we define
\begin{align}
    F_{\lam}(r)=\min\left\{\left(\frac{r}{\lam}\right)^k, \left(\frac{\lam}{r}\right)^k  \right\}.
\end{align}
For later use, we note that the scaling parameters $\lam_j$ appearing from Section 3 onward will depend on time, and we will write
\begin{align}\label{eq:def-Fj}
    F_{j}(t,r)=F_{\lam_j(t)}(r), \ 1\leq j\leq N
\end{align}
for convenience, while in this section, the parameters are still regarded as fixed.
Then for $a\geq 1$, we have
\begin{align}\label{eq:RQ-LamQ-F}
    |(r\p_r)^{a} e^{\al R}RQ_{\lam} |+|(r\p_r)^{a} e^{\al R}\Lam Q_{\lam} |\lesssim F_{\lam} ,
\end{align}
For $0<\lam_1\ll\cdots\ll\lam_N$ and $\vec \te=(\te_1,\dots,\te_N)\in \bT^N$, by definition 
\begin{align}
    \calQ_{\vec \lam, \vec \te}(\te,r)=e^{k\te R}\begin{pmatrix}
        \mathop{\sum} \limits_{j=1}^N (-1)^{j-1}\frac{2(r/\lam_j)^k}{1+(r/\lam_j)^{2k}}\cos \te_j \\
        \mathop{\sum} \limits_{j=1}^N (-1)^{j-1}\frac{2(r/\lam_j)^k}{1+(r/\lam_j)^{2k}}\sin \te_j \\
        1-2 \mathop{\sum} \limits_{j=1}^N (-1)^{j-1}\frac{(r/\lam_j)^{2k}}{1+(r/\lam_j)^{2k}}
    \end{pmatrix},
\end{align}
In the region $\{\sqrt{\lam_{j-1}\lam_j}\leq r\leq \sqrt{\lam_j\lam_{j+1}}\}$, we have
\begin{align}
    \calQ&=(-1)^{j-1}e^{\te_j R}Q_{\lam_j}+\sum_{\ell<j}(-1)^{\ell-1}\left(e^{\te_{\ell}R}Q_{\lam_{\ell}}+e_3\right)+\sum_{\ell>j}(-1)^{\ell-1}\left(e^{\te_{\ell}R}Q_{\lam_{\ell}}-e_3\right),
\end{align}
which implies 
\begin{equation}\label{eq:Q-cal-expansion-local}
    \begin{aligned}
        \calQ-(-1)^{j-1}e^{\te_j R}Q_{\lam_j}&=2\sum_{\ell<j}(-1)^{\ell-1}\left(\frac{\lam_{\ell}}{r}\right)^ke^{k\te R}\begin{pmatrix}
            \cos \te_{\ell}\\ \sin \te_{\ell} \\ 0
        \end{pmatrix}+2\sum_{\ell>j}(-1)^{\ell-1}
 \left(\frac{r}{\lam_{\ell}}\right)^ke^{k\te R}\begin{pmatrix}
            \cos \te_{\ell}\\ \sin \te_{\ell} \\ 0
        \end{pmatrix}\\
        &\quad +O\left(\sum_{\ell\neq j}F_{\ell}^2\right),
    \end{aligned}
\end{equation}
where the remainder keeps the same order when we apply $(r\p_r)^a$ for $1\leq a\leq 4$.

We also have
\begin{align}
    |\calQ|^2-1=2(-1)^{j-1}e^{\te_j R}Q_{\lam_j}\cdot \left(\calQ-(-1)^{j-1}e^{\te_j R}Q_{\lam_j}\right)+\left|\calQ-(-1)^{j-1}e^{\te_j R}Q_{\lam_j}\right|^2,
\end{align}
which implies for $(r\p_r)^a$ with $0\leq a\leq 4$, 
\begin{align}\label{eq:Q-cal-norm}
    \left|(r\p_r)^a\left(|\calQ|^2-1\right)\right|\lesssim\sum_{1\leq \ell<m\leq N}F_{\ell}F_{m}.
\end{align}

\subsection{Static decomposition}
The notation we use in this section is independent of other sections.    
\begin{lem}\label{lem:static-mod}
    There exists $\veps_0,C_0>0$ such that for any $k$-equivariant $u\in \dot H^1(\R^2;\bS^2)$ satisfying
    \begin{align}
        \inf_{(\vec \mu,\vec \zeta)\in \R_+^N\times\R^N}\left(\left\|u-\calU_{\vec \mu,\vec \zeta}\right\|_{\dot \calH^1}+\sum_{j=1}^{N-1}\frac{\mu_j}{\mu_{j+1}}+\sum_{j=1}^N\left|\zeta_j+\frac{1+(-1)^{j}}{2}\frac{\pi}{2}\right|\right)<\veps_0,
    \end{align}
    there exist unique $g\in \dot \calH^1$,  $\vec \lam,\vec \te$ satisfying
    \begin{align}
        \|g\|_{\dot \calH^1}+\sum_{j=1}^{N-1}\frac{\lam_j}{\lam_{j+1}}+\sum_{j=1}^N\left|\te_j+\frac{1+(-1)^{j}}{2}\frac{\pi}{2}\right|<C_0 \veps_0
    \end{align}
    such that $u$ admits a decomposition 
    \begin{align}
        u=\calU_{\vec \lam,\vec \te}+g
    \end{align}
    with the orthogonality conditions
    \begin{align}\label{eq:g-ortho}
    \left\langle g, \chi_{L,j}e^{\te_j R}\Lam Q_{\lam_j}\right\rangle=\left\langle g,\chi_{L,j}e^{\te_j R}R Q_{\lam_j}\right\rangle=0,\ j=1,\dots,N.
    \end{align}
\end{lem}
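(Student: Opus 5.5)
The plan is the standard implicit function theorem argument, carried out in renormalized coordinates so that all bounds are uniform in the scales. For the multi-scale structure, I parametrize $\vec\lam=(\lam_1,\dots,\lam_N)$ by $\lam_N$ together with the ratios $\sigma_j=\lam_j/\lam_{j+1}$. By scaling invariance of $\dot\calH^1$, and since $\calU_{\vec\lam,\vec\te}$ rescales covariantly, I may assume $\lam_N=1$. Given $u$ and a near-minimizing $(\vec\mu,\vec\zeta)$, define $\Phi:(\vec\lam,\vec\te)\mapsto\R^{2N}$ with components
\begin{align}
\Phi^{\lam}_j=\frac{1}{\lam_j\|\Lam Q\|_{L^2}^2}\left\langle u-\calU_{\vec\lam,\vec\te},\chi_{L,j}e^{\te_jR}\Lam Q_{\lam_j}\right\rangle,\quad \Phi^{\te}_j=\frac{1}{\lam_j\|RQ\|_{L^2}^2}\left\langle u-\calU_{\vec\lam,\vec\te},\chi_{L,j}e^{\te_jR}RQ_{\lam_j}\right\rangle .
\end{align}
Writing $\lam_j=\mu_je^{a_j}$ and $\te_j=\zeta_j+b_j$, I solve $\Phi=0$ in the small unknowns $(a,b)$, viewed as a perturbation of $(\vec\mu,\vec\zeta)$. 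The normalization is dictated by $\|\Lam Q_{\lam}\|_{L^2}\sim\lam$. The pairing with $g$ is controlled by $\|g\|_{\dot\calH^1}$ via Hardy, because $\chi_{L,j}\Lam Q_{\lam_j}/\lam_j$ is bounded in the dual of $\dot\calH^1$: one checks that $r\cdot r^{-1}F_{\lam}\chi_L\in L^2(r\,dr)$ scale invariantly, with a constant depending on $L$ when $k=3$. All constants may therefore depend on the fixed $L$.

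\textbf{Step 1 (smallness at the base point).} At $(a,b)=0$, $|\Phi|\lesssim_L\|u-\calU_{\vec\mu,\vec\zeta}\|_{\dot\calH^1}<\veps_0$.

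\textbf{Step 2 (Jacobian).} Since $\p_{\lam_j}\calU=(-1)^{j}\lam_j^{-1}e^{\te_jR}\Lam Q_{\lam_j}+\text{error}$ and $\p_{\te_j}\calU=(-1)^{j-1}e^{\te_jR}RQ_{\lam_j}+\text{error}$, the matrix $D_{(a,b)}\Phi$ is diagonal up to small terms. Three observations give this:
\begin{itemize}
\item The diagonal entries are $\pm\langle\chi_L\Lam Q,\Lam Q\rangle/\|\Lam Q\|^2\to\pm1$ as $L\to\infty$, and similarly for $RQ$.
\item The cross terms $\langle\Lam Q,RQ\rangle$ vanish pointwise, since $\Lam Q\cdot RQ=0$.
\item Interactions between different bubbles $j\neq\ell$ are bounded by $\int F_jF_\ell\chi_{L,j}r^{-1}\,dr$, which is small in terms of $\sigma=\max_j\sigma_j$ for fixed $L$, because the supports of $\chi_{L,j}$ reach only scale $L^2\lam_j$.
\end{itemize}
The corrections coming from the normalization $\calQ\mapsto\calQ/|\calQ|$ are controlled by \eqref{eq:Q-cal-norm} and are $O(\sigma)$. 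The terms in which the derivative falls on the test functions are paired against $u-\calU$, and hence are $O(\veps_0)$. So $D\Phi=\mathrm{diag}(\pm1)+O(\veps_0+\sigma+o_L(1))$, which is uniformly invertible on a ball of radius $C\veps_0$. All estimates must be checked in the scale-invariant coordinates, so that the constants do not depend on the $\mu_j$.

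\textbf{Step 3 (conclusion).} The quantitative inverse function theorem, or a contraction mapping in $(a,b)$, gives a unique zero with $|a|+|b|\lesssim\veps_0$. Then
\begin{align}
\|g\|_{\dot\calH^1}\le\|u-\calU_{\vec\mu,\vec\zeta}\|_{\dot\calH^1}+\|\calU_{\vec\mu,\vec\zeta}-\calU_{\vec\lam,\vec\te}\|_{\dot\calH^1}\lesssim\veps_0,
\end{align}
using $\|\p_{\lam_j}\calU\|_{\dot\calH^1}\lesssim\lam_j^{-1}$. The ratios and angles also move by $O(\veps_0)$, which yields the constant $C_0$.

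Uniqueness within the $C_0\veps_0$ neighbourhood needs one more argument. Two decompositions give parameter sets whose relative distance, in the log-scale and angle variables, is $O(\veps_0)$. This follows from a lower bound $\|\calU_{\vec\lam,\vec\te}-\calU_{\vec\lam',\vec\te'}\|_{\dot\calH^1}\gtrsim\min(1,|\log(\lam/\lam')|+|\te-\te'|)$, obtained by testing near each bubble. Both parameter sets then lie in the ball where $\Phi$ is injective.

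I expect the main obstacle to be Step 2, and specifically the uniformity: the logarithmic cutoff with $k=3$ makes the dual norms grow in $L$. Since $L$ is fixed here, I will simply choose $\veps_0$ depending on $L$. I also need to verify that the interaction errors are small when $\sigma\lesssim\veps_0$ but $L$ is large, which requires $\veps_0\ll L^{-2k}$ or a similar condition.
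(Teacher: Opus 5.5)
Your proposal is correct and follows the same route as the paper, which only says this is a standard implicit function theorem argument (citing Lemma 2.17 of \cite{JL-NLW-SRC}); your Steps 1--3, together with the separate uniqueness argument via a lower bound on $\|\calU_{\vec\lam,\vec\te}-\calU_{\vec\lam',\vec\te'}\|_{\dot\calH^1}$, are exactly the details that argument consists of. One small fix: for the diagonal entries to tend to $\pm1$ uniformly in the scales, normalize $\Phi^{\lam}_j,\Phi^{\te}_j$ by $\lam_j^{-2}$ (i.e.\ by $\|\Lam Q_{\lam_j}\|_{L^2}^{-2}$) rather than $\lam_j^{-1}$; then $\lam_j^{-2}\chi_{L,j}\Lam Q_{\lam_j}$ is bounded in $(\dot\calH^1)^*$ uniformly in both $\lam_j$ and $L$, since $r\Lam Q\in L^2$ for $k\ge 3$.
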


\begin{proof}
    The proof is a standard application of the implicit function theorem, see Lemma 2.17 in \cite{JL-NLW-SRC} for example.  
\end{proof}

\subsection{Darboux coordinates}\label{subsec:Darboux}

In this section we introduce Darboux coordinates for $k$-equivariant mappings near a given $k$-equivariant mapping. The notation we use in this section is independent of other sections.

Given a $k$-equivariant mapping $\calV(\te,r)=e^{k\te R}V(r)\in \dot H^1$. We first introduce an orthogonal system $(h_1^V(r),h_2^V(r),V(r))$ determined through Coulomb gauge:
\begin{equation}\label{eq:frame-def}
    \begin{aligned}
    \begin{cases}
        \p_r h_1^V=-(h_1^V\cdot \p_r V)V,\\
        h_1^V(0)=(1,0,0)^{\top},\\
        h_2^V=V\wedge h_1^V.
    \end{cases}
    \end{aligned}
\end{equation}
We define $f_j^V(\te,r)=e^{k\te R}h_j^V(r)$ for $j=1,2$ such that $(f_1^V(r),f_2^V(r),\calV(r))$ forms an orthogonal system.
Then for $k$-equivariant mapping $v\in \dot H^1$ satisfying $\|v-\calV\|_{\dot  \cH^1}<1$, there exists a unique complex-valued function $z^V(r)=z_1^V(r)+iz_2^V(r)\in \calH^1$  with $z_1^V,z_2^V\in \R$ such that
\begin{align}\label{eq:v-z}
    v(\te,r)&=\sqrt{1-\frac{|z^V|^2}{4}}(z_1^Vf_1^V+z_2^Vf_2^V)+\left(1-\frac{|z^V|^2}{2}\right)\calV\\
    &=e^{k\te R}\left(\sqrt{1-\frac{|z^V|^2}{4}}(z_1^V h_1^V+z_2^V h_2^V)+\left(1-\frac{|z^V|^2}{2}\right)V\right).
\end{align}
Conversely,  for a complex-valued function $z^V(r)=z_1^V(r)+iz_2^V(r)\in \calH^1$  with $z_1^V,z_2^V\in \R$ and $\|z^V\|_{\calH^1}<1$, we define
\begin{align}
    \Phi_V (z^V)(\te,r)=\sqrt{1-\frac{|z^V|^2}{4}}(z_1^Vf_1^V+z_2^Vf_2^V)+\left(1-\frac{|z^V|^2}{2}\right)\calV.
\end{align}For a complex-valued function $\xi(r)=\xi_1(r)+i\xi_2(r)\in L^2$ with $\xi_1,\xi_2\in \R$, we define
\begin{align}
    \xi_{z^V,V}(\te,r)&= \vD \Phi_{V}(z^V)(\xi)   \\
    &=\sqrt{1-\frac{|z^V|^2}{4}}(\xi_1 f_1^V+\xi_2 f_2^V)-\frac{\Re\left(\overline {z^V}\xi\right)}{4\sqrt{1-\frac{|z^V|^2}{4}}}(z_1^Vf_1^V+z_2^Vf_2^V)-\Re \left(\overline {z^V}\xi\right)\calV.
\end{align}
We have the following inequality:
\begin{align}
    \label{eq:Darboux}
    \int_{\R^2}v\cdot\left(\xi_{z^V,V}\wedge\eta_{z^V,V}\right)\ud x=2\pi \Im\int_{0}^{\infty}\bar{\xi}\eta\, \udr.
\end{align}
Let
\begin{align}\label{eq:Gz-def}
    G_{z^V,V}\xi:=(\vD\Phi_V(z^V) )^{*}\vD \Phi_{V}(z^V) \xi=\left(1-\frac{|z^V|^2}{4}\right)\xi+\frac{8-|z^V|^2}{4(4-|z^V|^2)}\Re(\overline{z^V} \xi)z^V,
\end{align}
then we have
\begin{align}
    \La G_{z^V,V} \xi,\eta \Ra=\La  \xi, G_{z^V,V}\eta \Ra =\La \xi_{z^V,V},\eta_{z^V,V}  \Ra.
\end{align}
We also have
\begin{align}
    (\vD G_{z^V,V}(h))\xi&=-\frac{1}{2}\Re(\overline{z^V}h)\xi+\frac{2\Re(\overline{z^V} h)}{(4-|z^V|^2)^2}\Re(\overline{z^V}\xi)z^V\\
    &\quad +\frac{8-|z^V|^2}{4(4-|z^V|^2)}\Re(\bar h\xi )z^V+\frac{8-|z^V|^2}{4(4-|z^V|^2)}\Re(\overline{z^V}\xi )h.
\end{align}
An immediate result is that for $|z^V|$ sufficiently small, we have
\begin{align}\label{eq:DGz-bounded}
    |(\vD G_{z^V,V}(h))\xi|\leq 2 |z^V||h||\xi|.
\end{align}
We denote by
\begin{align}\label{eq:Kz-def}
    K_{z^V,V}=G_{z^V,V}-\rm{Id}.
\end{align}
Then for $|z^V|$ sufficiently small, there holds
\begin{align}
    &|K_{z^V,V} \xi|\leq 2|z^V|^2|\xi|,\ |(\p_rK_{z^V,V})\xi|\leq 2|z^V||\p_r z^V||\xi|.\label{eq:Kz-bounded}\\
    &|(\p_r^2K_{z^V,V})\xi|\leq 2\left[|\p_rz^V|^2+|z^V||\p_r^2 z^V|\right]|\xi|,\label{eq:Kz-rr-bounded}\\
    &|(\p_r^3K_{z^V,V})\xi|\leq 6\left[|z^V||\p_rz^V|^3+|\p_r z^V||\p_r^2 z^V|+|z^V||\p_r^3 z^V|\right]|\xi|.\label{eq:Kz-rrr-bounded}
\end{align}
Here for an $\R$-linear operator $K$,  we identify it with four real-valued functions $K_j$, $1\leq j\leq 4$ such that for $z\in \C$, we have $Kz=K_1\Re z+K_2\Im z+i\left(K_3\Re z+K_4 \Im z\right)$. The $\R$-linear operator $\p_r K$ is defined naturally via $(\p_rK)z=(\p_rK_1)\Re z+(\p_rK_2)\Im z+i\left((\p_rK_3)\Re z+(\p_rK_4) \Im z\right)$.

\subsection{Variational estimates}

Define
\begin{align}
    H_{\lam}=-\p_r^2-\frac{1}{r}\p_r+k^2\frac{\cos 2\phi(\frac{r}{\lam})}{r^2}, A_{\lam}=-\p_r+k\frac{\cos \phi(\frac{r}{\lam})}{r}, \ A_{\lam}^*=\p_r+\frac{1}{r}+k\frac{\cos \phi(\frac{r}{\lam})}{r}.
\end{align}
Then we have
\begin{align}
    H_{\lam}=A^*_{\lam}A_{\lam}.
\end{align}
Define 
\begin{align}
    \ti H_{\lam}=A_{\lam}A_{\lam}^*.
\end{align}

\begin{lem}\label{lem:Alam-coercive}
    For $L>1$ sufficiently large,   there exists a constant $\ti C(L)>0$ such that for any complex-valued function $z$ satisfying that the term in the left-hand side is finite, there holds
    \begin{align}
        \|z\|_{\rm{Mor}(\lam)}^2\lesssim \ti C(L)\|A_{\lam}z\|_{\rm{Mor}_2(\lam)}^2+\lam^{-9}\left|\La z,\chi_{L}(r/\lam)\sin\phi(r/\lam) \Ra\right|^2+\lam^{-9}\left|\La z,i\chi_{L}(r/\lam)\sin\phi(r/\lam) \Ra\right|^2.
    \end{align}
\end{lem}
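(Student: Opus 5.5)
We rescale to $\lam=1$ and then invert $A_1$ explicitly: the kernel of $A_1$ is spanned over $\C$ by $\sin\phi$, the two inner products in the statement fix the free complex constant, and weighted Hardy inequalities control the rest.

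\emph{Step 1: scaling.} Set $z(r)=w(r/\lam)$. A direct change of variables shows that all three terms of the inequality scale like $\lam^{-5}$:
\begin{align}
\|z\|_{\rm{Mor}(\lam)}^2=\lam^{-5}\|w\|_{\rm{Mor}(1)}^2,\quad \|A_\lam z\|_{\rm{Mor}_2(\lam)}^2=\lam^{-5}\|A_1 w\|_{\rm{Mor}_2(1)}^2,
\end{align}
and $\La z,\chi_L(r/\lam)\sin\phi(r/\lam)\Ra=\lam^2\La w,\chi_L\sin\phi\Ra$, so this term squared and multiplied by $\lam^{-9}$ also gives $\lam^{-5}$. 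Hence it suffices to prove the inequality for $\lam=1$. Write $A=A_1$ and $f=Az$.

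\emph{Step 2: reduction to zeroth order.} Since $\p_r\log\sin\phi=k\cos\phi/r$, we have the factorization
\begin{align}
Az=-\sin\phi\,\p_r\Big(\frac{z}{\sin\phi}\Big).
\end{align}
Thus $\ker A=\C\sin\phi$, which is two real dimensions and matches the two orthogonality-type terms. From $\p_r z=k\cos\phi\, z/r-f$, differentiating once and twice more, we get pointwise
\begin{align}
|z|_{-3}\lesssim |z|/r^3+|f|_{-2},
\end{align}
using that $(r\p_r)^a\cos\phi$ is bounded. It therefore remains to bound $\int_0^\infty (1+r)^{-2}r^{-6}|z|^2\udr$.

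\emph{Step 3: representation and Hardy estimates.} Integrating the factorization gives
\begin{align}
z=\sin\phi\,\big(c-I_f(r)\big),\qquad I_f(r)=\int_1^r\frac{f(s)}{\sin\phi(s)}\,\ud s,\qquad c\in\C.
\end{align}
We apply one-dimensional weighted Hardy inequalities separately on $r\le1$ and $r\ge1$.
\begin{itemize}
\item On $r\le 1$: here $\sin\phi\sim r^k$, so the quantity to bound is $\int_0^1 r^{2k-5}|\int_r^1 f s^{-k}\,\ud s|^2\,\ud r$ against $\int_0^1 |f|^2r^{-3}\,\ud r$.
\item On $r\ge1$: here $\sin\phi\sim r^{-k}$, so the quantity to bound is $\int_1^\infty r^{-2k-7}|\int_1^r f s^k\,\ud s|^2\,\ud r$ against $\int_1^\infty |f|^2 r^{-5}\,\ud r$.
\end{itemize}
Both are of the admissible Hardy type. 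The exponent conditions need $k\ge3$ (roughly $2k-4>0$ at the origin), and I will check them carefully. This yields
\begin{align}
\|\sin\phi\, I_f\|_{\rm{Mor}}\lesssim\|f\|_{\rm{Mor}_2}.
\end{align}
The contribution $\|c\sin\phi\|_{\rm{Mor}}$ is $\lesssim|c|$, since $\sin\phi\in\rm{Mor}$ near $0$ and near $\infty$.

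\emph{Step 4: the constant $c$ (main obstacle).} Pair $z$ with $\chi_L\sin\phi$ and with $i\chi_L\sin\phi$:
\begin{align}
c\int\chi_L\sin^2\phi\udr=\La z,\chi_L\sin\phi\Ra+i\La z,i\chi_L\sin\phi\Ra+\int\chi_L\sin^2\phi\, I_f\udr.
\end{align}
The coefficient $\int\chi_L\sin^2\phi\udr$ is bounded below uniformly in $L$, because $\sin^2\phi\sim r^{-2k}$ is integrable against $r\,\ud r$. The last term requires bounding $|I_f(r)|$ on $\operatorname{supp}\chi_L\subset\{r\le L^2\}$. By Cauchy--Schwarz,
\begin{align}
|I_f(r)|\lesssim \|f\|_{\rm{Mor}_2}\Big(\int_{\min(r,1)}^{\max(r,1)} (1+s)^2s^{3}\sin^{-2}\phi\,\ud s\Big)^{1/2},
\end{align}
which grows polynomially in $r$ (and in $L$) but is finite on the support. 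The behaviour near $0$ is integrable after multiplying by $\sin^2\phi$ because of the factor $s^3$ and $k\ge3$. This gives
\begin{align}
|c|\lesssim |\La z,\chi_L\sin\phi\Ra|+|\La z,i\chi_L\sin\phi\Ra|+\ti C(L)^{1/2}\|f\|_{\rm{Mor}_2},
\end{align}
which is precisely where the constant $\ti C(L)$ enters.

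I expect the delicate points to be verifying the Hardy exponent conditions near $r=0$ for the borderline case $k=3$, and making sure the $L$-growth of $I_f$ on the support of $\chi_L$ is absorbed into $\ti C(L)$ rather than into the uniform constant. Combining Steps 2–4 and undoing the scaling of Step 1 concludes the proof.
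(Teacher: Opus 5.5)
Your proof is correct. The paper itself only reduces to $\lam=1$ by scaling and then refers to Lemma B.1 of \cite{HK-26-WM} without giving details, so your argument serves as a self-contained replacement for that citation.

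Your steps are:
\begin{itemize}
\item the factorization $A_1z=-\sin\phi\,\p_r(z/\sin\phi)$, which gives the representation $z=\sin\phi\,(c-I_f)$ with $f=A_1z$;
\item the two one-dimensional Hardy inequalities for $\sin\phi\, I_f$;
\item recovery of the complex constant $c$ from the two pairings.
\end{itemize}

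On the point you flagged, $k=3$ is not borderline. The Hardy conditions are $2k-5>-1$ at the origin, i.e.\ $k>2$, and $-2k-7<-1$ at infinity, so $k=3$ has room to spare.

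What your route adds is transparency about the constants. $\ti C(L)$ enters only through
\[
\int_0^{L^2}\sin^2\phi\,|I_f|\udr\lesssim\Bigl(1+\int_1^{L^2}r^{4-k}\,\vd r\Bigr)\|f\|_{\mathrm{Mor}_2(1)}.
\]
This grows at most polynomially in $L$ (like $L^4$ for $k=3$) and is bounded for $k\geq 6$. The coefficient in front of the two pairing terms is independent of $L$, and this is exactly what the paper needs when deriving \eqref{eq:y-Ay-control}. There the error $L^{4-2k}\|y\|^2_{\mathrm{Mor}(\lam_j)}$, which comes from replacing $\sin\phi$ by the frame components, is absorbed into the left-hand side. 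Your argument also works for every $L>1$, not only large $L$.

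Two small corrections:
\begin{itemize}
\item The lower bound $\int\chi_L\sin^2\phi\udr\geq\int_0^1\sin^2\phi\udr$ comes from $\chi_L\geq 0$ and $\chi_L=1$ on $[0,L]\supset[0,1]$. Integrability of $\sin^2\phi$ only gives the upper bound.
\item Your identity for $c$ omits the factor $2\pi$ relating $\La\cdot,\cdot\Ra$ to $\int\cdot\udr$, which is harmless.
\end{itemize}
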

\begin{proof}
    By scaling, it suffices to consider the case $\lam=1$. The proof follows the same argument as in  Lemma B.1 in \cite{HK-26-WM}, and we omit the details.   
\end{proof}

\section{Bootstrap framework}\label{sec:bootstrap}

In this section we state the bootstrap proposition which is the core of our analysis. Strictly speaking, the control for $\lam_j(t)$ with $1\leq j\leq N-1$ is a result of topological shooting argument instead of a direct bootstrap improvement. Nevertheless, we present it within the bootstrap framework for a clearer organization of the paper.

\begin{prop}\label{prop:bootstrap}
    Take $0<\veps_1<\veps_2<\dots<\veps_{N}<\veps_0<\min\{\frac{1}{2},\frac{1}{k-2} \}$. There exists $L>0$ such that,
    there exists $T_0<0$ with the following properties: Let $T<T_1<T_0$ and let $\lam_j^0$ such that for $j=1,\dots,N-1$,
    \begin{align}
        \left|\frac{\lam_j^0}{\lam_{j}^{\infty}(T)}-1\right|<\frac{1}{2}|T|^{-\veps_j}
    \end{align}
    Consider the solution $u(t)$ of ~\eqref{SM} with the initial data $\calU_{(\lam_1^0,\dots,\lam_{N-1}^0,1,\vec \te^{\infty})}$ at time $T$. Suppose that $u(t)$ exists on the time interval $[T,T_1]$ and can be decomposed according to Lemma ~\ref{lem:static-mod}
    with the following properties hold for $t\in [T,T_1]$:
    \begin{align}
        \left|\frac{\lam_j(t)}{\lam_{j}^{\infty}(t)}-1\right|&\leq|t|^{-\veps_j},\ j=1,\dots, N,\label{eq:lamN-bootstrap}\\
        \left|\te_j(t)-\te_j^{\infty}\right|&\leq |t|^{-\veps_j} ,\ j=1,\dots, N,\label{eq:te-bootstrap}\\
        \|g\|_{\dot \calH^1}&\leq |t|^{-\frac{1}{2}} \label{eq:g-H1-bootstrap}\\
        \|g\|_{\dot \calH^3}&\leq |t|^{-\frac{1}{2}-\veps_0}.\label{eq:g-H3-bootstrap}
    \end{align}
   Then for $t\in[T,T_1]$ there holds
    \begin{align}
        \left|\frac{\lam_N(t)}{\lam_{N}^{\infty}(t)}-1\right|&\leq \frac{1}{2}|t|^{-\veps_N},\label{eq:lamN-improved}\\
        \left|\te_j(t)-\te_j^{\infty}\right|&\leq \frac{1}{2}|t|^{-\veps_j} ,\ j=1,\dots, N,\label{eq:te-improved}\\
        \|g\|_{\dot \calH^1}&\leq \frac{1}{2}|t|^{-\frac{1}{2}},\label{eq:g-H1-improved}\\
        \|g\|_{\dot \calH^3}&\leq \frac{1}{2}|t|^{-\frac{1}{2}-\veps_0}.\label{eq:g-H3-improved}
    \end{align}
    Moreover, we can choose $\lam_j^0$ such that the related solution exists on $[T,T_0]$ with ~\eqref{eq:lamN-bootstrap}, ~\eqref{eq:te-bootstrap}, ~\eqref{eq:g-H1-bootstrap}, ~\eqref{eq:g-H3-bootstrap} and
    \begin{align}\label{eq:lam-improved}
         \left|\frac{\lam_j(t)}{\lam_{j}^{\infty}(t)}-1\right|&\leq\frac{1}{2}|t|^{-\veps_j},\ j=1,\dots, N-1
    \end{align}
    hold for $t\in [T,T_0]$.
   
\end{prop}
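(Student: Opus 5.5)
The argument follows the strategy outlined in the introduction. It runs in four stages: modulation equations, the corrected high-order energy, a Morawetz estimate, and a topological shooting argument for the unstable scales.

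\textbf{Stage 1: modulation.} Work on $[T,T_1]$ under the bootstrap hypotheses \eqref{eq:lamN-bootstrap}--\eqref{eq:g-H3-bootstrap}. Differentiate the orthogonality conditions \eqref{eq:g-ortho} in time, using the equation for $g=u-\calU$. This gives rough modulation bounds: $\lam_j\lam_j'$ and $\lam_j^2\te_j'$ equal the formal right-hand sides of \eqref{eq:modulation-formal}, up to errors that are quadratic in $g$ or carry the $L$-dependent constants. Next, pass to Darboux coordinates adapted to $\calU$ (Section~\ref{subsec:Darboux}) and write $z=\Phi^{-1}(u)$, which satisfies a quasilinear equation of the form $\p_t z=-iG_z\calH_{\calU}z+F-F_1+\dots$. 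Then perform the construction described in the strategy section:
\begin{itemize}
\item introduce auxiliary parameters $b_j^\lam,b_j^\te$, fixed by orthogonality conditions;
\item solve $-iH\eta=F-F_{1,b}$ for a correction $\eta$;
\item prove that $|b_j^\lam-\lam_j\lam_j'|$ and $|b_j^\te-\lam_j^2\te_j'|$ are bounded by Morawetz-absorbable terms plus terms of size $o(|t|^{-1-\veps_0})$, and that $\p_t b$ is controlled.
\end{itemize}

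\textbf{Stage 2: the $\dot\calH^3$ bound.} Define the energy functional $\calE_{3,0}$, namely the $\dot\calH^3$-type quantity $\La Hz,H^2z\Ra$ together with the correction $\La iH^2F,z\Ra$. Show it is coercive up to $c_L$ and the orthogonality directions, using Lemma~\ref{lem:Alam-coercive} and the factorization $H_\lam=A_\lam^*A_\lam$. Its time derivative splits into four kinds of terms:
\begin{itemize}
\item commutator terms $[H,i]$, which are harmless;
\item $\p_tH$ terms, of size $|\lam_j'|/\lam_j$ times localized norms, which are not integrable;
\item terms cancelled by the correction;
\item cubic remainders.
\end{itemize}
The non-integrable $\p_tH$ terms are absorbed by adding a small multiple of the Morawetz functional $\om(z+\eta,\na_X(z+\eta))$. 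Its derivative yields a positive $\sum_j\lam_j^{-2}\|z\|_{\mathrm{Mor}(\lam_j)}^2$ contribution, via the one-bubble Morawetz estimate with logarithmic cutoff, up to errors. Integrating from $T$, where $g(T)=0$, gives $\|g\|_{\dot\calH^3}\le C|t|^{-1/2-\veps_0-\delta}$, which improves \eqref{eq:g-H3-improved} for $|T_0|$ large. The $\dot\calH^1$ bound \eqref{eq:g-H1-improved} follows from energy conservation and coercivity of the linearized energy, again with $g(T)=0$. I expect the main obstacle here: the terms of order $|t|^{-2-2\veps_0}$ carry constants depending on $L$, and they must be handled through the refined bounds on $b_j$ rather than the rough modulation bounds.

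\textbf{Stage 3: improving the parameter bounds.} Insert the $g$-bounds into the modulation equations. Define refined parameters $\hat\lam_j,\hat\te_j$, which differ from $\lam_j,\te_j$ by localized inner products with $g$, so that their equations match \eqref{eq:modulation-formal} with integrable errors. Linearize around $(\lam^\infty,\te^\infty)$.
\begin{itemize}
\item The $\te_j$ directions and the $\lam_N$ direction are stable, since the linearized coefficients have the good sign. Integrating from $T$ improves the bounds to \eqref{eq:te-improved} and \eqref{eq:lamN-improved}, which uses $\veps_j<\veps_0$.
\item The ratios $\lam_j/\lam_j^\infty$ for $j\le N-1$ are unstable: the quantity $|t|^{2\veps_j}(\hat\lam_j/\lam^\infty_j-1)^2$ has outgoing behaviour at the boundary of its bootstrap region.
\end{itemize}

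\textbf{Stage 4: shooting.} Suppose that for every $\lam^0$ in the ball $\{|\lam_j^0/\lam_j^\infty(T)-1|<\tfrac12|T|^{-\veps_j}\}$ the solution exits before $T_0$. By the outgoing property, the exit time depends continuously on $\lam^0$; here continuity of the flow in the data comes from Lemma~\ref{lem:Cauchy}. The map from $\lam^0$ to the rescaled exit point would then be a continuous retraction of the ball onto its boundary sphere, which is impossible by Brouwer's theorem. Hence some choice of $\lam^0$ gives a solution satisfying all the bounds, including \eqref{eq:lam-improved}, on $[T,T_0]$.
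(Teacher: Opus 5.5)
Your overall architecture matches the paper's: rough and refined modulation, the correction $\eta$ with auxiliary $b_j^\lam,b_j^\te$, a corrected third-order energy plus a $\ti\de$-small Morawetz term on $y=z+\eta$, and Brouwer shooting on $\hat\lam_j$. However, Stage 2 as you set it up does not close, because it ignores the quasilinear structure.

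\textbf{The energy loses a derivative.} You write $\p_t z=-iG_z\calH_\calU z+\dots$ with $G_z=\Id+K_z$. Here $K_z=O(|z|^2)$ is only $\R$-linear: it contains $\Re(\bar z\xi)z$, so it does not commute with $i$. Differentiating your functional $\La\calH_\calU^2z,\calH_\calU z\Ra$ produces $2\La\calH_\calU^2z,\calH_\calU(-iK_z\calH_\calU z)\Ra$. After one integration by parts its leading part is, up to sign and lower-order terms, $\Re\int_0^\infty\overline{\p_r^4z}\,(iK_z)\,\p_r^4z\udr$.
\begin{itemize}
\item Since $iK_z$ is not skew-adjoint, this term does not vanish.
\item It involves $\p_r^4 z$, which the $\calH^3$ bootstrap does not control.
\item It is neither a harmless commutator $[\calH_\calU,i]$ nor a mere ``cubic remainder''.
\end{itemize}
This is why the paper's $\calE_{3,0}$ in \eqref{eq:def-energy-functional-E3} contains the extra term $2\pi\Re\int_0^\infty\overline{\p_r(\calH_\calU z)}(G_z-\Id)\p_r(\calH_\calU z)\udr$. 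With it, the top-order contribution becomes $\La\p_r\calH_\calU z,G_z\p_r\calH_\calU(-iG_z\calH_\calU z)\Ra$. This cancels by \eqref{eq:GziGz}, namely $G_ziG_z=i$ and $G_zi\p_rG_z=-(\p_rG_z)iG_z$; see the estimate of $(\mathrm{E})$.

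The same derivative-loss issue appears in two further places:
\begin{itemize}
\item The quadratic nonlinearity $\cR_{\mathrm{NL},1,1}=-2\Im(\overline{T_\calU(\p_r\calU)}z)\p_rz+\dots$ of Claim~\ref{claim:RNL} pairs $z\,\p_r^4z$ against $\p_r^3z$. It closes only because its coefficient is real, which permits an integration by parts; likewise $\sigma_2\in\R$ for $\cR_{\mathrm{NL},1,2}$.
\item In the Morawetz derivative, the $\calR_{\mathrm{q}}$ terms are controlled only through the commutators $[\calH_\calU,A_{\lam_j}^*\Lam_{\lam_j}A_{\lam_j}]$ and $[A_{\lam_j}^*\Lam_{\lam_j}A_{\lam_j},K_z]$.
\end{itemize}

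\textbf{Quantitative points to fix.}
\begin{itemize}
\item The gain in $\|g\|_{\dot\calH^3}$ is a constant factor, not a power $|t|^{-\de}$. Terms such as $\ti\de\|y\|_{\calH^3}^2|b_j^\lam|/\lam_j$, coming from $\p_t\Lam_{\lam_j}$, are exactly of size $|t|^{-2-2\veps_0}$. What one proves is $\frac{\ud}{\ud t}\calI_3\le\frac{c_L}{8\hat C}|t|^{-2-2\veps_0}$, choosing $L$ first, then $\ti\de,\de_0$, then $T_0$.
\item To read $\|z(t)\|_{\calH^3}$ off $\calI_3(t)$ you must show $\ti\de\calM_3(y,y)=o(|t|^{-1-2\veps_0})$. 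This uses $|\calM_3(y,y)|\lesssim\|y\|_{\calH^1}^{1/2}\|y\|_{\calH^3}^{3/2}$ together with the improved bound $\|g\|_{\dot\calH^1}\lesssim|t|^{-\frac12-\frac1{k-2}}$ from energy conservation. The bootstrap bound $|t|^{-1/2}$ would only give $|t|^{-1-\frac32\veps_0}$. This step is where the hypothesis $\veps_0<\frac1{k-2}$ enters.
\item The Morawetz sum needs the weights $\de_0^{N-j}$. In the $j$-th term, the potentials of the bubbles $\ell>j$ produce errors $\|y\|_{\mathrm{Mor}(\lam_\ell)}^2$ with $O(1)$ coefficients, which are absorbed only because of these weights.
\item Coercivity of $\calE_{3,0}$ comes from Lemma~\ref{lem:HU-coercive}, not Lemma~\ref{lem:Alam-coercive}. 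Lemma~\ref{lem:Alam-coercive}, with Lemma~\ref{lem:frame-compare} and the orthogonality of $g$ and $\eta$, is what converts the Morawetz output $\|A_{\lam_j}y\|^2_{\mathrm{Mor}_2(\lam_j)}$ into control of $\|y\|^2_{\mathrm{Mor}(\lam_j)}$.
\item Shoot in the closed box $|\lam_j^0/\lam_j^\infty(T)-1|\le\frac14|T|^{-\veps_j}$, with exit condition $|\hat\lam_j/\lam_j^\infty-1|=\frac14|t|^{-\veps_j}$. Exiting at $\frac12$ would not yield \eqref{eq:lam-improved}, since $\lam_j/\hat\lam_j-1=O(\|g\|_{\dot\calH^1})$.
\end{itemize}
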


The proof of Proposition~\ref{prop:bootstrap} will be presented in Section~\ref{sec:energy-estimate}, ~\ref{sec:mod}, ~\ref{sec:high-Sobolev} ~\ref{sec:completing-bootstrap}.

\section{Control of critical Sobolev norm}\label{sec:energy-estimate}

In this section we study the energy expansion and derive some basic estimates to improve the estimates of $\|g\|_{\dot\calH^1}$.
Sometimes we drop the subscript $\vec p$ of $\calU_{\vec p}$ for convenience.

Our starting point is the following identity:
\begin{align}
    E(\calU+g)&=E(\calU)+2\int\na\calU\cdot\na g \,\ud x+\int|\na g|^2\,\ud x\\
    &=E(\calU)-2\La\ta(\calU),g \Ra+q_{\calU}(g),
\end{align}
where
\begin{align}
    \ta(\calU)=\De\calU+|\na\calU|^2\calU
\end{align}
and
\begin{align}
    q_{\calU}(g)=\int \left( |\nabla g|^2 - |\nabla \mathcal{U}|^2 |g|^2 \right)\,\ud x.
\end{align}

\begin{prop}\label{prop:energy-control}
    There exists $C,c>0$ depending only on $k,N$ such that for $k$-equivariant function $u\in \dot H^1(\R^2;\bS^2)$ of the form $u=\calU_{\vec p}+g$, with 
    \begin{align}
    \sum_{j=1}^{N-1}\frac{\lam_j}{\lam_{j+1}}+\sum_{j=1}^{N-1}\left|\te_j+\frac{1+(-1)^j}{2}\frac{\pi}{2}\right|+\|g\|_{\dot \calH^1}\leq c,
\end{align}
    and $g$ satisfying
\begin{align}
    \La g,\chi_{L,j}e^{\te_j R}\Lam Q_{\lam_j}\Ra= \La g,\chi_{L,j}e^{\te_j R}R Q_{\lam_j}\Ra=0,\ j=1,2,\dots,N,
\end{align}
we have
\begin{align}\label{eq:energy-control}
         C\|g\|_{\dot \calH^1}^2\leq &  \left|E(u)-NE(Q)\right|\\
         &+\sum_{j=1}^{N-1}\left(\frac{\lam_j}{\lam_{j+1}}\right)^k\left(|\cos(\te_j-\te_{j+1})|+\left(\frac{\lam_j}{\lam_{j+1}}\right)^k\left|\log \frac{\lam_j}{\lam_{j+1}}\right|\right).
     \end{align}
\end{prop}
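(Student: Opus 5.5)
Starting from the energy expansion $E(u)=E(\calU)-2\La\ta(\calU),g\Ra+q_{\calU}(g)$, the plan is to prove three bounds and then rearrange:
\begin{align}
\Big|E(\calU)-NE(Q)\Big|&\lesssim \sum_{j=1}^{N-1}\Big(\tfrac{\lam_j}{\lam_{j+1}}\Big)^k\Big(|\cos(\te_j-\te_{j+1})|+\Big(\tfrac{\lam_j}{\lam_{j+1}}\Big)^k\Big|\log\tfrac{\lam_j}{\lam_{j+1}}\Big|\Big),\\
|\La\ta(\calU),g\Ra|&\leq \de\|g\|_{\dot\calH^1}^2+C_\de\sum_{j=1}^{N-1}\Big(\tfrac{\lam_j}{\lam_{j+1}}\Big)^{2k}\Big|\log\tfrac{\lam_j}{\lam_{j+1}}\Big|,\\
q_{\calU}(g)&\geq c_0\|g\|_{\dot\calH^1}^2-C\|g\|_{\dot\calH^1}^3 .
\end{align}
Combining them, with $\de$ small and $c$ small so that the cubic term is absorbed, gives \eqref{eq:energy-control}.

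\emph{Interaction energy.} I will compute $E(\calU)=\int|\na\calU|^2$ region by region, splitting $\R^2$ at the dyadic-geometric midpoints $r=\sqrt{\lam_j\lam_{j+1}}$. On each region, \eqref{eq:Q-cal-expansion-local} writes $\calQ=(-1)^{j-1}e^{\te_jR}Q_{\lam_j}+w_j$ with $w_j$ of size $F_\ell$ for $\ell\neq j$, and \eqref{eq:Q-cal-norm} gives $\calU=\calQ(1+O(\sum F_\ell F_m))$. The leading term reproduces the energy of each bubble up to tails of size $(\lam_j/\lam_{j+1})^k$ coming from the truncation. The cross term is $2\int\na Q_j\cdot\na w_j$, where $Q_j$ denotes the $j$-th rotated and rescaled bubble. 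Its leading part is the pairing of the horizontal components $2(r/\lam_j)^k e^{\te_jR}$ and $2(r/\lam_{j+1})^k e^{\te_{j+1}R}$, which gives $c\,(\lam_j/\lam_{j+1})^k\cos(\te_j-\te_{j+1})$. This is computed either by integrating by parts using $\De Q=-|\na Q|^2Q$ or by matching boundary terms at $r=\sqrt{\lam_j\lam_{j+1}}$; the $k^2r^{-2}$ angular part combines with the radial part. The quadratic terms and the normalization corrections are $O((\lam_j/\lam_{j+1})^{2k})$. The only exception is the integral $\int F_jF_{j+1}\cdot r^{-2}\,r\,dr$ taken over $\lam_j\le r\le\lam_{j+1}$ at order $2k$, which produces the logarithm.

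\emph{Linear term.} Since $\ta(Q_\lam)=0$, $\ta(\calU)$ is supported on interactions: pointwise $|\ta(\calU)|\lesssim r^{-2}\sum_{\ell<m}F_\ell F_m$, using \eqref{eq:RQ-LamQ-F} and the expansion. The part of $\ta(\calU)$ normal to $\calU$ pairs with $g$ through $g\cdot\calU=-\tfrac12|g|^2$, so it contributes only $O(\|g\|^2\cdot\text{small})$. For the tangential part, Cauchy--Schwarz with the Hardy weight gives
\[
|\La\ta,g\Ra|\lesssim\|r\,\ta^{\top}\|_{L^2}\,\|g/r\|_{L^2}.
\]
One should check that $\|r\ta\|_{L^2}\lesssim(\lam_j/\lam_{j+1})^k|\log|^{1/2}$. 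A gain from the tangential structure may actually remove the first-order piece. If not, the term $(\lam_j/\lam_{j+1})^{k}\cdot\|g\|$ is still acceptable provided it is absorbed as $\de\|g\|^2+C(\lam_j/\lam_{j+1})^{2k}$, which matches the right-hand side.

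\emph{Coercivity (main obstacle).} The key step is $q_{\calU}(g)\gtrsim\|g\|_{\dot\calH^1}^2$ under the localized orthogonality conditions, and I expect this to be the hardest part. I will use a partition of unity adapted to the scales, writing $g=\sum_j\psi_jg$ with $\psi_j$ supported in $\lam_j/R\lesssim r\lesssim R\lam_{j+1}$ in logarithmic scale. On each piece $|\na\calU|^2\approx|\na Q_{\lam_j}|^2$, and the error is bounded by $\sup(F_\ell)$, which is small. The one-bubble coercivity is obtained by writing the tangential part of $g$ in the frame of $Q_{\lam_j}$ as a complex function $z$. Then $q$ becomes $\La H_{\lam_j}z,z\Ra=\|A_{\lam_j}z\|_{L^2}^2$, and the kernel of $A_\lam$ is $\sin\phi(r/\lam)$, corresponding to $\Lam Q$ and $RQ$. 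A Lemma~\ref{lem:Alam-coercive}-type argument at the $\dot H^1$ level then gives
\[
\|z\|^2_{\calH^1}\lesssim\|A z\|^2_{L^2}+\text{the two localized projections}.
\]
Since $k\ge3$, $\sin\phi\in\dot\calH^1$ and the truncation by $\chi_L$ costs only $o_L(1)$. The cross terms between pieces and the $|\na\psi_j|^2$ localization errors are made small by taking $R$ large logarithmically and $c$ small. The normal component of $g$ is $O(|g|^2)$, which yields the cubic error.
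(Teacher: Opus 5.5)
Your architecture matches the paper's. It consists of the region-by-region expansion of $E(\calU)$ (Lemma~\ref{lem:zeroth-energy}), Cauchy--Schwarz with the Hardy weight for $\La\ta(\calU),g\Ra$ using $|r^2\ta(\calU)|\lesssim\sum_{\ell<m}F_\ell F_m$, and coercivity of $q_{\calU}$ after passing to the tangential part $g+\frac12|g|^2\calU$ and localizing in logarithmic scale as in \cite{JL-WM-SRC}. The linear-term and coercivity steps are fine as you describe them, with two small corrections:
\begin{itemize}
\item $\ta(\calU)$ is automatically tangent to $\calU$, since $\calU\cdot\De\calU=-|\na\calU|^2$, so there is no normal part to treat.
\item What $k\geq 3$ buys in the orthogonality conditions is $r\Lam Q,\,rRQ\in L^2$. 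That is, $\La\cdot,\chi_L e^{\te R}\Lam Q\Ra$ is bounded on $\dot\calH^1$ with $o_L(1)$ truncation error. It is not that $\sin\phi\in\dot\calH^1$, which holds for every $k\geq 1$.
\end{itemize}

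The genuine gap is in the interaction-energy bookkeeping, which as written does not give the $|\cos(\te_j-\te_{j+1})|$-weighted bound. On the region $\{\sqrt{\lam_{j-1}\lam_j}\le r\le\sqrt{\lam_j\lam_{j+1}}\}$, the term $\int|\na w_j|^2$ is \emph{not} $O((\lam_j/\lam_{j+1})^{2k})$. The horizontal part $2(r/\lam_{j+1})^k e^{k\te R}(\cos\te_{j+1},\sin\te_{j+1},0)^{\top}$ of $w_j$ alone contributes $2\pi\int_0^{\sqrt{\lam_j\lam_{j+1}}}8k^2r^{-2}(r/\lam_{j+1})^{2k}\udr=8\pi k(\lam_j/\lam_{j+1})^k$, with no cosine factor. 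The $(\lam_{j-1}/r)^k$ part likewise contributes $8\pi k(\lam_{j-1}/\lam_j)^k$. These are exactly the tails of bubbles $j\pm1$ that you say the leading term loses ``from the truncation''.

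With your accounting (tails lost, quadratic terms of order $2k$), an uncompensated first-order term of size $16\pi k(\lam_j/\lam_{j+1})^k$ per adjacent pair remains. You then only get $|E(\calU)-NE(Q)|\lesssim\sum_j(\lam_j/\lam_{j+1})^k$. That is not the claimed estimate, and it is precisely what fails in the relevant regime $\te_j-\te_{j+1}\approx\pm\frac{\pi}{2}$, where $\cos(\te_j-\te_{j+1})\approx 0$; Corollary~\ref{cor:g-improved} relies on exactly this.

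The fix is the paper's bookkeeping. On each region, write the density as $\sum_{\ell=1}^N\frac{8k^2}{r^2}\frac{(r/\lam_\ell)^{2k}}{(1+(r/\lam_\ell)^{2k})^2}$ over \emph{all} bubbles, plus cross terms. Summing over regions then gives exactly $NE(Q)$ with no truncation error. Every remaining first-order term is a dot product of horizontal components, hence proportional to $\cos(\te_j-\te_\ell)$.

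This also corrects your claim about the normalization. The quantity $|\calQ|^2-1=2(-1)^{j-1}e^{\te_jR}Q_{\lam_j}\cdot w_j+|w_j|^2$ produces a first-order correction, not an $O((\lam_j/\lam_{j+1})^{2k})$ one, though it is harmlessly $\cos$-weighted.

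Finally, the logarithm may only arise at second order, from densities such as $r^{-2}(F_jF_{j+1})^2$. A first-order density $r^{-2}F_jF_{j+1}$ on $[\lam_j,\lam_{j+1}]$ would produce the inadmissible $(\lam_j/\lam_{j+1})^k|\log(\lam_j/\lam_{j+1})|$. So you must use the radial/angular cancellation you mention to show the first-order cross density decays like $r^{-2}(r/\lam_{j+1})^k(\lam_j/r)^{3k}$ for $r\gg\lam_j$, as in the paper's explicit formula.
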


\begin{cor}\label{cor:g-improved}
    Under the bootstrap assumption, for $t\in [T,T_1]$ we have
    \begin{align}\label{eq:g-H1-improved-cor}
        \|g(t)\|_{\dot\calH^1}\lesssim |t|^{-\frac{1}{2}-\frac{1}{k-2}-\frac{\veps_1}{2}}.
    \end{align}
\end{cor}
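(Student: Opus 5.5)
Corollary~\ref{cor:g-improved} follows from Proposition~\ref{prop:energy-control} applied at each time $t\in[T,T_1]$. There are two inputs: conservation of energy, and the bootstrap bounds \eqref{eq:lamN-bootstrap}, \eqref{eq:te-bootstrap} on the parameters. The hypotheses of Proposition~\ref{prop:energy-control} hold for $|T_0|$ large. Indeed, \eqref{eq:g-H1-bootstrap} makes $\|g\|_{\dot\calH^1}$ small, and the orthogonality conditions hold by Lemma~\ref{lem:static-mod}. The ratios satisfy $\lam_j/\lam_{j+1}\sim |t|^{-(\be_j-\be_{j+1})}$ with
\begin{align}
\be_j-\be_{j+1}=\tfrac12\left(\tfrac{k}{k-2}\right)^{N-j-1}\tfrac{2}{k-2}\geq \tfrac{1}{k-2},
\end{align}
so $\lam_j/\lam_{j+1}\lesssim|t|^{-\frac1{k-2}}$. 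The angles $\te_j$ are within $|t|^{-\veps_j}$ of $\te_j^\infty$.

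\textbf{Step 1 (energy defect).} Energy is conserved, so $E(u(t))=E(u(T))=E(\calU_{\vec p^0})$ with $\vec p^0=(\lam_1^0,\dots,\lam_{N-1}^0,1,\vec\te^\infty)$. I would expand $E(\calU_{\vec p^0})-NE(Q)$ in the ratios $\mu_j=\lam_j^0/\lam_{j+1}^0$. The main interaction terms between neighbouring bubbles have size $\mu_j^k\cos(\te_j^\infty-\te_{j+1}^\infty)$. This vanishes, since consecutive angles differ by $\pi/2$.

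Two sources of error remain. The first is the next-order terms, of size $O(\mu_j^{2k}|\log\mu_j|)$. The second is the normalisation $\calU=\calQ/|\calQ|$, whose error is controlled by \eqref{eq:Q-cal-norm}; it contributes $\int|\na\calQ|^2\,\big||\calQ|^2-1\big|$-type terms of the same order. So
\begin{align}
|E(u)-NE(Q)|\lesssim \sum_j\mu_j^{2k}|\log\mu_j|\lesssim |T|^{-\frac{2k}{k-2}+}.
\end{align}
This is much smaller than $|t|^{-1-\frac{2}{k-2}-\veps_1}$, because $\frac{2k}{k-2}=2+\frac{4}{k-2}>1+\frac{2}{k-2}+\veps_1$ and $|T|\ge|t|$. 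Carrying out this expansion carefully, including the log-order bubble–bubble interactions for $k=3$, is the main computational obstacle. It is nonetheless a routine computation with \eqref{eq:Q-cal-expansion-local}.

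\textbf{Step 2 (interaction terms at time $t$).} Since $\te_j^\infty-\te_{j+1}^\infty=\pm\pi/2$, we have
\begin{align}
|\cos(\te_j-\te_{j+1})|\le|\te_j-\te_j^\infty|+|\te_{j+1}-\te_{j+1}^\infty|\lesssim|t|^{-\veps_j}.
\end{align}
Here the monotonicity $\veps_j<\veps_{j+1}$ lets the $j$-th exponent dominate. Combined with $(\lam_j/\lam_{j+1})^k\lesssim |t|^{-k(\be_j-\be_{j+1})}$, the $j$-th term is at most
\begin{align}
|t|^{-\frac{k}{k-2}\left(\frac{k}{k-2}\right)^{N-j-1}-\veps_j}.
\end{align}
The worst case is $j=N-1$, where the exponent is $\frac{k}{k-2}+\veps_{N-1}\ge 1+\frac{2}{k-2}+\veps_1$. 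The term $(\lam_j/\lam_{j+1})^{2k}|\log|$ is smaller still.

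\textbf{Step 3 (conclusion).} Summing the two steps gives
\begin{align}
\|g\|_{\dot\calH^1}^2\lesssim |t|^{-1-\frac2{k-2}-\veps_1}.
\end{align}
Taking square roots yields \eqref{eq:g-H1-improved-cor}.
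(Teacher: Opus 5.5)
Your argument is correct and is the paper's own: Proposition~\ref{prop:energy-control} at time $t$, conservation $E(u(t))=E(\calU_{\vec p^0})$, Lemma~\ref{lem:zeroth-energy} at $\vec\te=\vec\te^\infty$ (where $\cos(\te_j^\infty-\te_{j+1}^\infty)=0$ removes the $\mu_j^k$ terms), and the bootstrap bounds for the interaction terms at time $t$. Step 1 is therefore just a citation of that lemma, not a new computation; note that \eqref{eq:Q-cal-norm} alone would only bound the normalisation error by $O(\mu_j^k)$, and the improvement to $O(\mu_j^{2k}|\log\mu_j|)$ comes from the leading part of $|\calQ|^2-1$ being proportional to $\cos(\te_j-\te_{j+1})$, which the lemma already accounts for.
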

Proposition ~\ref{prop:energy-control} is an immediate result of the remaining lemmas of this section.
Before proving Proposition ~\ref{prop:energy-control}, we show the proof of Corollary~\ref{cor:g-improved}.
\begin{proof}[Proof of Corollary ~\ref{cor:g-improved}]
    Since $E(u)$ is conserved along the flow, inserting the bootstrap assumption
    \begin{align}
        C\|g\|_{\dot \calH^1}^2\leq &  \left|E(u(T))-NE(Q)\right|\\
         &+\sum_{j=1}^{N-1}\left(\frac{\lam_j}{\lam_{j+1}}\right)^k\left(|\cos(\te_j-\te_{j+1})|+\left(\frac{\lam_j}{\lam_{j+1}}\right)^k\left|\log \frac{\lam_j}{\lam_{j+1}}\right|\right)\\
         =& \left|E(\calU_{\vec \lam(T),\vec \te^{\infty}})-NE(Q)\right|\\
         &+\sum_{j=1}^{N-1}\left(\frac{\lam_j}{\lam_{j+1}}\right)^k\left(|\cos(\te_j-\te_{j+1})|+\left(\frac{\lam_j}{\lam_{j+1}}\right)^k\left|\log \frac{\lam_j}{\lam_{j+1}}\right|\right)\\
         \lesssim & |t|^{-1-\frac{2}{k-2}-\veps_1}   
    \end{align}
\end{proof}

\begin{lem}\label{lem:zeroth-energy}
    Under the assumption of Proposition~\ref{prop:energy-control}, we have
\begin{align}\label{eq:zeroth-energy}
    \left|E(\calU)-NE(Q)-32 k \pi \sum_{j=1}^{N-1} \cos(\te_j-\te_{j+1}) \left(\frac{\lam_j}{\lam_{j+1}}\right)^k \right|\lesssim \sum_{j=1}^{N-1} \left(\frac{\lam_j}{\lam_{j+1}}\right)^{2k}\left|\log \frac{\lam_j}{\lam_{j+1}}\right|.
\end{align}
\end{lem}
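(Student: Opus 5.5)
Since $\calU=\calQ/|\calQ|$ and \eqref{eq:Q-cal-norm} gives $\big||\calQ|^2-1\big|\lesssim\sum_{\ell<m}F_\ell F_m$, the plan is to reduce to computing $E(\calQ)$. We write $E(\calU)=\int|\na\calQ|^2|\calQ|^{-2}-|\na|\calQ||^2|\calQ|^{-2}$ (from $\na(\calQ/|\calQ|)$). The correction terms are quadratic in $|\calQ|^2-1$ and its derivatives, or linear in $|\calQ|^2-1$ times $|\na\calQ|^2$. Each is bounded pointwise by $r^{-2}\big(\sum_{\ell<m}F_\ell F_m\big)\big(\sum_j F_j+\sum_{\ell<m}F_\ell F_m\big)$, using \eqref{eq:Q-cal-norm} with $a\le1$ and \eqref{eq:RQ-LamQ-F}.

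The linear term needs care, since $(|\calQ|^2-1)|\na\calQ|^2$ is only of order $F_jF_{j+1}\cdot F_j r^{-2}$. Near $\lam_j$ this is of size $(\lam_j/\lam_{j+1})^k$, which is not small enough. I will therefore expand $|\na\calU|^2$ exactly rather than bound it. Write $\calQ=\calU(1+\de)$ with $\de=|\calQ|-1$. Since $\calU\cdot\na\calU=0$, we get $|\na\calQ|^2=(1+\de)^2|\na\calU|^2+|\na\de|^2$, so
\begin{align}
E(\calU)=\int\frac{|\na\calQ|^2-|\na\de|^2}{(1+\de)^2}.
\end{align}
The term $|\na\de|^2$ is $O(\sum r^{-2}F_\ell^2F_m^2)$, which integrates to $O((\lam_j/\lam_{j+1})^{2k})$. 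The remaining issue is the factor $(1+\de)^{-2}=1-2\de+O(\de^2)$ against $|\na\calQ|^2$. On the region $\sqrt{\lam_{j-1}\lam_j}\le r\le\sqrt{\lam_j\lam_{j+1}}$, the leading part of $\de$ is $(-1)^{j-1}e^{\te_jR}Q_{\lam_j}\cdot(\calQ-(-1)^{j-1}e^{\te_jR}Q_{\lam_j})$ by \eqref{eq:Q-cal-expansion-local}. This is $2(r/\lam_{j+1})^k\sin\phi(r/\lam_j)\cos(\te_j-\te_{j+1})\cdot(\pm)$ plus its mirror, i.e.\ of order $(\lam_j/\lam_{j+1})^k$ times a bounded profile. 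Its product with $|\na Q_{\lam_j}|^2$ has integral of exact order $(\lam_j/\lam_{j+1})^k\cos(\te_j-\te_{j+1})$. So this term contributes to the leading coefficient and must be computed explicitly, together with the interaction term in $E(\calQ)$.

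For $E(\calQ)$ itself, expand $|\na\calQ|^2=\sum_j|\na Q_{\lam_j}|^2+2\sum_{\ell<m}(-1)^{\ell+m}\na(e^{\te_\ell R}Q_{\lam_\ell})\cdot\na(e^{\te_m R}Q_{\lam_m})$. The diagonal terms give $NE(Q)$. In a cross term $\ell<m$, integrate by parts and use $-\De Q_\lam=|\na Q_\lam|^2Q_\lam$. This gives $\int|\na Q_{\lam_\ell}|^2\,e^{\te_\ell R}Q_{\lam_\ell}\cdot(e^{\te_mR}Q_{\lam_m}-e_3)$, with the constant $e_3$ handled via $\int\De Q=0$. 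For $r\lesssim\lam_\ell$ we have $e^{\te_mR}Q_{\lam_m}-e_3\approx2(r/\lam_m)^ke^{k\te R}(\cos\te_m,\sin\te_m,0)^\top$. Pairing this with the $k$-th order horizontal component $2(r/\lam_\ell)^k/(1+(r/\lam_\ell)^{2k})$ of $Q_{\lam_\ell}$ gives $(\lam_\ell/\lam_m)^k\cos(\te_\ell-\te_m)$ times the explicit integral $\int\frac{4(k^2/r^2)\cdot 2\rho^{2k}}{(1+\rho^{2k})^3}\cdots$. The vertical component contributes $O((\lam_\ell/\lam_m)^{2k})$. Only adjacent pairs give order $(\lam_j/\lam_{j+1})^k$; nonadjacent pairs are products and hence smaller. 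Adding the $-2\de$ correction from the previous paragraph, the coefficient computed from the explicit beta-type integrals $\int_0^\infty\rho^{2k-1}(1+\rho^{2k})^{-3}\,\vd\rho$ should equal $32k\pi$. As a check, one expects the total interaction to equal the flux $\sim 8\pi\cdot 4k$ from the $e_3$-boundary term, which is $\int\De(\cdot)$ at the matching scale. I will confirm the constant by direct computation.

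For the error, all remainders from \eqref{eq:Q-rescaled-expansion} are of size $F_\ell^2$ times the other factor. Integrating $r^{-2}F_jF_{j+1}^2$-type terms over $\sqrt{\lam_j\lam_{j+1}}\ge r$, and $r^{-2}F_j^2F_{j+1}^2$ over the plateau region $\lam_j\le r\le\lam_{j+1}$ where the integrand is $\sim r^{-1}(\lam_j/\lam_{j+1})^{2k}$, produces $(\lam_j/\lam_{j+1})^{2k}|\log(\lam_j/\lam_{j+1})|$. This accounts for the logarithm. \textbf{The main obstacle} is the exact bookkeeping of the leading constant, namely combining the $E(\calQ)$ cross term with the normalization correction $-2\de|\na\calQ|^2$ without dropping an order-$(\lam_j/\lam_{j+1})^k$ piece. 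I would first try to sidestep it by a cleaner route: compute $\frac{\vd}{\vd\lam_j}E(\calU)$ via $\La\ta(\calU),\p_{\lam_j}\calU\Ra$ and integrate in the ratio, since the modulation constant $\ka_k$ already encodes the same integral.
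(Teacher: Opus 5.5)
Your proposal is correct, and it rests on the same exact identity as the paper. Since $\calQ\cdot\p_r\calQ=|\calQ|\,\p_r|\calQ|$, the paper's formula $|\p_r\calU|^2=|\p_r\calQ|^2/|\calQ|^2-|\calQ\cdot\p_r\calQ|^2/|\calQ|^4$ is precisely your $E(\calU)=\int(|\na\calQ|^2-|\na\de|^2)(1+\de)^{-2}$. The difference lies in how the order-$(\lam_j/\lam_{j+1})^k$ interaction is extracted.

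The paper expands the full density on each annulus $\sqrt{\lam_{j-1}\lam_j}\le r\le\sqrt{\lam_j\lam_{j+1}}$. It then integrates a single combined profile (the terms containing $\rho^{3k}(\rho^{2k}-3)/(1+\rho^{2k})^3$), worth $16\pi k\cos(\te_j-\te_{j+1})(\lam_j/\lam_{j+1})^k$ on each side of the pair. At leading order, that profile is exactly the pointwise sum of your cross-term density $2(-1)^{\ell+m}\na(e^{\te_\ell R}Q_{\lam_\ell})\cdot\na(e^{\te_mR}Q_{\lam_m})$ and your correction $-2\de|\na Q_{\lam_j}|^2$.

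You instead keep the numerator global, evaluate the cross terms of $E(\calQ)$ exactly through $-\De Q_\lam=|\na Q_\lam|^2Q_\lam$, and localize only $\de$. This gives an exact formula for the bilinear part. It also makes visible that the normalization $\calQ\mapsto\calQ/|\calQ|$ is a leading-order effect: it reverses the sign of the interaction. The paper's single local expansion is shorter.

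The constant, which you left as ``should equal $32k\pi$'', does close. So neither the flux heuristic (a heuristic, not an argument) nor the fallback via $\frac{\vd}{\vd\lam_j}E(\calU)$ is needed. Set $u=\rho^{2k}$ and use $\int_0^\infty u(1+u)^{-3}\,\vd u=\int_0^\infty(1+u)^{-3}\,\vd u=\frac12$.

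First, $\int\na(e^{\te_jR}Q_{\lam_j})\cdot\na(e^{\te_{j+1}R}Q_{\lam_{j+1}})=16\pi k\cos(\te_j-\te_{j+1})(\lam_j/\lam_{j+1})^k+\dots$. After the factor $2(-1)^{2j+1}$, this gives $-32\pi k\cos(\te_j-\te_{j+1})(\lam_j/\lam_{j+1})^k$ in $E(\calQ)$.

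Second, for the normalization term:
\begin{itemize}
\item near $\lam_j$ one has $\de\approx-2(r/\lam_{j+1})^k\sin\phi(r/\lam_j)\cos(\te_j-\te_{j+1})$;
\item near $\lam_{j+1}$ one has $\de\approx-2(\lam_j/r)^k\sin\phi(r/\lam_{j+1})\cos(\te_j-\te_{j+1})$.
\end{itemize}
Each region contributes $-2\int\de|\na\calQ|^2\approx+32\pi k\cos(\te_j-\te_{j+1})(\lam_j/\lam_{j+1})^k$. The total is $-32\pi k+64\pi k=32\pi k$, as claimed.

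A minor point on the error terms. The vertical part of the cross term is of size $(\lam_j/\lam_{j+1})^{2k}|\log(\lam_j/\lam_{j+1})|$, coming from the plateau $\lam_j\le r\le\lam_{j+1}$, not $O((\lam_j/\lam_{j+1})^{2k})$. The pointwise bound you quote for $|\na\de|^2$ likewise only yields the logarithmic bound. This is harmless given the stated error.
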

\begin{proof}
    The proof is similar to that of Lemma 2.5 in \cite{J-17-NLS}. To calculate the asymptotic behavior of $E(u)$, we will divide the integral domain  into several parts: $\{\sqrt{\lam_{j-1}\lam_j}\leq r\leq\sqrt{\lam_{j}\lam_{j+1}}\}$, $j=1,\dots,N$. In each interval the dominant component of $|\na \calU|^2$ can be extracted explicitly due to the coherent structure of $\calU$. Since $\calU=\frac{\calQ}{|\calQ|}$, we have
    \begin{align}
        \p_r \calU=\frac{\p_r \calQ}{|\calQ|}-\frac{\calQ\cdot \p_r \calQ}{|\calQ|^3}\calQ.
    \end{align}
    Therefore
    \begin{align}
        |\p_r \calU|^2=\frac{|\p_r \calQ|^2}{|\calQ|^2}+\frac{|\calQ\cdot \p_r \calQ|^2}{|\calQ|^4}-2\frac{|\calQ\cdot \p_r \calQ|^2}{|\calQ|^4}=\frac{|\p_r \calQ|^2}{|\calQ|^2}-\frac{|\calQ\cdot \p_r \calQ|^2}{|\calQ|^4}.
    \end{align}
    Then we have
    \begin{align}
        E(\calU)&=2\pi\int_0^{\infty}\left(|\p_r\calU|^2+\frac{k^2}{r^2}|R\calU|^2\right)\udr\\
        &=2\pi\int_0^{\infty}\left(\frac{|\p_r \calQ|^2}{|\calQ|^2}-\frac{|\calQ\cdot \p_r \calQ|^2}{|\calQ|^4}+\frac{k^2}{r^2}\frac{|R\calQ|^2}{|\calQ|^2} \right)\udr.
    \end{align}
In the region $\{\sqrt{\lam_{j-1}\lam_j}\leq r\leq\sqrt{\lam_{j}\lam_{j+1}}\}$, using the expansion ~\eqref{eq:Q-rescaled-expansion}, ~\eqref{eq:Q-cal-expansion-local} and ~\eqref{eq:Q-cal-norm},  we have
    \begin{align}
        &\frac{|\p_r \calQ|^2}{|\calQ|^2}-\frac{|\calQ\cdot \p_r \calQ|^2}{|\calQ|^4}+\frac{k^2}{r^2}\frac{|R\calQ|^2}{|\calQ|^2} \\
        =&\sum_{\ell=1}^N \frac{8k^2}{r^2}\frac{(r/\lam_{\ell})^{2k}}{(1+ (r/\lam_{\ell})^{2k})^2}\\
        &+\frac{16k^2}{r^2}\sum_{\ell<j} (-1)^{j+\ell}\left(\frac{\lam_{\ell}}{r}\right)^k\cos(\te_{\ell}-\te_j)\frac{(r/\lam_j)^{3k}((r/\lam_j)^{2k}-3)}{(1+(r/\lam_j)^{2k})^3}\\
        &+\frac{16k^2}{r^2}\sum_{\ell>j} (-1)^{j+\ell}\left(\frac{r}{\lam_{\ell}}\right)^k\cos(\te_{\ell}-\te_j)\frac{(\lam_j/r)^{3k}((\lam_j/r)^{2k}-3)}{(1+(\lam_j/r)^{2k})^3}\\
        &+O\left(\frac{1}{r^2}\left(\sum_{\ell<m,\ell,m\neq j}F_{\ell}F_m+\left(\sum_{\ell<m}F_{\ell}F_m\right)^2\right)\right)
    \end{align}
For the leading terms, we have
    \begin{align}
        2\pi\sum_{j=1}^N\int_{\sqrt{\lam_{j-1}\lam_j}}^{\sqrt{\lam_j\lam_{j+1}}}\sum_{\ell=1}^N \frac{8k^2}{r^2}\frac{(r/\lam_{\ell})^{2k}}{(1+ (r/\lam_{\ell})^{2k})^2}\udr=NE(Q).
    \end{align}
 For the interaction terms, we have
   \begin{align}
       &2\pi\int_{\sqrt{\lam_{j-1}\lam_j}}^{\sqrt{\lam_j\lam_{j+1}}}\frac{16k^2}{r^2}\sum_{\ell<j} (-1)^{j+\ell}\left(\frac{\lam_{\ell}}{r}\right)^k\cos(\te_{\ell}-\te_j)\frac{(r/\lam_j)^{3k}((r/\lam_j)^{2k}-3)}{(1+(r/\lam_j)^{2k})^3}\udr\\
       =&-16k \pi \sum_{\ell<j} (-1)^{j+\ell}\cos(\te_{\ell}-\te_j)\left(\frac{\lam_{\ell}}{\lam_j}\right)^k+O\left(\left(\frac{\lam_{j-1}}{\lam_j}\right)^k\left[ \left(\frac{\lam_{j-1}}{\lam_j}\right)^k+\left(\frac{\lam_{j}}{\lam_{j+1}}\right)^k \right]\right)\\
       =&\ 16k \pi \cos(\te_{j-1}-\te_j)\left(\frac{\lam_{j-1}}{\lam_j}\right)^k+O\left(\sum_{\ell=1}^{N-1}\left(\frac{\lam_{\ell}}{\lam_{\ell+1}}\right)^{2k}\right),
   \end{align}
    \begin{align}
        &2\pi\int_{\sqrt{\lam_{j-1}\lam_j}}^{\sqrt{\lam_j\lam_{j+1}}}\frac{16k^2}{r^2}\sum_{\ell>j} (-1)^{j+\ell}\left(\frac{r}{\lam_{\ell}}\right)^k\cos(\te_{\ell}-\te_j)\frac{(\lam_j/r)^{3k}((\lam_j/r)^{2k}-3)}{(1+(\lam_j/r)^{2k})^3}\udr\\
        =&-16k \pi \sum_{\ell>j}(-1)^{j+\ell}\cos(\te_{\ell}-\te_j)\left(\frac{\lam_{j}}{\lam_{\ell}}\right)^k+O\left(\left(\frac{\lam_{j}}{\lam_{j+1}}\right)^k\left[ \left(\frac{\lam_{j-1}}{\lam_j}\right)^k+\left(\frac{\lam_{j}}{\lam_{j+1}}\right)^k \right]\right)\\
        =&\ 16k \pi \cos(\te_{j+1}-\te_j)\left(\frac{\lam_{j}}{\lam_{j+1}}\right)^k+O\left(\sum_{\ell=1}^{N-1}\left(\frac{\lam_{\ell}}{\lam_{\ell+1}}\right)^{2k}\right)
    \end{align}
 For the remainder terms, we have
    \begin{align}
        \sum_{j=1}^N \int_{\sqrt{\lam_{j-1}\lam_j}}^{\sqrt{\lam_j\lam_{j+1}}}\frac{1}{r^2}\left(\sum_{\ell<m,\ell,m\neq j}F_{\ell}F_m+\left(\sum_{\ell<m}F_{\ell}F_m\right)^2\right)\udr\lesssim \sum_{j=1}^{N-1} \left(\frac{\lam_j}{\lam_{j+1}}\right)^{2k}\left|\log \frac{\lam_j}{\lam_{j+1}}\right|.
    \end{align}
Combining these estimates yields ~\eqref{eq:zeroth-energy}.

\end{proof}

\begin{lem}
    Under the assumption of Proposition~\ref{prop:energy-control}, we have
    \begin{align}
        \left|
\La\ta(\calU),g \Ra
\right|
\lesssim  \left(\sum_{j=1}^{N-1} \left(\frac{\lam_j}{\lam_{j+1}}\right)^{2k}\left|\log \frac{\lam_j}{\lam_{j+1}}\right|\right)^{\frac{1}{2}}
\,\|g\|_{\dot{\calH}^1}.
    \end{align}
\end{lem}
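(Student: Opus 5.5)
The plan is to prove a pointwise bound on the tension field $\ta(\calU)$ in which the first-order interaction terms cancel. I will aim for, on each annulus $I_j=\{\sqrt{\lam_{j-1}\lam_j}\le r\le\sqrt{\lam_j\lam_{j+1}}\}$,
\begin{align}
    |\ta(\calU)|\lesssim \frac{1}{r^2}\Big(F_j\sum_{\ell\neq j}F_\ell\cdot\sum_{m\neq j}F_m+\sum_{\ell<m,\ \ell,m\neq j}F_\ell F_m+\Big(\sum_{\ell<m}F_\ell F_m\Big)^2\Big),
\end{align}
that is, $\ta(\calU)$ is quadratic in the interaction. Given this, Cauchy--Schwarz gives
\begin{align}
    |\La \ta(\calU),g\Ra|\le \|r\,\ta(\calU)\|_{L^2}\,\|g/r\|_{L^2}\lesssim \|r\,\ta(\calU)\|_{L^2}\,\|g\|_{\dot\calH^1},
\end{align}
since the $k^2r^{-2}|g|^2$ term is part of the $\dot\calH^1$ norm. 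The square of $\|r\,\ta(\calU)\|_{L^2}$ is then the same type of integral $\int r^{-2}(\cdots)^2\,r\,\ud r$ that was bounded by $\sum_j(\lam_j/\lam_{j+1})^{2k}|\log(\lam_j/\lam_{j+1})|$ in the remainder step of Lemma~\ref{lem:zeroth-energy}. This yields the claimed estimate.

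To get the pointwise bound, I work on $I_j$ and write $\calQ=(-1)^{j-1}e^{\te_jR}Q_{\lam_j}+P_j+O(\sum_{\ell\ne j}F_\ell^2)$ using \eqref{eq:Q-cal-expansion-local}. Here
\begin{align}
    P_j=2\sum_{\ell<j}(-1)^{\ell-1}\Big(\frac{\lam_\ell}{r}\Big)^k e^{k\te R}(\cos\te_\ell,\sin\te_\ell,0)^{\top}+2\sum_{\ell>j}(-1)^{\ell-1}\Big(\frac{r}{\lam_\ell}\right)^k e^{k\te R}(\cos\te_\ell,\sin\te_\ell,0)^{\top}.
\end{align}
By \eqref{eq:Q-cal-norm}, normalizing to $\calU=\calQ/|\calQ|$ only changes things by terms of size $\sum F_\ell F_m$ in the normal direction, stable under $(r\p_r)^a$. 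I then expand $\ta$ around $Q_j:=(-1)^{j-1}e^{\te_jR}Q_{\lam_j}$. Since $\ta(Q_j)=0$, the zeroth-order term vanishes. The first-order term is the linearized tension $\De h+|\na Q_j|^2h+2(\na Q_j\cdot\na h)Q_j-(\ldots)$ applied to $h=P_j$ plus its normalization correction. The second-order terms are pointwise $\lesssim r^{-2}|P_j|(F_j|P_j|+|P_j|)$, which has the quadratic form above.

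The main obstacle is showing that the linear term vanishes to leading order, since a nonzero linear term would only give $(\lam_j/\lam_{j+1})^k$. My approach is to use the stereographic/holomorphic structure. The components of $P_j$ are built from $r^{\pm k}e^{\pm ik\te}$, so they are harmonic and $\De P_j=0$. Moreover, $Q_j+P_j$, after normalization, agrees to first order with the harmonic map whose stereographic function is $(x/\lam_j)^{\pm k}$ (holomorphic or antiholomorphic according to orientation) plus a small constant and a small multiple of the (anti)holomorphic monomial coming from the other bubbles. An (anti)holomorphic perturbation of an (anti)holomorphic function is an infinitesimal harmonic-map deformation, so it lies in the kernel of the linearized tension. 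The alternating signs $(-1)^{\ell-1}$ and the choice of orientations of $Q_{\lam_\ell}$ make the far-field tails of the same type (holomorphic or antiholomorphic) as $Q_j$. I expect to verify this directly as a finite computation: compute $\ta$ of $e^{k\te R}(a(r),b(r),c(r))$ in the equivariant reduction, plug in $Q_j+\epsilon r^{\pm k}$-type terms, and check that the $O(\epsilon)$ coefficient cancels identically. Only errors of size $F_\ell F_m$ and $F_\ell^2$, together with their $(r\p_r)^a$ derivatives, should survive.
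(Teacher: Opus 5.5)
\textbf{The gap.} The step that fails is the one you identify as the main obstacle: the first-order interaction term of $\ta(\calU)$ does \emph{not} cancel, so your pointwise bound is false.

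On $\{\sqrt{\lam_{j-1}\lam_j}\le r\le\sqrt{\lam_j\lam_{j+1}}\}$, the expansion \eqref{eq:U-wedge-De-U-expansion-local} gives the leading part of $\calU\wedge\De\calU$ (recall $|\ta(\calU)|=|\calU\wedge\De\calU|$). It is $\frac{8k}{r^2}\frac{(r/\lam_j)^k}{1+(r/\lam_j)^{2k}}$, times $(r/\lam_\ell)^k$ for $\ell>j$ (resp.\ $(\lam_\ell/r)^k$ for $\ell<j$), times a combination of the orthogonal vectors $e^{\te_jR}RQ_{\lam_j}$ and $e^{\te_jR}\Lam Q_{\lam_j}$. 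The coefficients are $\pm k\cos(\te_\ell-\te_j)$ and $\pm\sin(\te_\ell-\te_j)$, which never both vanish. This term has size $r^{-2}F_j^2F_\ell$. For example, for $j=1$ at $r=\lam_1$ it is of order $\lam_1^{-2}(\lam_1/\lam_2)^k$, whereas your bound gives $\lam_1^{-2}(\lam_1/\lam_2)^{2k}$.

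This term is the interaction force itself. Tested against $e^{\te_jR}\Lam Q_{\lam_j}$ and $e^{\te_jR}RQ_{\lam_j}$, it produces the leading terms in \eqref{eq:lam-hat-modulation} and \eqref{eq:modulation-formal}.

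\textbf{Why the holomorphic heuristic fails.} The alternating signs make consecutive bubbles of \emph{opposite} orientation. In the chart $w=(u_1+iu_2)/(1+u_3)$, $Q$ is $w=z^k$ but $-Q$ is $w=-\bar z^{-k}$, with degrees $k$ and $-k$. So $\calU$ is a bubble--antibubble chain, not a point near a family of harmonic maps; indeed, nonconstant $k$-equivariant harmonic maps are single bubbles. Concretely, take a horizontal tail $h$ with $h_1+ih_2=\epsilon z^k$, add it to $Q$ and normalize. This changes $w=z^k$ by $\delta w=\frac12(\epsilon z^k-\bar\epsilon z^{2k}\bar z^k)$. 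The linearized harmonic map operator at $w=z^k$, namely $\delta w\mapsto\delta w_{z\bar z}-\frac{2\bar w}{1+|w|^2}w_z\,\delta w_{\bar z}$, sends this to $-\bar\epsilon k^2z^{2k-1}\bar z^{k-1}/(1+|z|^{2k})\neq0$.

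\textbf{The cancellation is unnecessary.} Your remark that a nonzero linear term ``would only give $(\lam_j/\lam_{j+1})^k$'' misreads the target. The right-hand side of the lemma is the \emph{square root} of $\sum_j(\lam_j/\lam_{j+1})^{2k}|\log(\lam_j/\lam_{j+1})|$. That is first order in the interaction: $(\lam_j/\lam_{j+1})^k|\log(\lam_j/\lam_{j+1})|^{1/2}$.

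Since $\|r\ta(\calU)\|_{L^2}^2=2\pi\int_0^\infty|r^2\ta(\calU)|^2\frac{\ud r}{r}$, the term above contributes $\int F_j^4F_\ell^2\frac{\ud r}{r}\lesssim(\lam_{j-1}/\lam_j)^{2k}+(\lam_j/\lam_{j+1})^{2k}$. Even the crude bound $|r^2\ta(\calU)|\lesssim\sum_{m<\ell}F_mF_\ell$ (as in \eqref{eq:F-pointwise}) is enough, and it follows from $\De P_j=0$ with no cancellation at all. Its worst part is $\int_{\lam_j}^{\lam_{j+1}}F_j^2F_{j+1}^2\frac{\ud r}{r}=(\lam_j/\lam_{j+1})^{2k}\log(\lam_{j+1}/\lam_j)$, which is exactly where the logarithm in the statement comes from.

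So keep your Cauchy--Schwarz step, which is correct using the $k^2r^{-2}|g|^2$ part of $\|g\|_{\dot\calH^1}$. Drop the cancellation claim and bound the first-order term directly; this is the paper's proof.
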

\begin{proof}
    Cauchy inequality and Hardy inequality yields
    \begin{align}
         \left|
\La\ta(\calU),g \Ra
\right|\leq \|r\ta(\calU)\|_{L^2}\left\|\frac{g}{r}\right\|_{L^2}
\lesssim \|r\ta(\calU)\|_{L^2}\|g\|_{\dot{\calH}^1}.
    \end{align}
    For $\ta(\calU)$, we have
    \begin{align}
        |\ta(\calU)|=|\calU\wedge\De\calU|,
    \end{align}
    \begin{align}
        \calU\wedge\De\calU=\frac{1}{|\calQ|^2}\calQ\wedge\De\calQ+2\frac{1}{|\calQ|}\p_r(|\calQ|^{-1})\calQ\wedge\p_r\calQ
    \end{align}

    In the region $\{\sqrt{\lam_{j-1}\lam_j}\leq r\leq\sqrt{\lam_{j}\lam_{j+1}}\}$, we calculate the expansion of $\calU\wedge\De\calU$ using ~\eqref{eq:Q-rescaled-expansion}, ~\eqref{eq:Q-cal-expansion-local} and  \eqref{eq:Q-cal-norm} to obtain
    \begin{equation}\label{eq:U-wedge-De-U-expansion-local}
        \begin{aligned}
             &\calU\wedge\De\calU\\
        =&\frac{8k}{r^2}\frac{(r/\lam_j)^k}{1+(r/\lam_j)^{2k}}\sum_{\ell<j}(-1)^{j+\ell}\left(\frac{\lam_{\ell}}{r}\right)^k\left[k\cos(\te_{\ell}-\te_j)e^{\te_j R}RQ_{\lam_j}-\sin(\te_{\ell}-\te_j)e^{\te_j R}\Lam Q_{\lam_j}\right]\\
        &-\frac{8k}{r^2}\frac{(\lam_j/r)^k}{1+(\lam_j/r)^{2k}}\sum_{\ell>j}(-1)^{j+\ell}\left(\frac{r}{\lam_{\ell}}\right)^k\left[k\cos(\te_{\ell}-\te_j)e^{\te_j R}RQ_{\lam_j}+\sin(\te_{\ell}-\te_j)e^{\te_j R}\Lam Q_{\lam_j}\right]\\
        &+O\left(\frac{1}{r^2}\sum_{\ell\neq j}F_{\ell}^2\right),
        \end{aligned}
    \end{equation}
    which yields
    \begin{align}
         \|r\ta(\calU)\|_{L^2}^2\lesssim \sum_{j=1}^{N-1} \left(\frac{\lam_j}{\lam_{j+1}}\right)^{2k}\left|\log \frac{\lam_j}{\lam_{j+1}}\right|.
    \end{align}
\end{proof}

\begin{lem}
    Under the assumption of Proposition~\ref{prop:energy-control}, we have
    \begin{align}\label{eq:qU-coercive}
        \|g\|_{\dot \calH^1}^{2} \lesssim q_{\calU}(g).
        \end{align}
\end{lem}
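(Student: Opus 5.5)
Our plan is to reduce the coercivity of $q_{\calU}$ to the single-bubble coercivity of the linearized operator around $Q$, via a partition of unity adapted to the scales $\lam_j$. Fix a smooth partition $1=\sum_{j=1}^N \psi_j^2$ in the variable $\log r$, with $\psi_j$ supported in $\{A^{-1}\sqrt{\lam_{j-1}\lam_j}\le r\le A\sqrt{\lam_j\lam_{j+1}}\}$ and $|r\p_r\psi_j|\lesssim |\log(\lam_{j}/\lam_{j+1})|^{-1}+|\log(\lam_{j-1}/\lam_j)|^{-1}$. This is possible because the transition regions have logarithmic length $\gtrsim|\log(\lam_j/\lam_{j+1})|$, which is large by the smallness assumption. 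The IMS-type localization formula then gives
\begin{align}
q_{\calU}(g)=\sum_j q_{\calU}(\psi_j g)-\sum_j\int|\na\psi_j|^2|g|^2\,\ud x,
\end{align}
and the last term is $o(1)\|g/r\|_{L^2}^2\lesssim o(1)\|g\|_{\dot\calH^1}^2$ by Hardy's inequality.

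On the support of $\psi_j$, the expansion \eqref{eq:Q-cal-expansion-local} together with \eqref{eq:Q-cal-norm} shows that $|\na\calU|^2=|\na Q_{\lam_j}|^2+O(r^{-2}\sum_{\ell\neq j}F_\ell)$. The error is $o(1)r^{-2}$ uniformly there, so $q_{\calU}(\psi_jg)=q_{Q_{\lam_j}}(\psi_j g)+o(1)\|\psi_jg\|_{\dot\calH^1}^2$. We next decompose $\psi_jg$ in the frame adapted to $e^{\te_jR}Q_{\lam_j}$. Since $g\cdot\calU=-\frac12|g|^2$, the normal component of $g$ along $e^{\te_jR}Q_{\lam_j}$ is quadratic in $g$ plus $O(\sum_{\ell\ne j}F_\ell)|g|$, hence contributes $o(1)\|g\|_{\dot\calH^1}^2$ (we use $|g|\le 2$ and the smallness of $\|g\|_{L^\infty}$, which follows from Hardy's inequality in the equivariant setting). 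The tangential part is identified with a complex function $w_j$, and $q_{Q_{\lam_j}}$ on it equals $2\pi\int|A_{\lam_j}w_j|^2\udr$ up to the usual identity $H_\lam=A_\lam^*A_\lam$.

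The key input is the one-bubble coercivity
\begin{align}
\|w\|_{\calH^1}^2\lesssim\int|A_{\lam}w|^2\udr+\lam^{-2}\left|\La w,\chi_L(r/\lam)\sin\phi(r/\lam)\Ra\right|^2+\lam^{-2}\left|\La w,i\chi_L(r/\lam)\sin\phi(r/\lam)\Ra\right|^2 .
\end{align}
This is the $\dot\calH^1$ analogue of Lemma~\ref{lem:Alam-coercive}. The kernel of $A_\lam$ is spanned by $\sin\phi$ and $i\sin\phi$, which correspond to $\Lam Q$ and $RQ$, and it lies in $\dot\calH^1$ for $k\ge 2$, so the standard compactness-and-contradiction argument applies. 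Since $\|\chi_L\sin\phi\|$ tends to its limit as $L\to\infty$, the constant is uniform for $L$ large.

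Finally we check that the orthogonality conditions \eqref{eq:g-ortho} make the inner products for $w_j$ small. We have
\begin{align}
\La\psi_jg,\chi_{L,j}e^{\te_jR}\Lam Q_{\lam_j}\Ra=\La g,(\psi_j-1)\chi_{L,j}e^{\te_jR}\Lam Q_{\lam_j}\Ra .
\end{align}
Because $\chi_{L,j}$ is supported in $r\le L^2\lam_j\ll\sqrt{\lam_j\lam_{j+1}}$, we have $\psi_j\equiv1$ there, so this inner product vanishes exactly; the same holds for $RQ$. What remains are the frame-change errors between the $\bS^2$-valued inner product and the complex one, which are $o(1)\|g\|_{\dot\calH^1}$. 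Summing over $j$ and using $\sum_j\|\psi_jg\|_{\dot\calH^1}^2\ge\|g\|_{\dot\calH^1}^2-o(1)\|g\|_{\dot\calH^1}^2$ then yields \eqref{eq:qU-coercive}.

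The main obstacle is the uniformity of these error terms. For $k=3$ the tails $F_\ell$ decay slowly, and the normal-component and frame-change errors must be shown to be small relative to $r^{-2}$ on the whole support of $\psi_j$, not only near $\lam_j$. We expect this to be handled by the smallness of $\lam_j/\lam_{j+1}$ and $\|g\|_{\dot\calH^1}$, with $c$ chosen after $L$.
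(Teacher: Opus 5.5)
Your argument is essentially the paper's. The paper uses $g\cdot\mathcal U=-\frac12|g|^2$ to replace $g$ by its tangential part $g+\frac12|g|^2\mathcal U$, and then defers to the localized coercivity argument of Lemma~2.18 in \cite{JL-WM-SRC}: a partition of unity at the scales $\lambda_j$ combined with one-bubble coercivity. You localize first and then split off the normal component relative to each $e^{\theta_jR}Q_{\lambda_j}$. That is the same argument in the opposite order, and a reasonable way to fill in the details.

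One step is false as written. The cutoff $\chi_{L,j}=\chi_L(r/\lambda_j)$ is identically $1$ on $[0,L\lambda_j]$, so its support is the whole ball $\{r\le L^2\lambda_j\}$, not an annulus around $r\sim\lambda_j$. You only checked the outer edge. For $j\ge 2$ this ball also contains the inner transition region of $\psi_j$ and the region near $r=0$ where $\psi_j=0$. Hence $\langle g,(\psi_j-1)\chi_{L,j}e^{\theta_jR}\Lambda Q_{\lambda_j}\rangle$ does not vanish.

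The defect is small, though. Suppose $\psi_j\equiv 1$ on $[\rho_j,L^2\lambda_j]$ with $\rho_j\ll\lambda_j$. Then $|\Lambda Q_{\lambda_j}|+|RQ_{\lambda_j}|\lesssim (r/\lambda_j)^k$ on $[0,\rho_j]$, and Cauchy--Schwarz with $\|g/r\|_{L^2}\lesssim\|g\|_{\dot{\mathcal H}^1}$ gives
\[
\lambda_j^{-2}\bigl|\langle g,(\psi_j-1)\chi_{L,j}e^{\theta_jR}\Lambda Q_{\lambda_j}\rangle\bigr|+\lambda_j^{-2}\bigl|\langle g,(\psi_j-1)\chi_{L,j}e^{\theta_jR}RQ_{\lambda_j}\rangle\bigr|\lesssim \Bigl(\frac{\rho_j}{\lambda_j}\Bigr)^{k+2}\|g\|_{\dot{\mathcal H}^1}.
\]
This is $o(1)\|g\|_{\dot{\mathcal H}^1}$ and can be absorbed.

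Two smaller points:
\begin{itemize}
\item The pairing term in your one-bubble estimate must carry the weight $\lambda^{-4}$, not $\lambda^{-2}$, to be scale invariant, since $\langle\cdot,\cdot\rangle$ is an integral over $\mathbb R^2$.
\item The uniformity of that estimate in $L$ does not come from the kernel lying in $\dot{\mathcal H}^1$ (true for every $k\ge1$). It comes from $r\sin\phi\in L^2(\mathbb R^2)$, i.e.\ from the boundedness of $w\mapsto\langle w,\sin\phi\rangle$ on $\mathcal H^1$. This is where $k\ge 3$ is used.
\end{itemize}
With these corrections the argument closes.
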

\begin{proof}
    Since $g\cdot\calU=-\frac{1}{2}|g|^2$, under the bootstrap assumption we only have to prove ~\eqref{eq:qU-coercive} with $g$ replaced by $g+\frac{1}{2}|g|^2\calU$, which is tangent to $\calU$. The proof is the same as that of Lemma 2.18 in \cite{JL-WM-SRC}. 
\end{proof}

\section{Control of parameters}\label{sec:mod}

In this section we derive the modulation estimates for solutions to ~\eqref{SM} under the bootstrap assumption.  We define
\begin{align}\label{eq:def-refined-parameter}
    \hat {\lam}_j=\lam_j+\frac{(-1)^j}{\|\Lam Q\|_{L^2}^2 \lam_j}\La g,e^{\te_j R}\Lam Q_{\lam_j}\Ra,\ \hat \te_j=\te_j-\frac{(-1)^j}{\|R Q\|_{L^2}^2 \lam_j^2}\La g,e^{\te_j R}R Q_{\lam_j}\Ra.
\end{align}

\begin{prop}
    \label{prop:mod}
    We have the following estimates for $\lam_j',\te_j'$:
    \begin{align}
        \lam_j|\lam_j'|&\lesssim \left(o_{L\to\infty}(1)+o_{T_0\to-\infty}(1)\right)\lam_j^2\|g\|_{\dot \calH^3}+\left(\frac{\lam_{j-1}}{\lam_{j}}\right)^k+\left(\frac{\lam_{j}}{\lam_{j+1}}\right)^k, \label{eq:lam'-rough-mod}\\
        \lam_j^2|\te_j'|&\lesssim \left(o_{L\to\infty}(1)+o_{T_0\to-\infty}(1)\right)\lam_j^2\|g\|_{\dot \calH^3}+\left(\frac{\lam_{j-1}}{\lam_{j}}\right)^k+\left(\frac{\lam_{j}}{\lam_{j+1}}\right)^k.\label{eq:te'-rough-mod}
    \end{align}
 For the refined parameters, we have
    \begin{align}\label{eq:lam-hatlam-te-hatte-close}
        \left|\frac{\hat\lam_j(t)}{\lam_j(t)}-1\right|+\left|\hat\te_j(t)- \te_j(t)\right|\lesssim \|g\|_{\dot \calH^1}.
    \end{align}
    The dynamical behavior of the refined parameters can be estimated explicitly:
    \begin{align}\label{eq:lam-hat-modulation}
        \lam_j \hat \lam_j' - \ka_k (-1)^j \left[ \left( \frac{\lam_{j-1}}{\lam_j} \right)^k \sin(\te_{j-1} - \te_j) -\left( \frac{\lam_j}{\lam_{j+1}} \right)^k \sin(\te_j - \te_{j+1})  \right]  =o\left(|t|^{-\veps_j}\right)\frac{\lam_j^2}{|t|},
    \end{align}

    \begin{align}\label{eq:te-hat-modulation}
         \lam_j^2 \hat \te_j' - k\ka_k (-1)^j \left[ \left( \frac{\lam_{j-1}}{\lam_j} \right)^k \cos(\te_{j-1} - \te_j) -\left( \frac{\lam_j}{\lam_{j+1}} \right)^k \cos(\te_j - \te_{j+1}) \right] =o\left(|t|^{-\veps_j}\right)\frac{\lam_j^2}{|t|}.
    \end{align}

\end{prop}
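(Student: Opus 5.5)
The plan is to differentiate the orthogonality conditions \eqref{eq:g-ortho} in time and solve the resulting linear system for $(\lam_j',\te_j')$; the refined parameters will then be handled by differentiating the unlocalized pairings in \eqref{eq:def-refined-parameter}.

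\emph{Step 1: the equation for $g$.} Write $u=\calU+g$. Then
\[
\p_t g=u\wedge\De u-\sum_\ell\bigl(\lam_\ell'\p_{\lam_\ell}\calU+\te_\ell'\p_{\te_\ell}\calU\bigr),
\]
and expanding $u\wedge\De u$ gives
\[
\calU\wedge\De\calU+\calU\wedge\De g+g\wedge\De\calU+g\wedge\De g .
\]

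\emph{Step 2: the linear system for $(\lam_j',\te_j')$.} Differentiate $\La g,\chi_{L,j}e^{\te_jR}\Lam Q_{\lam_j}\Ra=0$ and its $RQ$ analogue.
\begin{itemize}
\item The terms $\La \lam_\ell'\p_{\lam_\ell}\calU,\chi_{L,j}e^{\te_jR}\Lam Q_{\lam_j}\Ra$ form a matrix. Its diagonal entries are $(-1)^{j-1}\lam_j^{-1}\|\Lam Q\|^2(1+o_{L\to\infty}(1))$ and $(-1)^{j-1}\|RQ\|^2$. The off-diagonal entries are small by the scale separation $\lam_\ell/\lam_{\ell+1}\ll1$, so the system is invertible.
\item The terms where the time derivative falls on the test function are bounded by $(|\lam_j'|/\lam_j+|\te_j'|)\|g\|_{\dot\calH^1}\lam_j$, which can be absorbed.
\item The source $\La\calU\wedge\De\calU,\chi_{L,j}e^{\te_jR}\Lam Q_{\lam_j}\Ra$ is computed from \eqref{eq:U-wedge-De-U-expansion-local}. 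It gives exactly the interaction terms of \eqref{eq:modulation-formal}, of size $(\lam_{j-1}/\lam_j)^k+(\lam_j/\lam_{j+1})^k$. The constant $\ka_k$ comes from $\int \frac{8k}{r^2}\frac{r^k}{1+r^{2k}}\,r^{k}|\Lam Q|^2\udr$-type integrals, which reduce to beta integrals producing $\sin(\pi/k)$.
\item For the linear terms in $g$, integrate by parts onto the test function and use the kernel relation $\calH_{Q}\Lam Q=\calH_QRQ=0$. What survives involves only commutators with $\chi_{L,j}$ and the interaction with the other bubbles. The logarithmic cutoff makes $\|\p_r\chi_{L,j}\,\cdots\|$ in the $(\dot\calH^3)^*$ pairing carry a factor $o_{L\to\infty}(1)$, yielding the term $o(1)\lam_j^2\|g\|_{\dot\calH^3}$; the $o_{T_0}(1)$ comes from the bubble interaction.
\item The nonlinear terms are quadratic, $\lesssim\|g\|_{\dot\calH^1}\|g\|_{\dot\calH^3}\lam_j^2$, and so are $o_{T_0}(1)\lam_j^2\|g\|_{\dot\calH^3}$ by \eqref{eq:g-H1-bootstrap}.
\end{itemize}
Inverting the system gives \eqref{eq:lam'-rough-mod} and \eqref{eq:te'-rough-mod}.

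\emph{Step 3: closeness of the refined parameters.} The bound \eqref{eq:lam-hatlam-te-hatte-close} is Cauchy--Schwarz with Hardy. For $k\geq3$ we have $r\Lam Q\in L^2$, so $|\La g,\Lam Q_{\lam_j}\Ra|\lesssim\lam_j^2\|g\|_{\dot\calH^1}$.

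\emph{Step 4: dynamics of the refined parameters.} Differentiate $\hat\lam_j$, so that the same computation as Step 2 is done with the cutoff removed. The $\lam_j'$ contributions from the definition cancel to leading order, since the unlocalized pairing $\La\p_{\lam_j}\calU,\Lam Q_{\lam_j}\Ra$ is exactly the normalization. There is no cutoff commutator now: $\La\calU\wedge\De g,\Lam Q_{\lam_j}\Ra$ is rewritten via self-adjointness of the linearized operator acting on $\Lam Q_{\lam_j}$, which vanishes up to interaction terms. What remains is the following.
\begin{itemize}
\item The explicit interaction source, giving the bracket in \eqref{eq:lam-hat-modulation}.
\item Terms $(\lam_j'/\lam_j+|\te_j'|)\lam_j\La g,\cdot\Ra\lesssim(\text{rough bound})\cdot\|g\|_{\dot\calH^1}$.
\item Interaction-times-$g$ terms $\lesssim(\lam_j/\lam_{j+1})^{k/2}\|g\|_{\dot\calH^1}$.
\item Quadratic terms.
\item Second-order interaction errors $O((\lam_j/\lam_{j+1})^{2k}|\log|)$.
\end{itemize}
Using the bootstrap bounds, $\lam_j^2/|t|\sim(\lam_j/\lam_{j+1})^k$ (by the choice of $\be_j,q_j$) and the improved bound \eqref{eq:g-H1-improved-cor}, each of these is $o(|t|^{-\veps_j})\lam_j^2/|t|$ since $\veps_j<\veps_0<1/(k-2)$.

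\emph{Main obstacle.} I expect the main obstacle to be the slow decay for $k=3,4$. In that case $\La g,\Lam Q_{\lam_j}\Ra$ against high-regularity quantities and the cutoff commutators in Step 2 are borderline, and it is here that the logarithmic cutoff and a careful tail estimate in the region $r\gtrsim L\lam_j$ are needed.
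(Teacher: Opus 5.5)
Your overall scheme matches the paper's. You differentiate \eqref{eq:g-ortho} and invert a diagonally dominant system, use Cauchy--Schwarz for \eqref{eq:lam-hatlam-te-hatte-close}, and then redo the computation for the unlocalized pairings in \eqref{eq:def-refined-parameter}. Steps 1--3 are fine up to bookkeeping slips: the diagonal entries are $(-1)^j\lam_j\|\Lam Q\|_{L^2}^2$ and $(-1)^{j-1}\lam_j^2\|RQ\|_{L^2}^2$ up to relative $o(1)$, and the same-bubble cross entries are small because $\Lam Q\cdot RQ=0$ pointwise, not merely by scale separation.

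The genuine gap is in Step 4. It concerns the linear-in-$g$ terms that survive the kernel cancellation, $\La A_j\wedge\De g,\Lam Q_{\lam_j}\Ra+\La g\wedge\De A_j,\Lam Q_{\lam_j}\Ra$ with $A_j:=\calU-(-1)^{j-1}e^{\te_jR}Q_{\lam_j}$. You bound these by $(\lam_j/\lam_{j+1})^{k/2}\|g\|_{\dot\calH^1}$ and assert this is $o(|t|^{-\veps_j})\lam_j^2/|t|$. For $j\le N-1$ that is false. There $\lam_j^2/|t|\sim(\lam_j/\lam_{j+1})^k\sim|t|^{-1-2\be_j}$, so even with \eqref{eq:g-H1-improved-cor} your bound is of order $|t|^{-1-\be_j-\frac{1}{k-2}-\frac{\veps_1}{2}}$. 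Closing would require $\be_j+\veps_j<\frac{1}{k-2}+\frac{\veps_1}{2}$. This already fails at $j=N-1$, where $\be_{N-1}=\frac{1}{k-2}$ and $\veps_{N-1}\ge\veps_1$, and it fails worse for smaller $j$. Your justification ``since $\veps_j<\veps_0<1/(k-2)$'' is the check one would make if $\lam_j\sim1$. But $\lam_j\to0$ for $j<N$, and the target shrinks with it.

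The square-root loss comes from treating $A_j$ as $O(1)$ beyond $\sqrt{\lam_j\lam_{j+1}}$. Because the signs alternate, $(-1)^{j-1}e^{\te_jR}Q_{\lam_j}\approx(-1)^je_3$ for $r\gg\lam_j$, and this is also the value of $\calU$ for $\lam_j\ll r\ll\lam_{j+1}$; the inner side is symmetric. Hence $F_j|(r\p_r)^aA_j|\lesssim(\lam_{j-1}/\lam_j)^k+(\lam_j/\lam_{j+1})^k$ on all of $[\lam_{j-1},\lam_{j+1}]$. Outside this interval $|A_j|\lesssim1$, but $F_j\le(\lam_{j-1}/\lam_j)^k$, respectively $(\lam_j/\lam_{j+1})^k$.

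You can then close in either of two ways:
\begin{enumerate}
\item Move all derivatives onto the test function. This gives $\bigl[(\lam_{j-1}/\lam_j)^k+(\lam_j/\lam_{j+1})^k\bigr](\log|t|)^{1/2}\|g\|_{\dot\calH^1}$, which is acceptable even with the crude bound $\|g\|_{\dot\calH^1}\le|t|^{-1/2}$, since $\veps_j<\frac12$.
\item Keep $\De$ on $g$, as the paper does (``in the same way \dots using slightly sharper estimates on $\calU$''). Pair $\|\De g/r\|_{L^2}\lesssim\|g\|_{\dot\calH^3}$ against $\|r\,\Lam Q_{\lam_j}\wedge A_j\|_{L^2}$. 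Even the crude region bound gives $\|r\,\Lam Q_{\lam_j}\wedge A_j\|_{L^2}\lesssim\lam_j^2(\lam_j/\lam_{j+1})^{(k-2)/2}$ plus an inner analogue. Since $(\lam_j/\lam_{j+1})^{(k-2)/2}\lesssim|t|^{-1/2}$ for $j\le N-1$, the relative error is $|t|^{\veps_j-\veps_0}$.
\end{enumerate}

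For the same reason, the obstacle you single out at the end is misplaced. In Step 2 a standard cutoff at scale $L\lam_j$ already gives a factor $L^{2-k}$ in the $(\dot\calH^3)^*$ pairing; the logarithmic cutoff is only needed later, in Lemma~\ref{lem:b-lam-close}. What actually needs care in this proposition is extracting the full interaction factor from the linear-in-$g$ terms of Step 4.
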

\begin{rem}
    ~\eqref{eq:lam-hatlam-te-hatte-close}, ~\eqref{eq:lam-hat-modulation}, ~\eqref{eq:te-hat-modulation} are only used in Section~\ref{subsec:stable} and Section~\ref{subsec:unstable}.
\end{rem}

\begin{proof}
    The proof is very similar to Lemma 4.11 and Lemma 4.16 in \cite{JL-WM-SRC} and  Lemma 5.12 and Lemma 5.17 in \cite{JL-NLW-SRC}. 
    For the proofs of  ~\eqref{eq:lam'-rough-mod} and ~\eqref{eq:te'-rough-mod}, differentiating the orthogonal conditions ~\eqref{eq:g-ortho} yields a linear system of $\lam_j\lam_j'$ and $\lam_j^2\te_j'$. A direct computation implies the elements of the coefficient matrix is diagonally dominant and hence invertible, which implies the desired rough estimate. We conduct the calculation as follows.
    Differentiating the orthogonal conditions, we obtain
    \begin{equation}\label{eq:rough-modulation-lambda-raw}
        \begin{aligned}
            &\lam_j\lam_j' \frac{1}{\lam_j}\left[ \La \p_{\lam_j}\calU , \chi_{L,j}e^{\te_j R}\Lam Q_{\lam_j}\Ra-\La g , \p_{\lam_j}\left( \chi_{L,j}e^{\te_j R}\Lam Q_{\lam_j}\right)\Ra \right]\\
        &+\lam_j^2\te_j'\frac{1}{\lam_j^2}\left[ \La \p_{\te_j}\calU , \chi_{L,j}e^{\te_j R}\Lam Q_{\lam_j}\Ra-\La g , \p_{\te_j}\left( \chi_{L,j}e^{\te_j R}\Lam Q_{\lam_j}\right)\Ra \right]\\
        &+\sum_{\ell\neq j}\lam_\ell\lam_\ell' \frac{1}{\lam_\ell}\La \p_{\lam_\ell}\calU , \chi_{L,j}e^{\te_j R}\Lam Q_{\lam_j}\Ra+\sum_{\ell\neq j} \lam_\ell^2\te_\ell'\frac{1}{\lam_\ell^2} \La \p_{\te_\ell}\calU , \chi_{L,j}e^{\te_j R}\Lam Q_{\lam_j}\Ra \\
        =& \La \calU\wedge\De \calU,  \chi_{L,j}e^{\te_j R}\Lam Q_{\lam_j}\Ra+ \La \calU\wedge\De g,  \chi_{L,j}e^{\te_j R}\Lam Q_{\lam_j}\Ra\\
        &+ \La g\wedge\De \calU,  \chi_{L,j}e^{\te_j R}\Lam Q_{\lam_j}\Ra+ \La g\wedge\De g,  \chi_{L,j}e^{\te_j R}\Lam Q_{\lam_j}\Ra.
        \end{aligned}
    \end{equation}

    \begin{equation}\label{eq:rough-modulation-theta-raw}
        \begin{aligned}
            &\lam_j\lam_j' \frac{1}{\lam_j}\left[ \La \p_{\lam_j}\calU , \chi_{L,j}e^{\te_j R}R Q_{\lam_j}\Ra-\La g , \p_{\lam_j}\left( \chi_{L,j}e^{\te_j R}R Q_{\lam_j}\right)\Ra \right]\\
        &+\lam_j^2\te_j'\frac{1}{\lam_j^2}\left[ \La \p_{\te_j}\calU , \chi_{L,j}e^{\te_j R}R Q_{\lam_j}\Ra-\La g , \p_{\te_j}\left( \chi_{L,j}e^{\te_j R}R Q_{\lam_j}\right)\Ra \right]\\
        &+\sum_{\ell\neq j}\lam_\ell\lam_\ell' \frac{1}{\lam_\ell}\La \p_{\lam_\ell}\calU , \chi_{L,j}e^{\te_j R}R Q_{\lam_j}\Ra+\sum_{\ell\neq j} \lam_\ell^2\te_\ell'\frac{1}{\lam_\ell^2} \La \p_{\te_\ell}\calU , \chi_{L,j}e^{\te_j R}R Q_{\lam_j}\Ra \\
        =& \La \calU\wedge\De \calU,  \chi_{L,j}e^{\te_j R}R Q_{\lam_j}\Ra+ \La \calU\wedge\De g,  \chi_{L,j}e^{\te_j R}R Q_{\lam_j}\Ra\\
        &+ \La g\wedge\De \calU,  \chi_{L,j}e^{\te_j R}R Q_{\lam_j}\Ra+ \La g\wedge\De g,  \chi_{L,j}e^{\te_j R}R Q_{\lam_j}\Ra.
        \end{aligned}
    \end{equation}
By definition, for $\ell=1,\dots, N$ we have 
    \begin{align}\label{eq:partialU-expression}
        \p_{\lam_{\ell}}\calU=\frac{\p_{\lam_{\ell}}\calQ-(\calU\cdot\p_{\lam_{\ell}}\calQ)\calU}{|\calQ|},\ \p_{\te_{\ell}}\calU=\frac{\p_{\te_{\ell}}\calQ-(\calU\cdot\p_{\te_{\ell}}\calQ)\calU}{|\calQ|},
    \end{align}
    and
    \begin{align}
        \p_{\lam_\ell}\calQ=(-1)^{\ell}\frac{1}{\lam_{\ell}}e^{\te_{\ell}R}\Lam Q_{\lam_{\ell}}, \p_{\te_\ell}\calQ=(-1)^{\ell-1}e^{\te_{\ell}R}R Q_{\lam_{\ell}},
    \end{align}

    First of all, we estimate the coefficients of $\lam_j\lam_j'$ and $\lam_j^2\te_j'$ in ~\eqref{eq:rough-modulation-lambda-raw} and ~\eqref{eq:rough-modulation-theta-raw}.

    We first calculate the coefficients of $\lam_j\lam_j'$ in ~\eqref{eq:rough-modulation-lambda-raw} and of $\lam_j^2\te_j'$ in ~\eqref{eq:rough-modulation-theta-raw}.
    Since $Q\cdot \Lam Q=Q\cdot RQ=0$, we have
    \begin{equation}\label{eq:coefficient-diagonal}
        \begin{aligned}
            \frac{1}{\lam_j}\La \p_{\lam_j}\calU , \chi_{L,j}e^{\te_j R}\Lam Q_{\lam_j}\Ra&=(-1)^j\|\Lam Q\|_{L^2}^2+o_{L\to \infty}(1)+o_{T_0\to-\infty}(1),\\
        \frac{1}{\lam_j^2}\La \p_{\te_j}\calU , \chi_{L,j}e^{\te_j R}R Q_{\lam_j}\Ra&=(-1)^{j-1}\|R Q\|_{L^2}^2+o_{L\to \infty}(1)+o_{T_0\to-\infty}(1),
        \end{aligned}
    \end{equation}
   We also have
    \begin{align}
        \left|\frac{1}{\lam_j}\La g , \p_{\lam_j}\left( \chi_{L,j}e^{\te_j R}\Lam Q_{\lam_j}\right)\Ra\right|+\left|\frac{1}{\lam_j^2}\La g , \p_{\te_j}\left( \chi_{L,j}e^{\te_j R}R Q_{\lam_j}\right)\Ra\right|\lesssim\|g\|_{\dot \calH^1}=o_{T_0\to-\infty}(1).
    \end{align}
Then we estimate the coefficients of $\lam_j^2\te_j'$ in ~\eqref{eq:rough-modulation-lambda-raw} and of $\lam_j\lam_j'$ in  ~\eqref{eq:rough-modulation-theta-raw}.  The components  containing $g$ can be bounded by $\|g\|_{\dot \calH^1}$. For remaining components, we have
    \begin{equation}\label{eq:coefficient-off-diagonal-1}
        \begin{aligned}
            &  \left|\frac{1}{\lam_j^2} \La \p_{\te_j}\calU , \chi_{L,j}e^{\te_j R}\Lam Q_{\lam_j}\Ra\right|        
        =\left|\frac{1}{\lam_j} \La \p_{\lam_j}\calU , \chi_{L,j}e^{\te_j R}RQ_{\lam_j}\Ra\right|\\
        =&\left|\frac{1}{\lam_j^2}\int\frac{\chi_{L,j}}{|\calQ|^3}\left(\calQ\cdot e^{\te_j R}R Q_{\lam_j}\right)\left(\calQ\cdot e^{\te_j R}\Lam Q_{\lam_j}\right)\ud x\right|\\
        \lesssim&\frac{1}{\lam_j^2}\|\chi_{L,j}^{\frac{1}{2}}\calQ\cdot e^{\te_j R}R Q_{\lam_j}\|_{L^2}\|\chi_{L,j}^{\frac{1}{2}}\calQ\cdot e^{\te_j R}\Lam Q_{\lam_j}\|_{L^2}\\
        \lesssim & o_{L\to \infty}(1)+o_{T_0\to-\infty}(1).
        \end{aligned}
    \end{equation}
   The term $\La \p_{\lam_j}\calU , \chi_{L,j}e^{\te_j R}R Q_{\lam_j}\Ra$ in the expression of coefficient of $\lam_j\lam_j'$ in ~\eqref{eq:rough-modulation-theta-raw} is actually  $- \frac{1}{\lam_j}\La \p_{\te_j}\calU , \chi_{L,j}e^{\te_j R}\Lam Q_{\lam_j}\Ra$.\\
     Last we estimate the coefficients of $\lam_\ell\lam_\ell'$ and $\lam_\ell^2\te_\ell'$ for $\ell\neq j$ in ~\eqref{eq:rough-modulation-lambda-raw} and ~\eqref{eq:rough-modulation-theta-raw}. We estimate directly using
    \begin{align}
    |\lam_{\ell}\p_{\lam_{\ell}}\calU|+|\p_{\te_{\ell}}\calU|\lesssim F_{\ell},\ |\Lam Q_{\lam_j}|+|RQ_{\lam_j}|\lesssim F_j,\ |\chi_{L,j}|\leq 1
    \end{align}
    to obtain
    \begin{equation}\label{eq:coefficient-off-diagonal-2}
        \begin{aligned}
             &\left|\frac{1}{\lam_\ell}\La \p_{\lam_\ell}\calU , \chi_{L,j}e^{\te_j R}\Lam Q_{\lam_j}\Ra\right|+\left|\frac{1}{\lam_\ell^2} \La \p_{\te_\ell}\calU , \chi_{L,j}e^{\te_j R}\Lam Q_{\lam_j}\Ra\right|\\
        &+\left|\frac{1}{\lam_\ell}\La \p_{\lam_\ell}\calU , \chi_{L,j}e^{\te_j R}R Q_{\lam_j}\Ra\right|+\left|\frac{1}{\lam_\ell^2} \La \p_{\te_\ell}\calU , \chi_{L,j}e^{\te_j R}RQ_{\lam_j}\Ra\right|
        \lesssim  \begin{cases}
            \left(\frac{\lam_{\ell}}{\lam_j}\right)^{k-2},\ \ell<j,\\
            \left(\frac{\lam_{j}}{\lam_{\ell}}\right)^{k},\ \ell>j,\\
        \end{cases}
        \end{aligned}
    \end{equation}

     Secondly, we estimate the forcing terms in ~\eqref{eq:rough-modulation-lambda-raw} and ~\eqref{eq:rough-modulation-theta-raw}. In ~\eqref{eq:rough-modulation-lambda-raw} the forcing terms are
     \begin{align}
        & \La \calU\wedge\De \calU,  \chi_{L,j}e^{\te_j R}\Lam Q_{\lam_j}\Ra+ \La \calU\wedge\De g,  \chi_{L,j}e^{\te_j R}\Lam Q_{\lam_j}\Ra\\
        &+ \La g\wedge\De \calU,  \chi_{L,j}e^{\te_j R}\Lam Q_{\lam_j}\Ra+ \La g\wedge\De g,  \chi_{L,j}e^{\te_j R}\Lam Q_{\lam_j}\Ra.
    \end{align}
Using ~\eqref{eq:F-pointwise} we have
    \begin{align}
        &\left|\La \calU\wedge\De \calU,  \chi_{L,j}e^{\te_j R}\Lam Q_{\lam_j}\Ra\right|\\
        \lesssim& \int_0^{\infty} r^{-2}\left(\sum_{1\leq m<\ell\leq N}F_m F_{\ell}\right)F_j \udr\lesssim\left(\frac{\lam_{j-1}}{\lam_j}\right)^k+ \left(\frac{\lam_{j}}{\lam_{j+1}}\right)^k.
    \end{align}
Note that since $Q_{\lam_j}$ satisfies $\De Q_{\lam_j}+|\na Q_{\lam_j}|^2Q_{\lam_j}=0$, we have
    \begin{equation}\label{eq:Q-cancellation-linear-g}
        \begin{aligned}
             &\La Q_{\lam_j}\wedge\De g, \Lam Q_{\lam_j}\Ra + \La g\wedge\De Q_{\lam_j},  \Lam Q_{\lam_j}\Ra\\
         =&\La Q_{\lam_j}\wedge \left(\De g+|\na Q_{\lam_j}|^2 g\right)  ,\Lam Q_{\lam_j} \Ra\\
         =&\La  \De g+|\na Q_{\lam_j}|^2 g, \Lam Q_{\lam_j}\wedge Q_{\lam_j}   \Ra\\
         =&\La  \De g+|\na Q_{\lam_j}|^2 g, -kRQ_{\lam_j}   \Ra\\
         =&\La g, -kR\left(\De Q_{\lam_j}+|\na Q_{\lam_j}|^2Q_{\lam_j}\right) \Ra=0.
        \end{aligned}
    \end{equation}
    Then we write
    \begin{equation}\label{eq:g-linear-decomposition}
        \begin{aligned}
            &\La \calU\wedge\De g,  \chi_{L,j}e^{\te_j R}\Lam Q_{\lam_j}\Ra+ \La g\wedge\De \calU,  \chi_{L,j}e^{\te_j R}\Lam Q_{\lam_j}\Ra\\
        =&\La \left(\calU-(-1)^{j-1}e^{\te_j R}Q_{\lam_j}\right)\wedge\De g,  \chi_{L,j}e^{\te_j R}\Lam Q_{\lam_j}\Ra+ \La g\wedge\De \left(\calU-(-1)^{j-1}e^{\te_j R}Q_{\lam_j}\right),  \chi_{L,j}e^{\te_j R}\Lam Q_{\lam_j}\Ra\\
        &+(-1)^{j-1}\La e^{\te_j R}Q_{\lam_j}\wedge\De g, \left(\chi_{L,j}-1\right)e^{\te_j R}\Lam Q_{\lam_j}\Ra + (-1)^{j-1}\La g\wedge\De(e^{\te_j R} Q_{\lam_j}),  \left(\chi_{L,j}-1\right)e^{\te_j R}\Lam Q_{\lam_j}\Ra.
        \end{aligned}
    \end{equation}
Combining ~\eqref{eq:Q-cal-expansion-local}, ~\eqref{eq:Q-cal-norm} and Hardy's inequality yields
    \begin{align}
        &\left|\La \left(\calU-(-1)^{j-1}e^{\te_j R}Q_{\lam_j}\right)\wedge\De g,  \chi_{L,j}e^{\te_j R}\Lam Q_{\lam_j}\Ra \right|\\
        \lesssim & \left\|\frac{\De g}{r}\right\|_{L^2}\left\|r\chi_{L,j}e^{\te_j R}\Lam Q_{\lam_j}\wedge \left(\calU-(-1)^{j-1}e^{\te_j R}Q_{\lam_j}\right) \right\|_{L^2}\\
        \lesssim &\ (o_{T_0\to-\infty}(1)+o_{L\to\infty}(1)    )\lam_j^2\|g\|_{\dot \calH^3}.
    \end{align}
     The remaining three terms in ~\eqref{eq:g-linear-decomposition} can be estimated similarly to obtain the same bound  $(o_{T_0\to-\infty}(1)+o_{L\to\infty}(1)    )\lam_j^2\|g\|_{\dot \calH^3}$ .\\
    Since $\sup_{r>0}r^2F_j(t,r)\lesssim\lam_j^2(t)$, we have
    \begin{align}
        \left|\La g\wedge\De g,  \chi_{L,j}e^{\te_j R}\Lam Q_{\lam_j}\Ra\right|
        \lesssim  \left\|\frac{g}{r}\right\|_{L^2}\left\|r^2F_j\frac{\De g}{r}\right\|_{L^2}
        \lesssim \lam_j^2\|g\|_{\dot \calH^1}\|g\|_{\dot \calH^3}=o_{T_0\to-\infty}(1)\lam_j^2\|g\|_{\dot \calH^3}.
    \end{align}
The forcing terms in ~\eqref{eq:rough-modulation-theta-raw} can be also estimated in the same way to obtain the bound 
    
    $(o_{T_0\to-\infty}(1)+o_{L\to\infty}(1)    )\lam_j^2\|g\|_{\dot \calH^3}+\left(\frac{\lam_{j-1}}{\lam_j}\right)^k+ \left(\frac{\lam_{j}}{\lam_{j+1}}\right)^k$ .
    
    Combining the estimates of coefficient matrix and forcing terms above, we get
    \begin{align}
        &\ \lam_j|\lam_j'|+\lam_j^2|\te_j'|\\
        \lesssim &   \left(o_{L\to\infty}(1)+o_{T_0\to-\infty}(1)\right)\lam_j^2\|g\|_{\dot \calH^3}+\left(\frac{\lam_{j-1}}{\lam_{j}}\right)^k+\left(\frac{\lam_{j}}{\lam_{j+1}}\right)^k\\
        &+\sum_{\ell< j}\left[\left(o_{L\to\infty}(1)+o_{T_0\to-\infty}(1)\right)\lam_{\ell}^2\|g\|_{\dot \calH^3}+\left(\frac{\lam_{\ell-1}}{\lam_{\ell}}\right)^k+\left(\frac{\lam_{\ell}}{\lam_{\ell+1}}\right)^k\right]\cdot\left(\frac{\lam_{\ell}}{\lam_j}\right)^{k-2}\\
        &+\sum_{\ell> j}\left[\left(o_{L\to\infty}(1)+o_{T_0\to-\infty}(1)\right)\lam_{\ell}^2\|g\|_{\dot \calH^3}+\left(\frac{\lam_{\ell-1}}{\lam_{{\ell}}}\right)^k+\left(\frac{\lam_{{\ell}}}{\lam_{{\ell}+1}}\right)^k\right]\cdot\left(\frac{\lam_{j}}{\lam_{\ell}}\right)^{k}\\
        \lesssim & \left(o_{L\to\infty}(1)+o_{T_0\to-\infty}(1)\right)\lam_j^2\|g\|_{\dot \calH^3}+\left(\frac{\lam_{j-1}}{\lam_{j}}\right)^k+\left(\frac{\lam_{j}}{\lam_{j+1}}\right)^k,
    \end{align}
    Then ~\eqref{eq:lam'-rough-mod} and ~\eqref{eq:te'-rough-mod} are proven.

    The estimate ~\eqref{eq:lam-hatlam-te-hatte-close} follows directly by Cauchy's inequality.

    We turn to prove ~\eqref{eq:lam-hat-modulation} and ~\eqref{eq:te-hat-modulation}.
    For ~\eqref{eq:lam-hat-modulation}, we calculate
    \begin{equation}\label{eq:lam'-hat-expression}
        \begin{aligned}
            \hat\lam_j'&=\lam_j'-\frac{(-1)^j}{\|\Lam Q\|_{L^2}^2}\La \lam_{j}'\p_{\lam_{j}}\calU,e^{\te_j R}  \Lam Q_{\underline{\lam_j}}  \Ra\\
        &\quad +\frac{(-1)^j}{\|\Lam Q\|_{L^2}^2}\La \calU\wedge\De\calU+\calU\wedge\De g+g\wedge\De\calU+g\wedge \De g,e^{\te_j R}  \Lam Q_{\underline{\lam_j}}  \Ra\\
        &\quad -\frac{(-1)^j}{\|\Lam Q\|_{L^2}^2}\La \sum_{\ell\neq j}\lam_{\ell}'\p_{\lam_{\ell}}\calU+\sum_{1\leq\ell\leq N}\te_{\ell}'\p_{\te_{\ell}}\calU,e^{\te_j R}  \Lam Q_{\underline{\lam_j}}  \Ra\\
        &\quad + \frac{(-1)^j}{\|\Lam Q\|_{L^2}^2}\La g, \te_j'e^{\te_j R}R\Lam   Q_{\underline{\lam_j}}-\frac{\lam_j'}{\lam_j}e^{\te_j R}  \ULam \Lam Q_{\underline{\lam_j}} \Ra.
        \end{aligned}
    \end{equation}
     For terms in the first line, we have
   \begin{align}
      &1-\frac{(-1)^j}{\|\Lam Q\|_{L^2}^2}\La \p_{\lam_{j}}\calU,e^{\te_j R}  \Lam Q_{\underline{\lam_j}}  \Ra\\
      =& \frac{1}{\|\Lam Q\|_{L^2}^2\lam_j^2}\int\left(1-\frac{1}{|\calQ|}\right)|\Lam Q_{\lam_j}|^2\ud x+\frac{1}{\|\Lam Q\|_{L^2}^2\lam_j^2}\int\frac{\left(\calU\cdot e^{\te_j R}\Lam Q_{\lam_j}\right)^2}{|\calQ|}\ud x,
   \end{align}
    which combined with ~\eqref{eq:Q-cal-expansion-local}, ~\eqref{eq:Q-cal-norm}, bootstrap assumption and ~\eqref{eq:lam'-rough-mod} yields
    \begin{equation}\label{eq:estimate-111111}
        \begin{aligned}
             &\lam_j\left|\lam_j'-\frac{(-1)^j}{\|\Lam Q\|_{L^2}^2}\La \lam_{j}'\p_{\lam_{j}}\calU,e^{\te_j R}  \Lam Q_{\underline{\lam_j}}  \Ra\right|\\
        \lesssim& \lam_j|\lam_j'|\frac{1}{\lam_j^2}\int F_j^2\left(\left(\sum_{1\leq\ell<m\leq N}F_{\ell}F_{m}\right)+\left(\sum_{\ell\neq j}F_{\ell}\right)^2\right)\udr\\
        &+  \lam_j|\lam_j'|\frac{1}{\lam_j^2}\left(\int_0^{\sqrt{\lam_{j-1}\lam_j}}+\int_{\sqrt{\lam_j\lam_{j+1}}}^{\infty}\right) F_j^2\udr   \\
        \lesssim& \lam_j|\lam_j'| \left[\left(\frac{\lam_{j-1}}{\lam_j}\right)^{k-1}+\left(\frac{\lam_{j}}{\lam_{j+1}}\right)^{k-1}\right]
        \lesssim  o(|t|^{-\veps_j}) \frac{\lam_j^2}{|t|}.
        \end{aligned}
    \end{equation}
     The term containing $\calU\wedge\De\calU$ dominates the dynamics. It follows from ~\eqref{eq:U-wedge-De-U-expansion-local} that
    \begin{align}
        &\lam_j\frac{(-1)^j}{\|\Lam Q\|_{L^2}^2}\La \calU\wedge\De\calU,e^{\te_j R}  \Lam Q_{\underline{\lam_j}}  \Ra\\ 
        =&\frac{(-1)^j}{\|\Lam Q\|_{L^2}^2}16\pi k^2\left[\left( \frac{\lam_{j-1}}{\lam_j} \right)^k \sin(\te_{j-1} - \te_j) -\left( \frac{\lam_j}{\lam_{j+1}} \right)^k \sin(\te_j - \te_{j+1})  \right]+ o(|t|^{-\veps_j}) \frac{\lam_j^2}{|t|}.
    \end{align}
The term $\frac{(-1)^j}{\|\Lam Q\|_{L^2}^2}\La \calU\wedge\De g+g\wedge\De\calU+g\wedge \De g,e^{\te_j R}  \Lam Q_{\underline{\lam_j}}  \Ra$ can be estimated in the same way as $\La \calU\wedge\De g+g\wedge\De\calU+g\wedge \De g,  \chi_{L,j}e^{\te_j R}\Lam Q_{\lam_j}\Ra$ using slightly sharper estimates on $\calU$. And the remaining terms in ~\eqref{eq:lam'-hat-expression} can be estimated easily using $Q\cdot RQ=Q\cdot\Lam Q=RQ\cdot \Lam Q=0$ and ~\eqref{eq:lam'-rough-mod},~\eqref{eq:te'-rough-mod}. We omit the details.

    The estimate ~\eqref{eq:te-hat-modulation} follows in the same way as in the proof of ~\eqref{eq:lam-hat-modulation}. This completes the proof.
      
\end{proof}

\section{Control of high Sobolev norm}\label{sec:high-Sobolev}
In this section we will improve the estimate of $\|g\|_{\dot\calH^3}$. We will introduce a Morawetz correction in the spirit of \cite{HK-26-WM}. We use Darboux coordinates to make the form of Morawetz correction clean. We will consider the orthogonal frame defined in Section~\ref{subsec:Darboux} with $\calV$ replaced by $\calU$. We will omit the superscript and denote the orthogonal frame by $(f_1(t,r),f_2(t,r),\calU_{\vec p(t)}(r))$. We will abuse the notation  $\calU(t,r)=\calU_{\vec p(t)}(r) $ for convenience.   The estimates for the frame is presented in Lemma~\ref{lem:frame-estimate}.

\subsection{Evolution equation in Darboux coordinates}
In this section we consider the Darboux coordinates $z\in \calH^1$ defined in ~\eqref{eq:v-z} with $v,\calV$ replaced by $u,\calU$. We will omit the superscript of related quantities defined in Section~\ref{subsec:Darboux}. 
We will derive the evolution equation for $z$ and analyze the leading component.

It is easy to see that there exists a constant $\hat T_0<0$ such that for $T_0\leq \hat T_0$, there holds uniformly in $T_0$ and $t\in[T,T_0]$
\begin{align}\label{eq:g-z-equivalent}
    \|g\|_{\dot\calH^1}\sim\|z\|_{\calH^1},\ \|g\|_{\dot\calH^3}\sim\|z\|_{\calH^3}.
\end{align}
We denote by $\hat C$ the constant depending only on $k,N$ such that 
\begin{align}\label{eq:C-hat-def}
    \|g\|_{\dot \calH^3}^2\leq \hat C\|z\|_{\calH^3}^2.
\end{align}
We will always assume $T_0\leq \hat T_0$.

For vector-valued function $g$ and complex-valued function $z=z_1+iz_2$, define
\begin{align}\label{eq:notations-TPLTE}
    T_{\calU}g=g\cdot f_1+ig\cdot f_2,\ P_{\calU}g=g-(g\cdot \calU)\calU ,\ \calL_{\calU}g=\calU\wedge \De g+g\wedge\De\calU,\ \ti T_{\calU} z= z_1f_1+z_2 f_2.
\end{align}
Define
\begin{align}
    \calH_{\calU} z= T_{\calU} \left(-\De (\ti T_{\calU}z)-|\na \calU|^2 \ti T_{\calU}z\right),
\end{align}
then for complex-valued functions $z,w$ we have
\begin{align}
    \La \calH_{\calU}z,w  \Ra &=\int_{\R^2}\left( \na \ti T_{\calU}z\cdot   \na \ti T_{\calU}w-|\na\calU|^2\ti T_{\calU}z\cdot\ti T_{\calU}w \right)\ud x,\\
    &= 2\pi\Re\int_0^{\infty}\left(\overline{\p_r z}\p_r w+\bar z V_{\calU}w\right)\udr.  \label{eq:HU}
\end{align}
Here
\begin{align}
    V_{\calU}z&=\left[ \left(
-(f_2\cdot\p_r\calU)^2
+\frac{k^2}{r^2}\left(|Rf_1|^2-|R\calU|^2\right)
\right)z_1+ \left(
(f_1\cdot\p_r\calU)(f_2\cdot\p_r\calU)
+\frac{k^2}{r^2}Rf_1\cdot Rf_2
\right)z_2\right] \\
&\quad +i\left[\left(
(f_1\cdot\p_r\calU)(f_2\cdot\p_r\calU)
+\frac{k^2}{r^2}Rf_1\cdot Rf_2
\right)z_1+\left(
-(f_1\cdot\p_r\calU)^2
+\frac{k^2}{r^2}\left(|Rf_2|^2-|R\calU|^2\right)
\right)z_2\right].
\end{align}
Equivalently, we have
\begin{align}\label{eq:HU-expression}
    \calH_{\calU}=-\p_r^2-\frac{\p_r}{r}+V_{\calU}.
\end{align}
Note that, if we replace $\calU$ by $e^{\te R}Q_{\lam}$ for some $(\te,\lam)\in\bT\times\R_{>0}$, then the operator $\calH_{\calU}$ becomes $H_{\lam}$. Let
\begin{align}
    H_0=-\p_r^2-\frac{1}{r}\p_r+\frac{k^2}{r^2}, \ V_j=H_{\lam_j}-H_0=-\frac{2k^2}{r^2}\sin^2\phi(\lam_j^{-1}r).
\end{align}
Denote by
\begin{align}
    \E_{\calU}=\calH_{\calU}-H_0-\sum_{j=1}^N V_j.
\end{align}
The estimates concerning $\E_{\calU}$ are shown in Lemma~\ref{lem:operator-estimate}.
Since $|\calU|=1$, we have $\calU\cdot\De\calU+|\na \calU|^2=\frac{1}{2}\De|\calU|^2=0 $, which combined with $\ti T_{\calU}z\cdot \calU=0$ yields
    \begin{align}
        T_{\calU}\left(\ti T_{\calU}z\wedge\De\calU\right)&=T_{\calU}\left(\ti T_{\calU}z\wedge\left(-|\na\calU|^2\calU\right)\right),
    \end{align}
which implies
\begin{align}\label{eq:eq:HU-2}
    \calH_{\calU}z&=iT_{\calU}\left(\calL_{\calU}\ti T_{\calU}z\right).
\end{align}
For $\calH_{\calU}$ we have the following coercive properties:
\begin{lem}\label{lem:HU-coercive}
    There exists $L_0>0$ such that, for $L>L_0$ and for $w\in \calH^1$ satisfying
    \begin{align}
        \left\langle \ti T_{\calU}w, \chi_{L,j}e^{\te_j R}\Lam Q_{\lam_j}\right\rangle=\left\langle \ti T_{\calU}w,\chi_{L,j}e^{\te_j R}R Q_{\lam_j}\right\rangle=0,\ j=1,\dots,N,
    \end{align}
    there holds
    \begin{align}\label{eq:HU-coercove-H1}
        \La \calH_{\calU}w,w \Ra\gtrsim\|w\|_{\calH^1}^2,
    \end{align}
    Moreover, if we also have $w\in \calH^3$, then we have
    \begin{align}\label{eq:HU-coercove-H3}
        \|\calH_{\calU}w\|_{\calH^1}\geq C_{L}\|w\|_{\calH^3}
    \end{align}
    for some constant $C_{L}$ depending on $L$.
\end{lem}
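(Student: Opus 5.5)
For \eqref{eq:HU-coercove-H1}, I will transfer the energy coercivity \eqref{eq:qU-coercive} to the Darboux picture. By \eqref{eq:HU}, $\La \calH_{\calU}w,w\Ra = q_{\calU}(\ti T_{\calU}w)$. Moreover, $\ti T_{\calU}w$ is tangent to $\calU$, and $\|\ti T_{\calU}w\|_{\dot\calH^1}\sim\|w\|_{\calH^1}$ by the frame estimates of Lemma~\ref{lem:frame-estimate}. The orthogonality hypotheses on $\ti T_{\calU}w$ are exactly those of \eqref{eq:g-ortho}. So \eqref{eq:HU-coercove-H1} should follow from the tangent-vector version of the coercivity lemma in Section~\ref{sec:energy-estimate}, applied with $g$ replaced by $\ti T_{\calU}w$. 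That lemma in turn comes from localization: a partition of unity adapted to the regions $\{\sqrt{\lam_{j-1}\lam_j}\le r\le\sqrt{\lam_j\lam_{j+1}}\}$, with single-bubble coercivity of $H_{\lam_j}$ under the localized orthogonality $\La\cdot,\chi_{L,j}\Lam Q_{\lam_j}\Ra=\La\cdot,\chi_{L,j}RQ_{\lam_j}\Ra=0$. I need $L>L_0$ large so that the cutoff kernels are close to the true kernels in $(\dot\calH^1)^*$. For $k\ge3$, $\Lam Q,RQ\in L^2$ with $r^{-1}$-weighted decay, so the truncation error is $o_{L\to\infty}(1)$. The interaction errors $\E_{\calU}$ are small by Lemma~\ref{lem:operator-estimate}.

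For \eqref{eq:HU-coercove-H3}, I argue by a two-step factorization. \textbf{Step 1} is elliptic regularity: from \eqref{eq:HU-expression}, $\calH_{\calU}=H_0+\sum_jV_j+\E_{\calU}$. Since $\|H_0w\|_{\calH^1}\sim\|w\|_{\calH^3}$ for $k$-equivariant profiles (because $H_0$ corresponds to $-\De$ on $k$-equivariant functions, and $k\ge3$ gives the weighted Hardy bounds), I get
\begin{align}
\|w\|_{\calH^3}\lesssim \|\calH_{\calU}w\|_{\calH^1}+\Big\|\big(\textstyle\sum_jV_j+\E_{\calU}\big)w\Big\|_{\calH^1}.
\end{align}
The potential terms satisfy $|(r\p_r)^a V_j|\lesssim r^{-2}F_j^2$, so their $\calH^1$ norm is bounded by a localized quantity like $\sum_j\|w\|_{\mathrm{Mor}(\lam_j)}$-type weighted norms that are compact (a low-frequency piece). \textbf{Step 2} is a contradiction/compactness argument, done after rescaling to each bubble. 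The bound $\|w\|_{\calH^3}\le C_L\|\calH_{\calU}w\|_{\calH^1}$ reduces, near the $j$-th bubble, to Lemma~\ref{lem:Alam-coercive} applied twice. Writing $H_{\lam}=A_{\lam}^*A_{\lam}$ and using that $\ti H_{\lam}=A_\lam A_\lam^*$ has trivial kernel (for $k\ge3$ no $\dot\calH$ zero modes), I get $\|A_\lam w\|_{\mathrm{Mor}_2}\lesssim\|A^*_\lam A_\lam w\|$ in the appropriate weighted norm. Lemma~\ref{lem:Alam-coercive} then controls $\|w\|_{\mathrm{Mor}(\lam)}$ up to the pairings of $w$ with $\chi_L\sin\phi$ and $i\chi_L\sin\phi$. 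Via the frame, these pairings are the orthogonality conditions up to errors small in $T_0$, which is why the constant depends on $L$ through $\ti C(L)$. I then glue the bubble-localized estimates with Step 1, absorbing the cross terms from $\E_{\calU}$ by smallness of $\lam_j/\lam_{j+1}$.

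I expect the main obstacle to be the $L$-dependence for $k=3,4$. There the kernel elements are not in $(\dot\calH^3)^*$, so the localized orthogonality conditions cannot be converted into smallness of the pairings uniformly in $L$. This is why only some $C_L$ is claimed. I would handle it by keeping the logarithmic cutoff and accepting $\ti C(L)$ from Lemma~\ref{lem:Alam-coercive}. I would also need to check that the commutator terms with $\chi_{L}$ are bounded by $\|w\|_{\calH^3}$ times a constant depending on $L$.
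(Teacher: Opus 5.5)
Your argument for \eqref{eq:HU-coercove-H1} follows the paper's route. The paper simply invokes the Jendrej--Lawrie multi-bubble coercivity: $\La\calH_{\calU}w,w\Ra=q_{\calU}(\ti T_{\calU}w)$ by \eqref{eq:HU}, and $\ti T_{\calU}w$ is tangent and satisfies \eqref{eq:g-ortho} exactly. For \eqref{eq:HU-coercove-H3} the paper gives no details beyond a citation of Kim--Merle. Your Step 1 is sound.

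The gap is in Step 2, at the point that makes the lemma a multi-bubble statement. You must decouple bubble $j$ from the potentials $V_\ell$, $\ell\neq j$. These belong to $\calH_{\calU}-H_{\lam_j}$ but not to $\E_{\calU}$, and they are of size one near $\lam_\ell$; your chain does not achieve this decoupling. Suppose the estimate $\|A_\lam w\|_{\mathrm{Mor}_2(\lam)}\lesssim\|A_\lam^*A_\lam w\|$ carries the Morawetz weight on both sides. Then it is false for $k=3$. Take $\lam=1$; at infinity $A_1^*\approx\p_r-\frac{2}{r}$. Let $v=r^2\eta$, with $\eta$ a logarithmic plateau equal to $1$ on $[R,R^2]$ that transitions over $[R^{1/2},R]$ and $[R^2,R^4]$; such $v$ equals $A_1w$ for some $w\in\calH^3$. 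It satisfies $\|v\|^2_{\mathrm{Mor}_2(1)}\gtrsim\log R$, but $\int_0^\infty(1+r)^{-2}|A_1^*v|_{-1}^2\udr\lesssim(\log R)^{-1}$, because the weighted Hardy inequality sits at its critical exponent. If instead you use the unweighted bound $\|A_{\lam_j}w\|_{\calH^2}\lesssim\|H_{\lam_j}w\|_{\calH^1}$, which is true, then $\|H_{\lam_j}w\|_{\calH^1}\le\|\calH_{\calU}w\|_{\calH^1}+\sum_{\ell\neq j}\|V_\ell w\|_{\calH^1}+\dots$. This reinserts every other bubble at full size, multiplied by $\ti C(L)$, and the local estimates do not close. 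Decoupling by a cutoff works only if the commutator is small compared with $C_L$, not merely $\lesssim C(L)\|w\|_{\calH^3}$ as you plan to check. A cutoff at the scale of $\chi_{L,j}$ costs $O((\log L)^{-1})\|w\|_{\calH^3}$. For $k=3$ this loses against $C_L$, since there $\|\chi_L\sin\phi\|_{(\calH^3)^*}\sim L^2$. The cutoff must instead transition logarithmically over the gaps between $L^{\pm2}\lam_j$ and $\sqrt{\lam_{j\pm1}\lam_j}$. That makes the commutator $o(1)$ as $T_0\to-\infty$ for fixed $L$.

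The cleanest repair is to actually run the contradiction argument you announce; it makes Lemma~\ref{lem:Alam-coercive} unnecessary. Take $\|w_n\|_{\calH^3}=1$ and $\|\calH_{\calU_n}w_n\|_{\calH^1}\to0$, with $\max_j\lam_{j,n}/\lam_{j+1,n}\to0$. Step 1 then gives $\|V_jw_n\|_{\calH^1}\gtrsim1$ for some $j$. This quantity is the compact one: it involves at most one derivative of $w$ against the weight $F_j^4$, which vanishes at $0$ and $\infty$. By contrast, $\|w\|_{\mathrm{Mor}(\lam_j)}$ contains $\p_r^3w$ and is not compact. After rescaling by $\lam_{j,n}$ you get a weak limit $\ti w\neq0$ in $\calH^3$ with $H_1\ti w=0$, so $\ti w=c\sin\phi$ with $c\in\C$. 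On $(0,L^2]$ the rescaled frame converges locally uniformly to a frame parallel along the limiting bubble. Such a frame is a constant rotation $e^{i\alpha}$ of the bubble's Coulomb frame. The two orthogonality conditions at bubble $j$ therefore pass to the limit as $\int_0^\infty\overline{\ti w}\,e^{i\alpha}\chi_L\sin\phi\udr=0$, which forces $c=0$. This yields \eqref{eq:HU-coercove-H3} for each fixed $L$, uniformly over configurations with $\max_j\lam_j/\lam_{j+1}$ small enough depending on $L$.
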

\begin{proof}
    The proof is similar to that of Lemma 2.19 in \cite{JL-WM-SRC} and that of Lemma 2.7 in \cite{KM-classification}. We omit the details.
\end{proof}

Explicit estimates about $\calU$ are collected in Appendix~\ref{Appendix:estimates}, which will be frequently used in the remaining parts of Section~\ref{sec:high-Sobolev}. 

\bigskip

Now we derive the evolution equation for $z$.

Since $u=\calU_{\vec p}+g$ and $g\cdot \calU_{\vec p}=-\frac{1}{2}|g|^2$, we have
\begin{align}
    z(t,r)=\frac{g\cdot f_1+ig\cdot f_2}{\sqrt{1-\frac{|g|^2}{4}}},\ |z(t,r)|=|g(t,\te,r)|.
\end{align}
Differentiating by $t$, we have
\begin{align}
    \p_tz=\frac{\p_tg\cdot f_1+i\p_tg\cdot f_2}{\sqrt{1-\frac{|g|^2}{4}}}
    +\frac{g\cdot \p_t f_1+ig\cdot \p_t f_2}{\sqrt{1-\frac{|g|^2}{4}}}
     +\frac{1}{4}\frac{g\cdot f_1+ig\cdot f_2}{\left(1-\frac{|g|^2}{4}\right)\sqrt{1-\frac{|g|^2}{4}}}g\cdot g_t.
\end{align}
Denote by
\begin{align}
    N_{\calU}g=\calL_{\calU}\left(-\frac{1}{2}|g|^2\calU\right)+g\wedge \De g.
\end{align}
Since 
\begin{align}
    \p_t g &= \calU\wedge \De \calU+\calL_{\calU}g+g\wedge \De g-\p_t \calU\\
    &=\calU\wedge \De \calU+\calL_{\calU}\left(g+\frac{1}{2}|g|^2\calU\right)+\calL_{\calU}\left(-\frac{1}{2}|g|^2\calU\right)+g\wedge \De g-\p_t \calU\\
    &=\calU\wedge \De \calU+\calL_{\calU}\left(P_{\calU}g\right)+N_{\calU}g-\p_t \calU,
\end{align}
we have 
\begin{align}\label{eq:zt-raw}
    \p_tz&=\frac{1}{\sqrt{1-\frac{|g|^2}{4}}}T_{\calU}\left(\calU\wedge \De \calU+\calL_{\calU}\left(P_{\calU}g\right)+N_{\calU}g-\p_t \calU\right)\\
    &\quad +\frac{g\cdot \p_t f_1+ig\cdot \p_t f_2}{\sqrt{1-\frac{|g|^2}{4}}}\\
    &\quad +\frac{1}{4}\frac{g\cdot f_1+ig\cdot f_2}{\left(1-\frac{|g|^2}{4}\right)\sqrt{1-\frac{|g|^2}{4}}}g\cdot \left(\calU\wedge \De \calU+\calL_{\calU}\left(P_{\calU}g\right)+N_{\calU}g-\p_t \calU\right).
\end{align}
Combining $u=\calU+g$ with ~\eqref{eq:v-z} where $v,\calV$ replaced by $u,\calU$ and the fact that $|z|=|g|$, we have
\begin{align}\label{eq:PU-g-z}
    P_{\calU}g=\sqrt{1-\frac{|z|^2}{4}}\left(z_1f_1+z_2f_2\right)=\sqrt{1-\frac{|z|^2}{4}}\ti T_{\calU}z.
\end{align}
Denote by
\begin{align}
    \calR_{\rm{mod}}&:= -\left(\frac{1}{\sqrt{1-\frac{|g|^2}{4}}}-1\right)T_{\calU}\left(\p_t \calU\right)+\frac{g\cdot \p_t f_1+ig\cdot \p_t f_2}{\sqrt{1-\frac{|z|^2}{4}}}-\frac{1}{4}\frac{g\cdot f_1+ig\cdot f_2}{\left(1-\frac{|g|^2}{4}\right)\sqrt{1-\frac{|g|^2}{4}}}g\cdot \left(\p_t \calU\right).
\end{align}
Then combining with ~\eqref{eq:HU} we have
\begin{align}
    \p_tz=-iG_{z}\cH_{\calU}z+T_{\calU}\left(\calU\wedge\De\calU-\p_t\calU\right)+\cR_{\rm{NL}}+\calR_{\rm{mod}}.
\end{align}
Here
\begin{align}
    \calR_{\rm{NL}}&=iG_z\calH_{\calU}z+\left(\frac{1}{\sqrt{1-\frac{|z|^2}{4}}}-1\right)T_{\calU}\left(\calU\wedge \De \calU\right)\\
    &\quad +\frac{1}{\sqrt{1-\frac{|z|^2}{4}}}T_{\calU}\left(\calL_{\calU}\left(P_{\calU}g\right)+N_{\calU}g\right)\\
    &\quad +\frac{z}{4-|z|^2}g\cdot \left(\calU\wedge \De \calU+\calL_{\calU}\left(P_{\calU}g\right)+N_{\calU}g\right).
\end{align}

\subsection{Construction of the correction}

The term $T_{\calU}\left(\calU\wedge\De\calU-\p_t\calU\right)$ appearing in the expression of $\p_t z$ cannot be ignored, and in this section we seek for a correction. 

\begin{lem}
    \label{lem:correction-z-eta}
    For $L>1$ sufficiently large, there exists $T_L<0$ such that for $T_0<T_L$, there exist unique $\eta(t,r)\in \C$ and $b_j^{\lam}(t),b_j^{\te}(t)\in \R$ such that for $t\in[T,T_1]$, there holds
    \begin{align}
        i\calH_{\calU}\eta=-T_{\calU}(\calU\wedge\De\calU)+\sum_{j=1}^N\frac{b_j^{\lam}}{\lam_j}T_{\calU}(\p_{\lam_j}\calU)+\sum_{j=1}^N\frac{b_j^{\te}}{\lam_j^2}T_{\calU}(\p_{\te_j}\calU),\label{eq:eta-b}\\
        \left\langle \ti T_{\calU}\eta, \chi_{L,j}e^{\te_j R}\Lam Q_{\lam_j}\right\rangle=\left\langle \ti T_{\calU}\eta,\chi_{L,j}e^{\te_j R}R Q_{\lam_j}\right\rangle=0,\ j=1,\dots,N. \label{eq:eta-ortho}
    \end{align}
    Moreover, we have
    \begin{align}
        \|\eta\|_{\calH^1}+\sum_{j=1}^N\left(|b_j^{\lam}|+|b_j^{\te}|\right)&\lesssim |t|^{-1-\frac{2}{k-2}},\label{eq:correction-H1}\\
        \|\eta\|_{\calH^3}+\sum_{j=1}^N\frac{1}{\lam_j^2}\left(|b_j^{\lam}|+|b_j^{\te}|\right)&\lesssim_L |t|^{-1},\label{eq:correction-H3}\\
        \|\p_t\eta\|_{\calH^1}+\sum_{j=1}^N\left(|\p_tb_j^{\lam}|+|\p_tb_j^{\te}|\right)&\lesssim \max_{1\leq j\leq N}\left(\frac{|\lam_j'|}{\lam_j}+|\te_j'|\right)|t|^{-1-\frac{2}{k-2}},\label{eq:correction-t-H1}\\
        \|\p_t\eta\|_{\calH^3}+\sum_{j=1}^N\frac{1}{\lam_j^2}\left(|\p_t b_j^{\lam}|+|\p_t b_j^{\te}|\right)&\lesssim_L \max_{1\leq j\leq N}\left(\frac{|\lam_j'|}{\lam_j}+|\te_j'|\right) |t|^{-1},\label{eq:correction-t-H3}\\
        \|r^2\p_r^2 (\calH_{\calU}\eta)\|_{L^2(\udr)}&\lesssim |t|^{-1-\frac{1}{k-2}}.\label{eq:correction-H4}
    \end{align}
\end{lem}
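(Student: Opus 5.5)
We will treat \eqref{eq:eta-b}--\eqref{eq:eta-ortho} as a linear problem with Fredholm structure. Write $F:=T_{\calU}(\calU\wedge\De\calU)$ and $Y_j^{\lam}:=\lam_j^{-1}T_{\calU}(\p_{\lam_j}\calU)$, $Y_j^{\te}:=\lam_j^{-2}T_{\calU}(\p_{\te_j}\calU)$. The operator $\calH_{\calU}$ is self-adjoint, and by Lemma~\ref{lem:HU-coercive} it is coercive on the subspace $\calW_L$ cut out by the $2N$ localized orthogonality conditions. Since $\calU$ is built from the symmetry orbit of $Q$, the functions $T_{\calU}(\lam_j\p_{\lam_j}\calU)$ and $T_{\calU}(\p_{\te_j}\calU)$ are approximate kernel elements of $\calH_{\calU}$.

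The plan is to solve $\calH_{\calU}\eta=iF-\sum_j(b_j^{\lam}iY_j^{\lam}+b_j^{\te}iY_j^{\te})$ with $\eta\in\calW_L$, by a Lax--Milgram/Riesz argument on $\calW_L$ with respect to the form $\La\calH_{\calU}\cdot,\cdot\Ra$.

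\textbf{Step 1 (determining the $b_j$).} The variational problem on $\calW_L$ only produces $\calH_{\calU}\eta=\text{RHS}+\sum(\text{Lagrange multipliers})\,T_{\calU}(\chi_{L,j}e^{\te_jR}\Lam Q_{\lam_j})$, and similarly with $RQ$. The $2N$ free parameters $b$ must be chosen so that these multipliers vanish. Equivalently, I will pair the equation with approximate kernel elements $w_\ell$ (cutoffs of $T_{\calU}(\lam_\ell\p_{\lam_\ell}\calU)$ and $T_{\calU}(\p_{\te_\ell}\calU)$), for which $\La\calH_{\calU}\eta,w_\ell\Ra=\La\eta,\calH_{\calU}w_\ell\Ra$ is small. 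This gives a $2N\times2N$ linear system
\[
M\vec b=\big(\La iF,w_\ell\Ra\big)_\ell+\text{small}\cdot\|\eta\|.
\]
The matrix $M$ has entries $\La iY_j,w_\ell\Ra$. Using $i T_{\calU}\leftrightarrow$ rotation by $\calU\wedge$ and the identities $Q\wedge RQ=-\frac1k\Lam Q$, $Q\wedge\Lam Q=kRQ$, $M$ is diagonally dominant with diagonal entries $\sim\|\Lam Q\|^2,\|RQ\|^2$, exactly as in the coefficient analysis of Proposition~\ref{prop:mod}. The resulting fixed-point problem in $(\eta,\vec b)$ is then closed by a contraction for $T_0<T_L$, since all coupling terms carry factors $o_{L\to\infty}(1)+o_{T_0\to-\infty}(1)$. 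Uniqueness follows from coercivity.

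\textbf{Step 2 (bounds).} The size of $b$ is controlled by $\La iF,w_\ell\Ra$. By \eqref{eq:U-wedge-De-U-expansion-local}, $F$ is of order $r^{-2}F_jF_{j\pm1}$, giving $|b|\lesssim(\lam_j/\lam_{j+1})^k\sim|t|^{-1-\frac{2}{k-2}}$ after using the bootstrap scalings. Here I am using that $\lam_N\sim1$ and that the smallest ratio dominates.

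For \eqref{eq:correction-H1}, coercivity \eqref{eq:HU-coercove-H1} gives $\|\eta\|_{\calH^1}^2\lesssim|\La \text{RHS},\eta\Ra|$. I bound this via Hardy by $\|r\cdot\text{RHS}\|_{L^2}\|\eta\|_{\calH^1}$. The crucial point is that the choice of $b$ removes the leading projections, leaving $\|r\,\text{RHS}\|_{L^2}\lesssim|t|^{-1-\frac2{k-2}}$.

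For \eqref{eq:correction-H3}, I use \eqref{eq:HU-coercove-H3}, $\|\eta\|_{\calH^3}\le C_L^{-1}\|\text{RHS}\|_{\calH^1}$. Each term of the right-hand side is localized near $r\sim\lam_j$, where $\calH^1$ norms gain $\lam_j^{-2}$, so $\|\text{RHS}\|_{\calH^1}\lesssim\lam_1^{-2}(\lam_1/\lam_2)^k\sim|t|^{-1}$ by the relation $\lam_j\lam_j'\sim(\lam_j/\lam_{j+1})^k$ with $\lam_j'\sim\lam_j/|t|$. The same scaling gives $\lam_j^{-2}|b_j|\lesssim|t|^{-1}$. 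Estimate \eqref{eq:correction-H4} follows by reading $\calH_{\calU}\eta$ off the equation and estimating $r^2\p_r^2$ of the explicit right-hand side directly, without inverting.

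\textbf{Step 3 (time derivatives).} Differentiating \eqref{eq:eta-b}--\eqref{eq:eta-ortho} in $t$, the pair $(\p_t\eta,\p_t b)$ solves the same linear problem with the same matrix $M$. The new forcing is $-i(\p_t\calH_{\calU})\eta-\p_t F+\sum b\,\p_tY$, and the orthogonality data are $\La\ti T_{\calU}\eta,\p_t(\chi_{L,j}e^{\te_jR}\Lam Q_{\lam_j})\Ra+\La\p_t\ti T_{\calU}\,\eta,\dots\Ra$. Every $\p_t$ falls on $\calU$, the frame, or the parameters, and so produces a factor $\max(|\lam_j'|/\lam_j+|\te_j'|)$ (using Lemma~\ref{lem:frame-estimate} for $\p_t f_j$). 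No second derivatives of the parameters appear, because the $b$'s replaced $\lam_j',\te_j'$. The same estimates as in Step 2 then give \eqref{eq:correction-t-H1}--\eqref{eq:correction-t-H3}.

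The main obstacle is Step 1/2 for $k=3,4$. There the kernel elements $\Lam Q,RQ$ decay slowly, so the pairings are only approximately diagonal, with $L$-dependent errors. One must verify that these errors are genuinely small, which is where the logarithmic cutoff $\chi_L$ and its small derivative are used. One must also check that the $\calH^1$-level bound \eqref{eq:correction-H1} is $L$-independent; I will try to obtain this by using only \eqref{eq:HU-coercove-H1}, never $C_L$, at that level.
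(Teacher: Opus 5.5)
Your $\calH^1$-level construction is sound and matches Steps 1--2 of the paper, which tests the residual against the orthogonality directions themselves. That construction is Lax--Milgram on the orthogonality subspace, followed by choosing $b$ through a nearly diagonal $2N\times2N$ system so that the Lagrange multipliers vanish.

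\textbf{The gap} is the sentence ``the same scaling gives $\lam_j^{-2}|b_j|\lesssim|t|^{-1}$''. Your Step 2 only yields the global bound $|b|\lesssim|t|^{-1-\frac{2}{k-2}}$, which is the size $(\lam_{N-1}/\lam_N)^k$ of the outermost interaction. Estimate \eqref{eq:correction-H3} instead needs the localized bound $|b_j|\lesssim\lam_j^2|t|^{-1}$, which is much smaller for $j<N-1$.

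Your system does not produce this bound. Its right-hand side contains the coupling $\La\eta,\calH_{\calU}w_\ell\Ra$, which you control by a small constant times $\|\eta\|$. At the $\calH^1$ level this is only bounded by $o(1)\,|t|^{-1-\frac{2}{k-2}}$ for every $\ell$, and $\lam_j^{-2}$ times that is far larger than $|t|^{-1}$ when $j<N-1$.

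Much rests on this missing estimate:
\begin{itemize}
\item Since $\|\eta\|_{\calH^3}\lesssim_L\|T_{\calU}(\calU\wedge\De\calU)\|_{\calH^1}+\sum_j\lam_j^{-2}(|b_j^{\lam}|+|b_j^{\te}|)$, both \eqref{eq:correction-H3} and its differentiated version \eqref{eq:correction-t-H3} depend on it.
\item \eqref{eq:correction-H4} depends on it too, and ``estimating the explicit right-hand side directly'' is not enough there. The $b$-terms contribute $|b_j|/\lam_j$, and only the geometric mean of $|b_j|\lesssim|t|^{-1-\frac{2}{k-2}}$ and $|b_j|\lesssim\lam_j^2|t|^{-1}$ yields $|t|^{-1-\frac{1}{k-2}}$. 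Both bounds must hold with $L$-independent constants.
\end{itemize}

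\textbf{Localizing at the $\calH^3$ level is circular.} Suppose you bound the coupling by $\|\eta\|_{\calH^3}\|\calH_{\calU}(\lam_\ell^{-2}w_\ell)\|_{(\calH^3)^*}$. Then $\|\eta\|_{\calH^3}$ is itself controlled by $\sum_j\lam_j^{-2}|b_j|$, with the $L$-dependent constant of \eqref{eq:HU-coercove-H3}. The coupling must therefore be small relative to that constant. Errors of size $o_{L\to\infty}(1)$ from cutoffs at the same scale $L$ give no control of this product, since for $k=3,4$ the localized kernels are not uniformly bounded in $(\calH^3)^*$.

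\textbf{How the paper breaks the loop.} It pairs the exact equation with $\lam_\ell^{-2}T_{\calU}(\chi_{\ti L,\ell}e^{\te_\ell R}\Lam Q_{\lam_\ell})$ and $\lam_\ell^{-2}T_{\calU}(\chi_{\ti L,\ell}e^{\te_\ell R}RQ_{\lam_\ell})$, using an auxiliary cutoff $\ti L\gg L$. After fixing $\ti L$ and taking $T_0$ negative enough, this gives $\sum_j\lam_j^{-2}(|b_j^{\lam}|+|b_j^{\te}|)\lesssim|t|^{-1}+o_{\ti L\to\infty}(1)\|\eta\|_{\calH^3}$, which can then be absorbed.

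\textbf{A fix closer to your setup.} Test against the uncut tangent vectors $T_{\calU}(\lam_\ell\p_{\lam_\ell}\calU)$ and $T_{\calU}(\p_{\te_\ell}\calU)$. By \eqref{eq:eq:HU-2}, for $p\in\{\lam_\ell,\te_\ell\}$ one has exactly $\calH_{\calU}T_{\calU}(\p_p\calU)=iT_{\calU}(\p_p(\calU\wedge\De\calU))$. The coupling then carries the interaction smallness of bubble $\ell$, of size roughly $\lam_\ell^2|t|^{-1}\|\eta\|_{\calH^1}$ up to logarithms. The matrix rescaled by $\lam_j^2/\lam_\ell^2$ stays nearly diagonal for $k\geq3$, so the localized bound follows from the $\calH^1$ estimate alone.

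Either way, this localization is a genuine step you have to supply.
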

\begin{proof}
    We will use the inequalities of $T_{\calU}\left(\calU\wedge\De\calU\right)$ proven in Lemma~\ref{lem:F-estimate}. 
    Define 
    \begin{align}
        X=\{w\in \calH^1: \left\langle \ti T_{\calU}w, \chi_{L,j}e^{\te_j R}\Lam Q_{\lam_j}\right\rangle=\left\langle \ti T_{\calU}w,\chi_{L,j}e^{\te_j R}R Q_{\lam_j}\right\rangle=0,\ j=1,\dots,N\}
    \end{align}
    We view it as a real Hilbert space with the inner product inherit from $\calH^1$.\\
    \textbf{Step 1.}  For any fixed $\ti b_{j}^{\lam},\ti b_{j}^{\te}$,  we claim that, there exists a unique $\eta_{b}\in X$, such that
    \begin{align}\label{eq:claim-1}
        \La \calH_{\calU} \eta_b , v  \Ra=\La iT_{\calU}(\calU\wedge\De\calU)-\sum_{j=1}^N i \frac{\ti b_j^{\lam}}{\lam_j}T_{\calU}(\p_{\lam_j}\calU)-\sum_{j=1}^N i\frac{\ti b_j^{\te}}{\lam_j^2}T_{\calU}(\p_{\te_j}\calU)  ,v \Ra
    \end{align}
    holds for any $v\in X$.
    The proof of the claim is a standard application of Lax-Milgram Theorem. Now we examine the required conditions. First of all, using ~\eqref{eq:HU} and the fact that
    \begin{align}
        |\na\ti T_{\calU}z|\lesssim |\p_r z|+\frac{|z|}{r},\ |\na \calU|\lesssim\frac{1}{r},
    \end{align}
    we have
    \begin{align}
        \La\calH_{\calU} z,w\Ra\lesssim \| z \|_{\calH^1}\|w\|_{\calH^1}.
    \end{align}
     The coercivity of $\La\calH_{\calU} \cdot,\cdot\Ra$ is a consequence of Lemma~\ref{lem:HU-coercive}.
    Using Cauchy's inequality we have
     \begin{align}
         &\left|\La iT_{\calU}(\calU\wedge\De\calU)-\sum_{j=1}^N i \frac{\ti b_j^{\lam}}{\lam_j}T_{\calU}(\p_{\lam_j}\calU)-\sum_{j=1}^N i\frac{\ti b_j^{\te}}{\lam_j^2}T_{\calU}(\p_{\te_j}\calU)  ,v \Ra\right|\\
         \lesssim &\left(\|T_{\calU}(\calU\wedge\De\calU) \|_{(\calH^1)^*}+\sum_{j=1}^N\left(|\ti b_j^{\lam}|+|\ti b_j^{\te}|\right)\right)\|v\|_{\calH^1}.
     \end{align}
    Then Lax-Milgram Theorem yields the existence of $\eta_b\in X$ with
    \begin{align}\label{eq:etab-H1-raw}
        \|\eta_b\|_{\calH^1}\lesssim  \|T_{\calU}(\calU\wedge\De\calU) \|_{(\calH^1)^*}+\sum_{j=1}^N\left(|\ti b_j^{\lam}|+|\ti b_j^{\te}|\right).
    \end{align}
    \textbf{Step 2.} We claim that, there exist unique $ b_{j}^{\lam}, b_{j}^{\te}$ such that, for related $\eta$, there holds
    \begin{align}\label{eq:claim-2}
        \La \calH_{\calU} \eta , v  \Ra=\La iT_{\calU}(\calU\wedge\De\calU)-\sum_{j=1}^N i \frac{ b_j^{\lam}}{\lam_j}T_{\calU}(\p_{\lam_j}\calU)-\sum_{j=1}^N i\frac{ b_j^{\te}}{\lam_j^2}T_{\calU}(\p_{\te_j}\calU)  ,v \Ra
    \end{align}
    for any $v\in \calH^1$. Let
    \begin{align}
        R_b=\calH_{\calU} \eta-iT_{\calU}(\calU\wedge\De\calU)+\sum_{j=1}^N i \frac{ b_j^{\lam}}{\lam_j}T_{\calU}(\p_{\lam_j}\calU)+\sum_{j=1}^N i\frac{ b_j^{\te}}{\lam_j^2}T_{\calU}(\p_{\te_j}\calU).  
    \end{align}
    Combining with the result in Step 1, proving claim ~\eqref{eq:claim-2} is equivalent to finding $ b_{j}^{\lam}, b_{j}^{\te}$ such that, for related $\eta$, there holds
    \begin{align}\label{eq:claim-2-ortho}
        \left\langle R_b, T_{\calU}\left( \chi_{L,j}e^{\te_j R}\Lam Q_{\lam_j}\right)\right\rangle=\left\langle R_b, T_{\calU}\left(\chi_{L,j}e^{\te_j R}R Q_{\lam_j}\right)\right\rangle=0,\ j=1,\dots,N.
    \end{align}
Denote by $\eta_0$ the function obtained in Step 1 with $\ti b_j^{\lam}=\ti b_j^{\te}=0$ for all $j$. Since the right-hand side in ~\eqref{eq:claim-1} is linear in each $\ti b_j^{\lam}$ and $\ti b_j^{\te}$, taking $\ti b_j^{\lam}=1$ ( or $\ti b_j^{\te}=1$) with other parameters being zero, we obtain $2N$ functions $\eta_j^{\lam},\eta_j^{\te}$ independent of $\ti b_j^{\lam},\ti b_j^{\te}$ such that
    \begin{align}
        \eta_b=\eta_0+\sum_{j=1}^N\ti b_j^{\lam} (\eta_j^{\lam}-\eta_0)+\sum_{j=1}^N\ti b_j^{\te} (\eta_j^{\te}-\eta_0).
    \end{align}
    Then ~\eqref{eq:claim-2-ortho} is actually a linear system of $ b_j^{\lam}, b_j^{\te}$ . 
    Combining the argument as in Step 1 and ~\eqref{eq:F-p-X}, we have
    \begin{align}
        \|\eta_j^{\lam}-\eta_0\|_{\calH^1}+\| \eta_j^{\te}-\eta_0\|_{\calH^1}\lesssim o_{L\to\infty}(1)+\left(\frac{\lam_{j-1}}{\lam_{j}}\right)^k+\left(\frac{\lam_{j}}{\lam_{j+1}}\right)^k.
    \end{align}
     Denote by
    \begin{align}
         B_{j,\ell}^{\lam,\Lam}&=\La i\frac{1}{\lam_j}T_{\calU}(\p_{\lam_j}\calU),T_{\calU}\left( \chi_{L,\ell}e^{\te_{\ell} R}\Lam Q_{\lam_{\ell}}\right)\Ra+\La \calH_{\calU}(\eta_j^{\lam}-\eta_0),T_{\calU}\left( \chi_{L,\ell}e^{\te_{\ell} R}\Lam Q_{\lam_\ell}\right)\Ra,\\
        B_{j,\ell}^{\te,\Lam}&=\La i\frac{1}{\lam_j^2}T_{\calU}(\p_{\te_j}\calU),T_{\calU}\left( \chi_{L,\ell}e^{\te_\ell R}\Lam Q_{\lam_{\ell}}\right)\Ra+\La \calH_{\calU}(\eta_j^{\te}-\eta_0),T_{\calU}\left( \chi_{L,\ell}e^{\te_{\ell} R}\Lam Q_{\lam_{\ell}}\right)\Ra,\\
        B_{j,\ell}^{\lam,R}&=\La i\frac{1}{\lam_j}T_{\calU}(\p_{\lam_j}\calU),T_{\calU}\left( \chi_{L,\ell}e^{\te_{\ell} R}R Q_{\lam_{\ell}}\right)\Ra+\La \calH_{\calU}(\eta_j^{\lam}-\eta_0),T_{\calU}\left( \chi_{L,\ell}e^{\te_{\ell} R}R Q_{\lam_\ell}\right)\Ra,\\
         B_{j,\ell}^{\te,R}&=\La i\frac{1}{\lam_j^2}T_{\calU}(\p_{\te_j}\calU),T_{\calU}\left( \chi_{L,\ell}e^{\te_\ell R}R Q_{\lam_{\ell}}\right)\Ra+\La \calH_{\calU}(\eta_j^{\te}-\eta_0),T_{\calU}\left( \chi_{L,\ell}e^{\te_{\ell} R}R Q_{\lam_{\ell}}\right)\Ra.
    \end{align}
    Then for $\ell=1,\dots,N$, there holds
    \begin{equation}\label{eq:linear-bB-1}
        \begin{aligned}
             \sum_{j=1}^N b_j^{\lam}B_{j,\ell}^{\lam,\Lam}+\sum_{j=1}^N b_j^{\te}B_{j,\ell}^{\te,\Lam}&=\La -\calH_{\calU}\eta_0+iT_{\calU}(\calU\wedge\De\calU),T_{\calU}\left( \chi_{L,\ell}e^{\te_{\ell} R}\Lam Q_{\lam_{\ell}}\right)\Ra,\\
        \sum_{j=1}^N b_j^{\lam}B_{j,\ell}^{\lam,R}+\sum_{j=1}^N b_j^{\te}B_{j,\ell}^{\te,R}&=\La -\calH_{\calU}\eta_0+iT_{\calU}(\calU\wedge\De\calU),T_{\calU}\left( \chi_{L,\ell}e^{\te_{\ell} R}R Q_{\lam_{\ell}}\right)\Ra.
        \end{aligned}
    \end{equation}

    We first compute the dominant part of $B_{j,\ell}$.
    \begin{align}
        B_{j,\ell}^{\te,\Lam}&=\La i\frac{1}{\lam_j^2}T_{\calU}(\p_{\te_j}\calU),T_{\calU}\left( \chi_{L,\ell}e^{\te_{\ell} R}\Lam Q_{\lam_{\ell}}\right)\Ra+\La \calH_{\calU}(\eta_j^{\te}-\eta_0),T_{\calU}\left( \chi_{L,\ell}e^{\te_{\ell} R}\Lam Q_{\lam_{\ell}}\right)\Ra\\
        &=\La \frac{1}{\lam_j^2}\calU\wedge\p_{\te_j}\calU, \chi_{L,\ell}e^{\te_{\ell} R}\Lam Q_{\lam_{\ell}}\Ra+O(\|\eta_j^{\te}-\eta_0\|_{\calH^1}\|T_{\calU}(\chi_{L,\ell}e^{\te_{\ell} R}\Lam Q_{\lam_\ell}) \|_{\calH^1})\\
        &=-\frac{4\pi^2}{k\sin(\frac{\pi}{k})}\de_{j\ell}+o_{L\to\infty}(1),
    \end{align}
    where $\de_{j\ell}$ denotes the Kronecker delta. Similarly
    \begin{align}
         B_{j,\ell}^{\lam,\Lam}=o_{L\to\infty}(1),\ B_{j,\ell}^{\lam, R}=-\frac{4\pi^2}{k\sin(\frac{\pi}{k})}\de_{j\ell}+o_{L\to\infty}(1), \ B_{j,\ell}^{\te,R}=o_{L\to\infty}(1).
    \end{align}
    Here we write the remainder as $o_{L\to \infty}(1)$ since the choice of  $T_0$  is related to $L$. 

    Then we estimate the right hand side of ~\eqref{eq:linear-bB-1}. Using ~\eqref{eq:F-H-1} we obtain
    \begin{align}
        &\left|\La -\calH_{\calU}\eta_0+iT_{\calU}(\calU\wedge\De\calU),T_{\calU}\left( \chi_{L,\ell}e^{\te_{\ell} R}\Lam Q_{\lam_{\ell}}\right)\Ra\right|\\
        \lesssim& \|\eta_0\|_{\calH^1}\|T_{\calU}\left( \chi_{L,\ell}e^{\te_{\ell} R}\Lam Q_{\lam_{\ell}}\right) \|_{\calH^1}+\left|\La T_{\calU}(\calU\wedge\De\calU),T_{\calU}\left( \calU\wedge\chi_{L,\ell}e^{\te_{\ell} R}\Lam Q_{\lam_{\ell}}\right)\Ra\right|\\
        \lesssim& \|\eta_0\|_{\calH^1}+\| T_{\calU}(\calU\wedge\De\calU)\|_{(\calH^1)^*} 
        \lesssim \| T_{\calU}(\calU\wedge\De\calU)\|_{(\calH^1)^*} \lesssim |t|^{-1-\frac{2}{k-2}}.
    \end{align}
    Similarly
    \begin{align}
        \left|\La -\calH_{\calU}\eta_0+iT_{\calU}(\calU\wedge\De\calU),T_{\calU}\left( \chi_{L,\ell}e^{\te_{\ell} R}R Q_{\lam_{\ell}}\right)\Ra\right|\lesssim  |t|^{-1-\frac{2}{k-2}}.
    \end{align}
    
Therefore the coefficient matrix of $b_j^{\lam}$ and $b_{j}^{\te}$ is invertible, and  we obtain
    \begin{align}\label{eq:bj-H1}
        \sum_{j=1}^N(|b_j^{\lam}|+|b_{j}^{\te}|)\lesssim |t|^{-1-\frac{2}{k-2}}.
    \end{align}

    Combining the results in Step 1 and Step 2 yields the existence and uniqueness of $\eta$, $b_{j}^{\lam}$ and $b_j^{\te}$. ~\eqref{eq:correction-H1} follows from ~\eqref{eq:etab-H1-raw}, ~\eqref{eq:bj-H1} and ~\eqref{eq:F-H-1}.\\
    \textbf{Step 3.} Now we prove ~\eqref{eq:correction-H3} . 
    Using ~\eqref{eq:HU-coercove-H3} we have
    \begin{align}\label{eq:eta-H3-raw}
        \|\eta\|_{\calH^3}\lesssim_L \|\calH_{\calU}\eta\|_{\calH^1}\lesssim \|T_{\calU}\left(\calU\wedge \De\calU\right)\|_{\calH^1}+\sum_{j=1}^N\frac{1}{\lam_j^2}\left(|b_j^{\lam}|+|b_j^{\te}|\right).
    \end{align}
    Take $\ti L\gg L$ to be determined. Since $R_b=0$, we have the following quantities
    \begin{align}
        \lam_j^{-2}\left\langle R_b, T_{\calU}\left( \chi_{\ti L,j}e^{\te_j R}\Lam Q_{\lam_j}\right)\right\rangle=\lam_j^{-2}\left\langle R_b, T_{\calU}\left(\chi_{\ti L,j}e^{\te_j R}R Q_{\lam_j}\right)\right\rangle=0,\ j=1,\dots,N.
    \end{align}
    Define
    \begin{align}
        \ti B_{j,\ell}^{\lam,\Lam}&=\frac{\lam_j^2}{\lam_{\ell}^2}\La i\frac{1}{\lam_j}T_{\calU}(\p_{\lam_j}\calU),T_{\calU}\left( \chi_{\ti L,\ell}e^{\te_{\ell} R}\Lam Q_{\lam_{\ell}}\right)\Ra,\\
        \ti B_{j,\ell}^{\te,\Lam}&=\frac{\lam_j^2}{\lam_{\ell}^2}\La i\frac{1}{\lam_j^2}T_{\calU}(\p_{\te_j}\calU),T_{\calU}\left( \chi_{\ti L,\ell}e^{\te_{\ell} R}\Lam Q_{\lam_{\ell}}\right)\Ra,\\
        \ti B_{j,\ell}^{\lam,R}&=\frac{\lam_j^2}{\lam_{\ell}^2}\La i\frac{1}{\lam_j}T_{\calU}(\p_{\lam_j}\calU),T_{\calU}\left( \chi_{\ti L,\ell}e^{\te_{\ell} R}R Q_{\lam_{\ell}}\right)\Ra,\\
         \ti B_{j,\ell}^{\te,R}&=\frac{\lam_j^2}{\lam_{\ell}^2}\La i\frac{1}{\lam_j^2}T_{\calU}(\p_{\te_j}\calU),T_{\calU}\left( \chi_{\ti L,\ell}e^{\te_{\ell} R}R Q_{\lam_{\ell}}\right)\Ra.
    \end{align}
    Then for $\ell=1,\dots,N$, there holds
    \begin{align}
        \sum_{j=1}^N \frac{b_j^{\lam}}{\lam_j^{2}}\ti B_{j,\ell}^{\lam,\Lam}+\sum_{j=1}^N \frac{b_j^{\te}}{\lam_j^{2}}\ti B_{j,\ell}^{\te,\Lam}&=\frac{1}{\lam_{\ell}^{2}}\La -\calH_{\calU}\eta+iT_{\calU}(\calU\wedge\De\calU),T_{\calU}\left( \chi_{\ti L,\ell}e^{\te_{\ell} R}\Lam Q_{\lam_{\ell}}\right)\Ra,\\
        \sum_{j=1}^N \frac{b_j^{\lam}}{\lam_j^{2}}\ti B_{j,\ell}^{\lam,R}+\sum_{j=1}^N \frac{b_j^{\te}}{\lam_j^{2}}\ti B_{j,\ell}^{\te,R}&=\frac{1}{\lam_{\ell}^{2}}\La -\calH_{\calU}\eta+iT_{\calU}(\calU\wedge\De\calU),T_{\calU}\left( \chi_{\ti L,\ell}e^{\te_{\ell} R}R Q_{\lam_{\ell}}\right)\Ra.
    \end{align}
    Calculating in the similar way as in Step 2, we have
    \begin{align}
        \ti B_{j,\ell}^{\lam,\Lam}&=o_{\ti L\to \infty}(1)+o_{T_0\to -\infty}(1), \\
        \ti B_{j,\ell}^{\te,\Lam}&=-\frac{4\pi^2}{k\sin(\frac{\pi}{k})}\de_{j\ell}+o_{\ti L\to \infty}(1)+o_{T_0\to -\infty}(1),\\
        \ti B_{j,\ell}^{\lam,R}&=-\frac{4\pi^2}{k\sin(\frac{\pi}{k})}\de_{j\ell}+o_{\ti L\to \infty}(1)+o_{T_0\to -\infty}(1), \\
        \ti B_{j,\ell}^{\te,R}&=o_{\ti L\to \infty}(1)+o_{T_0\to -\infty}(1),\\
    \end{align}
   We estimate the right-hand side as
    \begin{align}
        &\left|\frac{1}{\lam_{\ell}^{2}}\La -\calH_{\calU}\eta+iT_{\calU}(\calU\wedge\De\calU),T_{\calU}\left( \chi_{\ti L,\ell}e^{\te_{\ell} R}\Lam Q_{\lam_{\ell}}\right)\Ra\right|\\
        \lesssim & \|\eta\|_{\calH^3}\|\calH_{\calU}\lam_{\ell}^{-2}T_{\calU}\left( \chi_{\ti L,\ell}e^{\te_{\ell} R}\Lam Q_{\lam_{\ell}}\right)\|_{(\calH^3)^*}+\|T_{\calU}(\calU\wedge\De\calU)\|_{\calH^1}\lam_{\ell}^{-2}\|T_{\calU}\left( \chi_{\ti L,\ell}e^{\te_{\ell} R}\Lam Q_{\lam_{\ell}}\right) \|_{(\calH^1)^*}\\
        \lesssim & \|\eta\|_{\calH^3}(o_{\ti L\to \infty}(1)+C(\ti L)o_{T_0\to-\infty}(1))+|t|^{-1}.
    \end{align}
    Note that the same estimates holds for $\frac{1}{\lam_{\ell}^{2}}\La -\calH_{\calU}\eta+iT_{\calU}(\calU\wedge\De\calU),T_{\calU}\left( \chi_{\ti L,\ell}e^{\te_{\ell} R}R Q_{\lam_{\ell}}\right)$.\\
    Therefore taking $T_0$ sufficiently negative after fixing $\ti L$ large enough, we have
    \begin{align}\label{eq:bj-weighted-high-order}
        \sum_{j=1}^N\frac{1}{\lam_j^2}\left(|b_j^{\lam}|+|b_j^{\te}|\right)\lesssim |t|^{-1}+o_{\ti L\to\infty}(1)\|\eta\|_{\calH^3},
    \end{align}
    which combined with ~\eqref{eq:eta-H3-raw} yields ~\eqref{eq:correction-H3} once we take $\ti L$ sufficiently large relative to $L$.\\
    \textbf{Step 4.} Now we prove ~\eqref{eq:correction-t-H1} and ~\eqref{eq:correction-t-H3}. Differentiating ~\eqref{eq:eta-b} by $t$, we obtain
    \begin{align}
        i\calH_{\calU}\p_t\eta+i(\p_t\calH_{\calU})\eta&=-\p_t\left(T_{\calU}(\calU\wedge\De\calU)\right)+\sum_{j=1}^N\frac{\p_tb_j^{\lam}}{\lam_j}T_{\calU}(\p_{\lam_j}\calU)+\sum_{j=1}^N\frac{\p_tb_j^{\te}}{\lam_j^2}T_{\calU}(\p_{\te_j}\calU)\\
        &\quad +\sum_{j=1}^Nb_j^{\lam} \p_t\left(\frac{1}{\lam_j}T_{\calU}(\p_{\lam_j}\calU)\right)+\sum_{j=1}^N b_j^{\te}\p_t\left(\frac{1}{\lam_j^2}T_{\calU}(\p_{\te_j}\calU)\right).
    \end{align}
Since $\eta$ satisfies ~\eqref{eq:eta-ortho}, we have for $j=1,\dots,N$,
    \begin{align}
        \left\langle \ti T_{\calU}\p_t\eta, \chi_{L,j}e^{\te_j R}\Lam Q_{\lam_j}\right\rangle&=-\left\langle \left(\p_t\ti T_{\calU}\right)\eta, \chi_{L,j}e^{\te_j R}\Lam Q_{\lam_j}\right\rangle-\left\langle \ti T_{\calU}\eta, \p_t\left(\chi_{L,j}e^{\te_j R}\Lam Q_{\lam_j}\right)\right\rangle,\\
         \left\langle \ti T_{\calU}\p_t\eta, \chi_{L,j}e^{\te_j R}R Q_{\lam_j}\right\rangle&=-\left\langle \left(\p_t\ti T_{\calU}\right)\eta, \chi_{L,j}e^{\te_j R}RQ_{\lam_j}\right\rangle-\left\langle \ti T_{\calU}\eta, \p_t\left(\chi_{L,j}e^{\te_j R}R Q_{\lam_j}\right)\right\rangle.
    \end{align}
    This combined with Lemma~\ref{lem:frame-estimate} and the estimates on $Q$ and $\chi_{L,j}$ yields
    \begin{align}
        \sum_{j=1}^N \lam_j^{-2}\left[\left|\left\langle \ti T_{\calU}\p_t\eta, \chi_{L,j}e^{\te_j R}\Lam Q_{\lam_j}\right\rangle\right|+\left|\left\langle \ti T_{\calU}\p_t\eta, \chi_{L,j}e^{\te_j R}RQ_{\lam_j}\right\rangle\right|\right]\lesssim \max_{1\leq j\leq N}\left(\frac{|\lam_j'|}{\lam_j}+|\te_j'|\right) \|\eta\|_{\calH^1},\\
        \sum_{j=1}^N \lam_j^{-4}\left[\left|\left\langle \ti T_{\calU}\p_t\eta, \chi_{L,j}e^{\te_j R}\Lam Q_{\lam_j}\right\rangle\right|+\left|\left\langle \ti T_{\calU}\p_t\eta, \chi_{L,j}e^{\te_j R}RQ_{\lam_j}\right\rangle\right|\right]\lesssim_L \max_{1\leq j\leq N}\left(\frac{|\lam_j'|}{\lam_j}+|\te_j'|\right) \|\eta\|_{\calH^3},
    \end{align}
    Then the proof proceeds in the same way as  in Step 1, Step 2, and Step 3 after taking the projection of $\p_t\eta$ onto $X$.\\
\textbf{Step 5.} Now we prove ~\eqref{eq:correction-H4}. 
    Note that ~\eqref{eq:bj-weighted-high-order} and ~\eqref{eq:correction-H3} yields that for $\ti L$ sufficiently large relative to $L$, we have
    \begin{align}
        \sum_{j=1}^N\frac{1}{\lam_j^2}\left(|b_j^{\lam}|+|b_j^{\te}|\right)\lesssim |t|^{-1}
    \end{align}
    with the implicit constant independent of $L$. Combining this estimate with ~\eqref{eq:correction-H1}, ~\eqref{eq:F-pointwise}, and ~\eqref{eq:U-tangent-pointwise} we have
    \begin{align}
        &\quad \ \|r^2\p_r^2 (\calH_{\calU}\eta)\|_{L^2(\udr)}\\
        &\lesssim \|r^2\p_r^2 (T_{\calU}(\calU\wedge\De\calU))\|_{L^2(\udr)}\\
        &\quad +\sum_{j=1}^N \frac{|b_j^{\lam}|}{\lam_j}\|r^2\p_r^2 (T_{\calU}(\p_{\lam_j}\calU))\|_{L^2(\udr)}+\sum_{j=1}^N \frac{|b_j^{\te}|}{\lam_j^2}\|r^2\p_r^2 (T_{\calU}(\p_{\te_j}\calU))\|_{L^2(\udr)}\\
        &\lesssim \|r^2\p_r^2 (T_{\calU}(\calU\wedge\De\calU))\|_{L^2(\udr)}\\
        &\quad +\sum_{j=1}^N \min\{ |t|^{-1-\frac{2}{k-2}}\lam_j^{-1}, |t|^{-1}\lam_j\}\left[\|r^2\p_r^2 (T_{\calU}(\p_{\lam_j}\calU))\|_{L^2(\udr)}+\frac{1}{\lam_j}\|r^2\p_r^2 (T_{\calU}(\p_{\te_j}\calU))\|_{L^2(\udr)}\right]\\
        &\lesssim |t|^{-1-\frac{1}{k-2}}.
    \end{align}
\end{proof}

\begin{lem}\label{lem:b-lam-close}
    There exists $L_0>0$, such that for $L\geq L_0$, there exists $\hat \de_0>0$, such that for any $0<\de_0<\hat\de_0$, there exists $\hat T_0<0$, such that for any $T_0<\hat T_0$, the following holds:
        For $b_j^{\lam},b_j^{\te}$ constructed in Lemma~\ref{lem:correction-z-eta}, there holds
        \begin{align}\label{eq:bj-weighted-sum}
        &\left(\sum_{j=1}^N\frac{\de_0^{N-j}}{\lam_j^5}\left(|b_j^{\lam}-\lam_j\lam_j'|^2+|b_j^{\te}-\lam_j^2\te_j'|^2\right)\right)^{\frac{1}{2}}\\
        \lesssim & \left(o_{L}(1)+C(L)\max_{1\leq j\leq N-1}\left(\frac{\lam_j}{\lam_{j+1}}\right)^{k}\right)\left(\sum_{j=1}^N\de_0^{N-j}\|A_{\lam_j}(z+\eta)\|_{\rm{Mor}_2(\lam_j)}^2\right)^{\frac{1}{2}}\\
        &+C(L)(|t|^{-2}+\|z\|_{\calH^3}^2).
    \end{align}
\end{lem}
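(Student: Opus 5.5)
The plan is to compare the modulation system obtained by differentiating the orthogonality conditions \eqref{eq:g-ortho} with the system \eqref{eq:linear-bB-1} that defines $b_j^{\lam},b_j^{\te}$, and to subtract the two. By \eqref{eq:g-z-equivalent} and \eqref{eq:PU-g-z}, $g=\sqrt{1-|z|^2/4}\,\ti T_{\calU}z-\tfrac12|z|^2\calU$, so the conditions \eqref{eq:g-ortho} can be rewritten, up to quadratic terms in $z$, as $\La \ti T_{\calU}z,\chi_{L,j}e^{\te_jR}\Lam Q_{\lam_j}\Ra=O(\|z\|^2)$, and similarly with $RQ_{\lam_j}$. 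Combined with \eqref{eq:eta-ortho}, $w:=z+\eta$ nearly satisfies the orthogonality conditions.

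Next I would differentiate these conditions in time and insert the equation
\[
\p_t z=-iG_z\calH_{\calU}z+T_{\calU}(\calU\wedge\De\calU-\p_t\calU)+\calR_{\rm NL}+\calR_{\rm mod},
\]
together with $-T_{\calU}(\calU\wedge\De\calU)=i\calH_{\calU}\eta-\sum_j\frac{b_j^{\lam}}{\lam_j}T_{\calU}\p_{\lam_j}\calU-\sum_j\frac{b_j^{\te}}{\lam_j^2}T_{\calU}\p_{\te_j}\calU$ from \eqref{eq:eta-b}. Since $T_{\calU}\p_t\calU=\sum_j\big(\lam_j'T_{\calU}\p_{\lam_j}\calU+\te_j'T_{\calU}\p_{\te_j}\calU\big)$, this gives
\[
\p_t z=-i\calH_{\calU}(z+\eta)-\sum_j\frac{\lam_j\lam_j'-b_j^{\lam}}{\lam_j^2}T_{\calU}\p_{\lam_j}\calU-\sum_j\frac{\lam_j^2\te_j'-b_j^{\te}}{\lam_j^2}T_{\calU}\p_{\te_j}\calU+\text{(errors)}.
\]
Pairing with $T_{\calU}(\chi_{L,\ell}e^{\te_\ell R}\Lam Q_{\lam_\ell})$ and $T_{\calU}(\chi_{L,\ell}e^{\te_\ell R}RQ_{\lam_\ell})$, and using the nondegenerate Gram matrix computed in Step~2 of Lemma~\ref{lem:correction-z-eta} (diagonal entries $\sim\|RQ\|^2,\|\Lam Q\|^2$), yields a linear system for $d_j^{\lam}=b_j^{\lam}-\lam_j\lam_j'$ and $d_j^{\te}=b_j^{\te}-\lam_j^2\te_j'$. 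The off-diagonal couplings are bounded as in \eqref{eq:coefficient-off-diagonal-2}, namely $(\lam_\ell/\lam_j)^{k-2}$ for $\ell<j$ and $(\lam_j/\lam_\ell)^k$ for $\ell>j$. The weights $\de_0^{N-j}$ make this lower-triangular-dominated system invertible in the weighted $\ell^2$ norm, since choosing $\de_0$ small absorbs the couplings.

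The main forcing term is $\La i\calH_{\calU}(z+\eta),T_{\calU}(\chi_{L,j}\Lam Q_{\lam_j})\Ra$. I would localize near bubble $j$, where $\calH_{\calU}\approx H_{\lam_j}=A_{\lam_j}^*A_{\lam_j}$, and integrate by parts to obtain $\La A_{\lam_j}(z+\eta),A_{\lam_j}(\chi_{L,j}\cdot)\Ra$. Since $A_{\lam_j}\sin\phi=0$, only derivatives of the cutoff survive. The logarithmic cutoff gives $|\p_r\chi_{L}|\lesssim (r\log L)^{-1}$ on $[L,L^2]$, and Cauchy--Schwarz against the weight $\lam/(\lam+r)^2$ yields the factor $o_L(1)\lam_j^{5/2}\|A_{\lam_j}(z+\eta)\|_{\rm Mor_2(\lam_j)}$. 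I expect this to be the main obstacle: for $k=3$ the decay of $\Lam Q$ is slow, and checking that the logarithmic cutoff really produces $o_L(1)$ with the $\lam_j^{-5}$ normalization is the delicate point. The contributions of $\E_{\calU}$ and of $V_\ell$, $\ell\neq j$, are controlled by $C(L)\max_j(\lam_j/\lam_{j+1})^k$ times the same Morawetz quantity.

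The remaining terms should all fall into the $C(L)(|t|^{-2}+\|z\|_{\calH^3}^2)$ part of the bound:
\begin{itemize}
\item The time derivatives of the test functions, paired with $z$ and $\eta$, which are bounded using \eqref{eq:correction-H1} and the rough modulation estimates.
\item The quadratic terms $\calR_{\rm NL}$, $K_z\calH_{\calU}z$ and $\calR_{\rm mod}$.
\item The product $|\lam_j'|\|\eta\|$, estimated via \eqref{eq:lam'-rough-mod}, \eqref{eq:te'-rough-mod} and \eqref{eq:correction-H1}.
\end{itemize}
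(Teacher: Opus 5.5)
Your overall route is the paper's. You use \eqref{eq:eta-b} to trade $T_{\calU}(\calU\wedge\De\calU)$ for $-i\calH_{\calU}\eta$ plus the $b$-terms. The differentiated orthogonality conditions then give a nearly diagonal system for $b_j^{\lam}-\lam_j\lam_j'$ and $b_j^{\te}-\lam_j^2\te_j'$. The paper does this in the vector form \eqref{eq:rough-modulation-lambda-raw}--\eqref{eq:rough-modulation-theta-raw} for $g$ rather than in $z$-coordinates, which is equivalent. The main forcing $\La A_{\lam_j}(z+\eta),A_{\lam_j}\big(iT_{\calU}(\chi_{L,j}e^{\te_jR}\Lam Q_{\lam_j})\big)\Ra$ is then estimated by Cauchy--Schwarz against the weight $\frac{(\lam_j+r)^2}{\lam_j}r^4$, and the logarithmic cutoff gives $\lam_j^5/\log L$.

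\textbf{The error allocation in this main term fails as written.} You claim the contribution of $\calH_{\calU}-H_{\lam_j}=\sum_{\ell\neq j}V_\ell+\calE_{\calU}$ is bounded by $C(L)\max_\ell(\lam_\ell/\lam_{\ell+1})^k\|A_{\lam_j}(z+\eta)\|_{\mathrm{Mor}_2(\lam_j)}$. This cannot hold, because $A_{\lam_j}$ annihilates $\sin\phi(r/\lam_j)$ and $i\sin\phi(r/\lam_j)$. For a suitable complex multiple $y$ of $\sin\phi(r/\lam_j)$ one has $A_{\lam_j}y=0$, yet $\La V_\ell y,iT_{\calU}(\chi_{L,j}e^{\te_jR}\Lam Q_{\lam_j})\Ra\neq0$. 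The reason is that $V_\ell<0$ and, on the core region, the test function is close to a fixed complex multiple of $\chi_{L,j}\sin\phi(r/\lam_j)$ (Lemma~\ref{lem:frame-compare}). Controlling $z+\eta$ by $A_{\lam_j}(z+\eta)$ would require Lemma~\ref{lem:Alam-coercive}, the near-orthogonality of $z+\eta$, and the frame comparison, none of which you invoke.

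The paper instead puts this term into $C(L)(|t|^{-2}+\|z\|_{\calH^3}^2)$. After the normalization $\lam_j^{-5/2}$ it is $\lesssim C(L)\lam_j^{-1/2}\big[(\lam_{j-1}/\lam_j)^k+(\lam_j/\lam_{j+1})^k\big]\|z+\eta\|_{\calH^3}$. It then suffices to use $(\lam_{j-1}/\lam_j)^k+(\lam_j/\lam_{j+1})^k\lesssim\lam_j^2|t|^{-1}$ and $\|\eta\|_{\calH^3}\lesssim_L|t|^{-1}$ from \eqref{eq:correction-H3}.

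\textbf{The cutoff is not the only surviving contribution.} Your claim that ``only derivatives of the cutoff survive'' is false, and the missing piece is where the coefficient $C(L)\max_\ell(\lam_\ell/\lam_{\ell+1})^k$ in \eqref{eq:bj-weighted-sum} actually comes from. The test function is built from the frame of $\calU$, not from that of $e^{\te_jR}Q_{\lam_j}$, and one computes
\[
A_{\lam_j}\big(T_{\calU}(e^{\te_jR}\Lam Q_{\lam_j})\big)=\frac{k^2\sin^2\phi(r/\lam_j)}{r}\,T_{\calU}(e^{\te_jR}Q_{\lam_j})+\big(e^{\te_jR}\Lam Q_{\lam_j}\cdot\calU\big)\,T_{\calU}(\p_r\calU).
\]
This is nonzero because $\calU\neq\pm e^{\te_jR}Q_{\lam_j}$. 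Both terms are governed by the neighbouring bubbles. Against the weight they contribute $C(L)\lam_j^5\big[(\lam_{j-1}/\lam_j)^{2k}+(\lam_j/\lam_{j+1})^{2k}\big]$, alongside the $\lam_j^5/\log L$ from $\p_r\chi_{L,j}$. So the two error sources must be allocated the other way round from your proposal.

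\textbf{Minor points.}
\begin{itemize}
\item The $\lam$-difference enters as $\frac{b_j^{\lam}-\lam_j\lam_j'}{\lam_j}T_{\calU}(\p_{\lam_j}\calU)$, not with $\lam_j^2$ in the denominator.
\item The relevant coefficient matrix is \eqref{eq:coefficient-diagonal}--\eqref{eq:coefficient-off-diagonal-2}, with no factor $i$, not the one from Step~2 of Lemma~\ref{lem:correction-z-eta}.
\item After normalizing the unknowns by $\de_0^{(N-j)/2}\lam_j^{-5/2}$, the couplings are $\lesssim\de_0^{-(j-\ell)/2}(\lam_\ell/\lam_j)^{k+1/2}$ for $\ell<j$ and $\de_0^{(\ell-j)/2}(\lam_j/\lam_\ell)^{k-5/2}$ for $\ell>j$. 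Invertibility therefore comes from scale separation ($T_0\to-\infty$ after $\de_0$ is fixed, using $k\geq3$), not from choosing $\de_0$ small.
\end{itemize}
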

\begin{rem}
    The reason why we choose logarithmic cutoff  function $\chi_L$ is shown in the proof.
\end{rem}
\begin{proof}
        The equation ~\eqref{eq:eta-b} yields
        \begin{equation}\label{eq:U-De-U-eta}
            \begin{aligned}
                 \calU\wedge\De\calU&=\calL_{\calU}\ti T_{\calU}\eta-((\calL_{\calU}\ti T_{\calU}\eta)\cdot \calU)\calU+\sum_{\ell=1}^N \frac{b_{\ell}^{\lam}}{\lam_{\ell}}\p_{\lam_{\ell}}\calU+\sum_{\ell=1}^N \frac{b_{\ell}^{\te}}{\lam_{\ell}^2}\p_{\te_{\ell}}\calU\\
            &=\calL_{\calU}\ti T_{\calU}\eta+((\calU\wedge\De\calU)\cdot\ti T_{\calU}\eta)\calU+\sum_{\ell=1}^N \frac{b_{\ell}^{\lam}}{\lam_{\ell}}\p_{\lam_{\ell}}\calU+\sum_{\ell=1}^N \frac{b_{\ell}^{\te}}{\lam_{\ell}^2}\p_{\te_{\ell}}\calU.
            \end{aligned}
        \end{equation}
        Inserting ~\eqref{eq:U-De-U-eta} into ~\eqref{eq:rough-modulation-lambda-raw} and ~\eqref{eq:rough-modulation-theta-raw} yields
        \begin{equation}\label{eq:b-lambda-raw}
        \begin{aligned}
            &\frac{\de_0^{\frac{N-j}{2}}}{\lam_j^{\frac{5}{2}}}(\lam_j\lam_j'-b_j^{\lam}) \frac{1}{\lam_j}\La \p_{\lam_j}\calU , \chi_{L,j}e^{\te_j R}\Lam Q_{\lam_j}\Ra
        +\frac{\de_0^{\frac{N-j}{2}}}{\lam_j^{\frac{5}{2}}}(\lam_j^2\te_j'-b_{j}^{\te})\frac{1}{\lam_j^2}\La \p_{\te_j}\calU , \chi_{L,j}e^{\te_j R}\Lam Q_{\lam_j}\Ra\\
        &+\sum_{\ell\neq j}\frac{\de_0^{\frac{N-\ell}{2}}}{\lam_{\ell}^{\frac{5}{2}}}(\lam_\ell\lam_\ell'-b_{\ell}^{\lam} )\de_0^{\frac{\ell-j}{2}}\left(\frac{\lam_{\ell}}{\lam_j}\right)^{\frac{5}{2}}\frac{1}{\lam_{\ell}}\La \p_{\lam_\ell}\calU , \chi_{L,j}e^{\te_j R}\Lam Q_{\lam_j}\Ra\\
        &+\sum_{\ell\neq j} \frac{\de_0^{\frac{N-\ell}{2}}}{\lam_{\ell}^{\frac{5}{2}}}(\lam_\ell^2\te_\ell'-b_{\ell}^{\te})\de_0^{\frac{\ell-j}{2}}\left(\frac{\lam_{\ell}}{\lam_j}\right)^{\frac{5}{2}}\frac{1}{\lam_{\ell}^2}\La \p_{\te_\ell}\calU , \chi_{L,j}e^{\te_j R}\Lam Q_{\lam_j}\Ra \\
        =& \frac{\de_0^{\frac{N-j}{2}}}{\lam_j^{\frac{5}{2}}}\Big[\La \calL_{\calU}\ti T_{\calU}\eta+((\calU\wedge\De\calU)\cdot\ti T_{\calU}\eta) \calU,  \chi_{L,j}e^{\te_j R}\Lam Q_{\lam_j}\Ra+  \La \calU\wedge\De g,  \chi_{L,j}e^{\te_j R}\Lam Q_{\lam_j}\Ra\\
        &+  \La g\wedge\De \calU,  \chi_{L,j}e^{\te_j R}\Lam Q_{\lam_j}\Ra+  \La g\wedge\De g,  \chi_{L,j}e^{\te_j R}\Lam Q_{\lam_j}\Ra\\
        &+ \lam_j\lam_j'\frac{1}{\lam_j}\La g , \p_{\lam_j}\left( \chi_{L,j}e^{\te_j R}\Lam Q_{\lam_j}\right)\Ra + \lam_j^2\te_j'\frac{1}{\lam_j^2}\La g , \p_{\te_j}\left( \chi_{L,j}e^{\te_j R}\Lam Q_{\lam_j}\right)\Ra\Big]
        \end{aligned}
    \end{equation}

    \begin{equation}\label{eq:b-theta-raw}
        \begin{aligned}
           &\frac{\de_0^{\frac{N-j}{2}}}{\lam_j^{\frac{5}{2}}}(\lam_j\lam_j'-b_j^{\lam}) \frac{1}{\lam_j}\La \p_{\lam_j}\calU , \chi_{L,j}e^{\te_j R}RQ_{\lam_j}\Ra
        +\frac{\de_0^{\frac{N-j}{2}}}{\lam_j^{\frac{5}{2}}}(\lam_j^2\te_j'-b_{j}^{\te})\frac{1}{\lam_j^2}\La \p_{\te_j}\calU , \chi_{L,j}e^{\te_j R}RQ_{\lam_j}\Ra\\
        &+\sum_{\ell\neq j}\frac{\de_0^{\frac{N-\ell}{2}}}{\lam_{\ell}^{\frac{5}{2}}}(\lam_\ell\lam_\ell'-b_{\ell}^{\lam} )\de_0^{\frac{\ell-j}{2}}\left(\frac{\lam_{\ell}}{\lam_j}\right)^{\frac{5}{2}}\frac{1}{\lam_{\ell}}\La \p_{\lam_\ell}\calU , \chi_{L,j}e^{\te_j R}RQ_{\lam_j}\Ra\\
        &+\sum_{\ell\neq j} \frac{\de_0^{\frac{N-\ell}{2}}}{\lam_{\ell}^{\frac{5}{2}}}(\lam_\ell^2\te_\ell'-b_{\ell}^{\te})\de_0^{\frac{\ell-j}{2}}\left(\frac{\lam_{\ell}}{\lam_j}\right)^{\frac{5}{2}}\frac{1}{\lam_{\ell}^2}\La \p_{\te_\ell}\calU , \chi_{L,j}e^{\te_j R}RQ_{\lam_j}\Ra \\
        =& \frac{\de_0^{\frac{N-j}{2}}}{\lam_j^{\frac{5}{2}}}\Big[\La \calL_{\calU}\ti T_{\calU}\eta+((\calU\wedge\De\calU)\cdot\ti T_{\calU}\eta) \calU,  \chi_{L,j}e^{\te_j R}RQ_{\lam_j}\Ra+  \La \calU\wedge\De g,  \chi_{L,j}e^{\te_j R}RQ_{\lam_j}\Ra\\
        &+  \La g\wedge\De \calU,  \chi_{L,j}e^{\te_j R}RQ_{\lam_j}\Ra+  \La g\wedge\De g,  \chi_{L,j}e^{\te_j R}RQ_{\lam_j}\Ra\\
        &+ \lam_j\lam_j'\frac{1}{\lam_j}\La g , \p_{\lam_j}\left( \chi_{L,j}e^{\te_j R}RQ_{\lam_j}\right)\Ra + \lam_j^2\te_j'\frac{1}{\lam_j^2}\La g , \p_{\te_j}\left( \chi_{L,j}e^{\te_j R}R Q_{\lam_j}\right)\Ra\Big]
        \end{aligned}
    \end{equation}
Using ~\eqref{eq:coefficient-diagonal}, ~\eqref{eq:coefficient-off-diagonal-1}, ~\eqref{eq:coefficient-off-diagonal-2} we obtain the estimates for the coefficient matrix of linear systems of ~\eqref{eq:b-lambda-raw}, ~\eqref{eq:b-theta-raw}. Now we estimate the forcing terms. For forcing terms in ~\eqref{eq:b-lambda-raw}, recalling ~\eqref{eq:notations-TPLTE}, ~\eqref{eq:PU-g-z},  we rearrange
    \begin{align}
        &\calL_{\calU}\ti T_{\calU}\eta+((\calU\wedge\De\calU)\cdot\ti T_{\calU}\eta) \calU+\calU\wedge\De g+g\wedge\De \calU+  g\wedge\De g\\
        =&P_{\calU}\calL_{\calU}\ti T_{\calU}(z+\eta)-((\calU\wedge\De\calU)\cdot\ti T_{\calU}z)\calU+\calL_{\calU}\left(\left(\sqrt{1-\frac{|z|^2}{4}}-1\right)\ti T_{\calU}z\right)+N_{\calU}g.
    \end{align}
    Since $P_{\calU}\calL_{\calU}\ti T_{\calU}(z+\eta) \cdot  \calU=0 $, we have
    \begin{align}
        &\La P_{\calU}\calL_{\calU}\ti T_{\calU}(z+\eta),\chi_{L,j}e^{\te_j R}\Lam Q_{\lam_j}\Ra\\
        = &\La T_{\calU}\left( P_{\calU}\calL_{\calU}\ti T_{\calU}(z+\eta)\right), T_{\calU}\left(\chi_{L,j}e^{\te_j R}\Lam Q_{\lam_j}\right)  \Ra\\
        =&\La T_{\calU}\left(\calL_{\calU}\ti T_{\calU}(z+\eta)\right), T_{\calU}\left(\chi_{L,j}e^{\te_j R}\Lam Q_{\lam_j}\right)  \Ra\\
        =&\La H_{\lam_j}(z+\eta), iT_{\calU}\left(\chi_{L,j}e^{\te_j R}\Lam Q_{\lam_j}\right)  \Ra+\La (\calH_{\calU}-H_{\lam_j})(z+\eta), iT_{\calU}\left(\chi_{L,j}e^{\te_j R}\Lam Q_{\lam_j}\right)  \Ra.
    \end{align}
    We estimate
    \begin{align}
        &\left|\La H_{\lam_j}(z+\eta), iT_{\calU}\left(\chi_{L,j}e^{\te_j R}\Lam Q_{\lam_j}\right)  \Ra\right|\\
        \lesssim & \|A_{\lam_j}(z+\eta)\|_{\rm{Mor}_2(\lam_j)}\left(\int_0^{\infty}  \frac{(\lam_j+r)^2}{\lam_j}r^4\left|A_{\lam_j}\left(iT_{\calU}\left(\chi_{L,j}e^{\te_j R}\Lam Q_{\lam_j}\right) \right)\right|^2\udr\right)^{\frac{1}{2}}.
    \end{align}
    Since
    \begin{align}
        &\left|A_{\lam_j}\left(iT_{\calU}\left(\chi_{L,j}e^{\te_j R}\Lam Q_{\lam_j}\right) \right)\right|\\
        =& \left|-\p_r(\chi_{L,j} )iT_{\calU}\left(e^{\te_j R}\Lam Q_{\lam_j}\right) +\chi_{L,j}A_{\lam_j}\left(iT_{\calU}\left(e^{\te_j R}\Lam Q_{\lam_j}\right) \right)\right|\\
        =&\left|-\p_r(\chi_{L,j} )iT_{\calU}\left(e^{\te_j R}\Lam Q_{\lam_j}\right) +i\chi_{L,j}\frac{k^2\sin^2\phi(r/\lam_j)}{r}T_{\calU}(e^{\te_j R} Q_{\lam_j})+i\chi_{L,j}\left(e^{\te_j R}\Lam Q_{\lam_j}\cdot\calU\right)(T_{\calU}(\p_r\calU))\right|\\
        \lesssim & |\p_r\chi_{L,j}|F_j+\chi_{L,j}\frac{F_j^2}{r}|T_{\calU}(\calU-(-1)^{j-1}e^{\te_j R} Q_{\lam_j})|+\chi_{L,j}F_{j}\sum_{\ell\neq j}F_{\ell} \frac{1}{r}\sum_{\ell=1}^NF_{\ell}\\
        \lesssim & |\p_r\chi_{L,j}|F_j+\chi_{L,j}\frac{F_j^2}{r}\sum_{\ell\neq j}F_{\ell}+\chi_{L,j}F_{j}\sum_{\ell\neq j}F_{\ell} \frac{1}{r}\sum_{\ell=1}^NF_{\ell}\\
        \lesssim &  |\p_r\chi_{L,j}|F_j+\chi_{L,j}\frac{F_j}{r}\sum_{\ell\neq j}F_{\ell},
    \end{align}
    we have \footnote{Note that if we use cutoff function like $\chi(r/L)$, we cannot get the coefficient $o_{L\to\infty}(1)$ when $k=3$, which is necessary for the arguments in the remaining of Section ~\ref{sec:high-Sobolev}.}
    \begin{align}
        &\int_0^{\infty}  \frac{(\lam_j+r)^2}{\lam_j}r^4\left|A_{\lam_j}\left(iT_{\calU}\left(\chi_{L,j}e^{\te_j R}\Lam Q_{\lam_j}\right) \right)\right|^2\udr\\
        \lesssim & \int_0^{\infty}  \frac{(\lam_j+r)^2}{\lam_j}r^4\left||\p_r\chi_{L,j}|F_j+\chi_{L,j}\frac{F_j}{r}\sum_{\ell\neq j}F_{\ell}\right|^2\udr\\
        \lesssim &\ \frac{\lam_j^5}{\log L}+C(L)\lam_j^5\left[\left(\frac{\lam_{j-1}}{\lam_j}\right)^{2k}+\left(\frac{\lam_{j}}{\lam_{j+1}}\right)^{2k}\right].
    \end{align}
    Therefore
    \begin{align}
        &\frac{1}{\lam_j^{\frac{5}{2}}}\left|\La H_{\lam_j}(z+\eta), iT_{\calU}\left(\chi_{L,j}e^{\te_j R}\Lam Q_{\lam_j}\right)  \Ra\right|\\
        \lesssim & \|A_{\lam_j}(z+\eta)\|_{\rm{Mor}_2(\lam_j)}\left[\frac{1}{\sqrt{\log L}}+C(L)\left[\left(\frac{\lam_{j-1}}{\lam_j}\right)^{k}+\left(\frac{\lam_{j}}{\lam_{j+1}}\right)^{k}\right]\right].
    \end{align}
    
    For $\lam_j^{-\frac{5}{2}}\La (\calH_{\calU}-H_{\lam_j})(z+\eta), iT_{\calU}\left(\chi_{L,j}e^{\te_j R}\Lam Q_{\lam_j}\right)  \Ra $, using  ~\eqref{eq:Vj-estimate-pointwise}, ~\eqref{eq:EU-estimate-pointwise}, ~\eqref{eq:correction-H3} and bootstrap assumptions, it can be bounded by $C(L)(|t|^{-2}+\|z\|_{\calH^3}^2)$.
    For terms containing $-((\calU\wedge\De\calU)\cdot\ti T_{\calU}z)\calU+\calL_{\calU}\left(\left(\sqrt{1-\frac{|z|^2}{4}}-1\right)\ti T_{\calU}z\right)+N_{\calU}g$,  using Lemma ~\ref{lem:frame-estimate},  the expansion ~\eqref{eq:U-wedge-De-U-expansion-local} and other explicit estimates on $\calU$, we obtain the bound $C(L)(|t|^{-2}+\|z\|_{\calH^3}^2)$.
    For terms containing $\lam_j'$ or $\te_j'$, we use ~\eqref{eq:lam'-rough-mod} and ~\eqref{eq:te'-rough-mod} to obtain the bound $C(L)(|t|^{-2}+\|z\|_{\calH^3}^2)$.

    The forcing terms in ~\eqref{eq:b-theta-raw} can be estimated in the same way.

    Combining the estimates on the coefficient matrix and forcing terms, we obtain  ~\eqref{eq:bj-weighted-sum}.
    
\end{proof}

\subsection{Localized Morawetz correction}

Now we define the truncated version of $\ULam=r\p_r+1$.
For $R>0$ sufficiently large, let
\begin{align}
    \psi(x)=\frac{x}{\la x \ra}-\de \frac{x}{\la x \ra^2},\ \varphi(x)=R\psi\left(\frac{x}{R}\right),\ \varphi_{\lam}(x)=\varphi\left(\frac{x}{\lam}\right).
\end{align}
Define
\begin{align}
    \Lam_{\lam}z=\varphi_{\lam}\p_rz+\frac{1}{2}\left(\p_r\varphi_{\lam}+\frac{1}{r}\varphi_{\lam}\right)z.
\end{align}

\begin{lem}
    \label{lem:Morawetz-one}
    There exist $0<\de\ll 1$, $R\gg 1$ such that, there exists $c_1>0$ only depending on $k,\de,R$ such that for complex-valued function $w$ we have
    \begin{align}
       \La \ti H_{\lam}^2w,\Lam_{\lam}w\Ra\geq c_1\|w\|_{\rm{Mor}_2(\lam)}^2.
    \end{align}
\end{lem}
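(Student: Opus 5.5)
By scaling ($w\mapsto w(\lam\,\cdot)$, under which both sides scale identically because $\varphi_\lam(r)=\lam\varphi(r/\lam)$ and the $\rm{Mor}_2(\lam)$ weight $\frac{\lam}{(\lam+r)^2}$ is scale-covariant), we reduce to $\lam=1$. Write $\ti H=\ti H_1=A A^*$ and $\Lam_1=\varphi\p_r+\frac12(\varphi'+\varphi/r)$. The operator $\Lam_1$ is formally skew-adjoint on $L^2(r\,dr)$. Hence
\begin{align}
\La \ti H^2 w,\Lam_1 w\Ra=\tfrac12\La [\ti H^2,\Lam_1]w,w\Ra .
\end{align}
We expand $[\ti H^2,\Lam_1]=\ti H[\ti H,\Lam_1]+[\ti H,\Lam_1]\ti H$, and rewrite the pairing as $\La [\ti H,\Lam_1]w,\ti H w\Ra$ plus the symmetric counterpart. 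The point is that $\ti H=-\p_r^2-\frac1r\p_r+\frac{\ti V(r)}{r^2}$ with $\ti V=(k+1)^2-2k(k+1)\sin^2\phi$-type potential (explicitly computed from $AA^*$). For $k\ge3$ this potential is positive: $\ti H$ has no zero resonance (the kernel of $A^*$ is $r^{-1}\sin^{-1}\phi$-like, which is not admissible). This positivity is what makes a Morawetz estimate with the bounded multiplier $\varphi$ plausible.

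\textbf{Step 1 (commutator computation).} The first task is to compute $[\ti H,\Lam_1]$ explicitly:
\begin{align}
[\ti H,\Lam_1]=-2\p_r(\varphi'\p_r)-\varphi\,\p_r\!\Big(\frac{\ti V}{r^2}\Big)+\text{lower order terms involving }\varphi''',\ \p_r\big(\tfrac{\varphi}{r}\big).
\end{align}
With $\psi(x)=\frac{x}{\la x\ra}-\de\frac{x}{\la x\ra^2}$, we have $\varphi'\sim\la r/R\ra^{-3}$ up to an $O(\de)$ perturbation. Moreover $\varphi\sim r$ for $r\lesssim R$ and $\varphi\sim R$ for $r\gtrsim R$. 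The $\de$-term is chosen so that $\varphi'$ remains positive while the term $\frac{\varphi}{r}-\varphi'$ acquires a positive sign with decay like $r^{-2}$ at infinity. This produces the weight $\frac{1}{(1+r)^2}$ in the $\rm{Mor}_2$ norm.

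\textbf{Step 2 (positivity of the quadratic form).} Next, write the resulting form as
\begin{align}
\int\Big(2\varphi'|\p_r \ti H w|^2+\frac{\varphi}{r}\cdot(\ldots)\Big)\udr,
\end{align}
together with similar terms at the lower orders. The form should be organized as a positive sum controlling $\int\frac{1}{(1+r)^2}\big(|\p_r^2 w|^2+\frac{|\p_rw|^2}{r^2}+\frac{|w|^2}{r^4}\big)\udr$. The regimes are handled as follows. In the region $r\le R$, $\varphi$ is close to $r$, so $\Lam_1$ is close to $\ULam$. There $[\ti H^2,\ULam]=4\ti H^2$ up to the term $-r\p_r(\ti V/r^2)$. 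The potential contribution is controlled by positivity of $\ti H$ together with Hardy inequalities, which for $k\ge3$ have the needed sharp constants. In the far region, the potential is close to $(k+1)^2/r^2$ and the computation is explicit.

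\textbf{Step 3 (error absorption).} The errors come from $\varphi-r$ on $r\lesssim R$ and from the $\de$-correction. They are of size $O(\de)+O(R^{-1})$ times the target norm, and the plan is to absorb them by choosing $\de$ small and then $R$ large. This yields $c_1=c_1(k,\de,R)$.

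\textbf{Main obstacle.} I expect the main difficulty to be Step 2 near the bubble scale $r\sim1$. There the potential term $-\varphi\p_r(\ti V/r^2)$ is not sign-definite, and the argument must use a genuine spectral/coercivity property of $\ti H$. My first attempt would be a compactness argument, as in the analogous wave-map estimate of Lemma B.1 in \cite{HK-26-WM}. Suppose the inequality fails along a sequence normalized in $\rm{Mor}_2(1)$. One extracts a weak limit that is a nontrivial zero of the limiting quadratic form. This limit must then be shown to be an element of $\ker\ti H$ in the relevant weighted space. Finally, the absence of kernel or resonance for $\ti H=AA^*$ when $k\ge3$ rules this out.
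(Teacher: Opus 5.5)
\textbf{The proposal has a genuine gap: the sign of the form is never established.} Your reduction to $\lam=1$ and the identity $\langle \ti H^2 w,\Lam_1 w\rangle=\tfrac12\langle[\ti H^2,\Lam_1]w,w\rangle$ are fine, but the sign is the whole content of the lemma. Already for the unlocalized multiplier one gets
$\tfrac12\langle[\ti H^2,\ULam]w,w\rangle=2\|\ti H w\|^2+2\langle \ti H w,\frac{k^2\sin^2\phi}{r^2}w\rangle$,
and the cross term has no sign. Here $\ti H=-\p_r^2-\frac1r\p_r+\frac{k^2+1+2k\cos\phi}{r^2}$, so the far-field potential is $(k-1)^2/r^2$, not $(k+1)^2/r^2$.

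Your compactness argument cannot supply the sign. A contradiction argument only upgrades a known qualitative positivity ($Q(w)>0$ for $w\neq0$) into a uniform bound. The step ``a zero of the limiting form must lie in $\ker\ti H$'' is unjustified: $Q$ is not a square of anything built from $\ti H$ alone, and $Q(w)\le 0$ does not force $\ti H w=0$. Absence of kernel or resonance for $AA^*$ only gives $\|\ti H w\|>0$, which does not dominate the cross term. Your appeal to Hardy inequalities with sharp constants also does not name an inequality.

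What is needed is a quantitative bound, and this is exactly where $k\ge 3$ enters. The paper passes to $x=\log r$, $v=y/r$, $a=k\cos\phi(e^x)$ (so $\p_x a=a^2-k^2$) and completes the square:
\[
\frac{1}{4\pi}\langle \ti H^2y,\ULam y\rangle=\|Mv\|_{L^2(dx)}^2-\frac14\int_{\R}(k^2-a^2)^2|v|^2\,dx,\qquad M=-\p_x^2-2\p_x+c(a),\quad c(a)=k^2+2a+\tfrac{k^2-a^2}{2}.
\]
One has $\Re\langle Mv,v\rangle=\|\p_xv\|^2+\int c|v|^2$. For $|a|\le k$ and $k\ge3$, pointwise $c\ge k(k-2)$ and $c\ge k^2-a^2$. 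Hence $\frac14\int(k^2-a^2)^2|v|^2\le\frac{k}{4(k-2)}\|Mv\|^2\le\frac34\|Mv\|^2$, which gives coercivity.

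\textbf{Your localization step is also wrong as stated.} For $r\gtrsim R$ one has $\varphi\approx R$, so $\varphi-r$ is not an $O(R^{-1})$ perturbation there. That region cannot be absorbed into the $\ULam$ estimate and needs its own positivity computation.

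The $\delta$-term is not a small error either. In log variables the top coefficient is $2(b+\p_xb)=2\varphi'$, with $b=\varphi(e^x)/e^x$. Without $\delta$ this equals $2(1+s^2)^{-3/2}$, $s=r/R$, which decays faster than the weight $(1+r)^{-2}$ required by $\mathrm{Mor}_2$. The $\delta$-term makes $\varphi'\approx\delta s^{-2}$ at large $s$. By contrast, $\varphi/r-\varphi'=s^2(1+s^2)^{-3/2}$ is already positive without $\delta$, so your stated mechanism is not the relevant one.

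The paper's treatment of the localized form proceeds in four steps.
\begin{enumerate}[(i)]
\item It writes the form as $\int(\be_2|\p_x^2v|^2+\be_1|\p_xv|^2+\be_0|v|^2)\,dx$ with explicit coefficients $\be_j(a,b)$.
\item On $x\ge M$ it sets $a=-k$ (error $O(e^{-2kx})$). It then computes the $\be_j$ as explicit rational functions of $s$, checks that their polynomial coefficients are positive for $k\ge 3$, and perturbs in small $\delta$.
\item On $x\le M+\ti M$ with $R\gg e^{M+\ti M}$, it uses $b=1-\delta+o(1)$ together with the unlocalized estimate above.
\item It glues the two regions with $\chi_+^2+\chi_-^2=1$, with transition errors $o_{\ti M\to\infty}(1)$.
\end{enumerate}
Without these two explicit positivity computations, the unlocalized square completion and the far-region coefficient check, your argument does not close.
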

\begin{proof}
    See Appendix~\ref{Appendix:Mor}.
\end{proof}

From now on we will fix the choice of $\de$ and $R$ and view them as constants.

\bigskip

Take $0<\de_0\ll1$ to be determined. For complex-valued function $w_1,w_2$, define 
\begin{align}\label{eq:def-Morawetz-functional}
    \calM_3(w_1,w_2)=\sum_{j=1}^N \de_0^{N-j}\La A_{\lam_j}\calH_{\calU}w_1 ,i\Lam_{\lam_j}A_{\lam_j}w_2 \Ra.
\end{align}

\subsection{Improving the estimate of high Sobolev norm}\label{subsec:I3}

We first define a suitable energy functional. For complex-valued function $w$, we define 

\begin{align}\label{eq:def-energy-functional-E3}
    \calE_{3}(w)&=\La \calH_{\calU}^2 w,\calH_{\calU}w \Ra+2\pi\Re\int_0^{\infty} \left(\overline{\p_r\left(\calH_{\calU}w\right)}\left(G_z-\rm{Id}\right)(\p_r\left(\calH_{\calU}w\right))\right)\udr\\
    &\quad +2\La i\calH_{\calU}T_{\calU}(\calU\wedge\De\calU),\calH_{\calU}w\Ra\\
    &=:\cE_{3,0}(w)+2\La i\calH_{\calU}T_{\calU}(\calU\wedge\De\calU),\calH_{\calU}w\Ra.
\end{align}

\begin{lem}\label{lem:E3-coercive}
    For the remainder $z$ under our bootstrap assumption, the correction $\eta$ constructed in Lemma ~\ref{lem:correction-z-eta} and $L$ sufficiently large, there exist a constant $c_L>0$ such that
    \begin{align}
        c_L\|z\|_{\calH^3}^2\leq\calE_{3,0}(z)\lesssim \|z\|_{\calH^3}^2,\label{eq:E-30-coercive}
    \end{align}
    \begin{align}
        |\La i\calH_{\calU}T_{\calU}(\calU\wedge\De\calU),\calH_{\calU}z\Ra|\lesssim |t|^{-1}\|z\|_{\calH^3},\label{eq:correction-E-3-small}
    \end{align}
    \begin{align}\label{eq:Mor-bounded}
        \left|\calM_3(z+\eta,z+\eta)\right|\lesssim_L |t|^{-1-\frac{1}{2(k-2)}-\frac{3}{2}\veps_0-\frac{1}{4}\veps_1}.
    \end{align}
\end{lem}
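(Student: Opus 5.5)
We prove the three estimates separately. All three rely on the coercivity of $\calH_{\calU}$ from Lemma~\ref{lem:HU-coercive}, on the bounds for $T_{\calU}(\calU\wedge\De\calU)$ from Lemma~\ref{lem:F-estimate}, and on the bootstrap assumptions together with Corollary~\ref{cor:g-improved}.

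\textbf{Step 1: the two-sided bound \eqref{eq:E-30-coercive}.} Write $w=\calH_{\calU}z$. Using \eqref{eq:HU}, we have
\begin{align}
\La \calH_{\calU}^2 z,\calH_{\calU}z\Ra=\La \calH_{\calU}w,w\Ra=2\pi\Re\int_0^\infty\left(|\p_r w|^2+\bar w V_{\calU}w\right)\udr .
\end{align}
The $G_z-\Id=K_z$ term, bounded by \eqref{eq:Kz-bounded}, is at most $2\|z\|_{L^\infty}^2\|\p_r w\|_{L^2}^2$. By Hardy's inequality and the bootstrap assumption \eqref{eq:g-H1-bootstrap}, $\|z\|_{L^\infty}\lesssim\|z\|_{\calH^1}\ll1$, so this term is a small perturbation. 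The upper bound is then immediate from $|V_{\calU}|\lesssim r^{-2}$, which gives $\La\calH_{\calU}w,w\Ra\lesssim\|w\|_{\calH^1}^2\lesssim\|z\|_{\calH^3}^2$.

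For the lower bound we cannot apply \eqref{eq:HU-coercove-H1} directly, since $w$ need not satisfy the orthogonality conditions. We split $w=w^\perp+\sum_j(\alpha_j\,\cdot+\beta_j\,\cdot)$, where the correction terms are multiples of the approximate kernel elements $T_{\calU}(\p_{\lam_j}\calU)$ and $T_{\calU}(\p_{\te_j}\calU)$ and $w^\perp$ satisfies the orthogonality conditions. The coefficients $\alpha_j,\beta_j$ are determined by the pairings $\La w,T_{\calU}(\chi_{L,j}\cdots)\Ra=\La z,\calH_{\calU}T_{\calU}(\chi_{L,j}\cdots)\Ra$. Because $z$ itself satisfies the orthogonality conditions and the kernel elements are almost annihilated by $\calH_{\calU}$, these coefficients are $o(1)\|z\|_{\calH^3}$, up to constants depending on $L$. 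Combining \eqref{eq:HU-coercove-H1} for $w^\perp$ with \eqref{eq:HU-coercove-H3} then gives
\begin{align}
\La\calH_{\calU}w,w\Ra\gtrsim\|w\|_{\calH^1}^2-o(1)\|z\|_{\calH^3}^2\geq (C_L^2-o(1))\|z\|_{\calH^3}^2 .
\end{align}
An alternative for the lower bound is to show $\La\calH_{\calU}w,w\Ra\ge c\|w\|_{\calH^1}^2-C\sum_j|\text{projections}|^2$ and then absorb the projection terms using the orthogonality of $z$. Either way, this yields \eqref{eq:E-30-coercive} with $c_L\sim C_L^2$.

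\textbf{Step 2: the correction term \eqref{eq:correction-E-3-small}.} Since $\calH_{\calU}$ is symmetric,
\begin{align}
|\La i\calH_{\calU}F,\calH_{\calU}z\Ra|\le \|\calH_{\calU}F\|_{(\calH^1)^*}\|\calH_{\calU}z\|_{\calH^1}\lesssim\|F\|_{\calH^1}\|z\|_{\calH^3},
\qquad F=T_{\calU}(\calU\wedge\De\calU).
\end{align}
The factor $\|F\|_{\calH^1}\lesssim|t|^{-1}$ is exactly the bound already used in \eqref{eq:eta-H3-raw}, taken from Lemma~\ref{lem:F-estimate}. This step is routine.

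\textbf{Step 3: the Morawetz term \eqref{eq:Mor-bounded}.} This is the delicate one, because the exponent is sharper than what the $\calH^3$ bounds alone give. Set $w=z+\eta$. For each $j$, integrate by parts in $\La A_{\lam_j}\calH_{\calU}w,i\Lam_{\lam_j}A_{\lam_j}w\Ra$. The weight $\varphi_{\lam_j}$ is bounded by $R\lam_j$ and the multiplier $\Lam_{\lam_j}$ has order one, so this term is controlled by
\begin{align}
\lam_j\,\|w\|_{\calH^3}\,\|w\|_{\calH^2\text{-type}}+\lam_j\,\|w\|_{\calH^3}\,\|w\|_{\calH^1}.
\end{align}
The point is to interpolate, spending one power of $\lam_j$ against a derivative, and to use the smaller $\calH^1$ size of $w$ rather than only its $\calH^3$ size.

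The $\calH^1$ size comes from Corollary~\ref{cor:g-improved}, which gives $\|z\|_{\calH^1}\lesssim|t|^{-\frac12-\frac1{k-2}-\frac{\veps_1}2}$, and from \eqref{eq:correction-H1}, which gives the even better bound $\|\eta\|_{\calH^1}\lesssim|t|^{-1-\frac2{k-2}}$. The $\calH^3$ size comes from the bootstrap bound $\|z\|_{\calH^3}\lesssim|t|^{-\frac12-\veps_0}$ together with \eqref{eq:correction-H3}. The scales satisfy $\lam_j\le\lam_N\sim1$.

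Interpolating the $\calH^2$ quantity between $\calH^1$ and $\calH^3$ gives $\|w\|_{\calH^2}\lesssim\|w\|_{\calH^1}^{1/2}\|w\|_{\calH^3}^{1/2}$, and hence
\begin{align}
|\calM_3|\lesssim\|w\|_{\calH^3}^{3/2}\|w\|_{\calH^1}^{1/2}\lesssim|t|^{-\frac34-\frac32\veps_0}\,|t|^{-\frac14-\frac1{2(k-2)}-\frac{\veps_1}4},
\end{align}
which is exactly the claimed exponent. The remaining work is to verify that the $\eta$ contributions, where \eqref{eq:correction-H3} only gives $|t|^{-1}$ in $\calH^3$, still fit under this exponent. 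To handle them we pair them with the stronger $\calH^1$ bound on $\eta$, and where needed use \eqref{eq:correction-H4}.
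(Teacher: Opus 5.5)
The gap is in Step 1; Steps 2 and 3 reach exactly the paper's bounds.

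\textbf{The gap in Step 1.} You try to justify $\La\calH_{\calU}w,w\Ra\gtrsim\|w\|_{\calH^1}^2$ for $w=\calH_{\calU}z$, a step the paper leaves implicit. Your chain $\La\calH_{\calU}w,w\Ra\gtrsim\|w\|_{\calH^1}^2-o(1)\|z\|_{\calH^3}^2\ge(C_L^2-o(1))\|z\|_{\calH^3}^2$ rests on the claim that the coefficients of $w$ along the localized kernel are $o(1)\|z\|_{\calH^3}$, small as $T_0\to-\infty$ with $L$-dependent constants. That claim fails. The pairing $\La z,\calH_{\calU}T_{\calU}(\chi_{L,j}e^{\te_jR}\Lam Q_{\lam_j})\Ra$ contains the cutoff commutator $[\calH_{\calU},\chi_{L,j}]$ acting on the kernel. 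This piece does not improve as $T_0\to-\infty$; in $(\calH^3)^*$ it is only $o_{L\to\infty}(1)$, and the chain needs it to beat $C_L^2$.

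For $k=3$ it does not. At $\lam_j=1$ the commutator lives near the inner edge of the transition region, and its $(\calH^3)^*$-size is roughly $L^{-1}$. On the other hand, $C_L\lesssim\|\chi_L\sin\phi\|_{(\calH^3)^*}^{-1}$: take $z=z_0+a\sin\phi$ with $z_0$ nearly extremal and $a$ fixed by orthogonality. Testing with $z_0=r^3\tilde\chi$, cut off at $L^{1+s_0}$ where $\chi_L\ge\frac12$ on $[0,L^{1+s_0}]$, gives $C_L\lesssim L^{-1-s_0}$. So the chain fails exactly in the case the logarithmic cutoff was introduced for. Note also that the orthogonality of $z$ plays no role in these coefficients. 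Moreover $z$, unlike $g$, satisfies the orthogonality conditions only up to $O(|z|^2)$; that is what \eqref{eq:v-z} is for when you finally apply \eqref{eq:HU-coercove-H3}.

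\textbf{How to repair it.} Measure the coefficients against $\|w\|_{\calH^1}$ instead of $\|z\|_{\calH^3}$. Take the unlocalized approximate kernel $\kappa_j=\lam_j^{-1}T_{\calU}(\p_{\lam_j}\calU)$ (and its $\te_j$ analogue), and write $\La w,\chi_{L,j}\kappa_j\Ra=\La z,\calH_{\calU}\kappa_j\Ra-\La w,(1-\chi_{L,j})\kappa_j\Ra$. By \eqref{eq:eq:HU-2}, $\calH_{\calU}\kappa_j=\lam_j^{-1}iT_{\calU}(\p_{\lam_j}(\calU\wedge\De\calU))$, a genuine interaction term that is small as $T_0\to-\infty$. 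The second term is, after normalization, $\lesssim L^{2-k}\|w\|_{\calH^1}$, because $r\Lam Q\in L^2$ for $k\ge3$. It is a relative error, absorbed for large $L$ regardless of $C_L$. Alternatively, decompose $w$ with a cutoff $\chi_{\tilde L,j}$, $\tilde L$ large depending on $L$, as in Step 3 of Lemma~\ref{lem:correction-z-eta}.

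\textbf{A slip in Step 3.} The weight $\varphi_{\lam_j}=\varphi(\cdot/\lam_j)$ is bounded by $R$, not by $R\lam_j$. Your intermediate bound carrying a factor $\lam_j$ is false by scaling: for $w$ concentrated at scale $\lam_j$ the left side is of order $\lam_j^{-3}$ and the right side of order $\lam_j^{-2}$. Instead, \eqref{eq:Lambda-estimate-pointwise} gives directly $|\calM_3(y,y)|\lesssim\int_0^\infty|y|_{-3}|y|_{-2}\udr\le\|y\|_{\calH^3}\|y\|_{\calH^2}$, uniformly in the $\lam_j$. The $\eta$-terms need no extra work, since $\|\eta\|_{\calH^3}\lesssim_L|t|^{-1}\le|t|^{-\frac12-\veps_0}$ and $\|\eta\|_{\calH^1}$ is already below the bound for $z$. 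In particular \eqref{eq:correction-H4} is not needed.
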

\begin{proof}
    ~\eqref{eq:E-30-coercive} follows from the orthogonal conditions ~\eqref{eq:g-ortho} satisfied by $g$, ~\eqref{eq:v-z} and ~\eqref{eq:HU-coercove-H3}. ~\eqref{eq:correction-E-3-small} follows from Cauchy's inequality and ~\eqref{eq:F-H1}. Using Corollary ~\ref{cor:g-improved}, Lemma ~\ref{lem:operator-estimate} , ~\eqref{eq:g-z-equivalent}, ~\eqref{eq:correction-H1}, ~\eqref{eq:correction-H3} and bootstrap assumption we obtain
    \begin{align}
        \left|\calM_3(z+\eta,z+\eta)\right|\lesssim \|z+\eta\|_{\calH^1}^{\frac{1}{2}}\|z+\eta\|_{\calH^3}^{\frac{3}{2}}\lesssim_L |t|^{-1-\frac{1}{2(k-2)}-\frac{3}{2}\veps_0-\frac{1}{4}\veps_1}
    \end{align}
    since $0<\veps_0<\frac{1}{2}$ and $T_0\ll -1$.
\end{proof}

Define
\begin{align}
    \calI_3=\calE_{3}(z)+\ti\de \calM_3(z+\eta,z+\eta),
\end{align}
where $\ti \de>0$ is to be determined.

\begin{prop}\label{prop:I3'}
    There exist  $L>0$, $\de_0>0$, $T_0<0$, $\ti \de>0$ such that under the bootstrap assumption, for $t\in[T,T_1]$, there holds
    \begin{align}
        \frac{\ud }{\ud t}\calI_3(t)\leq \frac{c_L}{8\hat C}|t|^{-2-2\veps_0},
    \end{align}
    where $\hat C$ is given in ~\eqref{eq:C-hat-def} and $c_L$ is given in ~\eqref{eq:E-30-coercive}.
\end{prop}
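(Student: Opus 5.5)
We will differentiate $\calI_3=\calE_3(z)+\ti\de\,\calM_3(z+\eta,z+\eta)$ in time and substitute the evolution equation
\begin{align}
\p_t z=-iG_z\calH_{\calU}z+T_{\calU}(\calU\wedge\De\calU-\p_t\calU)+\calR_{\rm NL}+\calR_{\rm mod}.
\end{align}
Using \eqref{eq:eta-b} we rewrite the forcing as
\begin{align}
T_{\calU}(\calU\wedge\De\calU-\p_t\calU)=-i\calH_{\calU}\eta-\sum_j\frac{\lam_j\lam_j'-b_j^{\lam}}{\lam_j^2}T_{\calU}(\p_{\lam_j}\calU)-\sum_j\frac{\lam_j^2\te_j'-b_j^{\te}}{\lam_j^4}T_{\calU}(\p_{\te_j}\calU).
\end{align}
This splits the forcing into a stationary part $-i\calH_{\calU}\eta$ and modulation defects. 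The defects are controlled by Lemma~\ref{lem:b-lam-close}. The target is an inequality of the form
\begin{align}
\frac{\ud}{\ud t}\calI_3\leq -\frac{\ti\de c_1}{2}\sum_j\de_0^{N-j}\|A_{\lam_j}(z+\eta)\|_{\rm{Mor}_2(\lam_j)}^2+\text{(small)}\cdot\Big(\sum_j\de_0^{N-j}\|A_{\lam_j}(z+\eta)\|_{\rm{Mor}_2(\lam_j)}^2\Big)+C(L)\big(|t|^{-1}\|z\|_{\calH^3}^2+|t|^{-3}\big)+o(1)|t|^{-2-2\veps_0}.
\end{align}
With the bootstrap bound \eqref{eq:g-H3-bootstrap} and \eqref{eq:C-hat-def}, taking $T_0$ very negative after fixing $L,\de_0,\ti\de$, the last three groups are at most $\frac{c_L}{8\hat C}|t|^{-2-2\veps_0}$.

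\textbf{Energy part.} For $\frac{\ud}{\ud t}\calE_{3,0}(z)$ the principal term is $2\La \calH_{\calU}^2z,\calH_{\calU}(-iG_z\calH_{\calU}z)\Ra$ together with the time derivative of the $G_z-\Id$ correction. The correction is designed so that the quasilinear top-order terms cancel after integration by parts, since $G_z$ is self-adjoint and $i$ is skew; the remaining terms are bounded by $\|z\|_{\calH^3}^2(\|z\|_{\calH^3}+|\p_t z|)$-type quantities, which are acceptable. Commutator terms $[\calH_{\calU},i]$ involve only the off-diagonal part of $V_{\calU}$, which is localized at the interaction scales. Terms containing $\p_t\calH_{\calU}$ produce factors $|\lam_j'|/\lam_j+|\te_j'|$ times $\|z\|_{\calH^3}^2$. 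For these we replace $\lam_j\lam_j'$ by $b_j^{\lam}$, use \eqref{eq:correction-H3} for $b_j$, and use Lemma~\ref{lem:b-lam-close} for the difference; this is how the $c_L$-versus-rough-modulation issue is avoided. The forcing $-i\calH_{\calU}\eta$ paired with $\calH_{\calU}^2 z$ is cancelled to leading order by the time derivative of $2\La i\calH_{\calU}T_{\calU}(\calU\wedge\De\calU),\calH_{\calU}z\Ra$, as in the toy-model computation of the introduction. What remains are:
\begin{itemize}
\item the $b$-terms, which lie in the approximate kernel of $\calH_{\calU}$ and so give $\|\calH_{\calU}T_{\calU}\p_{\lam_j}\calU\|$-small factors;
\item $\p_t$ falling on the correction, controlled by \eqref{eq:correction-t-H3};
\item the modulation defects, paired against $\calH_{\calU}^2z$ after moving $\calH_{\calU}$ onto the nearly-kernel vectors $T_{\calU}\p_{\lam_j}\calU$.
\end{itemize}
The defects are bounded by Lemma~\ref{lem:b-lam-close} times a Morawetz-type norm of $z$, which is absorbed below.

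\textbf{Morawetz part.} Set $w=z+\eta$. From the above, $\p_t w=-i\calH_{\calU}w+\text{errors}$, with errors involving $\p_t\eta$, $(G_z-\Id)$, $\calR_{\rm NL}$, $\calR_{\rm mod}$ and the modulation defects. Near the $j$-th bubble we write $\calH_{\calU}=H_{\lam_j}+(\calH_{\calU}-H_{\lam_j})$ and $H_{\lam_j}=A_{\lam_j}^*A_{\lam_j}$. The term $\La A_{\lam_j}\calH_{\calU}(-i\calH_{\calU}w),i\Lam_{\lam_j}A_{\lam_j}w\Ra$ and its symmetric partner then reduce to $-2\La \ti H_{\lam_j}^2A_{\lam_j}w,\Lam_{\lam_j}A_{\lam_j}w\Ra$ plus errors. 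Lemma~\ref{lem:Morawetz-one} makes this $\leq -2c_1\|A_{\lam_j}w\|_{\rm{Mor}_2(\lam_j)}^2$. The weights $\de_0^{N-j}$ absorb cross terms between bubbles: the error from the $j$-th bubble leaking into region $\ell>j$ is bounded by a small multiple of the $\ell$-th Morawetz norm. Terms with $\p_t\lam_j$ hitting $\Lam_{\lam_j}$ and $A_{\lam_j}$ give $|t|^{-1}\|w\|_{\calH^3}^2$, which is acceptable. Finally, we choose $\ti\de$ large compared with the coefficient of the Morawetz norm produced by the modulation defects in the energy part (those carry $o_L(1)$ prefactors from Lemma~\ref{lem:b-lam-close}), so that they are absorbed. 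The remaining constant in front of the Morawetz terms is $\ti\de\,C(L)\max(\lam_j/\lam_{j+1})^k$, which is small for $T_0$ very negative.

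\textbf{Main obstacle.} The hardest step is the order of parameter choices. The energy part produces Morawetz-norm terms with constant $o_L(1)$ only after $\ti\de$ is fixed, while the Morawetz derivative generates energy-type errors of size $\ti\de\,C(L)|t|^{-1}\|z\|_{\calH^3}^2$. The plan is to fix $L$ first so that the $o_L(1)$ in Lemma~\ref{lem:b-lam-close} is below $c_1/100$ after normalizing $\ti\de=1$, then choose $\de_0$, and finally $T_0$. A second risk is verifying that $\calH_{\calU}$ applied to the modulation vectors really produces only $o_L(1)$ and interaction-size factors; if it does not, I would additionally impose orthogonality of $\eta$ to the cutoff kernels at a larger scale $\ti L$, as in Step 3 of Lemma~\ref{lem:correction-z-eta}.
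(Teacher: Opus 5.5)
There is a genuine gap in how you handle the critical-size terms. Under \eqref{eq:g-H3-bootstrap} one has $\|z\|_{\calH^3}^2\lesssim|t|^{-1-2\veps_0}$. This applies to three terms in your plan:
\begin{enumerate}
\item the term $C(L)|t|^{-1}\|z\|_{\calH^3}^2$ in your target inequality;
\item the term $|t|^{-1}\|w\|_{\calH^3}^2$ that you call ``acceptable'' when $\p_t\lam_j$ hits $\Lam_{\lam_j}$;
\item the bound $\Ga(t)\|z\|_{\calH^3}^2$ you propose for the $\p_t\calH_{\calU}$ terms of the energy.
\end{enumerate}
All three are of exactly the same order $|t|^{-2-2\veps_0}$ as the right-hand side $\frac{c_L}{8\hat C}|t|^{-2-2\veps_0}$. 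Sending $T_0\to-\infty$ does not shrink them, and their constants bear no relation to $c_L$, which may degenerate as $L\to\infty$ when $k=3,4$. This is precisely the obstruction the proposition must overcome. The energy-part term is the worst. With the rough bound \eqref{eq:Gamma-pointwise} it is of size $|t|^{-\frac32-3\veps_0}$, which is worse than critical since $\veps_0<\frac12$. Even with the refined bound through $b_j$, it is $|t|^{-1}\|z\|_{\calH^3}^2$ with an $O(1)$ coefficient. It also sits in the part of $\calI_3$ not weighted by $\ti\de$, so no choice of $\ti\de$ can fix it.

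The paper never lets a full $\|z\|_{\calH^3}^2$ appear with a critical coefficient in the energy part.
\begin{itemize}
\item It splits $\p_t\calH_{\calU}=\sum_j\p_tV_j+\p_t\calE_{\calU}$. The $\p_tV_j$ pieces are localized at scale $\lam_j$ and give $|\lam_j'|\,\|z\|^2_{\mathrm{Mor}(\lam_j)}$. Since $|\lam_j'|\le\lam_j\Ga=o(1)\lam_j|t|^{-\frac12-\veps_0}$, this becomes, via \eqref{eq:z-y-Mor} and \eqref{eq:y-Ay-control}, an $o_{T_0\to-\infty}(1)$ multiple of the Morawetz gain. This is where $\veps_0<\frac12$ is used.
\item The $\p_t\calE_{\calU}$ pieces are subcritical through $\|z\|_{\calH^1}\|z\|_{\calH^3}$.
\item The remaining terms are only linear in $\|z\|_{\calH^3}$, of the form $\Ga(t)|t|^{-1}\|z\|_{\calH^3}$. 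They are subcritical once $\Ga$ is bounded via \eqref{eq:Ga-estimate-refined}. In particular, the Lemma~\ref{lem:b-lam-close} contribution arrives multiplied by $|t|^{-\frac32-\veps_0}$, so after Cauchy--Schwarz its Morawetz coefficient is $|t|^{-\frac12}$, not merely $o_L(1)$ as you assert.
\end{itemize}
The one irreducibly critical term comes from $\p_t\Lam_{\lam_j}$ in the Morawetz part. After writing $\lam_j\lam_j'=b_j^{\lam}+(\lam_j\lam_j'-b_j^{\lam})$ and using \eqref{eq:correction-H3} and \eqref{eq:bj-weighted-sum}, it is bounded by $C(L)\ti\de|t|^{-2-2\veps_0}$ plus absorbable terms. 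It is beaten by taking $\ti\de$ \emph{small} depending on $L$, hence on $c_L$ and $\hat C$.

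This is consistent because the $o_L(1)$-size Morawetz coefficients arise only inside the $\ti\de$-weighted terms, such as the defects in $\calR_{\mathrm{MOD}}$ paired in $\calM_3$. There they are compared with $c_1\ti\de$ whatever the size of $\ti\de$. So the correct order is: $L$ large, then $\ti\de,\de_0$ small, then $T_0$. Your normalization $\ti\de=1$ (or $\ti\de$ large) only yields $\frac{\ud}{\ud t}\calI_3\le C(L)|t|^{-2-2\veps_0}$, which does not close the $\calH^3$ bootstrap.

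A minor correction: in your rewriting of the forcing, the denominators should be $\lam_j$ and $\lam_j^2$, not $\lam_j^2$ and $\lam_j^4$.
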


In the remaining of this section we will first calculate $\calI_3'(t)$ and divide the expression into several parts, then we will estimate these parts separately.

Define $y=z+\eta$, then we have
\begin{align}
    \p_t y&=-i\calH_{\calU} y-i\left(G_z-\rm{Id}\right)\calH_{\calU}z\\
    &\quad +\sum_{j=1}^N\frac{b_j^{\lam}-\lam_j\lam_j'}{\lam_j} T_{\calU}\left(\p_{\lam_j}\calU\right)+\sum_{j=1}^N\frac{b_j^{\te}-\lam_j^2\te_j'}{\lam_j^2} T_{\calU}\left(\p_{\te_j}\calU\right)\\
    &\quad +\cR_{\rm{NL}}+\cR_{\rm{mod}}+\p_t\eta.
\end{align}
Denote by
\begin{align}
    \calR_{\rm{q}}&= -i\left(G_z-\rm{Id}\right)\calH_{\calU}z,\\
    \calR_{\rm{MOD}}&=\sum_{j=1}^N\frac{b_j^{\lam}-\lam_j\lam_j'}{\lam_j} T_{\calU}\left(\p_{\lam_j}\calU\right)+\sum_{j=1}^N\frac{b_j^{\te}-\lam_j^2\te_j'}{\lam_j^2} T_{\calU}\left(\p_{\te_j}\calU\right)+\cR_{\rm{mod}}
\end{align}
where the subscript `q' stands for `quasi-linear', then we write the evolution satisfied by $y$ and $z$ in a cleaner form:
\begin{align}
    \p_t y&=-i\calH_{\calU} y+\calR_{\rm{q}}+\cR_{\rm{NL}}
     +\calR_{\rm{MOD}}+\p_t\eta. \label{eq:y-evolution}
\end{align}
For $z$, recall that we have
\begin{align}
    \p_t z=-iG_z\calH_{\calU}z+ T_{\calU}\left(\calU\wedge\De\calU-\p_t\calU\right)+\cR_{\rm{NL}}+\calR_{\rm{mod}}.  \label{eq:z-evolution}
\end{align}
We calculate
\begin{align}
    \frac{\ud }{\ud t}\La \calH_{\calU}^2 z,\calH_{\calU}z \Ra=\La (\p_t\calH_{\calU})(\calH_{\calU}z),\calH_{\calU} z \Ra
     +2\La \calH_{\calU}^2 z,(\p_t\calH_{\calU})z    \Ra
     +2\La \calH_{\calU}^2 z,\calH_{\calU}(\p_t z)   \Ra.
\end{align}

\begin{align}
     &\frac{\ud }{\ud t} \Re\int_0^{\infty} \left(\overline{\p_r\left(\calH_{\calU}z\right)}\left(G_z-\rm{Id}\right)(\p_r\left(\calH_{\calU}z\right))\right)\udr\\
     =&\Re\int_0^{\infty} \left(\overline{\p_r\left(\calH_{\calU}z\right)}\left(\vD G_z(\p_t z)\right)(\p_r\left(\calH_{\calU}z\right))\right)\udr\\
     &+2\Re\int_0^{\infty} \left(\overline{\p_r\left((\p_t\calH_{\calU})z+\calH_{\calU}(\p_tz)\right)}\left(G_z-\rm{Id}\right)(\p_r\left(\calH_{\calU}z\right))\right)\udr.
\end{align}

\begin{align}
    &\frac{\ud }{\ud t}\La i\calH_{\calU}T_{\calU}(\calU\wedge\De\calU),\calH_{\calU}z\Ra\\
    =& \La i(\p_t\calH_{\calU})T_{\calU}(\calU\wedge\De\calU),\calH_{\calU}z \Ra  +\La  i\calH_{\calU} (\p_t\left(T_{\calU}(\calU\wedge\De\calU)\right)),\calH_{\calU}z\Ra\\
    & +\La  i\calH_{\calU}T_{\calU}(\calU\wedge\De\calU),(\p_t\calH_{\calU})z\Ra +\La  i\calH_{\calU}T_{\calU}(\calU\wedge\De\calU),\calH_{\calU}(\p_tz)\Ra.
\end{align}

\begin{align}
    \frac{\ud }{\ud t}\calM_3(y,y)&= \sum_{j=1}^N \de_0^{N-j}\La (\p_tA_{\lam_j})\calH_{\calU}y ,i\Lam_{\lam_j}A_{\lam_j}y \Ra 
     +\sum_{j=1}^N \de_0^{N-j}\La A_{\lam_j}(\p_t\calH_{\calU})y ,i\Lam_{\lam_j}A_{\lam_j}y \Ra\\
    &\quad +\sum_{j=1}^N \de_0^{N-j}\La A_{\lam_j}\calH_{\calU}(\p_t y) ,i\Lam_{\lam_j}A_{\lam_j}y \Ra
     +\sum_{j=1}^N \de_0^{N-j}\La A_{\lam_j}\calH_{\calU}y ,i(\p_t\Lam_{\lam_j})A_{\lam_j}y \Ra\\
    &\quad +\sum_{j=1}^N \de_0^{N-j}\La A_{\lam_j}\calH_{\calU}y ,i\Lam_{\lam_j}(\p_tA_{\lam_j})y \Ra
     +\sum_{j=1}^N \de_0^{N-j}\La A_{\lam_j}\calH_{\calU}y ,i\Lam_{\lam_j}A_{\lam_j}(\p_t y) \Ra.
\end{align}
Inserting ~\eqref{eq:y-evolution} and ~\eqref{eq:z-evolution}, we have
\begin{align}
    \frac{\ud }{\ud t}\calI_3&=\La (\p_t\calH_{\calU})(\calH_{\calU}z),\calH_{\calU} z \Ra
     +2\La \calH_{\calU}^2 z,(\p_t\calH_{\calU})z    \Ra\\
     &\quad +4\pi\Re\int_0^{\infty} \left(\overline{\p_r\left((\p_t\calH_{\calU})z\right)}\left(G_z-\rm{Id}\right)(\p_r\left(\calH_{\calU}z\right))\right)\udr\\
     & \quad +2\La i(\p_t\calH_{\calU})T_{\calU}(\calU\wedge\De\calU),\calH_{\calU}z \Ra +2\La  i\calH_{\calU} (\p_t\left(T_{\calU}(\calU\wedge\De\calU)\right)),\calH_{\calU}z\Ra\\
    &\quad +2\La  i\calH_{\calU}T_{\calU}(\calU\wedge\De\calU),(\p_t\calH_{\calU})z\Ra\\
     &\quad + \ti \de\sum_{j=1}^N \de_0^{N-j}\La (\p_tA_{\lam_j})\calH_{\calU}y ,i\Lam_{\lam_j}A_{\lam_j}y \Ra 
     +\ti \de\sum_{j=1}^N \de_0^{N-j}\La A_{\lam_j}(\p_t\calH_{\calU})y ,i\Lam_{\lam_j}A_{\lam_j}y \Ra\\
     &\quad +\ti \de\sum_{j=1}^N \de_0^{N-j}\La A_{\lam_j}\calH_{\calU}y ,i(\p_t\Lam_{\lam_j})A_{\lam_j}y \Ra+\ti \de\sum_{j=1}^N \de_0^{N-j}\La A_{\lam_j}\calH_{\calU}y ,i\Lam_{\lam_j}(\p_tA_{\lam_j})y \Ra\\
     &\quad +2\La \calH_{\calU}^2 z,\calH_{\calU}(-iG_z\calH_{\calU}z+ T_{\calU}\left(\calU\wedge\De\calU-\p_t\calU\right)+\cR_{\rm{NL}}+\calR_{\rm{mod}})   \Ra\\
     &\quad + 2\pi \Re\int_0^{\infty} \left(\overline{\p_r\left(\calH_{\calU}z\right)}\left(\vD G_z(-iG_z\calH_{\calU} z)\right)(\p_r\left(\calH_{\calU}z\right))\right)\udr\\
     &\quad + 2\pi \Re\int_0^{\infty} \left(\overline{\p_r\left(\calH_{\calU}z\right)}\left(\vD G_z( T_{\calU}\left(\calU\wedge\De\calU-\p_t\calU\right)+\cR_{\rm{NL}}+\calR_{\rm{mod}})\right)(\p_r\left(\calH_{\calU}z\right))\right)\udr\\
     &\quad +4\pi\Re\int_0^{\infty} \left(\overline{\p_r\left(\calH_{\calU}(-iG_z\calH_{\calU} z)\right)}\left(G_z-\rm{Id}\right)(\p_r\left(\calH_{\calU}z\right))\right)\udr\\
     &\quad +4\pi\Re\int_0^{\infty} \left(\overline{\p_r\left(\calH_{\calU}(T_{\calU}\left(\calU\wedge\De\calU-\p_t\calU\right)+\cR_{\rm{NL}}+\calR_{\rm{mod}})\right)}\left(G_z-\rm{Id}\right)(\p_r\left(\calH_{\calU}z\right))\right)\udr\\
     &\quad + 2\La  i\calH_{\calU}T_{\calU}(\calU\wedge\De\calU),\calH_{\calU}(-iG_z\calH_{\calU}z+ T_{\calU}\left(\calU\wedge\De\calU-\p_t\calU\right)+\cR_{\rm{NL}}+\calR_{\rm{mod}})\Ra\\
     &\quad +\ti \de\sum_{j=1}^N \de_0^{N-j}\La A_{\lam_j}\calH_{\calU}(-i\calH_{\calU} y+\calR_{\rm{q}}+\cR_{\rm{NL}}
     +\calR_{\rm{MOD}}+\p_t\eta) ,i\Lam_{\lam_j}A_{\lam_j}y \Ra\\
     &\quad +\ti \de\sum_{j=1}^N \de_0^{N-j}\La A_{\lam_j}\calH_{\calU}y ,i\Lam_{\lam_j}A_{\lam_j}(-i\calH_{\calU} y+\calR_{\rm{q}}+\cR_{\rm{NL}}
     +\calR_{\rm{MOD}}+\p_t\eta) \Ra.
\end{align}
We divide the identity in the right-hand side into six parts.

The first part collects the terms where the time derivative do not hit $z$ or $y$:
\begin{equation}\label{eq:A-expression}
    \begin{aligned}
        (\rm{A})&=\La (\p_t\calH_{\calU})(\calH_{\calU}z),\calH_{\calU} z \Ra
     +2\La \calH_{\calU}^2 z,(\p_t\calH_{\calU})z    \Ra\\
     &\quad +4\pi\Re\int_0^{\infty} \left(\overline{\p_r\left((\p_t\calH_{\calU})z\right)}\left(G_z-\rm{Id}\right)(\p_r\left(\calH_{\calU}z\right))\right)\udr\\
     & \quad +2\La i(\p_t\calH_{\calU})T_{\calU}(\calU\wedge\De\calU),\calH_{\calU}z \Ra +2\La  i\calH_{\calU} (\p_t\left(T_{\calU}(\calU\wedge\De\calU)\right)),\calH_{\calU}z\Ra\\
    &\quad +2\La  i\calH_{\calU}T_{\calU}(\calU\wedge\De\calU),(\p_t\calH_{\calU})z\Ra\\
     &\quad + \ti \de\sum_{j=1}^N \de_0^{N-j}\La (\p_tA_{\lam_j})\calH_{\calU}y ,i\Lam_{\lam_j}A_{\lam_j}y \Ra 
     +\ti \de\sum_{j=1}^N \de_0^{N-j}\La A_{\lam_j}(\p_t\calH_{\calU})y ,i\Lam_{\lam_j}A_{\lam_j}y \Ra\\
     &\quad +\ti \de\sum_{j=1}^N \de_0^{N-j}\La A_{\lam_j}\calH_{\calU}y ,i(\p_t\Lam_{\lam_j})A_{\lam_j}y \Ra+\ti \de\sum_{j=1}^N \de_0^{N-j}\La A_{\lam_j}\calH_{\calU}y ,i\Lam_{\lam_j}(\p_tA_{\lam_j})y \Ra.
    \end{aligned}
\end{equation}

The second part collects the terms containing $T_{\calU}(\p_t\calU)$, $\cR_{\rm{mod}}$, $\cR_{\rm{MOD}}$ or $\p_t\eta$.
\begin{equation}\label{eq:B-expression}
    \begin{aligned}
         (\rm{B})&=2\La \calH_{\calU}^2 z,\calH_{\calU}(- T_{\calU}\left(\p_t\calU\right)+\calR_{\rm{mod}})   \Ra\\
     &\quad + 2\pi \Re\int_0^{\infty} \left(\overline{\p_r\left(\calH_{\calU}z\right)}\left(\vD G_z( -T_{\calU}\left(\p_t\calU\right)+\calR_{\rm{mod}})\right)(\p_r\left(\calH_{\calU}z\right))\right)\udr\\
     &\quad +4\pi\Re\int_0^{\infty} \left(\overline{\p_r\left(\calH_{\calU}(-T_{\calU}\left(\p_t\calU\right)+\calR_{\rm{mod}})\right)}\left(G_z-\rm{Id}\right)(\p_r\left(\calH_{\calU}z\right))\right)\udr\\
     &\quad + 2\La  i\calH_{\calU}T_{\calU}(\calU\wedge\De\calU),\calH_{\calU}(-T_{\calU}\left(\p_t\calU\right)+\calR_{\rm{mod}})\Ra\\
     &\quad +\ti \de\sum_{j=1}^N \de_0^{N-j}\La A_{\lam_j}\calH_{\calU}(\calR_{\rm{MOD}}+\p_t\eta) ,i\Lam_{\lam_j}A_{\lam_j}y \Ra\\
     &\quad +\ti \de\sum_{j=1}^N \de_0^{N-j}\La A_{\lam_j}\calH_{\calU}y ,i\Lam_{\lam_j}A_{\lam_j}(\calR_{\rm{MOD}}+\p_t\eta) \Ra.
    \end{aligned}
\end{equation}

The third part collects the terms containing $\cR_{\rm{NL}}$.

\begin{equation}\label{eq:C-expression}
    \begin{aligned}
        (\rm{C})&=2\La \calH_{\calU}^2 z,\calH_{\calU}(\cR_{\rm{NL}}
     )   \Ra
      \\
     &\quad + 2\pi \Re\int_0^{\infty} \left(\overline{\p_r\left(\calH_{\calU}z\right)}\left(\vD G_z(\cR_{\rm{NL}})\right)(\p_r\left(\calH_{\calU}z\right))\right)\udr\\
     &\quad +4\pi\Re\int_0^{\infty} \left(\overline{\p_r\left(\calH_{\calU}(\cR_{\rm{NL}}
     )\right)}\left(G_z-\rm{Id}\right)(\p_r\left(\calH_{\calU}z\right))\right)\udr\\
     &\quad +2\La  i\calH_{\calU}T_{\calU}(\calU\wedge\De\calU),\calH_{\calU}(\cR_{\rm{NL}})\Ra\\
     &\quad +\ti \de\sum_{j=1}^N \de_0^{N-j}\La A_{\lam_j}\calH_{\calU}(\calR_{\rm{NL}}) ,i\Lam_{\lam_j}A_{\lam_j}y \Ra\\
     &\quad +\ti \de\sum_{j=1}^N \de_0^{N-j}\La A_{\lam_j}\calH_{\calU}y ,i\Lam_{\lam_j}A_{\lam_j}(\calR_{\rm{NL}}
     ) \Ra.
    \end{aligned}
\end{equation}

The fourth part containing the terms containing $T_{\calU}(\calU\wedge\De\calU)$ and the related correction term.

\begin{equation}\label{eq:D-expression}
    \begin{aligned}
         (\rm{D})&=2\La \calH_{\calU}^2 z,\calH_{\calU}( T_{\calU}\left(\calU\wedge\De\calU\right))   \Ra\\
     &\quad + 2\pi \Re\int_0^{\infty} \left(\overline{\p_r\left(\calH_{\calU}z\right)}\left(\vD G_z( T_{\calU}\left(\calU\wedge\De\calU\right))\right)(\p_r\left(\calH_{\calU}z\right))\right)\udr\\
     &\quad +4\pi\Re\int_0^{\infty} \left(\overline{\p_r\left(\calH_{\calU}(T_{\calU}\left(\calU\wedge\De\calU\right))\right)}\left(G_z-\rm{Id}\right)(\p_r\left(\calH_{\calU}z\right))\right)\udr\\
     &\quad + 2\La  i\calH_{\calU}T_{\calU}(\calU\wedge\De\calU),\calH_{\calU}(-iG_z\calH_{\calU}z+T_{\calU}(\calU\wedge\De\calU))\Ra.
    \end{aligned}
\end{equation}

The fifth part containing the terms containing high derivatives which will cancel when putting together.
\begin{equation}\label{eq:E-expression}
    \begin{aligned}
        (\rm{E})&=2\La \calH_{\calU}^2 z,\calH_{\calU}(-iG_z\calH_{\calU}z)   \Ra\\
     &\quad + 2\pi \Re\int_0^{\infty} \left(\overline{\p_r\left(\calH_{\calU}z\right)}\left(\vD G_z(-iG_z\calH_{\calU} z)\right)(\p_r\left(\calH_{\calU}z\right))\right)\udr\\
     &\quad +4\pi\Re\int_0^{\infty} \left(\overline{\p_r\left(\calH_{\calU}(-iG_z\calH_{\calU} z)\right)}\left(G_z-\rm{Id}\right)(\p_r\left(\calH_{\calU}z\right))\right)\udr\\
     &\quad +\ti \de\sum_{j=1}^N \de_0^{N-j}\La A_{\lam_j}\calH_{\calU}(\calR_{\rm{q}}) ,i\Lam_{\lam_j}A_{\lam_j}y \Ra\\
     &\quad +\ti \de\sum_{j=1}^N \de_0^{N-j}\La A_{\lam_j}\calH_{\calU}y ,i\Lam_{\lam_j}A_{\lam_j}(\calR_{\rm{q}}) \Ra.
    \end{aligned}
\end{equation}

The sixth part is the dominant part that absorbs bad terms.
\begin{equation}\label{eq:F-expression}
    \begin{aligned}
         (\rm{F})&=\ti \de\sum_{j=1}^N \de_0^{N-j}\La A_{\lam_j}\calH_{\calU}(-i\calH_{\calU} y) ,i\Lam_{\lam_j}A_{\lam_j}y \Ra
      +\ti \de\sum_{j=1}^N \de_0^{N-j}\La A_{\lam_j}\calH_{\calU}y ,i\Lam_{\lam_j}A_{\lam_j}(-i\calH_{\calU} y) \Ra.
    \end{aligned}
\end{equation}

Now we have
\begin{align}
    \frac{\ud }{\ud t}\calI_3(t)=(\rm{A})+(\rm{B})+(\rm{C})+(\rm{D})+(\rm{E})+(\rm{F}),
\end{align}
we will estimate these parts in the following sections. Before that we define
\begin{align}
    \Ga(t)=\max_{1\leq j\leq N }\left\{\frac{|\lam_j'(t)|}{\lam_j(t)},|\te_j'(t)|\right\}.
\end{align}
Using rough estimates in Proposition~\ref{prop:mod}, we have 
\begin{align}\label{eq:Gamma-pointwise}
    \Ga(t)\lesssim \left(o_{L\to \infty}(1)+o_{T_0\to-\infty}(1)\right)|t|^{-\frac{1}{2}-\veps_0}.
\end{align}
 We also recall the definition of $F_j(t,r)$ in ~\eqref{eq:def-Fj}. Under the bootstrap assumption, one have
\begin{align}
    \label{eq:FkFl-pointwise}
    \sup_{0<r<\infty}r^{-2}\sum_{1\leq m< \ell\leq N}F_m(t,r)F_{\ell}(t,r)\lesssim \sum_{1\leq j< N}\frac{1}{\lam_j^2}\left(\frac{\lam_j}{\lam_{j+1}}\right)^k\lesssim |t|^{-1}.
\end{align}

It is worth pointing out that, though the bootstrap assumption only gives $\|z\|_{\calH^1}\leq |t|^{-\frac{1}{2}}$ and $\|z\|_{\calH^3}\leq|t|^{-\frac{1}{2}-\veps_0}$, we will still regards $\|z\|_{\calH^3}$ as the larger quantity , since in Section~\ref{sec:energy-estimate} we have already improved the $\calH^1$ estimate to $|t|^{-\frac{1}{2}-\frac{1}{k-2}-\frac{\veps_1}{2}}$, which is much smaller than $|t|^{-\frac{1}{2}-\veps_0}$. However we will usually use the rough bound $|t|^{-\frac{1}{2}}$ to control $\|z\|_{\calH^1}$ for writing convenience when the improved decay is not needed.

\bigskip
\noindent
\textbf{Estimates for $(\rm{A})$.}  We recall the expression of $(\rm{A})$ in ~\eqref{eq:A-expression}.
\begin{equation}\label{eq:A-all}
    \begin{aligned}
        (\rm{A})&=\La (\p_t\calH_{\calU})(\calH_{\calU}z),\calH_{\calU} z \Ra
     +2\La \calH_{\calU}^2 z,(\p_t\calH_{\calU})z    \Ra \\
     &\quad +4\pi\Re\int_0^{\infty} \left(\overline{\p_r\left((\p_t\calH_{\calU})z\right)}\left(G_z-\rm{Id}\right)(\p_r\left(\calH_{\calU}z\right))\right)\udr\\
     & \quad +2\La i(\p_t\calH_{\calU})T_{\calU}(\calU\wedge\De\calU),\calH_{\calU}z \Ra +2\La  i\calH_{\calU} (\p_t\left(T_{\calU}(\calU\wedge\De\calU)\right)),\calH_{\calU}z\Ra\\
    &\quad +2\La  i\calH_{\calU}T_{\calU}(\calU\wedge\De\calU),(\p_t\calH_{\calU})z\Ra\\
     &\quad + \ti \de\sum_{j=1}^N \de_0^{N-j}\La (\p_tA_{\lam_j})\calH_{\calU}y ,i\Lam_{\lam_j}A_{\lam_j}y \Ra 
     +\ti \de\sum_{j=1}^N \de_0^{N-j}\La A_{\lam_j}(\p_t\calH_{\calU})y ,i\Lam_{\lam_j}A_{\lam_j}y \Ra   \\
     &\quad +\ti \de\sum_{j=1}^N \de_0^{N-j}\La A_{\lam_j}\calH_{\calU}y ,i(\p_t\Lam_{\lam_j})A_{\lam_j}y \Ra+\ti \de\sum_{j=1}^N \de_0^{N-j}\La A_{\lam_j}\calH_{\calU}y ,i\Lam_{\lam_j}(\p_tA_{\lam_j})y \Ra.
    \end{aligned}
\end{equation}

We first estimate the terms in the first line of ~\eqref{eq:A-all}.
By definition we have
\begin{align}
    \p_t \calH_{\calU}=\sum_{j=1}^N\p_tV_j+\p_t\calE_{\calU}=\p_tV_{\calU}.
\end{align}
We calculate
\begin{align}
   \calH_{\calU} ((\p_t\calH_{\calU})z) &=(\p_tV_{\calU}) (\calH_{\calU}z)+[V_{\calU},\p_t V_{\calU}]z\\
   &\quad +\left(-\p_r^2\p_tV_{\calU}-\frac{1}{r}\p_r\p_tV_{\calU}\right)z-2\p_r\p_tV_{\calU}\p_rz.
\end{align}
Since $\calH_{\calU}$ is symmetric, we have
\begin{align}
    &\La (\p_t\calH_{\calU})(\calH_{\calU}z),\calH_{\calU} z \Ra
     +2\La \calH_{\calU}^2 z,(\p_t\calH_{\calU})z    \Ra\\
     =&3\La (\p_tV_{\calU})(\calH_{\calU}z),\calH_{\calU} z \Ra+2\La \calH_{\calU} z,[V_{\calU},\p_t V_{\calU}]z  \Ra\\
     &+2\La \calH_{\calU} z,\left(-\p_r^2\p_tV_{\calU}-\frac{1}{r}\p_r\p_tV_{\calU}\right)z  \Ra-4\La \calH_{\calU} z,\p_r\p_tV_{\calU}\p_rz  \Ra.
\end{align}
Inserting $\sum_{j=1}^N\p_tV_j+\p_t\calE_{\calU}=\p_tV_{\calU}$, we obtain
\begin{equation}\label{eq:A-1}
    \begin{aligned}
        &\La (\p_t\calH_{\calU})(\calH_{\calU}z),\calH_{\calU} z \Ra
     +2\La \calH_{\calU}^2 z,(\p_t\calH_{\calU})z    \Ra\\
     =&\sum_{j=1}^N\left[3\La (\p_tV_{j})(\calH_{\calU}z),\calH_{\calU} z \Ra+2\La \calH_{\calU} z,\left(-\p_r^2\p_tV_{j}-\frac{1}{r}\p_r\p_tV_{j}\right)z  \Ra-4\La \calH_{\calU} z,\p_r\p_tV_{j}\p_rz  \Ra\right]\\
     &+3\La (\p_t\calE_{\calU})(\calH_{\calU}z),\calH_{\calU} z \Ra+2\La \calH_{\calU} z,\left(-\p_r^2\p_t\calE_{\calU}-\frac{1}{r}\p_r\p_t\calE_{\calU}\right)z  \Ra-4\La \calH_{\calU} z,\p_r\p_t\calE_{\calU}\p_rz  \Ra\\
     &+2\La \calH_{\calU} z,[V_{\calU},\p_t \E_{\calU}]z  \Ra.
    \end{aligned}
\end{equation}
Using ~\eqref{eq:Vj-estimate-pointwise} and ~\eqref{eq:VU-bounded-pointwise},
we obtain
\begin{align}
    &\left|3\La (\p_tV_{j})(\calH_{\calU}z),\calH_{\calU} z \Ra+2\La \calH_{\calU} z,\left(-\p_r^2\p_tV_{j}-\frac{1}{r}\p_r\p_tV_{j}\right)z  \Ra-4\La \calH_{\calU} z,\p_r\p_tV_{j}\p_rz  \Ra\right|\\
    \lesssim& \frac{|\lam_j'|}{\lam_j}\int_0^{\infty}F_j^2(t,r)r^{-2}|z|_{-2}^2\udr
    \lesssim \frac{|\lam_j'|}{\lam_j}\int_0^{\infty}\frac{\lam_j^2}{(\lam_j+r)^2} r^{-2}|z|_{-2}^2\udr
    \lesssim  |\lam_j'|\|z\|_{\rm{Mor}(\lam_j)}^2.
\end{align}
The terms in ~\eqref{eq:A-1} containing $\calE_{\calU}$ can be estimated directly using ~\eqref{eq:HU}~\eqref{eq:FkFl-pointwise} and Lemma~\ref{lem:operator-estimate}. 
\begin{align}
    &\left| \La (\p_t\calE_{\calU})(\calH_{\calU}z),\calH_{\calU} z \Ra \right|\lesssim\Ga(t)|t|^{-1}\int_0^{\infty}\left( |\p_{r}^2 z|^2+ \frac{|\p_r z|^2}{r^2}+\frac{|z|^2}{r^4}\right)\udr\\
    \lesssim & \Ga(t)|t|^{-1}\|z\|_{\calH^1}\|z\|_{\calH^3}\lesssim |t|^{-\frac{5}{2}-\veps_0}.  
\end{align}
The terms $2\La \calH_{\calU} z,\left(-\p_r^2\p_t\calE_{\calU}-\frac{1}{r}\p_r\p_t\calE_{\calU}\right)z  \Ra-4\La \calH_{\calU} z,\p_r\p_t\calE_{\calU}\p_rz  \Ra+2\La \calH_{\calU} z,[V_{\calU},\p_t \E_{\calU}]z  \Ra$ can be estimate similarly to obtain the bound $|t|^{-\frac{5}{2}-\veps_0}$, we omit the details.\\
     Combining these estimates, we have
\begin{align}
    \left|\La (\p_t\calH_{\calU})(\calH_{\calU}z),\calH_{\calU} z \Ra
     +2\La \calH_{\calU}^2 z,(\p_t\calH_{\calU})z\Ra\right|\lesssim \sum_{j=1}^N |\lam_j'|\|z\|_{\rm{Mor}(\lam_j)}^2+|t|^{-\frac{5}{2}-\veps_0}.
\end{align}

Next we estimate the terms in the last two lines of ~\eqref{eq:A-all}. We estimate
\begin{align}
    &\left|\La (\p_tA_{\lam_j})\calH_{\calU}y ,i\Lam_{\lam_j}A_{\lam_j}y \Ra \right|\\
    \lesssim&\sum_{\ell\neq j}\left|\La (\p_tA_{\lam_j})V_{\ell}y ,i\Lam_{\lam_j}A_{\lam_j}y \Ra \right|+\left|\La (\p_tA_{\lam_j})\calE_{\calU}y ,i\Lam_{\lam_j}A_{\lam_j}y \Ra \right|
    +\left|\La (\p_tA_{\lam_j})H_{\lam_j}y ,i\Lam_{\lam_j}A_{\lam_j}y \Ra \right|
\end{align}
Using ~\eqref{eq:A-t-estimate-pointwise} and ~\eqref{eq:Lambda-estimate-pointwise}, we obtain
\begin{align}
    &\left|\La (\p_tA_{\lam_j})H_{\lam_j}y ,i\Lam_{\lam_j}A_{\lam_j}y \Ra \right|\\
    \lesssim& \frac{|\lam_j'|}{\lam_j}\int_0^{\infty} r^{-1}F_j^2(t,r)|y|_{-2}^2\frac{r}{\lam_j+r}\udr,\\
    \lesssim&\frac{|\lam_j'|}{\lam_j}\int_0^{\infty} rF_j^2(t,r)|y|_{-3}^2\frac{r}{\lam_j+r}\udr\\
    \lesssim & \lam_j|\lam_j'|\int_0^{\infty} \frac{\lam_j}{(\lam_j+r)^2}|y|_{-3}^2\udr=\lam_j|\lam_j'|\|y\|_{\rm{Mor}(\lam_j)}^2.
\end{align}
For $\ell\neq j$, we estimate using ~\eqref{eq:Vj-estimate-pointwise},
\begin{align}
    \left|\La (\p_tA_{\lam_j})V_{\ell}y ,i\Lam_{\lam_j}A_{\lam_j}y \Ra \right|
    \lesssim&\frac{|\lam_j'|}{\lam_j}\int_0^{\infty}r^{-1}F_j^2(t,r)r^{-2}F_{\ell}^2(t,r)|y|\frac{r}{\lam_j+r}|y|_{-2}\udr\\
    \lesssim & \frac{|\lam_j'|}{\lam_j}\int_0^{\infty}F_j^2(t,r)\frac{|y|}{r^3}\frac{r^2}{\lam_j+r}|y|_{-3}\udr\lesssim\lam_j|\lam_j'|\|y\|_{\rm{Mor}(\lam_j)}^2.
\end{align}
For $\calE_{\calU}$, we estimate using ~\eqref{eq:EU-estimate-pointwise}
\begin{align}
    &\left|\La (\p_tA_{\lam_j})\calE_{\calU}y ,i\Lam_{\lam_j}A_{\lam_j}y \Ra \right|\\
    \lesssim & \frac{|\lam_j'|}{\lam_j}\int_0^{\infty}r^{-1}F_j^2(t,r)r^{-2}\sum_{1\leq m<\ell\leq N}F_{m}(t,r)F_{\ell}(t,r)|y|\frac{r}{\lam_j+r}|y|_{-2}\udr\\
    \lesssim & \frac{|\lam_j'|}{\lam_j}\int_0^{\infty}rF_j^2(t,r)\frac{|y|}{r^3}\frac{r}{\lam_j+r}|y|_{-3}\udr\lesssim \lam_j|\lam_j'|\|y\|_{\rm{Mor}(\lam_j)}^2.
\end{align}
The terms $\La A_{\lam_j}(\p_t\calH_{\calU})y ,i\Lam_{\lam_j}A_{\lam_j}y \Ra$ and $ \La A_{\lam_j}\calH_{\calU}y ,i\Lam_{\lam_j}(\p_tA_{\lam_j})y \Ra$ can be estimated in the same way. We use $\calH_{\calU}=H_{\lam_j}+\sum_{\ell\neq j}V_{\ell}+\calE_{\calU}$ and $\p_t\calH_{\calU}=\sum_{\ell=1}^N\p_tV_{\ell}+\p_t\calE_{\calU}$ to decompose the terms and estimate them separately using Lemma ~\ref{lem:operator-estimate}.\\
Combining these estimates, we have
\begin{align}
    &\ti \de\sum_{j=1}^N \de_0^{N-j}\left|\La (\p_tA_{\lam_j})\calH_{\calU}y ,i\Lam_{\lam_j}A_{\lam_j}y \Ra \right|
     +\ti \de\sum_{j=1}^N \de_0^{N-j}\left|\La A_{\lam_j}(\p_t\calH_{\calU})y ,i\Lam_{\lam_j}A_{\lam_j}y \Ra\right|   \\
     &\quad +\ti \de\sum_{j=1}^N \de_0^{N-j}\left|\La A_{\lam_j}\calH_{\calU}y ,i\Lam_{\lam_j}(\p_tA_{\lam_j})y \Ra\right|\lesssim \ti \de\sum_{j=1}^N |\lam_j'|\lam_j\|y\|_{\rm{Mor}(\lam_j)}^2.
\end{align}
We estimate the term $\ti \de\sum_{j=1}^N \de_0^{N-j}\La A_{\lam_j}\calH_{\calU}y ,i(\p_t\Lam_{\lam_j})A_{\lam_j}y \Ra$  using the results in Lemma~\ref{lem:operator-estimate} to obtain
\begin{align}
    \left|\La A_{\lam_j}\calH_{\calU}y ,i(\p_t\Lam_{\lam_j})A_{\lam_j}y \Ra\right|
    \lesssim |\lam_j'|\int_0^{\infty}\frac{r^2}{(\lam_j+r)^2}|y|_{-3}^2\udr\lesssim |\lam_j'|\|y\|_{\calH^3}^2.
\end{align}
Therefore
\begin{align}
    \left|\ti \de\sum_{j=1}^N \de_0^{N-j}\La A_{\lam_j}\calH_{\calU}y ,i(\p_t\Lam_{\lam_j})A_{\lam_j}y \Ra\right|
    \lesssim \ti \de\sum_{j=1}^N \de_0^{N-j} |\lam_j'|\|y\|_{\calH^3}^2
    \lesssim\ti \de\sum_{j=1}^N \de_0^{N-j} \|y\|_{\calH^3}^2\left(\frac{|b_j^{\lam}|}{\lam_j}+\frac{|b_j^{\lam}-\lam_j\lam_j'|}{\lam_j}\right).
\end{align}
Using ~\eqref{eq:correction-H3} we have
\begin{align}
    \ti \de\sum_{j=1}^N \de_0^{N-j} \|y\|_{\calH^3}^2\frac{|b_j^{\lam}|}{\lam_j}\lesssim_L \ti \de\sum_{j=1}^N \de_0^{N-j} \|y\|_{\calH^3}^2\frac{\lam_j}{|t|}
    \lesssim \ti \de\sum_{j=1}^N \de_0^{N-j}(|t|^{-1-2\veps_0})\frac{\lam_j}{|t|}\lesssim \ti\de |t|^{-2-2\veps_0}.
\end{align}
Using ~\eqref{eq:bj-weighted-sum} we obtain
\begin{align}
    &\ti \de\sum_{j=1}^N \de_0^{N-j} \|y\|_{\calH^3}^2\frac{|b_j^{\lam}-\lam_j\lam_j'|}{\lam_j}\\
    \lesssim& \ti\de |t|^{-1-2\veps_0}\left(\sum_{j=1}^N\frac{\de_0^{N-j}}{\lam_j^5}\left(|b_j^{\lam}-\lam_j\lam_j'|^2+|b_j^{\te}-\lam_j^2\te_j'|^2\right)\right)^{\frac{1}{2}}\left(\sum_{j=1}^N\de_0^{N-j}\lam_j^3\right)^{\frac{1}{2}}\\
    \lesssim & \ti\de |t|^{-1-2\veps_0}\left(o_{L}(1)+C(L)\max_{1\leq j\leq N-1}\left(\frac{\lam_j}{\lam_{j+1}}\right)^{k}\right)\left(\sum_{j=1}^N\de_0^{N-j}\|A_{\lam_j}y\|_{\rm{Mor}_2(\lam_j)}^2\right)^{\frac{1}{2}}\\
        &+C(L)\ti\de |t|^{-1-2\veps_0}(|t|^{-2}+\|z\|_{\calH^3}^2)\\
        \lesssim & \ti \de |t|^{-2-3\veps_0}+\ti \de |t|^{-\veps_0}\left(o_{L}(1)+C(L)\max_{1\leq j\leq N-1}\left(\frac{\lam_j}{\lam_{j+1}}\right)^{2k}\right)\left(\sum_{j=1}^N\de_0^{N-j}\|A_{\lam_j}y\|_{\rm{Mor}_2(\lam_j)}^2\right)\\
        &+C(L)\ti\de |t|^{-2-4\veps_0}.
\end{align}
Then we estimate the terms in the third and fourth lines of ~\eqref{eq:A-all}.
\begin{align}
    \left|\La i(\p_t\calH_{\calU})T_{\calU}(\calU\wedge\De\calU),\calH_{\calU}z \Ra\right|
    \lesssim \Ga(t)\|T_{\calU}(\calU\wedge\De\calU)\|_{\calH^1}\|\calH_{\calU}z\|_{\calH^1}\lesssim\Ga(t) |t|^{-1}\|z\|_{\calH^3} ,
\end{align}
\begin{align}
    \left|\La  i\calH_{\calU} (\p_t\left(T_{\calU}(\calU\wedge\De\calU)\right)),\calH_{\calU}z\Ra\right|\lesssim \|\p_t\left(T_{\calU}(\calU\wedge\De\calU)\right)\|_{\calH^1}\|\calH_{\calU}z\|_{\calH^1}\lesssim \Ga(t) |t|^{-1}\|z\|_{\calH^3} ,
\end{align}
\begin{align}
    \left|\La  i\calH_{\calU}T_{\calU}(\calU\wedge\De\calU),(\p_t\calH_{\calU})z\Ra\right|
    \lesssim \|T_{\calU}(\calU\wedge\De\calU)\|_{\calH^1}\|(\p_t\calH_{\calU})z\|_{\calH^1}\lesssim \Ga(t)|t|^{-1}\|z\|_{\calH^3}.
\end{align}
For $\Ga(t)$, similarly we have
\begin{equation}\label{eq:Ga-estimate-refined}
    \begin{aligned}
         \Ga(t)\lesssim & \sum_{j=1}^N\frac{|b_j^{\lam}|+|b_j^{\lam}-\lam_j\lam_j'|}{\lam_j^2}+\sum_{j=1}^N\frac{|b_j^{\te}|+|b_j^{\te}-\lam_j^2\te_j'|}{\lam_j^2}\\
    \lesssim & \left(o_{L}(1)+C(L)\max_{1\leq j\leq N-1}\left(\frac{\lam_j}{\lam_{j+1}}\right)^{k}\right)\left(\sum_{j=1}^N\de_0^{N-j}\|A_{\lam_j}y\|_{\rm{Mor}_2(\lam_j)}^2\right)^{\frac{1}{2}}\\
    &+C(L)|t|^{-1}+C(L)(|t|^{-2}+\|z\|_{\calH^3}^2).
    \end{aligned}
\end{equation}
Then we have
\begin{equation}\label{eq:Ga-t-1-z-H3}
    \begin{aligned}
         &\left(o_{L}(1)+C(L)\max_{1\leq j\leq N-1}\left(\frac{\lam_j}{\lam_{j+1}}\right)^{k}\right)\left(\sum_{j=1}^N\de_0^{N-j}\|A_{\lam_j}y\|_{\rm{Mor}_2(\lam_j)}^2\right)^{\frac{1}{2}}|t|^{-\frac{3}{2}-\veps_0}\\
     \lesssim & |t|^{-\frac{5}{2}-2\veps_0}+|t|^{-\frac{1}{2}}\sum_{j=1}^N\de_0^{N-j}\|A_{\lam_j}y\|_{\rm{Mor}_2(\lam_j)}^2.
    \end{aligned}
\end{equation}

Last, the term in the second line of ~\eqref{eq:A-all} can be estimated directly using Lemma~\ref{lem:F-estimate} and Lemma~\ref{lem:operator-estimate}.
\begin{align}
    &\left|\int_0^{\infty} \left(\overline{\p_r\left((\p_t\calH_{\calU})z\right)}\left(G_z-\rm{Id}\right)(\p_r\left(\calH_{\calU}z\right))\right)\udr\right|\\
    \lesssim& \Ga(t) \int_0^{\infty} r^{-2}|z|^2|z|_{-1}|z|_{-3}\udr\\
    \lesssim & \Ga(t) \|z\|_{\calH^1}^2\|z\|_{\calH^3}^2\lesssim (o_{L\to\infty}(1)+o_{T_0\to-\infty}(1))|t|^{-\frac{5}{2}-3\veps_0}.
\end{align}

Combining these estimates, we obtain that there exists a constant $C_{L,A}>0$ depending on $L$ such that
\begin{equation}\label{eq:A-estimate-final}
    \begin{aligned}
         |(\rm{A})|\lesssim& \sum_{j=1}^N |\lam_j'|\|z\|_{\rm{Mor}(\lam_j)}^2+ \sum_{j=1}^N \lam_j|\lam_j'|\|y\|_{\rm{Mor}(\lam_j)}^2+ o_{T_0\to-\infty}(1)\sum_{j=1}^N\de_0^{N-j}\|A_{\lam_j}y\|_{\rm{Mor}_2(\lam_j)}^2\\
    &+C_{L,A}\left(\ti \de +o_{T_0\to-\infty}(1)\right)|t|^{-2-2\veps_0},
    \end{aligned}
\end{equation}

\bigskip
\noindent
\textbf{Estimates for $(\rm{B})$.} We recall the expression of $(\rm{B})$ in ~\eqref{eq:B-expression}.
\begin{equation}\label{eq:B-all}
    \begin{aligned}
         (\rm{B})&=2\La \calH_{\calU}^2 z,\calH_{\calU}(- T_{\calU}\left(\p_t\calU\right)+\calR_{\rm{mod}})   \Ra\\
     &\quad + 2\pi \Re\int_0^{\infty} \left(\overline{\p_r\left(\calH_{\calU}z\right)}\left(\vD G_z( -T_{\calU}\left(\p_t\calU\right)+\calR_{\rm{mod}})\right)(\p_r\left(\calH_{\calU}z\right))\right)\udr\\
     &\quad +4\pi\Re\int_0^{\infty} \left(\overline{\p_r\left(\calH_{\calU}(-T_{\calU}\left(\p_t\calU\right)+\calR_{\rm{mod}})\right)}\left(G_z-\rm{Id}\right)(\p_r\left(\calH_{\calU}z\right))\right)\udr\\
     &\quad + 2\La  i\calH_{\calU}T_{\calU}(\calU\wedge\De\calU),\calH_{\calU}(-T_{\calU}\left(\p_t\calU\right)+\calR_{\rm{mod}})\Ra\\
     &\quad +\ti \de\sum_{j=1}^N \de_0^{N-j}\La A_{\lam_j}\calH_{\calU}(\calR_{\rm{MOD}}+\p_t\eta) ,i\Lam_{\lam_j}A_{\lam_j}y \Ra\\
     &\quad +\ti \de\sum_{j=1}^N \de_0^{N-j}\La A_{\lam_j}\calH_{\calU}y ,i\Lam_{\lam_j}A_{\lam_j}(\calR_{\rm{MOD}}+\p_t\eta) \Ra.
    \end{aligned}
\end{equation}

For the terms in ~\eqref{eq:B-all} containing $\cR_{\rm{MOD}}$, by definition we have
\begin{align}\label{eq:RMOD-Rmod}
    \calR_{\rm{MOD}}=\sum_{\ell=1}^N\frac{b_{\ell}^{\lam}-\lam_{\ell}\lam_{\ell}'}{\lam_{\ell}} T_{\calU}\left(\p_{\lam_{\ell}}\calU\right)+\sum_{\ell=1}^N\frac{b_{\ell}^{\te}-\lam_{\ell}^2\te_{\ell}'}{\lam_{\ell}^2} T_{\calU}\left(\p_{\te_{\ell}}\calU\right)+\cR_{\rm{mod}}.
\end{align}
With the decomposition ~\eqref{eq:RMOD-Rmod}, the expression ~\eqref{eq:B-all} of $(\rm{B})$  can be divided into three parts: The terms containing  $\sum_{\ell=1}^N\frac{b_{\ell}^{\lam}-\lam_{\ell}\lam_{\ell}'}{\lam_{\ell}} T_{\calU}\left(\p_{\lam_{\ell}}\calU\right)+\sum_{\ell=1}^N\frac{b_{\ell}^{\te}-\lam_{\ell}^2\te_{\ell}'}{\lam_{\ell}^2} T_{\calU}\left(\p_{\te_{\ell}}\calU\right)$, the terms containing $\cR_{\rm{mod}}$ or $\p_t\eta$, and the remaining terms.

First we estimate the part of ~\eqref{eq:B-all} containing  $\sum_{\ell=1}^N\frac{b_{\ell}^{\lam}-\lam_{\ell}\lam_{\ell}'}{\lam_{\ell}} T_{\calU}\left(\p_{\lam_{\ell}}\calU\right)+\sum_{\ell=1}^N\frac{b_{\ell}^{\te}-\lam_{\ell}^2\te_{\ell}'}{\lam_{\ell}^2} T_{\calU}\left(\p_{\te_{\ell}}\calU\right)$, that is,
\begin{align}
    &\ti \de\sum_{j=1}^N \de_0^{N-j}\La A_{\lam_j}\calH_{\calU}\left[\sum_{\ell=1}^N\frac{b_{\ell}^{\lam}-\lam_{\ell}\lam_{\ell}'}{\lam_{\ell}} T_{\calU}\left(\p_{\lam_{\ell}}\calU\right)+\sum_{\ell=1}^N\frac{b_{\ell}^{\te}-\lam_{\ell}^2\te_{\ell}'}{\lam_{\ell}^2} T_{\calU}\left(\p_{\te_{\ell}}\calU\right)\right],i\Lam_{\lam_j}A_{\lam_j}y \Ra\\
     &\quad +\ti \de\sum_{j=1}^N \de_0^{N-j}\La A_{\lam_j}\calH_{\calU}y ,i\Lam_{\lam_j}A_{\lam_j}\left[\sum_{\ell=1}^N\frac{b_{\ell}^{\lam}-\lam_{\ell}\lam_{\ell}'}{\lam_{\ell}} T_{\calU}\left(\p_{\lam_{\ell}}\calU\right)+\sum_{\ell=1}^N\frac{b_{\ell}^{\te}-\lam_{\ell}^2\te_{\ell}'}{\lam_{\ell}^2} T_{\calU}\left(\p_{\te_{\ell}}\calU\right)\right] \Ra.
\end{align}
We first estimate the coefficients of $\frac{b_j^{\lam}-\lam_j\lam_j'}{\lam_j}$ using Lemma~\ref{lem:F-tangent-estimate} as follows:
\begin{align}
    &\left|\La A_{\lam_j}\calH_{\calU}T_{\calU}\left(\p_{\lam_j}\calU\right),i\Lam_{\lam_j}A_{\lam_j}y \Ra
      \right|\\
      \lesssim & \lam_j\int_0^{\infty} r^{-3}\lam_j^{-2}F_j(t,r)\frac{r}{\lam_j+r}\left(|\p_r (A_{\lam_j}y)|+\frac{|A_{\lam_j}y|}{r}\right)\udr\\
      \lesssim&\int_0^{\infty}\frac{F_j(t,r)}{(\lam_j+r)\lam_jr}  |A_{\lam_j}y|_{-2}\udr\\
      \lesssim&    \left(\int_0^{\infty}\frac{F_j^2(t,r)}{(\lam_j+r)^2\lam_j^2r^2}\frac{(\lam_j+r)^2}{\lam_j}  \udr  \right)^{\frac{1}{2}}\|A_{\lam_j}y\|_{\rm{Mor}_2(\lam_j)} \\
      \lesssim &\lam_j^{-\frac{3}{2}}\|A_{\lam_j}y\|_{\rm{Mor}_2(\lam_j)}.
\end{align}
For $\La A_{\lam_j}\calH_{\calU}y ,i\Lam_{\lam_j}A_{\lam_j}(T_{\calU}\left(\p_{\lam_j}\calU\right))\Ra$, we write
\begin{align}
    \calH_{\calU}=H_{\lam_j}+\sum_{\ell\neq j}V_{\ell}+\calE_{\calU},
\end{align}
since $A_{\lam_j}H_{\lam_j}=\ti H_{\lam_j}A_{\lam_j}$, this term can be estimated using Lemma~\ref{lem:operator-estimate} as follows:
\begin{align}
    &\left|\La A_{\lam_j}\calH_{\calU}y ,i\Lam_{\lam_j}A_{\lam_j}(T_{\calU}\left(\p_{\lam_j}\calU\right))\Ra\right|\\
    \lesssim& \left|\La A_{\lam_j}H_{\lam_j}y ,i\Lam_{\lam_j}A_{\lam_j}(T_{\calU}\left(\p_{\lam_j}\calU\right))\Ra\right|\\
    &+\sum_{\ell\neq j} \left|\La A_{\lam_j}V_{\ell}y ,i\Lam_{\lam_j}A_{\lam_j}(T_{\calU}\left(\p_{\lam_j}\calU\right))\Ra\right|+\left|\La A_{\lam_j}\calE_{\calU}y ,i\Lam_{\lam_j}A_{\lam_j}(T_{\calU}\left(\p_{\lam_j}\calU\right))\Ra\right|\\
    \lesssim & \frac{1}{\lam_j}\int_0^{\infty}\frac{F_j}{r(\lam_j+r)}|\ti H_{\lam_j}A_{\lam_j}y|\udr \\
    &+\frac{1}{\lam_j}\int_0^{\infty}\frac{F_j}{r(\lam_j+r)}\frac{1}{r^2}\left(\sum_{\ell\neq j}F_{\ell}^2+\sum_{1\leq m<n\leq N}F_{m}F_n\right)|y|_{-1}\udr\\
    \lesssim & \frac{1}{\lam_j^{\frac{3}{2}}}\left(\int_0^{\infty}\frac{F_j^2}{r^2}\udr\right)^{\frac{1}{2}}\|A_{\lam_j}y\|_{\rm{Mor}_2(\lam_j)}\\
    &+\frac{1}{\lam_j^{\frac{3}{2}}}\left(\sum_{\ell\neq j}\left(\int_0^{\infty}\frac{F_j^2F_{\ell}^4}{r^2}\udr\right)^{\frac{1}{2}}+\sum_{1\leq m<n\leq N}\left(\int_0^{\infty}\frac{F_j^2F_{m}^2F_n^2}{r^2}\udr\right)^{\frac{1}{2}}\right)\|y\|_{\rm{Mor}(\lam_j)}\\
    \lesssim & \frac{1}{\lam_j^{\frac{3}{2}}}\left( \|A_{\lam_j}y\|_{\rm{Mor}_2(\lam_j)}+\max_{1\leq \ell\leq N-1}\left(\frac{\lam_{\ell}}{\lam_{\ell+1}}\right)^k\|y\|_{\rm{Mor}(\lam_j)}\right)
\end{align} 
 
The coefficients of $\frac{b_j^{\te}-\lam_j^2\te_j'}{\lam_j^2} $ can be estimated in the same way:
\begin{align}
    &\left|\La A_{\lam_j}\calH_{\calU}T_{\calU}\left(\p_{\te_j}\calU\right),i\Lam_{\lam_j}A_{\lam_j}y \Ra
      +\La A_{\lam_j}\calH_{\calU}y ,i\Lam_{\lam_j}A_{\lam_j}(T_{\calU}\left(\p_{\te_j}\calU\right))\Ra\right|\\
      \lesssim &\lam_j^{-\frac{1}{2}}\left(\|A_{\lam_j}y\|_{\rm{Mor}_2(\lam_j)}+\max_{1\leq \ell\leq N-1}\left(\frac{\lam_{\ell}}{\lam_{\ell+1}}\right)^k\|y\|_{\rm{Mor}(\lam_j)}\right).
\end{align}
For $\ell\neq j$, the coefficients of $\frac{b_{\ell}^{\lam}-\lam_{\ell}\lam_{\ell}'}{\lam_{\ell}}$ and $\frac{b_{\ell}^{\te}-\lam_{\ell}^2\te_{\ell}'}{\lam_{\ell}^2} $  can be estimated similarly to obtain
\begin{equation}
    \begin{aligned}
        &\left|\La A_{\lam_j}\calH_{\calU}T_{\calU}\left(\p_{\lam_{\ell}}\calU\right),i\Lam_{\lam_j}A_{\lam_j}y \Ra
      \right|+\left|\La A_{\lam_j}\calH_{\calU}y ,i\Lam_{\lam_j}A_{\lam_j}(T_{\calU}\left(\p_{\lam_{\ell}}\calU\right))\Ra\right|\\
      \lesssim&\begin{cases}
          \lam_{\ell}^{-\frac{3}{2}}\left(\frac{\lam_{\ell}}{\lam_j}\right)^{\frac{1}{2}} \|y\|_{\rm{Mor}(\lam_j)} , & \ell<j,\\
          \lam_{\ell}^{-\frac{3}{2}} \|y\|_{\rm{Mor}(\lam_{\ell})}, & \ell>j.
      \end{cases}
    \end{aligned}
\end{equation}

\begin{equation}
    \begin{aligned}
        &\left|\La A_{\lam_j}\calH_{\calU}T_{\calU}\left(\p_{\te_{\ell}}\calU\right),i\Lam_{\lam_j}A_{\lam_j}y \Ra
      \right|+\left|\La A_{\lam_j}\calH_{\calU}y ,i\Lam_{\lam_j}A_{\lam_j}(T_{\calU}\left(\p_{\te_{\ell}}\calU\right))\Ra\right|\\
      \lesssim&\begin{cases}
          \lam_{\ell}^{-\frac{1}{2}}\left(\frac{\lam_{\ell}}{\lam_j}\right)^{\frac{1}{2}} \|y\|_{\rm{Mor}(\lam_j)} , & \ell<j,\\
          \lam_{\ell}^{-\frac{1}{2}}\|y\|_{\rm{Mor}(\lam_{\ell})}, & \ell>j.
      \end{cases}
    \end{aligned}
\end{equation}
Therefore using Lemma ~\ref{lem:b-lam-close} we have
\begin{align}
    &\Bigg|\ti \de\sum_{j=1}^N \de_0^{N-j}\La A_{\lam_j}\calH_{\calU}\left[\sum_{\ell=1}^N\frac{b_{\ell}^{\lam}-\lam_{\ell}\lam_{\ell}'}{\lam_{\ell}} T_{\calU}\left(\p_{\lam_{\ell}}\calU\right)+\sum_{\ell=1}^N\frac{b_{\ell}^{\te}-\lam_{\ell}^2\te_{\ell}'}{\lam_{\ell}^2} T_{\calU}\left(\p_{\te_{\ell}}\calU\right)\right],i\Lam_{\lam_j}A_{\lam_j}y \Ra\\
     &\quad +\ti \de\sum_{j=1}^N \de_0^{N-j}\La A_{\lam_j}\calH_{\calU}y ,i\Lam_{\lam_j}A_{\lam_j}\left[\sum_{\ell=1}^N\frac{b_{\ell}^{\lam}-\lam_{\ell}\lam_{\ell}'}{\lam_{\ell}} T_{\calU}\left(\p_{\lam_{\ell}}\calU\right)+\sum_{\ell=1}^N\frac{b_{\ell}^{\te}-\lam_{\ell}^2\te_{\ell}'}{\lam_{\ell}^2} T_{\calU}\left(\p_{\te_{\ell}}\calU\right)\right] \Ra \Bigg|\\
     \lesssim &\ \ti\de \sum_{j=1}^N\de_0^{N-j}\frac{1}{\lam_j^{\frac{5}{2}}}\left[|b_j^{\lam}-\lam_j\lam_j'|+|b_j^{\te}-\lam_j^2\te_j'|\right]\left(\|A_{\lam_j}y\|_{\rm{Mor}_2(\lam_j)}+\max_{1\leq \ell\leq N-1}\left(\frac{\lam_{\ell}}{\lam_{\ell+1}}\right)^k \|y\|_{\rm{Mor}(\lam_j)}\right)\\
     &+ \ti\de\sum_{1\leq\ell<j\leq N}\de_0^{N-j}\frac{1}{\lam_{\ell}^{\frac{5}{2}}}\left[|b_{\ell}^{\lam}-\lam_{\ell}\lam_{\ell}'|+|b_{\ell}^{\te}-\lam_{\ell}^2\te_{\ell}'|\right]\left(\frac{\lam_{\ell}}{\lam_j}\right)^{\frac{1}{2}}  \|y\|_{\rm{Mor}(\lam_j)} \\
     &+\ti\de\sum_{1\leq j<\ell\leq N}\de_0^{N-j}\frac{1}{\lam_{\ell}^{\frac{5}{2}}}\left[|b_{\ell}^{\lam}-\lam_{\ell}\lam_{\ell}'|+|b_{\ell}^{\te}-\lam_{\ell}^2\te_{\ell}'|\right] \|y\|_{\rm{Mor}(\lam_{\ell})} \\
     \lesssim &\ \ti \de \left(\sum_{j=1}^N\frac{\de_0^{N-j}}{\lam_j^5}\left(|b_j^{\lam}-\lam_j\lam_j'|^2+|b_j^{\te}-\lam_j^2\te_j'|^2\right)\right)^{\frac{1}{2}}\\
     &\times\left(\left(\sum_{j=1}^N\de_0^{N-j}\|A_{\lam_j}y\|_{\rm{Mor}_2(\lam_j)}^2\right)^{\frac{1}{2}}+\max_{1\leq \ell\leq N-1}\left(\frac{\lam_{\ell}}{\lam_{\ell+1}}\right)^k\left(\sum_{j=1}^N\de_0^{N-j}\|y\|_{\rm{Mor}(\lam_j)}^2\right)^{\frac{1}{2}}\right)\\
     &+  \ti\de   \left(\sum_{\ell=1}^N\frac{\de_0^{N-\ell}}{\lam_\ell^5}\left(|b_\ell^{\lam}-\lam_\ell\lam_\ell'|^2+|b_\ell^{\te}-\lam_\ell^2\te_\ell'|^2\right)\right)^{\frac{1}{2}}  \\
     &\times \left(\de_0+\de_0^{-\frac{N-1}{2}}\max_{1\leq \ell\leq N-1}\left(\frac{\lam_{\ell}}{\lam_{\ell+1}}\right)^{\frac{1}{2}}\right)\left(\sum_{j=1}^N\de_0^{N-j}\|y\|_{\rm{Mor}(\lam_j)}^2\right)^{\frac{1}{2}}\\
     \lesssim & \ti\de\left( o_{L\to\infty}(1)+  C(L,\de_0) o_{T_0\to-\infty}(1)\right)\sum_{j=1}^N\de_0^{N-j}\|A_{\lam_j}y\|_{\rm{Mor}_2(\lam_j)}^2\\
     &+\ti\de \left(o_{\de_0\to 0}(1)+C(L,\de_0)o_{T_0\to-\infty}(1)\right)\sum_{j=1}^N\de_0^{N-j}\|y\|_{\rm{Mor}(\lam_j)}^2+C(L,\de_0)|t|^{-2-3\veps_0}.
\end{align}

Next we estimate the second part of ~\eqref{eq:B-all}: the terms containing $\cR_{\rm{mod}}$ or $\p_t\eta$. We first present a decomposition of $\cR_{\rm{mod}}$. Denote by $\om(t,r)=\p_tf_1(t,r)\cdot f_2(t,r)$.
\begin{claim}\label{claim:Qmod}
    The term $\cR_{\rm{mod}}$ admits a decomposition of the form
    \begin{align}
        \cR_{\rm{mod}}=-i\om z+\calQ_{\rm{mod}}
    \end{align}
    with the following properties:
    \begin{align}\label{eq:omega-estimate-pointwise}
        |(r\p_r)^a \om(t,r)|\lesssim \Ga(t)\sum_{j=1}^NF_j(t,r),\ a=1,2,3.
    \end{align}
    \begin{align}\label{eq:Q-mod-estimate}
    \|\calQ_{\rm{mod}}\|_{\calH^1}\lesssim \Ga(t)\|z\|_{\calH^1}^2,\  \|\calQ_{\rm{mod}}\|_{\calH^3}\lesssim \Ga(t)\|z\|_{\calH^1}\|z\|_{\calH^3}.
    \end{align}
\end{claim}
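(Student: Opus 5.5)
The plan is to expand $\cR_{\rm{mod}}$ term by term and isolate its only contribution that is linear in $z$. Recall
\begin{align}
\cR_{\rm{mod}}=-\Big(\tfrac{1}{\sqrt{1-|g|^2/4}}-1\Big)T_{\calU}(\p_t\calU)+\frac{g\cdot\p_tf_1+ig\cdot\p_tf_2}{\sqrt{1-|z|^2/4}}-\frac{z}{4-|z|^2}\,g\cdot\p_t\calU,
\end{align}
where $|g|=|z|$. The first and third terms are at least quadratic in $z$, with coefficients involving $\p_t\calU=\sum_j(\lam_j'\p_{\lam_j}\calU+\te_j'\p_{\te_j}\calU)$. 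I will simply place both of them in $\calQ_{\rm{mod}}$.

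For the second term, I insert the Darboux representation. Using \eqref{eq:PU-g-z} and $g\cdot\calU=-\frac12|z|^2$, we have
\begin{align}
g=\sqrt{1-\tfrac{|z|^2}{4}}(z_1f_1+z_2f_2)-\tfrac12|z|^2\calU .
\end{align}
Since $(f_1,f_2,\calU)$ is orthonormal, $\p_tf_1\cdot f_1=0$, $\p_tf_2\cdot f_2=0$, and $\p_tf_2\cdot f_1=-\om$. Substituting gives
\begin{align}
g\cdot\p_tf_1+ig\cdot\p_tf_2=\sqrt{1-\tfrac{|z|^2}{4}}\,(z_2\om-iz_1\om)-\tfrac12|z|^2(\calU\cdot\p_tf_1+i\,\calU\cdot\p_tf_2),
\end{align}
and $z_2\om-iz_1\om=-i\om z$. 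The square-root factors cancel exactly, so the second term equals $-i\om z$ plus a quadratic term. That quadratic term has coefficient $\calU\cdot\p_tf_j=-f_j\cdot\p_t\calU$, and it also goes into $\calQ_{\rm{mod}}$. This establishes the decomposition.

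For \eqref{eq:omega-estimate-pointwise}, I write $\p_tf_1=\sum_\ell(\lam_\ell'\p_{\lam_\ell}f_1+\te_\ell'\p_{\te_\ell}f_1)$. It then suffices to show
\begin{align}
|(r\p_r)^a(\lam_\ell\p_{\lam_\ell}h_1\cdot h_2)|+|(r\p_r)^a(\p_{\te_\ell}h_1\cdot h_2)|\lesssim\sum_jF_j .
\end{align}
Differentiating the Coulomb gauge ODE \eqref{eq:frame-def} in the parameter yields an ODE for $\p_ph_1$ with $\p_ph_1(0)=0$. Taking the $h_2$ component gives
\begin{align}
\p_r(\p_ph_1\cdot h_2)=-(h_1\cdot\p_r V)(V\cdot\p_p h_2)+\dots ,
\end{align}
a formula involving only $\p_rV$ and $\p_pV$. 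Both are bounded by $\sum_jF_j/r$ and $\sum_jF_j$ respectively, by the expansions \eqref{eq:Q-rescaled-expansion} and \eqref{eq:Q-cal-expansion-local} and by $|\lam_\ell\p_{\lam_\ell}\calU|+|\p_{\te_\ell}\calU|\lesssim F_\ell$.

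The main obstacle is that integrating from $0$ produces $\int_0^r\sum_j F_j\,\frac{ds}{s}$, which only gives a bound of order one rather than $\sum_jF_j$ for $r\gg\lam_j$. I would first try to show the integrand is an exact derivative up to terms of order $F_j^2/r$. Concretely, $\om\approx k^{-1}$ times a multiple of $\te_\ell'(\cos\phi_\ell-1)$-type quantities, which is the angular-connection form for $e^{\te R}Q_\lambda$. If the leading part of $\om$ does not decay, it is at least constant in $r$ for large $r$. In that case I would use the gauge freedom: since $\om$ enters only as $-i\om z$, a spatially constant part is harmless. I would then interpret the claim as bounding only $(r\p_r)^a\om$ for $a\ge1$, which is exactly how the claim is stated ($a=1,2,3$). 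That is consistent with integration, since $r\p_r\om$ equals the integrand and is pointwise bounded by $\Ga\sum_jF_j$. Higher $a$ follow from differentiating the integrand using the $(r\p_r)^a$ stability of the expansions.

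Finally, for \eqref{eq:Q-mod-estimate}, each piece of $\calQ_{\rm{mod}}$ has the form $c(t,r)\,Q(z)$. Here $Q$ is smooth and at least quadratic in $z$ (for instance $|z|^2$, $z|z|^2/(4-|z|^2)$, or $(g\cdot\p_t\calU)z$). The coefficient satisfies $|(r\p_r)^ac|\lesssim\Ga$, using $|\p_t\calU|\lesssim\Ga\sum F_\ell$ and $|\p_tf_j|\lesssim\Ga$, where the latter is also controlled by the same ODE argument in $L^\infty$. The $\calH^1$ bound then follows from $\|z\|_{L^\infty}\lesssim\|z\|_{\calH^1}$ (Hardy/Strauss). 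The $\calH^3$ bound follows by Leibniz: all three derivatives are placed on one factor, the remaining factors are bounded in $L^\infty$, and mixed terms are handled by interpolation $|z|_{-1}\in L^\infty$-type bounds. This gives $\Ga\|z\|_{\calH^1}\|z\|_{\calH^3}$.
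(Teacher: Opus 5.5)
Your proposal is correct and follows the paper's proof. Substituting $g=\sqrt{1-|z|^2/4}\,(z_1f_1+z_2f_2)-\frac{1}{2}|z|^2\calU$ and using orthonormality of $(f_1,f_2,\calU)$ isolates exactly $-i\om z$ and leaves the same $\calQ_{\mathrm{mod}}$, and both estimates then come, as in Lemma~\ref{lem:frame-estimate}, from differentiating the Coulomb-gauge ODE and integrating from $r=0$, with decay needed only for $a\geq 1$ since $\om$ itself is merely $O(\Ga)$; your remark that $r\p_r\om=-(f_1\cdot r\p_r\calU)(f_2\cdot\p_t\calU)+(f_2\cdot r\p_r\calU)(f_1\cdot\p_t\calU)$ contains no $\p_t f_j$ is a slightly cleaner route to \eqref{eq:omega-estimate-pointwise} than the paper's Gronwall bound on $\p_t f_1$, and the gauge-freedom detour in your obstacle paragraph is unnecessary.
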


\begin{proof}[Proof of Claim ~\ref{claim:Qmod}]
    
By definition, we have
\begin{align}
    g=\sqrt{1-\frac{|z|^2}{4}}(z_1f_1+z_2f_2)-\frac{|z|^2}{2}\calU,
\end{align} 
\begin{align}
    T_{\calU}(\p_t\calU)&=\p_t\calU\cdot f_1+i\p_t\calU\cdot f_2,\\
    \p_tf_1&=\om f_2-(f_1\cdot\p_t\calU)\calU,\\
    \p_tf_2&=-\om f_1-(f_2\cdot\p_t\calU)\calU.
\end{align}
Therefore
\begin{align}
     \calR_{\rm{mod}}&= -\left(\frac{1}{\sqrt{1-\frac{|z|^2}{4}}}-1\right)T_{\calU}\left(\p_t \calU\right)-\frac{1}{4}\frac{z}{\left(1-\frac{|z|^2}{4}\right)}\sqrt{1-\frac{|z|^2}{4}}(z_1f_1+z_2f_2)\cdot \left(\p_t \calU\right)\\
    &\quad +\frac{1}{\sqrt{1-\frac{|z|^2}{4}}}\left[\om \sqrt{1-\frac{|z|^2}{4}}z_2+\frac{|z|^2}{2}(f_1\cdot\p_t\calU)-i\om \sqrt{1-\frac{|z|^2}{4}}z_1+i\frac{|z|^2}{2}(f_2\cdot\p_t\calU)\right]\\
    &=-i\om z+\left(1-\frac{1}{\sqrt{1-\frac{|z|^2}{4}}}+\frac{|z|^2}{2\sqrt{1-\frac{|z|^2}{4}}}\right)T_{\calU}\left(\p_t \calU\right)-\frac{1}{4}\frac{z}{\sqrt{1-\frac{|z|^2}{4}}}\Re(\bar z\cdot T_{\calU}\left(\p_t \calU\right)).
\end{align}
Denote by
\begin{align}
    \calQ_{\rm{mod}}=\left(1-\frac{1}{\sqrt{1-\frac{|z|^2}{4}}}+\frac{|z|^2}{2\sqrt{1-\frac{|z|^2}{4}}}\right)T_{\calU}\left(\p_t \calU\right)-\frac{1}{4}\frac{z}{\sqrt{1-\frac{|z|^2}{4}}}\Re(\bar z\cdot T_{\calU}\left(\p_t \calU\right)).
\end{align}
For $\om$ using Lemma ~\ref{lem:frame-estimate} we obtain ~\eqref{eq:omega-estimate-pointwise}. Using Lemma~\ref{lem:frame-estimate} we obtain ~\eqref{eq:Q-mod-estimate}.

\end{proof}

Now we estimate the terms in the second part of ~\eqref{eq:B-all} containing  $-i\om z $.
\begin{align}
    \left|2\La\calH_{\calU}^2z, \calH_{\calU}(-i\om z)\Ra\right|
    \lesssim\left|\La\calH_{\calU}^2z, i\om\calH_{\calU}z\Ra  \right|+\left|\La\calH_{\calU}^2z, i[\calH_{\calU},\om]z\Ra  \right|+\left|\La\calH_{\calU}^2z, [\calH_{\calU},i](\om z)\Ra\right|.
\end{align}
For $\left|\La\calH_{\calU}^2z, i\om\calH_{\calU}z\Ra  \right|$, note that
\begin{align}
   \La\calH_{\calU}^2z, i\om\calH_{\calU}z\Ra &=\La \calH_{\calU}z,i\om\calH^2_{\calU}z     \Ra+\La \calH_{\calU}z,i[\calH_{\calU},\om]\calH_{\calU}z  \Ra+\La \calH_{\calU}z,[\calH_{\calU},i](\om\calH_{\calU}z)\Ra,
\end{align}
hence
\begin{align}
    \La\calH_{\calU}^2z, i\om\calH_{\calU}z\Ra&=\frac{1}{2}\left[\La \calH_{\calU}z,i[\calH_{\calU},\om]\calH_{\calU}z  \Ra+\La \calH_{\calU}z,[\calH_{\calU},i](\om\calH_{\calU}z)\Ra\right]\\
    &=\frac{1}{2}\Re\int_0^{\infty}\overline{i\calH_{\calU}z}\left[2(\p_r\om)\p_r+\left(\p_r^2\om+\frac{1}{r}\p_r\om\right)\Id\right]\left(\calH_{\calU}z\right)\udr\\
    &\quad +\frac{1}{2}\La \calH_{\calU}z,[\calE_{\calU},i](\om\calH_{\calU}z)\Ra\\
    &=\Re\int_0^{\infty}\overline{i\calH_{\calU}z}(\p_r\om)\p_r\left(\calH_{\calU}z\right)\udr
    +\La \calH_{\calU}z,\calE_{\calU}(i\om\calH_{\calU}z)\Ra.
\end{align}
We estimate using ~\eqref{eq:omega-estimate-pointwise}
\begin{align}
    \left|\int_0^{\infty}\overline{i\calH_{\calU}z}(\p_r\om)\p_r\left(\calH_{\calU}z\right)\udr\right|
    \lesssim  \Ga(t)\sum_{j=1}^N\int_0^{\infty}|z|_{-3}^2F_{j}(t,r)\udr
    \lesssim \Ga(t)\sum_{j=1}^N\lam_j\|z\|_{\rm{Mor}(\lam_j)}^2.
\end{align}
We estimate using ~\eqref{eq:FkFl-pointwise}
\begin{align}
    \left|\La \calH_{\calU}z,\calE_{\calU}(i\om\calH_{\calU}z)\Ra\right|&\lesssim \Ga(t)\sum_{1\leq m<\ell\leq N}\int_0^{\infty}r^{-2}F_m(t,r)F_{\ell}(t,r)|\calH_{\calU}z|^2\udr\\
    &\lesssim \Ga(t)|t|^{-1}\|z\|_{\calH^1}\|z\|_{\calH^3}\lesssim |t|^{-\frac{5}{2}-\veps_0}.
\end{align}
For $\La\calH_{\calU}^2z, i[\calH_{\calU},\om]z\Ra $, we estimate
\begin{align}
    &\left|\La\calH_{\calU}^2z, i[\calH_{\calU},\om]z\Ra \right|\\
    \lesssim & \int_0^{\infty}|\p_r(\calH_{\calU}z)||\p_r([\calH_{\calU},\om]z)|\udr
    +\int_0^{\infty}|\calH_{\calU} z||V_{\calU}( i[\calH_{\calU},\om]z)|\udr\\
    \lesssim & \Ga(t)\sum_{j=1}^N\int_0^{\infty} F_j(t,r)|z|_{-3}^2\udr\lesssim \Ga(t)\sum_{j=1}^N\lam_j\|z\|_{\rm{Mor}(\lam_j)}^2.
\end{align}
For $\left|\La\calH_{\calU}^2z, [\calH_{\calU},i](\om z)\Ra\right|$, we use ~\eqref{eq:commutator-pointwise} to obtain
\begin{align}
    &\left|\La\calH_{\calU}^2z, [\calH_{\calU},i](\om z)\Ra\right|\\
    \lesssim & \|\calH_{\calU}z\|_{\calH^1}\|[\calE_{\calU},i](\om z)\|_{\calH^1}\\
    \lesssim &\|z\|_{\calH^3}\left(\int_0^{\infty}       |\p_r\left([\calE_{\calU},i](\om z)\right) |^2+\frac{|[\calE_{\calU},i](\om z)|^2}{r^2}     \udr\right)^{\frac{1}{2}}\\
    \lesssim & \|z\|_{\calH^3}\Ga(t)|t|^{-1}\|z\|_{\calH^1}\lesssim |t|^{-\frac{5}{2}-2\veps_0}.
\end{align}
We use ~\eqref{eq:DGz-bounded} and ~\eqref{eq:Kz-bounded} to estimate
\begin{align}
    &\left|\int_0^{\infty} \left(\overline{\p_r\left(\calH_{\calU}z\right)}\left(\vD G_z(-i\om z)\right)(\p_r\left(\calH_{\calU}z\right))\right)\udr\right|\\
    \lesssim & \int_0^{\infty}|z|_{-3}^2|z|^2|\om|\udr\lesssim \Ga(t)\|z\|_{\calH^1}^2\|z\|_{\calH^3}^2\lesssim |t|^{-\frac{5}{2}-3\veps_0}.
\end{align}
\begin{align}
    \left|\int_0^{\infty} \left(\overline{\p_r\left(\calH_{\calU}(-i\om z)\right)}\left(G_z-\rm{Id}\right)(\p_r\left(\calH_{\calU}z\right))\right)\udr\right|\lesssim \Ga(t)\|z\|_{\calH^1}^2\|z\|_{\calH^3}^2\lesssim |t|^{-\frac{5}{2}-3\veps_0}.
\end{align}
We use ~\eqref{eq:F-H1} to obtain
\begin{align}
    &\left|\La  i\calH_{\calU}T_{\calU}(\calU\wedge\De\calU),\calH_{\calU}(-i\om z)\Ra\right|\\\lesssim& \|T_{\calU}(\calU\wedge\De\calU)\|_{\calH^1}\| \calH_{\calU}(-i\om z)\|_{\calH^1}
    \lesssim  |t|^{-1}\Ga(t)\|z\|_{\calH^3},
\end{align}
which has been treated when estimating $(\rm{A})$, see ~\eqref{eq:Ga-estimate-refined}.

To estimate
\begin{align}
    &\quad \ti \de\sum_{j=1}^N \de_0^{N-j}\La A_{\lam_j}\calH_{\calU}(-i\om z) ,i\Lam_{\lam_j}A_{\lam_j}y \Ra
    +\ti \de\sum_{j=1}^N \de_0^{N-j}\La A_{\lam_j}\calH_{\calU}y ,i\Lam_{\lam_j}A_{\lam_j}(-i\om z) \Ra,
\end{align}
we write $-i\om z=-i\om y+i\om \eta$ to decompose the term into two parts.
Note that
\begin{align}
    [A_{\lam_j},\om]=-\p_r \om,\ [\Lam_{\lam_j},\om]=\varphi_{\lam_j}\p_r \om,
\end{align}
we have
\begin{align}
    &\La A_{\lam_j}\calH_{\calU}(-i\om z) ,i\Lam_{\lam_j}A_{\lam_j}y \Ra+\La A_{\lam_j}\calH_{\calU}y ,i\Lam_{\lam_j}A_{\lam_j}(-i\om z) \Ra\\
    =&\La  i(\p_r\om)\calH_{\calU}y,i\Lam_{\lam_j}A_{\lam_j}y\Ra+\La iA_{\lam_j}\left[2(\p_r\om)\p_r y+(\p_r^2\om+r^{-1}\p_r\om)y\right],i\Lam_{\lam_j}A_{\lam_j}y  \Ra\\
    &-\La A_{\lam_j}[\calH_{\calU},i](\om y),i\Lam_{\lam_j}A_{\lam_j}y \Ra+\La  A_{\lam_j}\calH_{\calU}y , \varphi_{\lam_j}(\p_r \om) A_{\lam_j}y\Ra-\La  A_{\lam_j}\calH_{\calU}y , \Lam_{\lam_j}((\p_r\om)y)\Ra\\
    &+\La A_{\lam_j}\calH_{\calU}(i\om\eta),i\Lam_{\lam_j}A_{\lam_j}y\Ra+\La A_{\lam_j}\calH_{\calU}y,i\Lam_{\lam_j}A_{\lam_j}(i\om \eta)\Ra.
\end{align}
Each of the term can be estimated in the same way when estimating $\La\calH_{\calU}^2z, \calH_{\calU}(-i\om z)\Ra$ to obtain
\begin{align}
    \left|\ti \de\sum_{j=1}^N \de_0^{N-j}\La A_{\lam_j}\calH_{\calU}(-i\om z) ,i\Lam_{\lam_j}A_{\lam_j}y \Ra
    +\ti \de\sum_{j=1}^N \de_0^{N-j}\La A_{\lam_j}\calH_{\calU}y ,i\Lam_{\lam_j}A_{\lam_j}(-i\om z) \Ra\right|
    \lesssim  \Ga(t)\sum_{j=1}^N\lam_j^2\|y\|_{\rm{Mor}(\lam_j)}^2
\end{align}
and we omit the details. 

The terms containing $\calQ_{\rm{mod}}$ or $\p_t\eta$ in the second part of ~\eqref{eq:B-all} can be estimated directly using ~\eqref{eq:Q-mod-estimate}, ~\eqref{eq:correction-H1} and ~\eqref{eq:correction-H3} to obtain the bound $C(L)o_{T_{0}\to -\infty}(1)|t|^{-2-2\veps_0}$, and we only estimate one of them.
\begin{align}
    &\left|\La A_{\lam_j}\calH_{\calU}(\p_t\eta) ,i\Lam_{\lam_j}A_{\lam_j}y \Ra\right|\lesssim \|\calH_{\calU}(\p_t\eta)\|_{L^2}\| A_{\lam_j}^*\Lam_{\lam_j}A_{\lam_j}y\|_{L^2}\\
    \lesssim&\|\p_t\eta\|_{\calH^1}^{\frac{1}{2}}\|\p_t\eta\|_{\calH^3}^{\frac{1}{2}}\|y\|_{\calH^3}\lesssim |t|^{-2-2\veps_0-\frac{1}{k-2}}.
\end{align}

Last, we estimate the remaining terms in the expression of $(\rm{B})$.
Since $\calH_{\calU}h=iT_{\calU}(\calL_{\calU}\ti T_{\calU}h)$, we have
\begin{align}
    \calH_{\calU}T_{\calU}(\p_t\calU)&=iT_{\calU}(\calL_{\calU}\p_t\calU)\\
    &=iT_{\calU}(\calU\wedge\De\p_t\calU+\p_t\calU\wedge\De\calU)\\
    &=iT_{\calU}(\p_t(\calU\wedge\De\calU))\\
    &=i\p_t(T_{\calU}(\calU\wedge\De\calU))-i(\p_t T_{\calU})(\calU\wedge\De\calU),
\end{align}
then ~\eqref{eq:FkFl-pointwise}, Lemma~\ref{lem:frame-estimate} and ~\eqref{eq:Ft-H1} yields
\begin{align}
    \|\calH_{\calU}T_{\calU}(\p_t\calU)\|_{\calH^1}\lesssim \Ga(t)|t|^{-1},\ \|T_{\calU}(\p_t\calU)\|_{L^{\infty}}\lesssim\Ga(t).
\end{align}
Therefore
\begin{align}
    \left|\La \calH_{\calU}^2 z,\calH_{\calU}(- T_{\calU}\left(\p_t\calU\right))   \Ra\right|\lesssim\|z\|_{\calH^3}\|\calH_{\calU}T_{\calU}(\p_t\calU)\|_{\calH^1}\lesssim \Ga(t)|t|^{-1}\|z\|_{\calH^3},
\end{align}
\begin{align}
    &\left|\int_0^{\infty} \left(\overline{\p_r\left(\calH_{\calU}z\right)}\left(\vD G_z( -T_{\calU}\left(\p_t\calU\right))\right)(\p_r\left(\calH_{\calU}z\right))\right)\udr\right|\\
    \lesssim & \|z\|_{\calH^1}\|z\|_{\calH^3}^2\|T_{\calU}(\p_t\calU)\|_{L^{\infty}}\lesssim \Ga(t)|t|^{-1}\|z\|_{\calH^3},
\end{align}
where $\Ga(t)|t|^{-1}\|z\|_{\calH^3}$ has been estimated before, see ~\eqref{eq:Ga-estimate-refined} and ~\eqref{eq:Ga-t-1-z-H3}.
\begin{align}
    &\left|\int_0^{\infty} \left(\overline{\p_r\left(\calH_{\calU}(-T_{\calU}\left(\p_t\calU\right))\right)}\left(G_z-\rm{Id}\right)(\p_r\left(\calH_{\calU}z\right))\right)\udr\right|\\
    \lesssim & \|\calH_{\calU}T_{\calU}(\p_t\calU)\|_{\calH^1}\|z\|_{\calH^1}\|z\|_{\calH^3}\lesssim \Ga(t)|t|^{-2}\lesssim |t|^{-\frac{5}{2}-\veps_0}.
\end{align}
\begin{align}
    &\left|\La  i\calH_{\calU}T_{\calU}(\calU\wedge\De\calU),\calH_{\calU}(-T_{\calU}\left(\p_t\calU\right))\Ra\right|\\
    \lesssim&\|\calH_{\calU}T_{\calU}(\p_t\calU)\|_{\calH^1}\|T_{\calU}(\calU\wedge\De\calU)\|_{\calH^1}\lesssim\Ga(t)|t|^{-2}\lesssim |t|^{-\frac{5}{2}-\veps_0}.
\end{align}

Combining these estimates,  we obtain that there exists a constant $C_{L,B}>0$ depending on $L$ such that
\begin{equation}\label{eq:B-estimate-final}
    \begin{aligned}
         |(\rm{B})|\lesssim &\ \Ga(t)\sum_{j=1}^N \lam_j\|z\|_{\rm{Mor}(\lam_j)}^2+\left(\ti\de o_{L\to\infty}(1)+C_{L,B}C(\de_0)o_{T_0\to-\infty}(1)\right)\sum_{j=1}^N\de_0^{N-j}\|A_{\lam_j}y\|_{\rm{Mor}_2(\lam_j)}^2\\
        &+ \left(\ti\de o_{\de_0\to 0}(1)+C_{L,B}C(\de_0)o_{T_0\to-\infty}(1) \right)\sum_{j=1}^N\de_0^{N-j}\|y\|_{\rm{Mor}(\lam_j)}^2\\
        &+C_{L,B}C(\de_0)o_{T_0\to-\infty}(1)     |t|^{-2-2\veps_0}.
    \end{aligned}
\end{equation}

\bigskip
\noindent
\textbf{Estimates for $(\rm{C})$.} We first study the structure of $\cR_{\rm{NL}}$.

\begin{claim}\label{claim:RNL}
    $\calR_{\rm{NL}}$ has the following decomposition
    \begin{align}
        \calR_{\rm{NL}}=\cR_{\rm{NL,1}}+\cR_{\rm{NL},2}
    \end{align}
    with the following properties:
    $\cR_{\rm{NL},1}$ admits a decomposition $\cR_{\rm{NL},1}= \cR_{\rm{NL},1,1}+\cR_{\rm{NL},1,2}$ where
    \begin{align}
        \cR_{\rm{NL},1,1}=-2\Im\left(\overline{T_{\calU}(\p_r\calU)}z\right)\p_rz+2i\frac{k^2}{r^2}\left(\left(R\calU\right)\cdot\left(R\ti T_{\calU}z\right)\right)z,
    \end{align}
    and $\cR_{\rm{NL},1,2}$ satisfies
    \begin{align}\label{eq:R-NL12-estimate}
        |\cR_{\rm{NL},1,2}|\lesssim |z||z|_{-1}^2, \  \|\cR_{\rm{NL},1,2} \|_{\calH^1}\lesssim\|z\|_{\calH^1}^2\|z\|_{\calH^3},\ \|\cR_{\rm{NL},1,2}\|_{\calH^2}\lesssim\|z\|_{\calH^1}^{\frac{3}{2}}\|z\|_{\calH^3}^{\frac{3}{2}}.
    \end{align}
$\cR_{\rm{NL},2}$ satisfies
    \begin{align}\label{eq:R-NL2-estimate}
        \|\cR_{\rm{NL},2} \|_{\calH^1}\lesssim |t|^{-1}\|z\|_{\calH^1}^2,\ \|\cR_{\rm{NL},2} \|_{\calH^3}\lesssim |t|^{-1}\|z\|_{\calH^1}\|z\|_{\calH^3}.
    \end{align}

\end{claim}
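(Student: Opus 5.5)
The plan is to expand $\calR_{\rm{NL}}$ term by term, using the identities $g=\sqrt{1-\frac{|z|^2}{4}}\,\ti T_{\calU}z-\frac{|z|^2}{2}\calU$ and $P_{\calU}g=\sqrt{1-\frac{|z|^2}{4}}\,\ti T_{\calU}z$. Each contribution will be sorted into one of three classes. Class (i) consists of terms quadratic in $z$ carrying one derivative of $z$ and one derivative of $\calU$ with no extra decay; these go into $\cR_{\rm{NL},1,1}$. Class (ii) consists of terms at least cubic in $z$; these go into $\cR_{\rm{NL},1,2}$. Class (iii) consists of terms containing the nonlinear defect $\calU\wedge\De\calU$, or the multi-bubble interaction structure, as an explicit factor; these go into $\cR_{\rm{NL},2}$.

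Concretely, I first write
\begin{align}
\frac{1}{\sqrt{1-\frac{|z|^2}{4}}}T_{\calU}\left(\calL_{\calU}(P_{\calU}g)\right)=T_{\calU}\left(\calL_{\calU}\ti T_{\calU}z\right)+\text{(cubic)}.
\end{align}
By \eqref{eq:eq:HU-2}, $T_{\calU}(\calL_{\calU}\ti T_{\calU}z)=-i\calH_{\calU}z$, and this cancels against $iG_z\calH_{\calU}z=i\calH_{\calU}z+iK_z\calH_{\calU}z$, leaving $iK_z\calH_{\calU}z$. This remainder is cubic by \eqref{eq:Kz-bounded}. The term $\left(\frac{1}{\sqrt{1-|z|^2/4}}-1\right)T_{\calU}(\calU\wedge\De\calU)$ and the term $\frac{z}{4-|z|^2}\,g\cdot(\calU\wedge\De\calU)$ are assigned to $\cR_{\rm{NL},2}$. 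For them I use the pointwise bound $|(r\p_r)^a T_{\calU}(\calU\wedge\De\calU)|\lesssim r^{-2}\sum_{m<\ell}F_mF_\ell$ from Lemma~\ref{lem:F-estimate} together with \eqref{eq:FkFl-pointwise}. This gives $\|\cR_{\rm{NL},2}\|_{\calH^1}\lesssim|t|^{-1}\|z\|_{\calH^1}^2$. For the $\calH^3$ bound, I distribute derivatives and apply Hardy's inequality and $\|z\|_{L^\infty}\lesssim\|z\|_{\calH^1}$. The same treatment applies to $g\cdot\calL_{\calU}(P_{\calU}g)$ once it is rewritten via $g\cdot\calU\wedge\De g=-\frac{|z|^2}{2}\,\calU\cdot(\calU\wedge\De g)$ plus tangential pieces.

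The quadratic core comes from $N_{\calU}g=\calL_{\calU}(-\frac12|g|^2\calU)+g\wedge\De g$. First, $\calL_{\calU}(-\frac12|z|^2\calU)=-\frac12|z|^2\,\calU\wedge\De\calU-\frac12\calU\wedge\De(|z|^2\calU)$. The first piece belongs to class (iii). In the second piece, $\calU\wedge\De(|z|^2\calU)=2\nabla|z|^2\cdot\calU\wedge\nabla\calU+|z|^2\calU\wedge\De\calU$. Next, $g\wedge\De g=\ti T z\wedge\De\ti Tz+$(cubic). Expanding $\De(z_1f_1+z_2f_2)$ in the frame, and using $\p_r f_1=-(f_1\cdot\p_r\calU)\calU$ (Coulomb gauge \eqref{eq:frame-def}) together with the angular derivatives $\frac{k}{r}Rf_j$, I expect the part with two derivatives on $z$ to vanish: $\ti Tz\wedge\ti T(\De z)$ is normal, so it is killed by $T_{\calU}$, or it is $O(|z|^2)\calU$. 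The mixed terms then produce exactly $-2\Im(\overline{T_{\calU}(\p_r\calU)}z)\p_rz$ from the radial part and $2i\frac{k^2}{r^2}((R\calU)\cdot(R\ti T_{\calU}z))z$ from the angular part. All remaining terms are either cubic or carry a factor of $\calU\wedge\De\calU$.

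For the cubic bounds \eqref{eq:R-NL12-estimate}, every such term has the schematic form $z\cdot(\text{two factors among } \p_r z, z/r, \p_r\calU z)$, which gives $|z||z|_{-1}^2$ pointwise. The $\calH^1$ and $\calH^2$ estimates then follow by the Leibniz rule, with the two lowest-order factors placed in $L^\infty$ (using $\|z\|_{L^\infty}+\|r|z|_{-1}\|_{L^\infty}\lesssim\|z\|_{\calH^1}^{1/2}\|z\|_{\calH^3}^{1/2}$-type interpolations) and Hardy's inequality. The main obstacle is the exact cancellation of the second-derivative terms in $T_{\calU}(g\wedge\De g)$ and $T_{\calU}(\calU\wedge\De(|z|^2\calU))$. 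This is where I would spend the care: I will verify it by writing everything in the orthonormal frame $(f_1,f_2,\calU)$ and using $f_1\wedge f_2=\calU$ and $\calU\wedge f_1=f_2$.
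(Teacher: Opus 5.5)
Your quadratic bookkeeping is fine: $-2\Im(\overline{T_{\calU}(\p_r\calU)}z)\p_rz$ comes from combining $2i\Re(\overline{T_{\calU}(\p_r\calU)}\p_rz)z$, which comes from $T_{\calU}(\ti T_{\calU}z\wedge\De\ti T_{\calU}z)$, with $-i\p_r(|z|^2)T_{\calU}(\p_r\calU)$, which comes from $\calL_{\calU}(-\frac12|z|^2\calU)$. The gap is in the cubic part, which is where the real content of the claim lies. You call $iK_z\calH_{\calU}z$ ``cubic by \eqref{eq:Kz-bounded}'' and assert that every cubic term has the form $z\cdot(\text{two factors among }\p_rz,\,z/r,\,\p_r\calU\,z)$. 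That is false term by term. The term $iK_z\calH_{\calU}z$ contains $-iK_z\p_r^2z$, and three other pieces of $\calR_{\mathrm{NL}}$ also contain $\p_r^2z$. Set $a=\sqrt{1-|z|^2/4}$ and $w=\p_r^2z$. To leading order in $z$, the $w$-parts of these four pieces are
\begin{align*}
iK_z\calH_{\calU}z\ &:\ \tfrac{i}{4}|z|^2w-\tfrac{i}{2}\Re(\bar zw)\,z,\\
\tfrac{1}{a}T_{\calU}\big(\calL_{\calU}(a\ti T_{\calU}z)\big)+i\calH_{\calU}z=\tfrac{i}{a}\big(2\p_ra\,\p_rz+(\De a)z\big)\ &:\ -\tfrac{i}{4}\Re(\bar zw)\,z,\\
\tfrac{1}{a}T_{\calU}(g\wedge\De g)\ &:\ i\Re(\bar zw)\,z-\tfrac{i}{2}|z|^2w,\\
\tfrac{z}{4-|z|^2}\,g\cdot\calL_{\calU}(P_{\calU}g)=\tfrac{z}{4}\Im(\bar z\,\calH_{\calU}z)+\dots\ &:\ -\tfrac{1}{4}z\Im(\bar zw).
\end{align*}
None of these vanishes individually. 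Only their sum does, because $z\Re(\bar zw)+iz\Im(\bar zw)=|z|^2w$.

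So the second-order terms cancel only globally, across $iG_z\calH_{\calU}z$, $\calL_{\calU}(P_{\calU}g)$, $g\wedge\De g$ and the $g\cdot(\cdots)$ term. The cancellation uses the exact coefficient $\frac{8-|z|^2}{4(4-|z|^2)}$ in $G_z$ together with the weights $\frac{1}{\sqrt{1-|z|^2/4}}$ and $\frac{z}{4-|z|^2}$. It is precisely the statement that $-iG_z\calH_{\calU}z$ is the full principal part of the $z$-equation.

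Your plan puts this cancellation in the wrong place. By your own formula, $T_{\calU}(\calU\wedge\De(|z|^2\calU))$ has no second derivatives of $z$ at all. Normality kills the second derivatives of $T_{\calU}(g\wedge\De g)$ only at quadratic order.

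Your plan also misfiles one of the four pieces. You propose to treat $g\cdot\calL_{\calU}(P_{\calU}g)$ like the interaction terms in $\cR_{\mathrm{NL},2}$. But your rewriting is vacuous, since $\calU\cdot(\calU\wedge\De g)=0$. The remaining ``tangential piece'' is $a^2\Im(\bar z\calH_{\calU}z)+\dots$, which has no factor $T_{\calU}(\calU\wedge\De\calU)$, no $|t|^{-1}$ smallness, and two derivatives. So \eqref{eq:R-NL2-estimate} fails for it.

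Without the global cancellation you get neither $|\cR_{\mathrm{NL},1,2}|\lesssim|z||z|_{-1}^2$ nor the $\calH^2$ bound, since a term $|z|^2\p_r^2z$ would need $\p_r^4z$. Nor would $\cR_{\mathrm{NL},1,2}$ be a function of $(z,\bar z,\p_rz,\p_r\bar z)$ only, which is what the later analysis of $(\mathrm{C})_{1,2}$ uses.

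The paper avoids this bookkeeping problem as follows. It writes $u=a\ti T_{\calU}z+(1-\frac{|z|^2}{2})\calU$ and computes $T_{\calU}(u\wedge\De u)$ and $g\cdot(u\wedge\De u)$ in closed form. It takes $\cR_{\mathrm{NL},2}$ to be exactly the terms with an explicit factor $T_{\calU}(\calU\wedge\De\calU)$. It then checks via \eqref{eq:HU-expression} that the $\p_r^2z$ terms in the remainder cancel. Once that is established, your Leibniz/Hardy estimates for the cubic part go through.
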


\begin{proof}[Proof of Claim ~\ref{claim:RNL}]
    The notation $a,b$ we introduced in this proof is independent of other sections. Since $u=\calU+g$, by definition we have
    \begin{align}
    \calR_{\rm{NL}}&=iG_z\calH_{\calU}z-T_{\calU}\left(\calU\wedge \De \calU\right)
     +\frac{1}{\sqrt{1-\frac{|z|^2}{4}}}T_{\calU}\left(u\wedge \De u\right) +\frac{z}{4-|z|^2}g\cdot \left(u\wedge\De u\right).
\end{align}
    We use ~\eqref{eq:v-z} to expand $u\wedge \De u$ appearing in the last two terms. Let $a(z)=\sqrt{1-\frac{|z|^2}{4}} $, $b(z)=1-\frac{|z|^2}{2}$  for convenience.  Then
    \begin{align}
        u\wedge\De u&=\left(a\ti T_{\calU}z+b\calU \right)\wedge \left((\De a)\ti T_{\calU}z+2\p_r a\p_r(\ti T_{\calU}z)+a\De \ti T_{\calU}z+(\De b)\calU+2\p_r b\p_r(\calU)+b\De \calU    \right)\\
        &=a^2\ti T_{\calU}z \wedge\De(\ti T_{\calU}z)+2 a(\p_r a)\ti T_{\calU}z\wedge(\p_r \ti T_{\calU}z)+2a(\p_r b)\ti T_{\calU}z\wedge\p_r \calU\label{eq:u-wegde-De-u-1}\\
        &\quad +b^2\calU\wedge\De\calU+2b(\p_r b)\calU\wedge(\p_r\calU)+2b(\p_r a)\calU\wedge (\p_r \ti T_{\calU}z) \label{eq:u-wegde-De-u-2}\\
        &\quad +(a(\De b)-b(\De a))\ti T_{\calU}z\wedge\calU+ab(\ti T_{\calU}z\wedge\De\calU+\calU\wedge \De \ti T_{\calU}z).\label{eq:u-wegde-De-u-3}
    \end{align}

    Firstly, we compute $T_{\calU}\left(u\wedge \De u\right) $.
    
    We first compute the terms in ~\eqref{eq:u-wegde-De-u-1}.  Since $\ti T_{\calU}z\cdot \calU=0$, for any $k$-equivariant vector $\eta$ there holds
    \begin{align}
        T_{\calU}\left(\ti T_{\calU}z\wedge\eta\right)=-i\left(\eta\cdot \calU\right)z.
    \end{align}
    For the term containing $\ti T_{\calU}z \wedge\De(\ti T_{\calU}z)$. Since $\De\left(\ti T_{\calU}z\cdot \calU\right)=0$, we have
    \begin{align}
        T_{\calU}\left(\ti T_{\calU}z \wedge\De(\ti T_{\calU}z)\right)&=-i\left(\calU\cdot \De(\ti T_{\calU}z)\right)z\\
        &=i\left( (\De \calU)\cdot \ti T_{\calU}z+2\p_r\calU\cdot(\p_r \ti T_{\calU}z)+2\frac{k^2}{r^2}\left((R\calU)\cdot(R\ti T_{\calU}z)\right)\right)z\\
        &=i\left( \Im\left(\bar z\cdot T_{\calU}\left(\calU\wedge\De\calU\right)\right)+2\p_r\calU\cdot(\p_r \ti T_{\calU}z)+2\frac{k^2}{r^2}\left((R\calU)\cdot(R\ti T_{\calU}z)\right)\right)z.
    \end{align}
    For the term containing $\ti T_{\calU}z\wedge(\p_r \ti T_{\calU}z)$. 
    \begin{align}
        T_{\calU}\left(\ti T_{\calU}z\wedge(\p_r \ti T_{\calU}z)\right)=-i\left(\calU\cdot(\p_r \ti T_{\calU}z)\right)z=i\left((\p_r\calU)\cdot  \ti T_{\calU}z\right)z.
    \end{align}
    The term containing $\ti T_{\calU}z\wedge\p_r \calU$ cancels since $\p_r\calU\cdot\calU=0$.

    Next we compute the terms in ~\eqref{eq:u-wegde-De-u-2}. Since $T_{\calU}\left(\calU\wedge\eta\right)=iT_{\calU}\eta$, and $\p_rf_1,\p_r f_2 \parallel \calU$, we have
    \begin{align}
        T_{\calU}\left(\calU\wedge(\p_r\calU)\right)=iT_{\calU}\p_r\calU,\ T_{\calU}\left(\calU\wedge (\p_r \ti T_{\calU}z)\right)=iT_{\calU}\left((\p_r \ti T_{\calU}z)\right)=i\p_r z.
    \end{align}

    Last we compute the terms in  ~\eqref{eq:u-wegde-De-u-3}. By definition we have $T_{\calU}\left(\ti T_{\calU}z\wedge\calU\right)=-iz$.
    ~\eqref{eq:eq:HU-2} yields
    \begin{align}
        T_{\calU}\left((\ti T_{\calU}z\wedge\De\calU+\calU\wedge \De \ti T_{\calU}z)\right)=-i \calH_{\calU}z.
    \end{align}

    Combining these computations, we get
    \begin{equation}
    \begin{aligned}
         T_{\calU}\left(u\wedge \De u\right) &=a^2i\left( \Im\left(\bar z\cdot T_{\calU}\left(\calU\wedge\De\calU\right)\right)+2\p_r\calU\cdot(\p_r (\ti T_{\calU}z))+2\frac{k^2}{r^2}\left((R\calU)\cdot(R\ti T_{\calU}z)\right)\right)z\\
         &\quad +2 a(\p_r a)i\left((\p_r\calU)\cdot  \ti T_{\calU}z\right)z\\
        &\quad +b^2T_{\calU}\left(\calU\wedge\De\calU\right)+2b(\p_r b)iT_{\calU}\p_r\calU+2b(\p_r a)i\p_r z \\
        &\quad -i(a(\De b)-b(\De a))z-abi\calH_{\calU}z.
    \end{aligned}
    \end{equation}
       
    Secondly, we compute $g\cdot \left(u\wedge\De u\right)$. Since $u=\calU+g$, 
    \begin{align}
        g\cdot \left(u\wedge\De u\right)&=-\calU\cdot \left(u\wedge\De u\right)\\
        &=-a^2\calU\cdot\left(\ti T_{\calU}z \wedge\De(\ti T_{\calU}z)\right)-2 a(\p_r a)\calU\cdot \left( \ti T_{\calU}z\wedge(\p_r (\ti T_{\calU}z))\right)\\
        &\quad -2a(\p_r b)\calU\cdot\left(\ti T_{\calU}z\wedge\p_r \calU\right)-ab\calU\cdot\left(\ti T_{\calU}z\wedge\De\calU\right).
    \end{align}
    For general vector-valued function $h$,
    \begin{align}
        \calU\cdot\left( \ti T_{\calU}z\wedge h \right)&=h\cdot\left(\calU\wedge \ti T_{\calU}z\right)=\Re\left(\overline{T_{\calU}h}\cdot i z  \right)=\Im\left(\bar z\cdot T_{\calU}h\right).
    \end{align}
    Therefore
    \begin{align}
        \calU\cdot\left(\ti T_{\calU}z \wedge\De(\ti T_{\calU}z)\right)=\calU\cdot\left(\ti T_{\calU}z \wedge(\De+|\na \calU|^2)(\ti T_{\calU}z)\right)
        =-\Im\left(\bar z\cdot \calH_{\calU}z\right).
    \end{align}
    Since $\p_rf_1,\p_r f_2 \parallel \calU$, 
    \begin{align}
        \calU\cdot \left( \ti T_{\calU}z\wedge(\p_r \ti T_{\calU}z)\right)=\Im\left(\bar z\cdot T_{\calU}\left(\p_r \ti T_{\calU}z\right)\right)=\Im\left(\bar z\cdot \p_r z\right).
    \end{align}
    We also have
    \begin{align}
        \calU\cdot\left(\ti T_{\calU}z\wedge\p_r \calU\right)&=\Im\left(\bar z\cdot T_{\calU}\left(\p_r \calU\right)\right),
    \end{align}
    \begin{align}
        \calU\cdot\left(\ti T_{\calU}z\wedge\De\calU\right)&=\Im\left(\bar z\cdot T_{\calU}\left(\De \calU\right)\right)=-\Re\left(\bar z\cdot T_{\calU}\left(\calU\wedge\De \calU\right)\right).
    \end{align}
     Combining these computations, we get 
     \begin{align}
         g\cdot \left(u\wedge\De u\right)&=a^2\Im\left(\bar z\cdot \calH_{\calU}z\right)-2 a(\p_r a)\Im\left(\bar z\cdot \p_r z\right)\\
        &\quad -2a(\p_r b)\Im\left(\bar z\cdot T_{\calU}\left(\p_r \calU\right)\right)+ab\Re\left(\bar z\cdot T_{\calU}\left(\calU\wedge\De \calU\right)\right).
     \end{align}
     
     Define
    \begin{align}
        \cR_{\rm{NL,2}}=&-T_{\calU}(\calU\wedge\De \calU)+\frac{1}{\sqrt{1-\frac{|z|^2}{4}}}\left(a^2i\Im\left(\bar z\cdot T_{\calU}\left(\calU\wedge\De\calU\right)\right)z+b^2T_{\calU}(\calU\wedge\De \calU)\right)\\
        &+\frac{z}{4-|z|^2}ab\Re\left(\bar z\cdot T_{\calU}\left(\calU\wedge\De \calU\right)\right).
    \end{align}
    Since $iz\Im(\bar z w)=|z|^2 w-z\Re(\bar z w)$ for $z,w\in\C$, we get
    \begin{align}
        \cR_{\rm{NL,2}}=&\left(\frac{1}{\sqrt{1-\frac{|z|^2}{4}}}-1\right)T_{\calU}\left(\calU\wedge\De \calU\right)-\frac{6-|z|^2}{8\sqrt{1-\frac{|z|^2}{4}}}z\Re\left(\bar z\cdot T_{\calU}\left(\calU\wedge\De \calU\right)\right).
    \end{align}
    The estimates ~\eqref{eq:R-NL2-estimate} follow directly from the expression of $\cR_{\rm{NL,2}}$.
    
    Then for $\cR_{\rm{NL,1}}$ we have
    \begin{align}
        \cR_{\rm{NL},1}&=ia^2\calH_{\calU}z+i\frac{8-|z|^2}{4(4-|z|^2)}\Re(\bar z\calH_{\calU}z)z+ia\left(2\p_r\calU\cdot(\p_r (\ti T_{\calU}z))+2\frac{k^2}{r^2}\left((R\calU)\cdot(R\ti T_{\calU}z)\right)\right)z\\
        &\quad +2i(\p_r a)(\p_r\calU\cdot\ti T_{\calU}z)z+2i\frac{b\p_r b}{a}T_{\calU}\p_r\calU+2i\frac{b\p_ra}{a}\p_r z-i\left(\De b-\frac{b}{a}\De a\right)z-bi\calH_{\calU}z\\
        &\quad +\frac{z}{4-|z|^2}\left[a^2\Im(\bar z\calH_{\calU}z)-2a(\p_ra)\Im(\bar z\p_r z)-2a(\p_r b)\Im(\bar zT_{\calU}(\p_r\calU))\right].
    \end{align}
    A direct calculation using ~\eqref{eq:HU-expression}  yields that the terms containing $\p_r^2 z$ or $\p_r^2\bar z$ cancel. Therefore for $|z|$ sufficiently small,  $\cR_{\rm{NL},1}$ admits a convergent power series expansion in $z,\bar z,\p_r z,\p_r\bar z$ with coefficients depending on $\calU$ and its derivatives.  The homogeneous components of degree $0$ and $1$ vanish, while the quadratic component is
    \begin{align}
        &2i\p_r\calU\cdot \left(\p_r\left(\ti T_{\calU}z\right)\right)z+2i\frac{k^2}{r^2}\left(\left(R\calU\right)\cdot\left(R\ti T_{\calU}z\right)\right)z-i\p_r(|z|^2)T_{\calU}(\p_r\calU)\\
        =& 2i \Re\left(\overline{T_{\calU}(\p_r\calU)}\p_r z\right)z+2i\frac{k^2}{r^2}\left(\left(R\calU\right)\cdot\left(R\ti T_{\calU}z\right)\right)z-2i\Re (\bar z \p_r z)T_{\calU}(\p_r\calU)\\
        =&-2\Im\left(\overline{T_{\calU}(\p_r\calU)}z\right)\p_rz+2i\frac{k^2}{r^2}\left(\left(R\calU\right)\cdot\left(R\ti T_{\calU}z\right)\right)z.
    \end{align}
    Every cubic component has  at least one factor  $z$ or $\bar z$ with no derivatives. The estimates ~\eqref{eq:R-NL12-estimate} follow directly from the expression of $\cR_{\rm{NL},1,2}$.

\end{proof}

Now we rearrange the expression of $(\rm{C})$ in ~\eqref{eq:C-expression}. We will put the first and the third line together to obtain
\begin{equation}\label{eq:C-all}
    \begin{aligned}
        (\rm{C})&=4\pi\Re\int_0^{\infty} \left(\overline{\calH_{\calU}(\cR_{\rm{NL}}
     )}V_{\calU}(\calH_{\calU}z)\right)\udr\\
     &\quad + 2\pi \Re\int_0^{\infty} \left(\overline{\p_r\left(\calH_{\calU}z\right)}\left(\vD G_z(\cR_{\rm{NL}})\right)(\p_r\left(\calH_{\calU}z\right))\right)\udr\\
     &\quad +4\pi\Re\int_0^{\infty} \left(\overline{\p_r\left(\calH_{\calU}(\cR_{\rm{NL}}
     )\right)}G_z(\p_r\left(\calH_{\calU}z\right))\right)\udr\\
     &\quad +2\La  i\calH_{\calU}T_{\calU}(\calU\wedge\De\calU),\calH_{\calU}(\cR_{\rm{NL}})\Ra\\
     &\quad +\ti \de\sum_{j=1}^N \de_0^{N-j}\La \calH_{\calU}(\calR_{\rm{NL}}) ,iA_{\lam_j}^*\Lam_{\lam_j}A_{\lam_j}y \Ra\\
     &\quad +\ti \de\sum_{j=1}^N \de_0^{N-j}\La A_{\lam_j}\calH_{\calU}y ,i\Lam_{\lam_j}A_{\lam_j}(\calR_{\rm{NL}}
     ) \Ra.
    \end{aligned}
\end{equation}
 Inserting the decomposition  $\cR_{\rm{NL}}= \cR_{\rm{NL},1,1}+\cR_{\rm{NL},1,2}+\cR_{\rm{NL,2}}$ into ~\eqref{eq:C-all}, we denote by $(\rm{C})_{1,1}$, $(\rm{C})_{1,2}$ and $(\rm{C})_{2}$ the corresponding contribution to $(\rm{C})$, respectively.

First we estimate $(\rm{C})_{1,1}$.  One of the worst terms is $ 4\pi\Re\int_0^{\infty} \left(\overline{\p_r\left(\calH_{\calU}(\cR_{\rm{NL},1,1})\right)}G_z(\p_r\left(\calH_{\calU}z\right))\right)\udr$, in which the worst part is
\begin{align}
    4\pi\Re\int_0^{\infty} \left(\overline{2\Im\left(\overline{T_{\calU}(\p_r\calU)}z\right)\p_r^4z}G_z(-\p_r^3 z)\right)\udr.
\end{align}
Since $\Im\left(\overline{T_{\calU}(\p_r\calU)}z\right)\in \R $, we have
\begin{align}
    &4\pi\Re\int_0^{\infty} \left(\overline{2\Im\left(\overline{T_{\calU}(\p_r\calU)}z\right)\p_r^4z}G_z(-\p_r^3 z)\right)\udr\\
    =&-4\pi\Re\int_0^{\infty}\overline{\p_r^3 z} \left(\p_r\left(2\Im\left(\overline{T_{\calU}(\p_r\calU)}z\right)G_z r\right)\right)(-\p_r^3 z)\ud r\\
    &-4\pi\Re\int_0^{\infty}\overline{\p_r^3 z} \left(2\Im\left(\overline{T_{\calU}(\p_r\calU)}z\right)G_z\right)(-\p_r^4 z)\udr\\
    =&-4\pi\Re\int_0^{\infty}\overline{\p_r^3 z} \left(\p_r\left(2\Im\left(\overline{T_{\calU}(\p_r\calU)}z\right)G_zr\right)\right)(-\p_r^3 z)\ud r\\
    &-4\pi\Re\int_0^{\infty} \left(\overline{2\Im\left(\overline{T_{\calU}(\p_r\calU)}z\right)\p_r^4z}G_z(-\p_r^3 z)\right)\udr.
\end{align}
Hence using ~\eqref{eq:Kz-bounded},  Lemma ~\ref{lem:frame-estimate}  and $F_j(t,r)\lesssim\frac{\lam_j^2}{(\lam_j+r)^2}$ we obtain
\begin{align}
    &\left|4\pi\Re\int_0^{\infty} \left(\overline{2\Im\left(\overline{T_{\calU}(\p_r\calU)}z\right)\p_r^4z}G_z(-\p_r^3 z)\right)\udr\right|\\
    =&\left|-4\pi\Re\int_0^{\infty}\overline{\p_r^3 z} \left(\p_r\left(\Im\left(\overline{T_{\calU}(\p_r\calU)}z\right)G_zr\right)\right)(-\p_r^3 z)\ud r\right|\\
    \lesssim & \sum_{j=1}^N \int_0^{\infty}r^{-1}|z|_{-3}^2|z|_{-1}F_j\udr\\
    \lesssim & \sum_{j=1}^{N}\|z\|_{\calH^3}\int_0^{\infty}\frac{\lam_j^2}{(\lam_j+r)^2}|z|_{-3}^2\udr\\
    \lesssim & |t|^{-\frac{1}{2}-\veps_0}\sum_{j=1}^N\lam_j\|z\|_{\rm{Mor}(\lam_j)}^2.
\end{align}

The remaining parts in $ 4\pi\Re\int_0^{\infty} \left(\overline{\p_r\left(\calH_{\calU}(\cR_{\rm{NL},1,1})\right)}G_z(\p_r\left(\calH_{\calU}z\right))\right)\udr$ can be estimated directly using ~\eqref{eq:Kz-bounded}, Lemma ~\ref{lem:frame-estimate} and Lemma ~\ref{lem:operator-estimate} to obtain the same bound.

We estimate the remaining terms of $(\rm{C})_{1,1}$. Using Lemma ~\ref{lem:frame-estimate} and Lemma~\ref{lem:operator-estimate} we obtain
\begin{align}
    &\left|4\pi\Re\int_0^{\infty} \left(\overline{\calH_{\calU}(\cR_{\rm{NL,1,1}}
     )}V_{\calU}(\calH_{\calU}z)\right)\udr \right|
    \lesssim  \sum_{j=1}^N\int_0^{\infty} r^{-2}|z|_{-3}|z|_{-1}F_j^2 |z|_{-2}\udr
    \lesssim  |t|^{-\frac{1}{2}-\veps_0}\sum_{j=1}^N\lam_j\|z\|_{\rm{Mor}(\lam_j)}^2,
\end{align}
\begin{align}
    &\left|2\pi \Re\int_0^{\infty} \left(\overline{\p_r\left(\calH_{\calU}z\right)}\left(\vD G_z(\cR_{\rm{NL,1,1}})\right)(\p_r\left(\calH_{\calU}z\right))\right)\udr\right|
    \lesssim  \int_0^{\infty} |z|_{-3}^2|z| |z|_{-1}^2\udr\lesssim\|z\|_{\calH^1}^2\|z\|_{\calH^3}^3\lesssim|t|^{-\frac{5}{2}-3\veps_0}.
\end{align}
Using ~\eqref{eq:F-pointwise} we obtain
\begin{align}
    &\left|2\La  i\calH_{\calU}T_{\calU}(\calU\wedge\De\calU),\calH_{\calU}(\cR_{\rm{NL,1,1}})\Ra\right|\\
    \lesssim & \sum_{1\leq m<\ell\leq N}\int_0^{\infty}r^{-4}F_{m}F_{\ell}|z|_{-1}|z|_{-3}\udr\\
    \lesssim & \|z\|_{\calH^3}\sum_{1\leq m<\ell\leq N}\|z\|_{\rm{Mor}(\lam_{\ell})}\left(\int_0^{\infty}\frac{(\lam_{\ell}+r)^2}{\lam_{\ell}}r^{-6}F_{m}^2F_{\ell}^2\right)^{\frac{1}{2}}\\
    \lesssim & |t|^{-1}\|z\|_{\calH^3}\sum_{1\leq m<\ell\leq N}\lam_{\ell}^{\frac{1}{2}}\|z\|_{\rm{Mor}(\lam_{\ell})}\\
    \lesssim & |t|^{-\frac{5}{2}-\veps_0}+|t|^{-\frac{1}{2}-\veps_0}\sum_{j=1}^N\lam_{j}\|z\|_{\rm{Mor}(\lam_{j})}^2.
\end{align}
Using Lemma~\ref{lem:operator-estimate} and $rF_{\ell}\lesssim \lam_{\ell}\frac{\lam_{\ell}^2}{(\lam_{\ell}+r)^2}$ we obtain
\begin{align}
    &\left|\ti \de\sum_{j=1}^N \de_0^{N-j}\La \calH_{\calU}(\calR_{\rm{NL,1,1}}) ,iA_{\lam_j}^*\Lam_{\lam_j}A_{\lam_j}y \Ra\right|\\
    \lesssim & \ti\de \sum_{j=1}^N \de_0^{N-j}\sum_{\ell=1}^N\int_0^{\infty}F_{\ell}|z|_{-1}|z|_{-3}\frac{r}{\lam_j+r}|y|_{-3}\udr\\
    \lesssim & \ti\de \|z\|_{\calH^3}\sum_{\ell=1}^N\lam_{\ell}^2\int_0^{\infty}\frac{\lam_{\ell}}{(\lam_{\ell}+r)^2}|z|_{-3}|y|_{-3}\udr\\
    \lesssim & \ti\de \|z\|_{\calH^3}\sum_{\ell=1}^N\lam_{\ell}^2\left(\|z\|_{\rm{Mor}(\lam_{\ell})}^2+\|y\|_{\rm{Mor}(\lam_{\ell})}^2\right),
\end{align}
Similarly
\begin{align}
    \left|\ti \de\sum_{j=1}^N \de_0^{N-j}\La A_{\lam_j}\calH_{\calU}y ,i\Lam_{\lam_j}A_{\lam_j}(\calR_{\rm{NL},1,1}
     ) \Ra\right|\lesssim& \ti\de \sum_{j=1}^N \de_0^{N-j}\sum_{\ell=1}^N\int_0^{\infty}F_{\ell}|z|_{-1}|z|_{-3}\frac{r}{\lam_j+r}|y|_{-3}\udr\\
     \lesssim & \ti\de \|z\|_{\calH^3}\sum_{\ell=1}^N\lam_{\ell}^2\left(\|z\|_{\rm{Mor}(\lam_{\ell})}^2+\|y\|_{\rm{Mor}(\lam_{\ell})}^2\right),
\end{align}

Next we estimate $(\rm{C})_{1,2}$. The worst component in $ 4\pi\Re\int_0^{\infty} \left(\overline{\p_r\left(\calH_{\calU}(\cR_{\rm{NL},1,2})\right)}G_z(\p_r\left(\calH_{\calU}z\right))\right)\udr$ is
\begin{align}
    4\pi\Re\int_0^{\infty} \left(\overline{\left(\p_{\p_r z}\cR_{\rm{NL},1,2}\p_r^4 z+ \p_{\p_r \bar z}\cR_{\rm{NL},1,2}\p_r^4 \bar z \right)}G_z(\p_r^3 z)\right)\udr,
\end{align}
where we regard $\cR_{\rm{NL},1,2}$ as a function of  $(z,\bar z,\p_r z,\p_r \bar z,t,r)$. The expression of $\p_{\p_r z}\cR_{\rm{NL},1,2}$ and $\p_{\p_r \bar z}\cR_{\rm{NL},1,2}$ are calculated as follows:
\begin{align}
    \p_{\p_r z}\cR_{\rm{NL},1,2}&=-i\frac{|z|^4-9|z|^2+16}{2(4-|z|^2)^2}\bar z\p_r z+i\frac{|z|^4-10|z|^2+32}{4(4-|z|^2)^2}z\p_r\bar z\\
    &\quad +i\left(\sqrt{1-\frac{|z|^2}{4}}-1\right)\left(\overline{T_{\calU}\left(\p_r\calU\right)}z-T_{\calU}\left(\p_r\calU\right)\bar z\right),\\
    \p_{\p_r \bar z}\cR_{\rm{NL},1,2}&=i\frac{|z|^4-10|z|^2+32}{4(4-|z|^2)^2}z\p_r z+i\frac{z^2}{2(4-|z|^2)^2}z\p_r\bar z.
\end{align}
Then we have
\begin{align}
    &4\pi\Re\int_0^{\infty} \left(\overline{\left(\p_{\p_r z}\cR_{\rm{NL},1,2}\p_r^4 z+ \p_{\p_r \bar z}\cR_{\rm{NL},1,2}\p_r^4 \bar z \right)}G_z(\p_r^3 z)\right)\udr\\
    =&4\pi\Re\int_0^{\infty}\left(\left(\overline{\p_{\p_r z}\cR_{\rm{NL},1,2}}\p_r^4 \bar z+\overline{\p_{\p_r \bar z}\cR_{\rm{NL},1,2}}\p_r^4z\right)\left(1-\frac{|z|^2}{4}+\frac{(8-|z|^2)|z|^2}{8(4-|z|^2)}\right)\p_r^3 z\right)\udr\\
    &+4\pi\Re\int_0^{\infty}\left(\left(\overline{\p_{\p_r z}\cR_{\rm{NL},1,2}}\p_r^4 \bar z+\overline{\p_{\p_r \bar z}\cR_{\rm{NL},1,2}}\p_r^4z\right)\frac{(8-|z|^2)z^2}{8(4-|z|^2)}\p_r^3\bar z\right)\udr\\
    =&4\pi\Re\int_0^{\infty}\left[\overline{\p_{\p_r \bar z}\cR_{\rm{NL},1,2}} \left(1-\frac{|z|^2}{4}+\frac{(8-|z|^2)|z|^2}{8(4-|z|^2)}\right)+ \p_{\p_r z}\cR_{\rm{NL},1,2}\overline{\frac{(8-|z|^2)z^2}{8(4-|z|^2)}}\right]\p_r^4z \p_r^3 z\udr\\
    &+4\pi\Re\int_0^{\infty}\left[\overline{\p_{\p_r z}\cR_{\rm{NL},1,2}}\left(1-\frac{|z|^2}{4}+\frac{(8-|z|^2)|z|^2}{8(4-|z|^2)}\right)+\p_{\p_r \bar z}\cR_{\rm{NL},1,2}\overline{\frac{(8-|z|^2)z^2}{8(4-|z|^2)}}\right]\p_r^4 \bar z\p_r^3 z\udr
\end{align}
A direct calculation yields
\begin{align}
    &\overline{\p_{\p_r z}\cR_{\rm{NL},1,2}}\left(1-\frac{|z|^2}{4}+\frac{(8-|z|^2)|z|^2}{8(4-|z|^2)}\right)+\p_{\p_r \bar z}\cR_{\rm{NL},1,2}\overline{\frac{(8-|z|^2)z^2}{8(4-|z|^2)}}\\
    =&i\left(1-\frac{|z|^2}{4}+\frac{(8-|z|^2)|z|^2}{8(4-|z|^2)}\right)\left(\sqrt{1-\frac{|z|^2}{4}}-1\right)\left(\overline{T_{\calU}\left(\p_r\calU\right)}z-T_{\calU}\left(\p_r\calU\right)\bar z\right)\\
    &-i\left(1-\frac{|z|^2}{4}\right)\frac{|z|^4-10|z|^2+32}{4(4-|z|^2)^2}\left(\bar z\p_rz-z\p_r\bar z\right),
\end{align}
which implies
\begin{align}
    \overline{\p_{\p_r z}\cR_{\rm{NL},1,2}}\left(1-\frac{|z|^2}{4}+\frac{(8-|z|^2)|z|^2}{8(4-|z|^2)}\right)+\p_{\p_r \bar z}\cR_{\rm{NL},1,2}\overline{\frac{(8-|z|^2)z^2}{8(4-|z|^2)}}\in\R.
\end{align}
 Denote by
 \begin{align}
     \sigma_1&=\overline{\p_{\p_r \bar z}\cR_{\rm{NL},1,2}} \left(1-\frac{|z|^2}{4}+\frac{(8-|z|^2)|z|^2}{8(4-|z|^2)}\right)+ \p_{\p_r z}\cR_{\rm{NL},1,2}\overline{\frac{(8-|z|^2)z^2}{8(4-|z|^2)}},\\
     \sigma_2&=\overline{\p_{\p_r z}\cR_{\rm{NL},1,2}}\left(1-\frac{|z|^2}{4}+\frac{(8-|z|^2)|z|^2}{8(4-|z|^2)}\right)+\p_{\p_r \bar z}\cR_{\rm{NL},1,2}\overline{\frac{(8-|z|^2)z^2}{8(4-|z|^2)}}.
 \end{align}
Since $\sigma_2\in \R$, we have
\begin{align}
    \Re\int_0^{\infty}\sigma_2\p_r^4\bar z\p_r^3 z\udr&=-\Re\int_0^{\infty}|\p_r^3 z |^2 \p_r(\sigma_2 r)\ud r-\Re\int_0^{\infty}\sigma_2\p_r^3\bar z\p_r^4z\\
    &=-\Re\int_0^{\infty}|\p_r^3 z |^2 \p_r(\sigma_2 r)\ud r-\Re\int_0^{\infty}\sigma_2\p_r^3 z\p_r^4 \bar z,
\end{align}
which implies
\begin{align}
    &4\pi\Re\int_0^{\infty} \left(\overline{\left(\p_{\p_r z}\cR_{\rm{NL},1,2}\p_r^4 z+ \p_{\p_r \bar z}\cR_{\rm{NL},1,2}\p_r^4 \bar z \right)}G_z(\p_r^3 z)\right)\udr\\
    =&4\pi\Re\int_0^{\infty}\frac{1}{2}\sigma_1 \p_r((\p_r^3z)^2)\udr-2\pi\Re\int_0^{\infty}|\p_r^3 z |^2 \p_r(\sigma_2 r)\ud r\\
    =&-2\pi\Re\int_0^{\infty}\left( (\p_r(\sigma_1 r))(\p_r^3 z)^2+(\p_r(\sigma_2 r))|\p_r^3z|^2 \right)\ud r.
\end{align}
By definition we have
\begin{align}
    |\sigma_1|+|\sigma_2|\lesssim |z||z|_{-1},\ |\p_r\sigma_1|+|\p_r\sigma_2|\lesssim |z|_{-1}^2+|z||z|_{-2},
\end{align}
which implies
\begin{align}
    &\left|4\pi\Re\int_0^{\infty} \left(\overline{\left(\p_{\p_r z}\cR_{\rm{NL},1,2}\p_r^4 z+ \p_{\p_r \bar z}\cR_{\rm{NL},1,2}\p_r^4 \bar z \right)}G_z(\p_r^3 z)\right)\udr\right|\\
    \lesssim& \int_0^{\infty}|z|_{-3}^2  \left(|z||z|_{-1}+r\left(|z|_{-1}^2+|z||z|_{-2}\right)\right) \ud r\\
    \lesssim & \|z\|_{\calH^1}\|z\|_{\calH^3}^3\lesssim|t|^{-2-3\veps_0}.
\end{align}

The remaining parts in $ 4\pi\Re\int_0^{\infty} \left(\overline{\p_r\left(\calH_{\calU}(\cR_{\rm{NL},1,2})\right)}G_z(\p_r\left(\calH_{\calU}z\right))\right)\udr$ can be estimated directly using ~\eqref{eq:Kz-bounded}, Lemma ~\ref{lem:frame-estimate} and Lemma ~\ref{lem:operator-estimate} to obtain the same bound.

The term $4\pi\Re\int_0^{\infty} \left(\overline{\calH_{\calU}(\cR_{\rm{NL},1,2}
     )}V_{\calU}(\calH_{\calU}z)\right)\udr$ can be estimated using $V_{\calU}=\frac{k^2}{r^2}+\sum_{j=1}^N V_j+\calE_{\calU}$ and Lemma ~\ref{lem:operator-estimate}:
     \begin{align}
         &\left|4\pi\Re\int_0^{\infty} \left(\overline{\calH_{\calU}(\cR_{\rm{NL},1,2}
     )}V_{\calU}(\calH_{\calU}z)\right)\udr\right|\\
     \lesssim & \sum_{j=1}^N\int_0^{\infty}r^{-2}|z|_{-3}|z|_{-2}|z|_{-1}|z|F_j^2\udr+\int_0^{\infty}r^{-2}|z|_{-3}|z|_{-2}|z|_{-1}|z| \udr\\
     \lesssim & \|z\|_{\calH^1}\|z\|_{\calH^3}^3\lesssim |t|^{-2-3\veps_0}.
     \end{align}

The remaining components in $(\rm{C})_{1,2}$ can be directly estimated using ~\eqref{eq:R-NL12-estimate} and Cauchy's inequality to obtain the bound $C(L)|t|^{-2-\frac{5}{2}\veps_0}$.

Last we estimate $(\rm{C})_2$. It follows directly from ~\eqref{eq:R-NL2-estimate} and Cauchy's inequality that
\begin{align}
    |(\rm{C})_2|\lesssim_L |t|^{-\frac{5}{2}-2\veps_0}+|t|^{-\frac{7}{2}-2\veps_0}+|t|^{-\frac{5}{2}-2\veps_0}+|t|^{-3-\veps_0}+|t|^{-\frac{5}{2}-\frac{3}{2}\veps_0}+|t|^{-\frac{5}{2}-\frac{3}{2}\veps_0}\lesssim |t|^{-\frac{5}{2}-\frac{3}{2}\veps_0}.
\end{align}

Combining these estimates, we obtain that there exists a constant $C_{L,C}>0$ depending on $L$ such that
\begin{align}\label{eq:C-estimate-final}
    |(\rm{C})|&\lesssim |t|^{-\frac{1}{2}-\veps_0}\sum_{j=1}^N\lam_j\|z\|_{\rm{Mor}(\lam_j)}^2+o_{T_0\to-\infty}(1)\ti\de\sum_{j=1}^N \|y\|_{\rm{Mor}(\lam_j)}^2 +C_{L,C}o_{T_0\to-\infty}(1)|t|^{-2-2\veps_0}.
\end{align}

\bigskip
\noindent
\textbf{Estimates for $(\rm{D})$.} The last line in the expression of $(\rm{D})$ in ~\eqref{eq:D-expression} can be simplified. Recalling the definition of ~\eqref{eq:Kz-def}, we have
\begin{align}
    (\rm{D})
     &=-2\La  \calH_{\calU}T_{\calU}(\calU\wedge\De\calU),\calH_{\calU}(K_z\calH_{\calU}z)\Ra\\
     &\quad +4\pi\Re\int_0^{\infty} \left(\overline{\p_r\left(\calH_{\calU}(T_{\calU}\left(\calU\wedge\De\calU\right))\right)}K_z(\p_r\left(\calH_{\calU}z\right))\right)\udr\\
     &\quad + 2\pi \Re\int_0^{\infty} \left(\overline{\p_r\left(\calH_{\calU}z\right)}\left(\vD G_z( T_{\calU}\left(\calU\wedge\De\calU\right))\right)(\p_r\left(\calH_{\calU}z\right))\right)\udr\\
     &\quad -2\La  i\calH_{\calU}T_{\calU}(\calU\wedge\De\calU),[\calH_{\calU},i](G_z\calH_{\calU}z)\Ra.
\end{align}
Using ~\eqref{eq:HU}, we have
\begin{align}
    &\quad -2\La  \calH_{\calU}T_{\calU}(\calU\wedge\De\calU),\calH_{\calU}(K_z\calH_{\calU}z)\Ra\\
    &= -4\pi\Re\int_0^{\infty}\left(\overline{\p_r\left(\calH_{\calU}(T_{\calU}\left(\calU\wedge\De\calU\right))\right)}\left((\p_rK_z)(\calH_{\calU}z)+K_z(\p_r\left(\calH_{\calU}z\right))\right)\right)\udr\\
    &\quad -4\pi\Re\int_0^{\infty} \overline{\calH_{\calU}(T_{\calU}\left(\calU\wedge\De\calU\right)) } V_{\calU}(K_z\calH_{\calU}z)\udr.
\end{align}
Therefore
\begin{equation}\label{eq:D-all}
    \begin{aligned}
         (\rm{D})
     &=-4\pi\Re\int_0^{\infty}\left(\overline{\p_r\left(\calH_{\calU}(T_{\calU}\left(\calU\wedge\De\calU\right))\right)}\left((\p_rK_z)(\calH_{\calU}z)\right)\right)\udr\\
    &\quad -4\pi\Re\int_0^{\infty} \overline{\calH_{\calU}(T_{\calU}\left(\calU\wedge\De\calU\right)) } V_{\calU}(K_z\calH_{\calU}z)\udr\\
     &\quad + 2\pi \Re\int_0^{\infty} \left(\overline{\p_r\left(\calH_{\calU}z\right)}\left(\vD G_z( T_{\calU}\left(\calU\wedge\De\calU\right))\right)(\p_r\left(\calH_{\calU}z\right))\right)\udr\\
     &\quad -2\La  i\calH_{\calU}T_{\calU}(\calU\wedge\De\calU),[\calH_{\calU},i](G_z\calH_{\calU}z)\Ra.
    \end{aligned}
\end{equation}

These terms can be estimated separately. For the term in the first line of ~\eqref{eq:D-all}, we estimate using ~\eqref{eq:Kz-bounded}, ~\eqref{eq:FkFl-pointwise}, ~\eqref{eq:F-pointwise}, ~\eqref{eq:VU-bounded-pointwise} to obtain
\begin{align}
    &\left|-4\pi\Re\int_0^{\infty}\left(\overline{\p_r\left(\calH_{\calU}(T_{\calU}\left(\calU\wedge\De\calU\right))\right)}\left((\p_rK_z)(\calH_{\calU}z)\right)\right)\udr\right|\\
    \lesssim & \sum_{1\leq m< \ell\leq N}\int_0^{\infty} r^{-5}F_m(t,r)F_{\ell}(t,r) |z||\p_r z||z|_{-2}\udr\\
    \lesssim &|t|^{-1}\int_0^{\infty}|z|\frac{|\p_r z|}{r^2} \frac{|z|_{-2}}{r}\udr\\
    \lesssim &|t|^{-1} \|z\|_{\calH^1}\|z\|_{\calH^3}^2\lesssim |t|^{-\frac{5}{2}-2\veps_0}.
\end{align}
The term in the second line of ~\eqref{eq:D-all} can be treated similarly using ~\eqref{eq:VU-bounded-pointwise}.
\begin{align}
    &\left|4\pi\Re\int_0^{\infty} \overline{\calH_{\calU}(T_{\calU}\left(\calU\wedge\De\calU\right)) } V_{\calU}(K_z\calH_{\calU}z)\udr\right|\\
    \lesssim & |t|^{-1}\int_0^{\infty} r^{-4} |z|^2|z|_{-2}\udr\\
    \lesssim & |t|^{-1}\|z\|_{\calH^1}\|z\|_{\calH^3}^2\lesssim |t|^{-\frac{5}{2}-2\veps_0}.
\end{align}
For the term in the third line of ~\eqref{eq:D-all}, we use ~\eqref{eq:DGz-bounded}, ~\eqref{eq:FkFl-pointwise} and ~\eqref{eq:F-pointwise} to obtain
\begin{align}
    &\left|2\pi \Re\int_0^{\infty} \left(\overline{\p_r\left(\calH_{\calU}z\right)}\left(\vD G_z( T_{\calU}\left(\calU\wedge\De\calU\right))\right)(\p_r\left(\calH_{\calU}z\right))\right)\udr\right|\\
    \lesssim & |t|^{-1}\int_0^{\infty}|\p_r\left(\calH_{\calU}z\right)||z||\p_r\left(\calH_{\calU}z\right)|\udr\lesssim |t|^{-1}\|z\|_{\calH^1}\|z\|_{\calH^3}^2\lesssim |t|^{-\frac{5}{2}-2\veps_0}.
\end{align}
For the term in the last line of ~\eqref{eq:D-all}, we use ~\eqref{eq:commutator-pointwise} and the definition ~\eqref{eq:Gz-def} of $G_z$ to obtain
\begin{align}
    &\left|2\La  i\calH_{\calU}T_{\calU}(\calU\wedge\De\calU),[\calH_{\calU},i](G_z\calH_{\calU}z)\Ra\right|\\
    \lesssim & |t|^{-1}\sum_{1\leq m< \ell\leq N}\int_0^{\infty}r^{-3}F_m(t,r)F_{\ell}(t,r)r^{-1}|z|_{-2}\udr\\
    \lesssim & |t|^{-1}\sum_{1\leq m< \ell\leq N}\left(\frac{1}{\lam_m^4}\left(\frac{\lam_m}{\lam_{\ell}}\right)^{2k}\right)^{\frac{1}{2}}\|z\|_{\calH^3}\lesssim |t|^{-\frac{5}{2}-\veps_0}.
\end{align}

Combining these estimates, we obtain
\begin{align}\label{eq:D-estimate-final}
    |(\rm{D})|\lesssim |t|^{-\frac{5}{2}-\veps_0}.
\end{align}

\bigskip
\noindent
\textbf{Estimates for $(\rm{E})$.} We recall the expression of $(\rm{E})$ in ~\eqref{eq:E-expression}.
\begin{equation}\label{eq:E-all}
    \begin{aligned}
        (\rm{E})&=2\La \calH_{\calU}^2 z,\calH_{\calU}(-iG_z\calH_{\calU}z)   \Ra\\
     &\quad + 2\pi \Re\int_0^{\infty} \left(\overline{\p_r\left(\calH_{\calU}z\right)}\left(\vD G_z(-iG_z\calH_{\calU} z)\right)(\p_r\left(\calH_{\calU}z\right))\right)\udr\\
     &\quad +4\pi\Re\int_0^{\infty} \left(\overline{\p_r\left(\calH_{\calU}(-iG_z\calH_{\calU} z)\right)}\left(G_z-\rm{Id}\right)(\p_r\left(\calH_{\calU}z\right))\right)\udr\\
     &\quad +\ti \de\sum_{j=1}^N \de_0^{N-j}\La A_{\lam_j}\calH_{\calU}(\calR_{\rm{q}}) ,i\Lam_{\lam_j}A_{\lam_j}y \Ra\\
     &\quad +\ti \de\sum_{j=1}^N \de_0^{N-j}\La A_{\lam_j}\calH_{\calU}y ,i\Lam_{\lam_j}A_{\lam_j}(\calR_{\rm{q}}) \Ra.
    \end{aligned}
\end{equation}

First we estimate the terms in the first and the third line of ~\eqref{eq:E-all} together. We calculate
\begin{align}
    &2\La \calH_{\calU}^2 z,\calH_{\calU}(-iG_z\calH_{\calU}z)   \Ra\\
    =&4\pi\Re\int_0^{\infty}\overline{\p_r(\calH_{\calU}z)}\p_r\left(\calH_{\calU}(-iG_z\calH_{\calU}z)\right)\udr\\
    &+4\pi\Re\int_0^{\infty}\overline{\calH_{\calU}z}V_{\calU}\left(\calH_{\calU}(-iG_z\calH_{\calU}z)\right)\udr.
\end{align}
The expression of $G_z$ implies $K_z^*=K_z$, which yields
\begin{equation}\label{eq:E-1}
    \begin{aligned}
        &2\La \calH_{\calU}^2 z,\calH_{\calU}(-iG_z\calH_{\calU}z)   \Ra+4\pi\Re\int_0^{\infty} \left(\overline{\p_r\left(\calH_{\calU}(-iG_z\calH_{\calU} z)\right)}\left(G_z-\rm{Id}\right)(\p_r\left(\calH_{\calU}z\right))\right)\udr\\
    =&4\pi\Re\int_0^{\infty}\overline{\p_r(\calH_{\calU}z)}\p_r\left(\calH_{\calU}(-iG_z\calH_{\calU}z)\right)\udr+4\pi\Re\int_0^{\infty}\overline{\calH_{\calU}z}V_{\calU}\left(\calH_{\calU}(-iG_z\calH_{\calU}z)\right)\udr\\
    &+4\pi\Re\int_0^{\infty} \left(\overline{\p_r\left(\calH_{\calU}z\right)}\left(G_z-\rm{Id}\right)(\p_r\left(\calH_{\calU}(-iG_z\calH_{\calU} z)\right))\right)\udr\\
    =&4\pi\Re\int_0^{\infty} \left(\overline{\p_r\left(\calH_{\calU}z\right)}G_z(\p_r\left(\calH_{\calU}(-iG_z\calH_{\calU} z)\right))\right)\udr+4\pi\Re\int_0^{\infty}\overline{\calH_{\calU}z}V_{\calU}\left(\calH_{\calU}(-iG_z\calH_{\calU}z)\right)\udr.
    \end{aligned}
\end{equation}
We calculate the first term in the last line of ~\eqref{eq:E-1}, which we denote by $(\rm{E})_{*}$. We have
\begin{align}
    &G_z(\p_r\left(\calH_{\calU}(-iG_z\calH_{\calU} z)\right))\\
    =&G_z\left(\p_r\left( \p_r^2(iG_z\calH_{\calU} z) +\frac{\p_r}{r}(iG_z\calH_{\calU} z)-V_{\calU}(iG_z\calH_{\calU} z)      \right)\right)\\
    =&G_z i \left(\p_r^3(G_z\calH_{\calU} z)+\frac{\p_r^2}{r}(G_z\calH_{\calU} z)-\frac{\p_r}{r^2}(G_z\calH_{\calU} z)\right)-G_z\p_r(V_{\calU}(iG_z\calH_{\calU} z)  )\\
    =&G_ziG_z\left(\left(\p_r^3+\frac{\p_r^2}{r}-\frac{\p_r}{r^2}   \right)\left(\calH_{\calU}z\right)\right)\\
    &+G_zi(\p_rG_z)\left( \left(3\p_r^2+2\frac{\p_r}{r}-\frac{1}{r^2}\right)\left(\calH_{\calU}z\right) \right)\\
    &+G_zi(\p_r^2G_z)\left(\left(3\p_r+\frac{1}{r}\right)(\calH_{\calU}z)\right)\\
    &+G_zi(\p_r^3G_z)(\calH_{\calU}z)-G_z\p_r(V_{\calU}(iG_z\calH_{\calU} z)  )
\end{align}
A direct calculation yields that
\begin{align}\label{eq:GziGz}
     G_ziG_z=i,\ G_zi\p_rG_z=-(\p_rG_z)iG_z.
\end{align}

For the component of $(\rm{E})_*$ containing $G_ziG_z\left(\left(\p_r^3+\frac{\p_r^2}{r}-\frac{\p_r}{r^2}   \right)\left(\calH_{\calU}z\right)\right)$, we calculate
\begin{align}
    &4\pi\Re\int_0^{\infty} \left(\overline{\p_r\left(\calH_{\calU}z\right)}G_ziG_z\left(\left(\p_r^3+\frac{\p_r^2}{r}-\frac{\p_r}{r^2}   \right)\left(\calH_{\calU}z\right)\right)\right)\udr\\
    =&4\pi\Re\int_0^{\infty} \left(\overline{\p_r\left(\calH_{\calU}z\right)}i\left(\left(\p_r^3+\frac{\p_r^2}{r}-\frac{\p_r}{r^2}   \right)\left(\calH_{\calU}z\right)\right)\right)\udr\\
    =&-4\pi\Re\int_0^{\infty} \left(\overline{\p_r^2\left(\calH_{\calU}z\right)}i\left(\p_r^2\left(\calH_{\calU}z\right)\right)\right)\udr-4\pi\Re\int_0^{\infty} \left(\overline{\frac{1}{r}\p_r\left(\calH_{\calU}z\right)}i\left(\p_r^2\left(\calH_{\calU}z\right)\right)\right)\udr\\
    &+4\pi\Re\int_0^{\infty} \left(\overline{\p_r\left(\calH_{\calU}z\right)}i\left(\left(\frac{\p_r^2}{r}-\frac{\p_r}{r^2}   \right)\left(\calH_{\calU}z\right)\right)\right)\udr\\
    =&-4\pi\Re\int_0^{\infty} \left(\overline{\p_r\left(\calH_{\calU}z\right)}i\left(\frac{\p_r}{r^2}   \left(\calH_{\calU}z\right)\right)\right)\udr=0.
\end{align}

For the component of $(\rm{E})_*$ containing $3\p_r^2(\calH_{\calU} z)$, we calculate
\begin{align}
    &4\pi\Re\int_0^{\infty} \left(\overline{\p_r\left(\calH_{\calU}z\right)}G_zi(\p_rG_z)\left( 3\p_r^2\left(\calH_{\calU}z\right) \right)\right)\udr\\
    =&12\pi\Re\int_0^{\infty} \left(\overline{\p_r\left(\calH_{\calU}z\right)}\p_r\left[G_zi(\p_rG_z)\left( \p_r\left(\calH_{\calU}z\right) \right)\right]\right)\udr\\
    &-12\pi\Re\int_0^{\infty} \left(\overline{\p_r\left(\calH_{\calU}z\right)}\left[\p_r\left(G_zi(\p_rG_z)\right)\right]\left( \p_r\left(\calH_{\calU}z\right) \right)\right)\udr\\
    =&-12\pi\Re\int_0^{\infty} \left(\overline{\p_r^2\left(\calH_{\calU}z\right)}\left[G_zi(\p_rG_z)\left( \p_r\left(\calH_{\calU}z\right) \right)\right]\right)\udr\\
    &-12\pi\Re\int_0^{\infty} \left(\overline{\p_r\left(\calH_{\calU}z\right)}\left[\p_r\left(G_zi(\p_rG_z)\right)\right]\left( \p_r\left(\calH_{\calU}z\right) \right)\right)\udr\\
    &-12\pi\Re\int_0^{\infty} \left(\overline{\p_r\left(\calH_{\calU}z\right)}\left[G_zi(\p_rG_z)\left( \p_r\left(\calH_{\calU}z\right) \right)\right]\right)\dr\\
    =&-12\pi\Re\int_0^{\infty} \left(\overline{\p_r\left(\calH_{\calU}z\right)}\left[(\p_rG_z)(-i)(G_z)\left( \p_r^2\left(\calH_{\calU}z\right) \right)\right]\right)\udr\\
    &-12\pi\Re\int_0^{\infty} \left(\overline{\p_r\left(\calH_{\calU}z\right)}\left[\p_r\left(G_zi(\p_rG_z)\right)\right]\left( \p_r\left(\calH_{\calU}z\right) \right)\right)\udr\\
    &-12\pi\Re\int_0^{\infty} \left(\overline{\p_r\left(\calH_{\calU}z\right)}\left[G_zi(\p_rG_z)\left( \p_r\left(\calH_{\calU}z\right) \right)\right]\right)\dr,
\end{align}
which combined with ~\eqref{eq:GziGz} yields
\begin{align}
    &4\pi\Re\int_0^{\infty} \left(\overline{\p_r\left(\calH_{\calU}z\right)}G_zi(\p_rG_z)\left( 3\p_r^2\left(\calH_{\calU}z\right) \right)\right)\udr\\
    =&-6\pi\Re\int_0^{\infty} \left(\overline{\p_r\left(\calH_{\calU}z\right)}\left[\p_r\left(G_zi(\p_rG_z)\right)\right]\left( \p_r\left(\calH_{\calU}z\right) \right)\right)\udr\\
    &-6\pi\Re\int_0^{\infty} \left(\overline{\p_r\left(\calH_{\calU}z\right)}\left[G_zi(\p_rG_z)\left( \p_r\left(\calH_{\calU}z\right) \right)\right]\right)\dr
\end{align}
Therefore we have
\begin{equation}\label{eq:E-2}
    \begin{aligned}
         &2\La \calH_{\calU}^2 z,\calH_{\calU}(-iG_z\calH_{\calU}z)   \Ra+4\pi\Re\int_0^{\infty} \left(\overline{\p_r\left(\calH_{\calU}(-iG_z\calH_{\calU} z)\right)}\left(G_z-\rm{Id}\right)(\p_r\left(\calH_{\calU}z\right))\right)\udr\\
    =&-6\pi\Re\int_0^{\infty} \left(\overline{\p_r\left(\calH_{\calU}z\right)}\left[\p_r\left(G_zi(\p_rG_z)\right)\right]\left( \p_r\left(\calH_{\calU}z\right) \right)\right)\udr\\
    &-6\pi\Re\int_0^{\infty} \left(\overline{\p_r\left(\calH_{\calU}z\right)}\left[G_zi(\p_rG_z)\left( \p_r\left(\calH_{\calU}z\right) \right)\right]\right)\dr\\
    &+4\pi\Re\int_0^{\infty} \left(\overline{\p_r\left(\calH_{\calU}z\right)}G_zi(\p_rG_z)\left( \left(2\frac{\p_r}{r}-\frac{1}{r^2}\right)\left(\calH_{\calU}z\right) \right)\right)\udr\\
    &+4\pi\Re\int_0^{\infty} \left(\overline{\p_r\left(\calH_{\calU}z\right)}G_zi(\p_r^2G_z)\left(\left(3\p_r+\frac{1}{r}\right)(\calH_{\calU}z)\right)\right)\udr\\
    &+4\pi\Re\int_0^{\infty} \left(\overline{\p_r\left(\calH_{\calU}z\right)}G_zi(\p_r^3G_z)(\calH_{\calU}z)\right)\udr\\
    & -4\pi\Re\int_0^{\infty} \left(\overline{\p_r\left(\calH_{\calU}z\right)}G_z\p_r(V_{\calU}(iG_z\calH_{\calU} z)  )\right)\udr   \\    
    &+4\pi\Re\int_0^{\infty}\overline{\calH_{\calU}z}V_{\calU}\left(\calH_{\calU}(-iG_z\calH_{\calU}z)\right)\udr.
    \end{aligned}
\end{equation}
Note that the terms containing high derivatives of $\calH_{\calU}z$ in the right hand side of the identity ~\eqref{eq:E-2} have been canceled out. The remaining terms except for the terms containing $V_{\calU}$ can be estimated directly using ~\eqref{eq:Kz-bounded}, ~\eqref{eq:Kz-rr-bounded} and ~\eqref{eq:Kz-rrr-bounded} to obtain a bound $\|z\|_{\calH^1}\|z\|_{\calH^3}^3$, under our bootstrap assumption this bound can be controlled by $|t|^{-2-3\veps_0}$. For the terms containing $V_\calU$, we will use the decomposition $ G_z=K_z+\rm{Id} $ and $V_{\calU}=\frac{k^2}{r^2}+\sum_{j=1}^NV_j+\calE_{\calU}$. Note that the most dangerous part cancels:
\begin{align}
    &-\Re\int_0^{\infty} \overline{\p_r(\calH_{\calU}z)}\p_r(r^{-2}i\calH_{\calU}z) \udr\\
    &+\Re\int_0^{\infty}\overline{\calH_{\calU}z}\frac{1}{r^2}\left(-\p_r^2-\frac{\p_r}{r}+\frac{k^2}{r^2}\right)\left(-i\calH_{\calU}z\right)\udr\\
    =&-\Re\int_0^{\infty} \overline{\p_r(\calH_{\calU}z)}(-2r^{-3})(i\calH_{\calU}z) \udr\\
    &+\Re\int_0^{\infty}\overline{\calH_{\calU}z}\frac{1}{r^2}\left(-\frac{\p_r}{r}\right)\left(-i\calH_{\calU}z\right)\udr\\
    &+\Re\int_0^{\infty}\p_r\left(\overline{\calH_{\calU}z}\frac{1}{r}\right)\p_r\left(-i\calH_{\calU}z\right)\dr=0.
\end{align}
The other components containing $K_z$ or $\p_rK_z$ can be estimated easily using ~\eqref{eq:Kz-bounded} to obtain the bound $\|z\|_{\calH^1}\|z\|_{\calH^3}^3$, which can be controlled by $|t|^{-2-3\veps_0}$, and we omit the details. The remaining components are
\begin{equation}\label{eq:E-3}
    \begin{aligned}
         & -4\pi\sum_{j=1}^N\Re\int_0^{\infty} \left(\overline{\p_r\left(\calH_{\calU}z\right)}\p_r(V_j(i\calH_{\calU} z)  )\right)\udr   \\    
    &+4\pi\sum_{j=1}^N\Re\int_0^{\infty}\overline{\calH_{\calU}z}V_j\left(\left(-\p_r^2-r^{-1}\p_r+V_{\calU}\right)(-i\calH_{\calU}z)\right)\udr\\
    &-4\pi\Re\int_0^{\infty} \left(\overline{\p_r\left(\calH_{\calU}z\right)}\p_r(\E_{\calU}(i\calH_{\calU} z)  )\right)\udr\\
    &+4\pi\Re\int_0^{\infty}\overline{\calH_{\calU}z}\calE_{\calU}\left(\left(-\p_r^2-r^{-1}\p_r+\frac{k^2}{r^2}+\sum_{j=1}^NV_j+\calE_{\calU}\right)(-i\calH_{\calU}z)\right)\udr.
    \end{aligned}
\end{equation}

The terms in the first and second lines of ~\eqref{eq:E-3} cancels in the following way:
\begin{align}
    &4\pi\sum_{j=1}^N\Re\int_0^{\infty}\overline{\calH_{\calU}z}V_j\left(\left(-\p_r^2-r^{-1}\p_r+\frac{k^2}{r^2}\right)(-i\calH_{\calU}z)\right)\udr\\
    =&-  4\pi\sum_{j=1}^N\Re\int_0^{\infty}\p_r\left(r\overline{\calH_{\calU}z}V_j\right)\left(\p_r(i\calH_{\calU}z)\right)\dr     \\
    &+4\pi\sum_{j=1}^N\Re\int_0^{\infty}\overline{\calH_{\calU}z}V_j\left(\left(-r^{-1}\p_r\right)(-i\calH_{\calU}z)\right)\udr\\
    =&-  4\pi\sum_{j=1}^N\Re\int_0^{\infty}r\cdot\overline{\calH_{\calU}z}(\p_rV_j)\left(\p_r(i\calH_{\calU}z)\right)\dr   \\
    =& 4\pi\sum_{j=1}^N\Re\int_0^{\infty}\overline{i\calH_{\calU}z}(\p_rV_j)\left(\p_r(\calH_{\calU}z)\right)\udr\\
    =& 4\pi\sum_{j=1}^N\Re\int_0^{\infty} \left(\overline{\p_r\left(\calH_{\calU}z\right)}\p_r(V_j(i\calH_{\calU} z)  )\right)\udr 
\end{align}
Therefore these terms can be estimated as
\begin{align}
    &\left|4\pi\sum_{j=1}^N\Re\int_0^{\infty}\overline{\calH_{\calU}z}V_j\left(\E_{\calU}(-i\calH_{\calU}z)\right)\udr\right|\\
    \lesssim &\sum_{j=1}^N\sum_{1\leq m<\ell\leq N}\int_0^{\infty}|z|_{-2}^2\cdot r^{-4}F_j^2(t,r)F_m(t,r)F_{\ell}(t,r)\udr\\
    \lesssim &\sum_{1\leq m<\ell\leq N}\int_0^{\infty}|z|_{-3}^2\cdot r^{-2}F_m(t,r)F_{\ell}(t,r)\udr\\
    \lesssim & \sum_{1\leq m<\ell\leq N}\int_0^{\infty}|z|_{-3}^2\cdot \frac{\lam_{\ell}}{(\lam_{\ell}+r)^2}\frac{1}{\lam_{\ell}}\left(\frac{\lam_m}{\lam_{\ell}}\right)^{k-2}\udr\\
    \lesssim & \sum_{\ell=1}^N |t|^{-1}\lam_{\ell}\|z\|_{\rm{Mor}(\lam_{\ell})}^2.
\end{align}
The terms in the third and fourth lines of ~\eqref{eq:E-3} can be estimated similarly to obtain the same bound.

Next we estimate the term in the second line of ~\eqref{eq:E-all} using ~\eqref{eq:DGz-bounded} as follows:
\begin{align}
    &\left|\int_0^{\infty} \left(\overline{\p_r\left(\calH_{\calU}z\right)}\left(\vD G_z(-iG_z\calH_{\calU} z)\right)(\p_r\left(\calH_{\calU}z\right))\right)\udr\right|\\
    \lesssim&\int_0^{\infty}|z|_{-3}|z||z|_{-2}|z|_{-3}\udr
    \lesssim \|z\|_{\calH^1}\|z\|_{\calH^3}^3\lesssim|t|^{-2-3\veps_0}.
\end{align}
For the terms in the last two lines of ~\eqref{eq:E-all}, we write $z=y-\eta$ to obtain
\begin{equation}\label{eq:E-4}
    \begin{aligned}
        &\La A_{\lam_j}\calH_{\calU}(\calR_{\rm{q}}) ,i\Lam_{\lam_j}A_{\lam_j}y \Ra+\La A_{\lam_j}\calH_{\calU}y ,i\Lam_{\lam_j}A_{\lam_j}(\calR_{\rm{q}}) \Ra\\
    =&\La A_{\lam_j}\calH_{\calU}(-iK_z\calH_{\calU}(y-\eta)) ,i\Lam_{\lam_j}A_{\lam_j}y \Ra+\La A_{\lam_j}\calH_{\calU}y ,i\Lam_{\lam_j}A_{\lam_j}(-iK_z\calH_{\calU}(y-\eta)) \Ra\\
    =&-\La A_{\lam_j}\calH_{\calU}(iK_z\calH_{\calU}y) ,i\Lam_{\lam_j}A_{\lam_j}y \Ra-\La A_{\lam_j}\calH_{\calU}y ,i\Lam_{\lam_j}A_{\lam_j}(iK_z\calH_{\calU}y) \Ra\\
    &+\La A_{\lam_j}\calH_{\calU}(iK_z\calH_{\calU}\eta) ,i\Lam_{\lam_j}A_{\lam_j}y \Ra+\La A_{\lam_j}\calH_{\calU}y ,i\Lam_{\lam_j}A_{\lam_j}(iK_z\calH_{\calU}\eta) \Ra
    \end{aligned}
\end{equation}
We denote by $(E)_j$ the sums of the four terms in the last two lines in ~\eqref{eq:E-4} for each $1\leq j\leq N$.

We first estimate the terms in $(E)_j$ containing $-iK_z\calH_{\calU}y$.
\begin{align}
    &-\La A_{\lam_j}\calH_{\calU}(iK_z\calH_{\calU}y) ,i\Lam_{\lam_j}A_{\lam_j}y \Ra-\La A_{\lam_j}\calH_{\calU}y ,i\Lam_{\lam_j}A_{\lam_j}(iK_z\calH_{\calU}y) \Ra\\
    =&-\La A_{\lam_j}\calH_{\calU}(K_z\calH_{\calU}y) ,\Lam_{\lam_j}A_{\lam_j}y \Ra+\La A_{\lam_j}\calH_{\calU}y ,\Lam_{\lam_j}A_{\lam_j}(K_z\calH_{\calU}y) \Ra-\La A_{\lam_j}[\calH_{\calU},i](K_z\calH_{\calU}y) ,i\Lam_{\lam_j}A_{\lam_j}y \Ra\\
    =&-\La K_z\calH_{\calU}y,A_{\lam_j}^*\Lam_{\lam_j}A_{\lam_j}\calH_{\calU}y \Ra-  \La K_z\calH_{\calU}y,[\calH_{\calU},A_{\lam_j}^*\Lam_{\lam_j}A_{\lam_j}]y \Ra\\
    &+\La A_{\lam_j}\calH_{\calU}y ,\Lam_{\lam_j}A_{\lam_j}(K_z\calH_{\calU}y) \Ra-\La A_{\lam_j}[\calH_{\calU},i](K_z\calH_{\calU}y) ,i\Lam_{\lam_j}A_{\lam_j}y \Ra\\
    =&-  \La K_z\calH_{\calU}y,[\calH_{\calU},A_{\lam_j}^*\Lam_{\lam_j}A_{\lam_j}]y \Ra+\La \calH_{\calU}y,[ A_{\lam_j}^*\Lam_{\lam_j}A_{\lam_j},K_z   ]\calH_{\calU}y\Ra-\La A_{\lam_j}[\calE_{\calU},i](K_z\calH_{\calU}y) ,i\Lam_{\lam_j}A_{\lam_j}y \Ra.
\end{align}
The coefficients of $\p_r^3$ and $\p_r^2$ of the third order differential operator 
$A_{\lam_j}^*\Lam_{\lam_j}A_{\lam_j}$ is:
\begin{align}
    -\varphi_{\lam_j}\p_r^3-\frac{3}{2}\left(\p_r\varphi_{\lam_j}+\frac{\varphi_{\lam_j}}{r}\right)\p_r^2.
\end{align}
We can calculate the highest order and second-highest order parts of the commutators $[\calH_{\calU},A_{\lam_j}^*\Lam_{\lam_j}A_{\lam_j}]$ and $[ A_{\lam_j}^*\Lam_{\lam_j}A_{\lam_j},K_z   ]$ as
\begin{align}
    [\calH_{\calU},A_{\lam_j}^*\Lam_{\lam_j}A_{\lam_j}]&=2(\p_r\varphi_{\lam_j})\p_r^4+\rm{l.o.t},\\
    [ A_{\lam_j}^*\Lam_{\lam_j}A_{\lam_j},K_z   ]&=-3\varphi_{\lam_j}(\p_rK_z)\p_r^2+\rm{l.o.t}.
\end{align}
We only estimate the terms containing these leading order parts, as the estimates for  the remaining parts are much easier.
\begin{align}
    -  2\La K_z\calH_{\calU}y,(\p_r\varphi_{\lam_j})\p_r^4y \Ra
    &=-2\La K_z\p_r^3y, (\p_r\varphi_{\lam_j})\p_r^3y  \Ra+\rm{l.o.t},\\
    \La \calH_{\calU}y,-3\varphi_{\lam_j}(\p_rK_z)\p_r^2\calH_{\calU}y\Ra&=3\La \p_r^3 y,\varphi_{\lam_j}(\p_rK_z)\p_r^3y\Ra+\rm{l.o.t},
\end{align}
Using ~\eqref{eq:Kz-bounded}, ~\eqref{eq:Kz-rr-bounded}, ~\eqref{eq:Kz-rrr-bounded} and the fact that $|\varphi_{\lam_j}|+|r\p_r\varphi_{\lam_j}|\lesssim 1$, these terms can be estimated as
$\|z\|_{\calH^1}^{\frac{3}{2}}\|z\|_{\calH^3}^{\frac{1}{2}}\|y\|_{\calH^3}^{2}$, which is bounded by $|t|^{-2-\frac{5}{2}\veps_0}$.

The term $ \La A_{\lam_j}[\calE_{\calU},i](K_z\calH_{\calU}y) ,i\Lam_{\lam_j}A_{\lam_j}y \Ra$ can be estimated using ~\eqref{eq:Lambda-estimate-pointwise} and ~\eqref{eq:commutator-pointwise} to obtain
\begin{align}
    \left|\La A_{\lam_j}[\calE_{\calU},i](K_z\calH_{\calU}y) ,i\Lam_{\lam_j}A_{\lam_j}y \Ra\right|\lesssim |t|^{-1}\sum_{1\leq m<\ell\leq N}\frac{\lam_m}{\lam_{\ell}}\|y\|_{\rm{Mor}(\lam_{\ell})}^2.
\end{align}

Then we estimate the terms in $(E)_j$ containing $iK_z\calH_{\calU}\eta$, where we have to use ~\eqref{eq:correction-H4}.
\begin{align}
    &\La A_{\lam_j}\calH_{\calU}(iK_z\calH_{\calU}\eta) ,i\Lam_{\lam_j}A_{\lam_j}y \Ra
      +\La A_{\lam_j}\calH_{\calU}y ,i\Lam_{\lam_j}A_{\lam_j}(iK_z\calH_{\calU}\eta) \Ra\\
     = &\La A_{\lam_j}\calH_{\calU}(K_z\calH_{\calU}\eta) ,\Lam_{\lam_j}A_{\lam_j}y \Ra+\La A_{\lam_j}[\calH_{\calU},i](K_z\calH_{\calU}\eta) ,i\Lam_{\lam_j}A_{\lam_j}y \Ra\\
     &-\La A_{\lam_j}\calH_{\calU}y ,\Lam_{\lam_j}A_{\lam_j}(K_z\calH_{\calU}\eta) \Ra
\end{align}
The term $\La A_{\lam_j}[\calH_{\calU},i](K_z\calH_{\calU}\eta) ,i\Lam_{\lam_j}A_{\lam_j}y \Ra$ can be estimated using ~\eqref{eq:Kz-bounded}, ~\eqref{eq:commutator-pointwise}, ~\eqref{eq:FkFl-pointwise}, ~\eqref{eq:Lambda-estimate-pointwise} to obtain the bound
$|t|^{-1}\|z\|_{\calH^1}^2\|\eta\|_{\calH^2}\|y\|_{\calH^3} $, which can be bounded by $|t|^{-\frac{7}{2}-\veps_0}$.
For the remaining two terms, since $A_{\lam_j}\calH_{\calU}=\p_r^3+\rm{l.o.t}$, $ \Lam_{\lam_j}A_{\lam_j}=-\varphi_{\lam_j}\p_r^2+\rm{l.o.t}$, $A_{\lam_j}^*\Lam_{\lam_j}A_{\lam_j}=-\varphi_{\lam_j}\p_r^3+\rm{l.o.t}$, the leading order terms are 
\begin{align}
\La K_z\p_r^2(\calH_{\calU}\eta) ,\varphi_{\lam_j}\p_r^3y \Ra+\La \p_r^3y ,\varphi_{\lam_j}K_z\p_r^2(\calH_{\calU}\eta) \Ra=2\La K_z\p_r^2(\calH_{\calU}\eta) ,\varphi_{\lam_j}\p_r^3y \Ra.
\end{align}
Using ~\eqref{eq:Kz-bounded}, ~\eqref{eq:correction-H3} and ~\eqref{eq:correction-H4} we obtain
\begin{align}
    &\left|  \La K_z\p_r^2(\calH_{\calU}\eta) ,\varphi_{\lam_j}\p_r^3y \Ra\right|\\
    \lesssim &  \int_0^{\infty}\frac{|z|^2}{r^2}|r^2\p_r^2(\calH_{\calU}\eta)||\p_r^3 y|\udr\\
    \lesssim &\|z\|_{\calH^1}\|z\|_{\calH^3}\|r^2\p_r^2(\calH_{\calU}\eta)\|_{L^2(\udr)}\|y\|_{\calH^3}\lesssim_L |t|^{-\frac{5}{2}-\veps_0}.
\end{align}
The remaining parts can be estimated in similar ways to obtain the bound $C(L)|t|^{-\frac{5}{2}-\veps_0}$.

Combining these estimates, we obtain that there exists a constant $C_{L,E}$ depending on $L$ such that
\begin{align}\label{eq:E-estimate-final}
    |(\rm{E})|&\lesssim |t|^{-1}\sum_{j=1}^N\|y\|_{\rm{Mor}(\lam_j)}^2+|t|^{-1}\sum_{j=1}^N\lam_j\|z\|_{\rm{Mor}(\lam_j)}^2+C_{L,E}o_{T_0\to-\infty}(1)|t|^{-2-2\veps_0}.
\end{align}

\bigskip
\noindent
\textbf{Estimates for $(\rm{F})$.} We first rearrange the expression of $(\rm{F})$ in ~\eqref{eq:F-expression} as follows:
\begin{align}
    (\rm{F})&=\ti \de\sum_{j=1}^N \de_0^{N-j}\La A_{\lam_j}\calH_{\calU}(-i\calH_{\calU} y) ,i\Lam_{\lam_j}A_{\lam_j}y \Ra
      +\ti \de\sum_{j=1}^N \de_0^{N-j}\La A_{\lam_j}\calH_{\calU}y ,i\Lam_{\lam_j}A_{\lam_j}(-i\calH_{\calU} y) \Ra\\
      &=-\ti \de\sum_{j=1}^N \de_0^{N-j}\La A_{\lam_j}\calH_{\calU}(\calH_{\calU} y) ,\Lam_{\lam_j}A_{\lam_j}y \Ra-\ti \de\sum_{j=1}^N \de_0^{N-j}\La A_{\lam_j}[\calH_{\calU},i](\calH_{\calU} y) ,i\Lam_{\lam_j}A_{\lam_j}y \Ra.
\end{align}
Since
\begin{align}
    \calH_{\calU}=H_{\lam_j}+\sum_{\ell \neq j}V_{\ell}+\calE_{\calU},\ A_{\lam_j}H_{\lam_j}=\ti H_{\lam_j}A_{\lam_j},\ [\calH_{\calU},i]=[\calE_{\calU},i],
\end{align}
we have
\begin{align}
    A_{\lam_j}\calH_{\calU}^2&=A_{\lam_j}H_{\lam_j}^2+A_{\lam_j}H_{\lam_j}\left(\sum_{\ell\neq j}V_{\ell}+\calE_{\calU}\right)+A_{\lam_j}\left(\sum_{\ell\neq j}V_{\ell}+\calE_{\calU}\right)H_{\lam_j}+A_{\lam_j}\left(\sum_{\ell\neq j}V_{\ell}+\calE_{\calU}\right)^2\\
    &=\ti H_{\lam_j}^2A_{\lam_j}+\ti H_{\lam_j}A_{\lam_j}\left(\sum_{\ell\neq j}V_{\ell}+\calE_{\calU}\right)+A_{\lam_j}\left(\sum_{\ell\neq j}V_{\ell}+\calE_{\calU}\right)\calH_{\calU}
\end{align}
Therefore
\begin{equation}\label{eq:F-all}
    \begin{aligned}
         (\rm{F})&=-\ti \de\sum_{j=1}^N \de_0^{N-j}\La \ti H_{\lam_j}^2A_{\lam_j}y ,\Lam_{\lam_j}A_{\lam_j}y \Ra-\ti \de\sum_{j=1}^N \de_0^{N-j}\La \ti H_{\lam_j}A_{\lam_j}\left(\sum_{\ell\neq j}V_{\ell}+\calE_{\calU}\right)y ,\Lam_{\lam_j}A_{\lam_j}y \Ra\\
    &\quad -\ti \de\sum_{j=1}^N \de_0^{N-j}\La A_{\lam_j}\left(\sum_{\ell\neq j}V_{\ell}+\calE_{\calU}\right)\calH_{\calU} y,\Lam_{\lam_j}A_{\lam_j}y \Ra\\
    &\quad -\ti \de\sum_{j=1}^N \de_0^{N-j}\La A_{\lam_j}[\calE_{\calU},i](\calH_{\calU} y) ,i\Lam_{\lam_j}A_{\lam_j}y \Ra.
    \end{aligned}
\end{equation}

By Lemma ~\ref{lem:Morawetz-one}, for some $c_1>0$ only depending on $k,\de,R$, there holds
\begin{align}
    -\ti \de\sum_{j=1}^N \de_0^{N-j}\La \ti H_{\lam_j}^2A_{\lam_j}y ,\Lam_{\lam_j}A_{\lam_j}y \Ra\leq-c_1\ti\de\sum_{j=1}^N \de_0^{N-j}\|A_{\lam_j}y\|_{\rm{Mor}_2(\lam_j)}^2.
\end{align}

Next we estimate the remaining terms in ~\eqref{eq:F-all}.
~\eqref{eq:Vj-estimate-pointwise}, ~\eqref{eq:EU-estimate-pointwise} yield that
\begin{align}
    &\left|\La \ti H_{\lam_j}A_{\lam_j}\left(\sum_{\ell\neq j}V_{\ell}+\calE_{\calU}\right)y ,\Lam_{\lam_j}A_{\lam_j}y \Ra\right|\\
    =&\left|\La A^*_{\lam_j}A_{\lam_j}\left(\sum_{\ell\neq j}V_{\ell}+\calE_{\calU}\right)y , A^*_{\lam_j}\Lam_{\lam_j}A_{\lam_j}y \Ra\right|\\
    \lesssim&\sum_{\ell\neq j}\int_0^{\infty} r^{-2}F_{\ell}^2(t,r)r|y|_{-3}   \frac{r}{\lam_j+r}|y|_{-3}\udr \\
    &+\sum_{1\leq m<\ell\leq N}\int_0^{\infty} r^{-2}F_m(t,r)F_{\ell}(t,r)r|y|_{-3}   \frac{r}{\lam_j+r}|y|_{-3}\udr\\
    = &\int_0^{\infty}\frac{\sum_{\ell\neq j}F_{\ell}^2(t,r)+\sum_{1\leq m<n\leq N}F_m(t,r)F_n(t,r)}{\lam_j+r}|y|_{-3}^2\udr
\end{align}

\begin{align}
    &\left|\La A_{\lam_j}\left(\sum_{\ell\neq j}V_{\ell}+\calE_{\calU}\right)\calH_{\calU} y,\Lam_{\lam_j}A_{\lam_j}y \Ra\right|\\
    =&\left|\La\left(\sum_{\ell\neq j}V_{\ell}+\calE_{\calU}\right)\calH_{\calU} y, A_{\lam_j}^*\Lam_{\lam_j}A_{\lam_j}y \Ra\right|\\
    \lesssim &\int_0^{\infty}\frac{\sum_{\ell\neq j}F_{\ell}^2(t,r)+\sum_{1\leq m<n\leq N}F_m(t,r)F_n(t,r)}{\lam_j+r}|y|_{-3}^2\udr.
\end{align}

\begin{align}
    &\left|\La A_{\lam_j}[\calE_{\calU},i](\calH_{\calU} y) ,i\Lam_{\lam_j}A_{\lam_j}y \Ra\right|\\
    =  &\left|\La [\calE_{\calU},i](\calH_{\calU} y) ,iA_{\lam_j}^*\Lam_{\lam_j}A_{\lam_j}y \Ra\right|\\
    \lesssim &\int_0^{\infty}\frac{\sum_{\ell\neq j}F_{\ell}^2(t,r)+\sum_{1\leq m<n\leq N}F_m(t,r)F_n(t,r)}{\lam_j+r}|y|_{-3}^2\udr.
\end{align}
We have to estimate
\begin{align}
    \frac{\sum_{\ell\neq j}F_{\ell}^2(t,r)+\sum_{1\leq m<n\leq N}F_m(t,r)F_n(t,r)}{\lam_j+r}.
\end{align}
For $\ell\neq j$, if $\ell<j$, for $r\leq\lam_{\ell}$, we have
\begin{align}
    \frac{F_{\ell}^2(t,r)}{\lam_j+r}\cdot\frac{(\lam_{\ell}+r)^2}{\lam_{\ell}}=\left(\frac{r}{\lam_{\ell}}\right)^{2k}\frac{\lam_{\ell}+r}{\lam_{\ell}}\frac{\lam_{\ell}+r}{\lam_j+r}\leq 4\frac{\lam_{\ell}}{\lam_j}
\end{align}
For $\lam_{\ell}  <r\leq\lam_j $, we have
\begin{align}
    \frac{F_{\ell}^2(t,r)}{\lam_j+r}\cdot\frac{(\lam_{\ell}+r)^2}{\lam_{\ell}}=\left(\frac{\lam_{\ell}}{r}\right)^{2k}\frac{\lam_{\ell}+r}{\lam_j+r}\frac{\lam_{\ell}+r}{\lam_{\ell}}\leq 4 \left(\frac{\lam_{\ell}}{r}\right)^{2k}\frac{r}{\lam_j}\frac{r}{\lam_{\ell}}\leq 4\frac{\lam_{\ell}}{\lam_j}
\end{align}
For $r>\lam_j $, we have
\begin{align}
    \frac{F_{\ell}^2(t,r)}{\lam_j+r}\cdot\frac{(\lam_{\ell}+r)^2}{\lam_{\ell}}=\left(\frac{\lam_{\ell}}{r}\right)^{2k}\frac{\lam_{\ell}+r}{\lam_j+r}\frac{\lam_{\ell}+r}{\lam_{\ell}}\leq 4 \left(\frac{\lam_{\ell}}{r}\right)^{2k}\frac{r}{\lam_{\ell}}\leq  4\frac{\lam_{\ell}}{\lam_j}
\end{align}
If $\ell>j$, for $r>\lam_{\ell}$, we have
\begin{align}
    \frac{F_{\ell}^2(t,r)}{\lam_j+r}\cdot\frac{(\lam_{\ell}+r)^2}{\lam_{\ell}}=\left(\frac{\lam_{\ell}}{r}\right)^{2k}\frac{\lam_{\ell}+r}{\lam_{\ell}}\frac{\lam_{\ell}+r}{\lam_j+r}\leq 4  \left(\frac{\lam_{\ell}}{r}\right)^{2k}\frac{r}{\lam_{\ell}}\frac{r}{r}\leq 4.
\end{align}
For $\lam_j<r\leq \lam_{\ell}$, we have
\begin{align}
    \frac{F_{\ell}^2(t,r)}{\lam_j+r}\cdot\frac{(\lam_{\ell}+r)^2}{\lam_{\ell}}=\left(\frac{r}{\lam_{\ell}}\right)^{2k}\frac{\lam_{\ell}+r}{\lam_{\ell}}\frac{\lam_{\ell}+r}{\lam_j+r}\leq 4 \left(\frac{r}{\lam_{\ell}}\right)^{2k}\frac{\lam_{\ell}}{\lam_{\ell}}\frac{\lam_{\ell}}{r}\leq 4.
\end{align}
For $r\leq \lam_j$, we have
\begin{align}
    \frac{F_{\ell}^2(t,r)}{\lam_j+r}\cdot\frac{(\lam_{\ell}+r)^2}{\lam_{\ell}}=\left(\frac{r}{\lam_{\ell}}\right)^{2k}\frac{\lam_{\ell}+r}{\lam_{\ell}}\frac{\lam_{\ell}+r}{\lam_j+r}\leq 4 \left(\frac{r}{\lam_{\ell}}\right)^{2k}\frac{\lam_{\ell}}{\lam_{\ell}}\frac{\lam_{\ell}}{\lam_j}=  4 \left(\frac{r}{\lam_{\ell}}\right)^{2k-1}\frac{r}{\lam_j} \leq 4.
\end{align}
For $m<n$, similarly we have
\begin{align}
    \frac{F_m(t,r)F_n(t,r)}{\lam_j+r}\frac{(\lam_n+r)^2}{\lam_n}\leq 4 \frac{\lam_m}{\lam_n}.
\end{align}

Therefore we have
\begin{align}
    &\sum_{j=1}^N \de_0^{N-j}\Big|\La \ti H_{\lam_j}A_{\lam_j}\left(\sum_{\ell\neq j}V_\ell+\calE_{\calU}\right)y ,\Lam_{\lam_j}A_{\lam_j}y \Ra+\La A_{\lam_j}\left(\sum_{\ell\neq j}V_\ell+\calE_{\calU}\right)\calH_{\calU} y,\Lam_{\lam_j}A_{\lam_j}y \Ra  \\
    &+\La A_{\lam_j}[\calE_{\calU},i](\calH_{\calU} y) ,i\Lam_{\lam_j}A_{\lam_j}y \Ra \Big|\\
    \lesssim &\sum_{j=1}^N \de_0^{N-j}\left[\sum_{\ell< j}\frac{\lam_\ell}{\lam_j}\|y\|_{\rm{Mor}(\lam_\ell)}^2 +\sum_{\ell>j}\|y\|_{\rm{Mor}(\lam_\ell)}^2  +\sum_{1\leq m<n\leq N}\frac{\lam_m}{\lam_n}\|y\|_{\rm{Mor}(\lam_n)}^2\right]\\
    =& \sum_{j=1}^N\sum_{\ell>j}\de_0^{j-\ell}\frac{\lam_j}{\lam_{\ell}}\de_0^{N-j}\|y\|_{\rm{Mor}(\lam_j)}^2+\sum_{j=1}^N\sum_{\ell<j}\de_0^{j-\ell}\de_0^{N-j}\|y\|_{\rm{Mor}(\lam_j)}^2\\
    &+\frac{1-\de_0^N}{1-\de_0}\left[\sum_{1\leq m<n\leq N}\frac{\lam_m}{\lam_n}\de_0^{n-N}\de_0^{N-n}\|y\|_{\rm{Mor}(\lam_n)}^2\right].
\end{align}

Combining these estimates yields that there exists a constant $C_{1,F}$ only depending on $k,N$ such that
\begin{equation}\label{eq:F-estimate-final}
    \begin{aligned}
        (\rm{F})\leq& -c_1\ti\de\sum_{j=1}^N \de_0^{N-j}\|A_{\lam_j}y\|_{\rm{Mor}_2(\lam_j)}^2\\
    &+C_{1,F}\cdot\ti\de\sum_{j=1}^N\left( o_{\de_0\to 0}(1)+\de_0^{-N} o_{T_0\to-\infty}(1) \right) \de_0^{N-j}\left(\|A_{\lam_j}y\|_{\rm{Mor}_2(\lam_j)}^2+\|y\|_{\rm{Mor}(\lam_j)}^2\right).
    \end{aligned}
\end{equation}

\subsection{Completing the proof of Proposition~\ref{prop:I3'}}

In this section we complete the proof of Proposition ~\ref{prop:I3'}.

We first study the relation among $\|z\|_{\rm{Mor}(\lam_j)},\|y\|_{\rm{Mor}(\lam_j)},\|A_{\lam_j}y\|_{\rm{Mor}_2(\lam_j)}$.

First of all using ~\eqref{eq:correction-H3} we have
\begin{align}\label{eq:z-y-Mor}
    \lam_j\|z\|_{\rm{Mor}(\lam_j)}^2\leq \|y\|_{\rm{Mor}(\lam_j)}^2+C(L)|t|^{-2}.
\end{align}

Next using Lemma ~\ref{lem:Alam-coercive}, we obtain
\begin{align}
    \|y\|_{\rm{Mor}(\lam_j)}^2\lesssim  \ti C(L)\|A_{\lam_j}y\|_{\rm{Mor}_2(\lam_j)}^2+\lam_j^{-9}\left|\La y,\chi_{L,j}\sin\phi(r/\lam_j) \Ra\right|^2+\lam_j^{-9}\left|\La y,i\chi_{L,j}\sin\phi(r/\lam_j) \Ra\right|^2\\
\end{align}
Consider the frame defined in Section ~\ref{subsec:Darboux} with $\calV$ replaced by $Q_{\lam_j}$. A direct calculation yields that the related orthogonal frame, denoted by $(f_1^{Q_j},f_2^{Q_j},Q_{\lam_j})$ for convenience, satisfies
\begin{align}
    \Lam Q_{\lam_j}=k\sin\phi(r/\lam_j)f_1^{Q_j},\ RQ_{\lam_j}=\sin \phi(r/\lam_j)f_2^{Q_j}.
\end{align}

\begin{lem}\label{lem:frame-compare}
    For $L\gg1$ and $T_0\ll -1$ sufficiently negative such that 
    \begin{align}
        L^2\lam_{j}\leq \sqrt{\lam_{j}\lam_{j+1}}\leq \frac{\lam_{j+1}}{L^2}, \ 0\leq j\leq N,
    \end{align}
    where we adopt the convention $\lam_0=0,\lam_{N+1}=\infty$, we have the following estimates:\\
    For $1\leq j\leq N$ and $L^{-2}\lam_j\leq r\leq L^2\lam_j$, there holds
    \begin{equation}\label{eq:frame-compare-core}
        \begin{aligned}
            &\left|f_1(\te,r)- (-1)^{[\frac{j}{2}]}e^{\te_j R}\left(\cos \te_jf_1^{Q_j}(\te,r)-\sin\te_jf_2^{Q_j}(\te,r)\right)\right|\\
            &+\left|f_2(\te,r)-(-1)^{[\frac{j-1}{2}]} e^{\te_j R}\left(\sin \te_jf_1^{Q_j}(\te,r)+\cos\te_jf_2^{Q_j}(\te,r)\right)\right|\\
            \lesssim & L^{-2k}+C(L)\sum_{\ell=1}^{N}\left[\left(\frac{\lam_{\ell}}{\lam_{\ell+1}}\right)^k+\left|\te_{\ell}-\te_{\ell}^{\infty}\right|\right].
        \end{aligned}
    \end{equation}
     For $0\leq j\leq N$ and $L^2\lam_j\leq r\leq  L^{-2}\lam_{j+1}$, there holds
    \begin{align}\label{eq:frame-compare-transition}
        &\left|f_1(\te,r)-(-1)^{[\frac{j+1}{2}]}e^{k\te R}e_1\right|+\left|f_2(\te,r)-(-1)^{[\frac{j}{2}]}e^{k\te R}e_2\right|\\
        \lesssim  & L^{-2k}+C(L)\sum_{\ell=1}^{N}\left[\left(\frac{\lam_{\ell}}{\lam_{\ell+1}}\right)^k+\left|\te_{\ell}-\te_{\ell}^{\infty}\right|\right].
    \end{align}
Here $[m]$ denotes the integral part of $m$.
    
\end{lem}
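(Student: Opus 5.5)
The plan is to reduce everything to comparing ODE solutions in $r$. By \eqref{eq:frame-def}, both frames $(h_1,h_2,U)$ for $\calU$ and $(h_1^{Q_j},h_2^{Q_j},Q_{\lam_j})$ for $Q_{\lam_j}$ are obtained by parallel transport along $r\mapsto V(r)$ on $\bS^2$. The transport equation $\p_r h_1=-(h_1\cdot\p_r V)V$ preserves orthonormality. For two curves $V,W$ with frames $h,\tilde h$, Gronwall applied to $\p_r(h-\tilde h)$ therefore gives
\begin{align}
|h(r)-\tilde h(r)|\lesssim |h(r_0)-\tilde h(r_0)|+\int_{r_0}^{r}\left(|\p_s V-\p_s W|+|\p_s W||V-W|\right)\vd s.
\end{align}
I will propagate the frame of $\calU$ from $r=0$ outward, region by region: the transition region $[L^2\lam_{j-1},L^{-2}\lam_j]$, then the core $[L^{-2}\lam_j,L^2\lam_j]$, then the next transition region. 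I will track the accumulated error at each step.

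\emph{Transition regions.} For $L^2\lam_j\le r\le L^{-2}\lam_{j+1}$, the expansions \eqref{eq:Q-rescaled-expansion}, \eqref{eq:Q-cal-expansion-local} and \eqref{eq:Q-cal-norm} give $\calU=(-1)^{j}e_3+O(F_j+F_{j+1})$, with the same bound for $r\p_r\calU$. Hence
\begin{align}
\int |\p_r\calU|\,\vd r\lesssim \int (F_j+F_{j+1})\frac{\vd r}{r}\lesssim L^{-2k}.
\end{align}
On this interval $h_1,h_2$ change by at most $O(L^{-2k})$. At $r=0$ the frame is $e_1,e_2$. It remains horizontal, since the first component is $e_3$-orthogonal up to the error. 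At each core crossing it is then rotated, which produces the signs $(-1)^{[(j+1)/2]}$ and $(-1)^{[j/2]}$ in \eqref{eq:frame-compare-transition}.

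\emph{Cores.} For $L^{-2}\lam_j\le r\le L^2\lam_j$, I compare $U$ with $W=(-1)^{j-1}e^{\te_j R}Q_{\lam_j}$ in the radial profile. Here $|U-W|+r|\p_r(U-W)|\lesssim C(L)(\lam_{j-1}/\lam_j+\lam_j/\lam_{j+1})^k$, so the Gronwall integral over this interval, which has length $O(\log L)$ in $\vd r/r$, contributes $C(L)\sum_\ell(\lam_\ell/\lam_{\ell+1})^k$. The frame of $W$ is explicit. It comes from the frame of $Q_{\lam_j}$ by applying the constant rotation $e^{\te_jR}$ and, when $j$ is even, the reflection $W=-e^{\te_jR}Q$. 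This reflection maps the parallel frame to $(\pm\cdot,\mp\cdot)$ and thus flips one frame vector, which is the source of the differing signs in $f_1$ and $f_2$. The initial condition at $r=L^{-2}\lam_j$ is inherited from the previous transition region, with error $L^{-2k}$ plus the earlier accumulated errors. I then match it with the frame of $W$ there. Since $h^{Q_j}(0)=e_1$ and $Q$ is nearly $e_3$ for $r\le L^{-2}\lam_j$, this frame is $e^{\te_jR}$ applied to $(e_1,e_2)$ up to $O(L^{-2k})$.

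This matching of initial frames is where $\cos\te_j,\sin\te_j$ enter, because the incoming frame is unrotated while $W$'s frame carries $e^{\te_jR}$. It is also why $|\te_\ell-\te_\ell^\infty|$ appears. With $\te_\ell^\infty\in\{0,-\pi/2\}$ the rotations are exactly compatible with the exiting frame being $\pm e^{k\te R}e_{1,2}$, and deviations cost $O(|\te_\ell-\te_\ell^\infty|)$. Induction on $j$ with $N$ steps keeps all constants $C(L)$.

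The main obstacle is the sign and rotation bookkeeping. One has to verify that after crossing the core of a $Q$ whose profile rotates by $\pi$ (going from $e_3$ to $-e_3$), parallel transport maps the horizontal frame at the south pole to the stated one. I would first check this for a single $Q$ explicitly, using $\Lam Q=k\sin\phi f_1^Q$ and $RQ=\sin\phi f_2^Q$, so that $h_1^Q(\infty)=-e_1$ and $h_2^Q(\infty)=e_2$. Then I would compose with the $e^{\te_jR}$ and alternating-sign conventions, checking each case $j \bmod 4$.
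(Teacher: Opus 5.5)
Your proposal is correct and follows the same route as the paper. It uses induction over alternating transition and core regions, with $\int|\p_r\calU|\,\vd r\lesssim L^{-2k}$ on transitions, and in each core a Gronwall comparison of the parallel-transport ODE against the frame of $(-1)^{j-1}e^{\te_jR}Q_{\lam_j}$ (the paper's $\ti f_m^{Q_j}$), whose values at $L^{\mp 2}\lam_j$ give the signs and the $|\te_j-\te_j^\infty|$ error. Make explicit that the Gronwall factor $\exp\bigl(\int|\p_r\calU|\,\vd r\bigr)$ is bounded uniformly in $L$ because $\int_0^\infty F_\ell\,\frac{\vd r}{r}\lesssim 1$, not via the $O(\log L)$ length of the core, so the $L^{-2k}$ term keeps an $L$-independent constant.
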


\begin{proof}[Proof of Lemma ~\ref{lem:frame-compare}]
    In the proof we suppress the variable $\te$ for convenience.

    For any $0\leq \ell\leq N$, it follows from ~\eqref{eq:Q-cal-expansion-local} and ~\eqref{eq:Q-cal-norm} that 
    \begin{align}
        \sup_{L^2\lam_{\ell}\leq r\leq  L^{-2}\lam_{\ell+1}}|\calU(r)-(-1)^{\ell}e_3|+\int_{L^2\lam_{\ell}}^{L^{-2}\lam_{\ell+1}}\left|\p_r\calU(r)\right|\ud r\lesssim L^{-2k}+\int_{L^2\lam_{\ell}}^{L^{-2}\lam_{\ell+1}}\sum_{m=1}^N F_m\frac{\ud r}{r}\lesssim L^{-2k}.
    \end{align}
This implies
    \begin{align}\label{eq:derivative-transition}
       \int_{L^2\lam_{\ell}}^{L^{-2}\lam_{\ell+1}}\left|\p_r f_1(r)\right|\ud r+\int_{L^2\lam_{\ell}}^{L^{-2}\lam_{\ell+1}}\left|\p_r f_2(r)\right|\ud r\lesssim \int_{L^2\lam_{\ell}}^{L^{-2}\lam_{\ell+1}}\left|\p_r\calU(r)\right|\ud r\lesssim L^{-2k}.
    \end{align}
 Define 
    \begin{align}
        \ti f_1^{Q_{\ell}}&=(-1)^{[\frac{\ell}{2}]}e^{\te_{\ell} R}\left(\cos \te_{\ell}f_1^{Q_{\ell}}(r)-\sin\te_{\ell}f_2^{Q_{\ell}}(r)\right),\\ \ti f_2^{Q_{\ell}}&=(-1)^{[\frac{{\ell}-1}{2}]}e^{\te_{\ell} R}\left(\sin \te_{\ell}f_1^{Q_{\ell}}(r)+\cos\te_{\ell}f_2^{Q_{\ell}}(r)\right).
    \end{align}
    Then we have
    \begin{align}
        \p_r \ti f_1^{Q_{\ell}}=-\left(\ti f_1^{Q_{\ell}}\cdot \p_r\left((-1)^{{\ell}-1}e^{\te_{\ell} R}Q_{\lam_{\ell}}\right)\right)(-1)^{\ell-1}e^{\te_{\ell} R}Q_{\lam_{\ell}},\\
        \p_r \ti f_2^{Q_{\ell}}=-\left(\ti f_2^{Q_{\ell}}\cdot \p_r\left((-1)^{\ell-1}e^{\te_{\ell} R}Q_{\lam_{\ell}}\right)\right)(-1)^{{\ell}-1}e^{\te_{\ell} R}Q_{\lam_{\ell}},
    \end{align}
    which implies for $m=1,2$,
    \begin{equation}\label{eq:derivative-core}
        \begin{aligned}
             \left|\p_r\left(f_m-\ti f_m^{Q_{\ell}}\right)\right| 
        \lesssim \left|\calU-(-1)^{{\ell}-1}e^{\te_{\ell} R}Q_{\lam_{\ell}}\right|\left|\p_r\calU\right| 
        +\left|\p_r\left(\calU-(-1)^{{\ell}-1}e^{\te_{\ell} R}Q_{\lam_{\ell}}\right)\right|
        +\left|f_m-\ti f_m^{Q_{\ell}}\right| |\p_r\calU|.
        \end{aligned}
    \end{equation}
    The expansion of $\ti f_1^{Q_{\ell}}$, $\ti f_2^{Q_{\ell}}$ at $L^{-2}\lam_{\ell}$ and $L^2\lam_{\ell}$ is given by
    \begin{align}\label{eq:expansion-core-1}
        \ti f_1^{Q_{\ell}}(L^{-2}\lam_{\ell})=(-1)^{[\frac{\ell}{2}]}e^{k\te R}e_1+O(L^{-2k}),\ \ti f_2^{Q_{\ell}}(L^{-2}\lam_{\ell})=(-1)^{[\frac{\ell-1}{2}]}e^{k\te R}e_2+O(L^{-2k}),
    \end{align}
    \begin{equation}\label{eq:expansion-core-2}
        \begin{aligned}
             \ti f_1^{Q_{\ell}}(L^{2}\lam_{\ell})&=(-1)^{[\frac{\ell+1}{2}]}e^{k\te R}e_1+O(L^{-2k}+|\te_{\ell}-\te_{\ell}^{\infty}|),\\
        \ti f_2^{Q_{\ell}}(L^{2}\lam_{\ell})&=(-1)^{[\frac{\ell}{2}]}e^{k\te R}e_2+O(L^{-2k}+|\te_{\ell}-\te_{\ell}^{\infty}|).
        \end{aligned}
    \end{equation}

    We argue by induction on $j$. 

    For $j=0$, using the initial condition in ~\eqref{eq:frame-def}, $\calU(0)=e_3$ and  ~\eqref{eq:derivative-transition} we have
    \begin{align}
        \left|f_1(r)-e^{k\te R}e_1\right|+\left|f_2(r)-e^{k\te R}e_2\right|\lesssim \int_0^{L^{-2}\lam_1}|\p_r\calU(r)|\ud r \lesssim L^{-2k}.
    \end{align}

    For $1\leq j\leq N$, if ~\eqref{eq:frame-compare-transition} holds for $j-1$, then using ~\eqref{eq:expansion-core-1}  we have
    \begin{align}
        \left|f_1(L^{-2}\lam_j)-  \ti f_1^{Q_j}(L^{-2}\lam_j)\right|+\left|f_2(L^{-2}\lam_j)-   \ti f_2^{Q_j}(L^{-2}\lam_j)\right|
            \lesssim L^{-2k}+C(L)\sum_{\ell=1}^{N}\left[\left(\frac{\lam_{\ell}}{\lam_{\ell+1}}\right)^k+\left|\te_{\ell}-\te_{\ell}^{\infty}\right|\right],
    \end{align}
    which combined with ~\eqref{eq:derivative-core} and Gronwall's inequality yields for $m=1,2$,
    \begin{align}
        &\sup_{L^{-2}\lam_j\leq r\leq L^2\lam_j}|f_m(r)-\ti f_m^{Q_{j}}(r)|\\
        \lesssim &e^{\int_{L^{-2}\lam_{j}}^{L^2\lam_{j}}|\p_r\calU|\ud r}\left[ L^{-2k}+C(L)\sum_{\ell=1}^{N}\left[\left(\frac{\lam_{\ell}}{\lam_{\ell+1}}\right)^k+\left|\te_{\ell}-\te_{\ell}^{\infty}\right|\right] \right]\\
        &+e^{\int_{L^{-2}\lam_{j}}^{L^2\lam_{j}}|\p_r\calU|\ud r}\left[ \int_{L^{-2}\lam_{j}}^{L^2\lam_j}\left[\left|\calU-(-1)^{{j}-1}e^{\te_{j} R}Q_{\lam_{j}}\right|\left|\p_r\calU\right| 
        +\left|\p_r\left(\calU-(-1)^{{j}-1}e^{\te_{j} R}Q_{\lam_{j}}\right)\right|\right]\ud r  \right].
    \end{align}
    Using ~\eqref{eq:Q-cal-expansion-local}, ~\eqref{eq:Q-cal-norm}, ~\eqref{eq:frame-estimate-pointwise} we obtain
    \begin{align}
        \sup_{L^{-2}\lam_j\leq r\leq L^2\lam_j}|f_m(r)-\ti f_m^{Q_{j}}(r)|\lesssim L^{-2k}+C(L)\sum_{\ell=1}^{N}\left[\left(\frac{\lam_{\ell}}{\lam_{\ell+1}}\right)^k+\left|\te_{\ell}-\te_{\ell}^{\infty}\right|\right], 
    \end{align}
    which finish the proof of ~\eqref{eq:frame-compare-core} for $j$. 
    
    For $1\leq j\leq N$, if ~\eqref{eq:frame-compare-core} holds for $j$, then we have
    \begin{align}
         \left|f_1(L^{2}\lam_j)-  \ti f_1^{Q_j}(L^{2}\lam_j)\right|+\left|f_2(L^{2}\lam_j)-    \ti f_2^{Q_j}(L^{2}\lam_j)\right| 
            \lesssim  L^{-2k}+C(L)\sum_{\ell=1}^{N}\left[\left(\frac{\lam_{\ell}}{\lam_{\ell+1}}\right)^k+\left|\te_{\ell}-\te_{\ell}^{\infty}\right|\right],
    \end{align}
    which combined with ~\eqref{eq:derivative-transition} and ~\eqref{eq:expansion-core-2} yields ~\eqref{eq:frame-compare-transition} for $j$. 

    Therefore the proof is finished by induction.
    
\end{proof}

Now we turn back to  the estimates of $\lam_j^{-9}\left|\La y,\chi_{L,j}\sin\phi(r/\lam_j) \Ra\right|^2+\lam_j^{-9}\left|\La y,i\chi_{L,j}\sin\phi(r/\lam_j) \Ra\right|^2$.\\
We have
\begin{align}
    \int_{0}^{L^{-2}\lam_j} |y||\sin\phi(r/\lam_j)|\udr\lesssim \|y\|_{\rm{Mor}(\lam_j)}\left(\int_0^{L^{-2}\lam_j} F_j^2r^6\frac{(\lam_j+r)^2}{\lam_j}\udr \right)^{\frac{1}{2}}\lesssim L^{-2k-8} \lam_j^{\frac{9}{2}}\|y\|_{\rm{Mor}(\lam_j)},
\end{align}
\begin{align}
     \int_{L^{-2}\lam_j}^{L^{2}\lam_j} |y||\sin\phi(r/\lam_j)|\udr\lesssim \|y\|_{\rm{Mor}(\lam_j)}\left(\int_{L^{-2}\lam_j}^{L^{2}\lam_j} F_j^2r^6\frac{(\lam_j+r)^2}{\lam_j}\udr \right)^{\frac{1}{2}}\lesssim L^4 \lam_j^{\frac{9}{2}}\|y\|_{\rm{Mor}(\lam_j)},
\end{align}
which combined with ~\eqref{eq:frame-compare-core} yields
\begin{align}
  &\lam_j^{-9}\left|\La y,\chi_{L,j}\sin\phi(r/\lam_j) \Ra\right|^2+\lam_j^{-9}\left|\La y,i\chi_{L,j}\sin\phi(r/\lam_j) \Ra\right|^2\\
  \lesssim &  \lam_j^{-9}\left|\La \ti T_{\calU} y,\chi_{L,j}e^{\te_j R}\Lam Q_{\lam_j} \Ra\right|^2+\lam_j^{-9}\left|\La \ti T_{\calU} y,\chi_{L,j}e^{\te_j R}R Q_{\lam_j}\Ra\right|^2\\
  &+ \left(L^{4-2k}+C(L)o_{T_0\to -\infty}(1)\right)\|y\|_{\rm{Mor}(\lam_j)}^2.
\end{align}

Recall that $y=z+\eta$ with $\eta$ satisfying ~\eqref{eq:eta-ortho}, $g$ satisfies ~\eqref{eq:g-ortho}, and $g=\sqrt{1-\frac{|z|^2}{4}}\ti T_{\calU}z-\frac{|z|^2}{2}\calU$ by definition,  we have
\begin{align}
    &\left|\La \ti T_{\calU} y,\chi_{L,j}e^{\te_j R}\Lam Q_{\lam_j} \Ra\right|+\left|\La \ti T_{\calU} y,\chi_{L,j}e^{\te_j R}R Q_{\lam_j}\Ra\right|\\
    =& \left|\Bigg< \left(1-\sqrt{1-\frac{|z|^2}{4}}\right)\ti T_{\calU} z+\frac{|z|^2}{2}\calU,\chi_{L,j}e^{\te_j R}\Lam Q_{\lam_j} \Bigg>\right|\\
    &+\left|\Bigg< \left(1-\sqrt{1-\frac{|z|^2}{4}}\right)\ti T_{\calU} z+\frac{|z|^2}{2}\calU,\chi_{L,j}e^{\te_j R}R Q_{\lam_j} \Bigg>\right|\\
    \lesssim & \int_0^{L^2\lam_j} |z|^2F_j\udr\lesssim_L \lam_j^6\|z\|_{\calH^3}^2.
\end{align}
Therefore
\begin{align}
    \lam_j^{-9}\left|\La \ti T_{\calU} y,\chi_{L,j}e^{\te_j R}\Lam Q_{\lam_j} \Ra\right|^2+\lam_j^{-9}\left|\La \ti T_{\calU} y,\chi_{L,j}e^{\te_j R}R Q_{\lam_j}\Ra\right|^2\lesssim_L \lam_j^3\|z\|_{\calH^3}^4\lesssim |t|^{-2-4\veps_0}.
\end{align}

Combining these estimates we obtain that for $L\gg 1$ sufficiently large and for $T_0\ll-1$ sufficiently negative after fixing $L$, we have
\begin{align}\label{eq:y-Ay-control}
    \|y\|_{\rm{Mor}(\lam_j)}^2\lesssim_L \|A_{\lam_j}y\|_{\rm{Mor}_2(\lam_j)}^2+o_{T_0\to-\infty}(1)|t|^{-2-2\veps_0}.
\end{align}

\begin{proof}[Proof of Proposition~\ref{prop:I3'}]

    The terms containing $\|z\|_{\rm{Mor}(\lam_j)}^2$ appearing in ~\eqref{eq:A-estimate-final}, ~\eqref{eq:B-estimate-final}, ~\eqref{eq:C-estimate-final}, ~\eqref{eq:E-estimate-final}  can be estimated into the form $\Ga(t)|\lam_j|\|z\|_{\rm{Mor}(\lam_j)}^2$ or $|t|^{-\frac{1}{2}-\veps_0}\lam_j|\|z\|_{\rm{Mor}(\lam_j)}^2$, which can be estimated using ~\eqref{eq:z-y-Mor} to obtain the bound $o_{T_0\to -\infty}(1)\|y\|_{\rm{Mor}(\lam_j)}^2+C(L)o_{T_0\to -\infty}(1)|t|^{-2-2\veps_0}$ where we used the choice $\veps_0<\frac{1}{2}$ in Proposition ~\ref{prop:bootstrap} .

    Combining the estimates ~\eqref{eq:A-estimate-final}, ~\eqref{eq:B-estimate-final}, ~\eqref{eq:C-estimate-final}, ~\eqref{eq:D-estimate-final}, ~\eqref{eq:E-estimate-final} ~\eqref{eq:F-estimate-final} and the coercivity estimate ~\eqref{eq:y-Ay-control}, we obtain
    \begin{align}
        \frac{\ud}{\ud t}\calI_3(t)\lesssim & -c_1\ti\de\sum_{j=1}^N \de_0^{N-j}\|A_{\lam_j}y\|_{\rm{Mor}_2(\lam_j)}^2\\
        &+\ti\de\left( o_{L\to\infty}(1)+C(L)o_{\de_0\to 0}(1)+C(L,\de_0)o_{T_0\to-\infty}(1)\right) \sum_{j=1}^N\de_0^{N-j}\|A_{\lam_j}y\|_{\rm{Mor}_2(\lam_j)}^2\\
        &+C(L)\left(\ti \de+C(\de_0)o_{T_0\to-\infty}(1)\right)|t|^{-2-2\veps_0}.
    \end{align}

    We first choose $L\gg 1$ sufficiently large and fix it, then choose $0<\ti \de,\de_0\ll 1$ sufficiently small depending on $L$, and finally take $T_0\ll -1$ sufficiently negative depending on $L,\de_0,\ti \de$. In particular, we obtain
    \begin{align}
        \frac{\ud}{\ud t}\calI_3(t)\leq \frac{c_L}{8\hat C}|t|^{-2-2\veps_0},
    \end{align}
   which finishes the proof.
   
\end{proof}

\section{Closing the bootstrap}\label{sec:completing-bootstrap}
In this section we prove Proposition ~\ref{prop:bootstrap} using the results obtained in Section ~\ref{sec:energy-estimate}~\ref{sec:mod} ~\ref{sec:high-Sobolev}. We fix the choice of $L$ and the choice of $\ti \de, \de_0$ in Proposition ~\ref{prop:I3'}.

\subsection{Improvement of $g$}

Using Corollary ~\ref{cor:g-improved}, we obtain that for $T_0<0$ sufficiently negative,
\begin{align}
    \|g\|_{\dot \calH^1}\lesssim |t|^{-\frac{1}{2}-\frac{1}{k-2}-\frac{\veps_1}{2}}\ll|t|^{-\frac{1}{2}}.
\end{align}

For $\eta$ constructed in Lemma~\ref{lem:correction-z-eta}, a same argument as in that of ~\eqref{eq:Mor-bounded} yields $|\cM(\eta(t),\eta(t))|\leq C(L)|t|^{-2}$

For $\|g\|_{\dot \calH^3}$, using ~\eqref{eq:C-hat-def}, Lemma ~\ref{lem:E3-coercive}, Proposition ~\ref{prop:I3'} and the  assumption $\veps_0<\min\{\frac{1}{2},\frac{1}{k-2}\}$ in Proposition ~\ref{prop:bootstrap},  we have
\begin{align}
    \|g(t)\|_{\dot\calH^3}^2\leq& \hat C\|z(t)\|_{\calH^3}^2\\
    \leq& \frac{\hat C}{c_L}\calI_3(t)+\frac{\hat C}{c_L|t|}\|z\|_{\calH^3}+\frac{\hat C}{c_L}|t|^{-1-\frac{3}{2}\veps_0-\frac{1}{2(k-2)}-\frac{\veps_1}{4}}\\
    \leq & \frac{\hat C}{c_L}\calI_3(T)+\frac{\hat C}{c_L}\int_T^{t} \frac{c_L}{8\hat C}|s|^{-2-2\veps_0}\ud s+\frac{\hat C}{c_L}\left(|t|^{-\frac{3}{2}-\veps_0}+ |t|^{-1-\frac{3}{2}\veps_0-\frac{1}{2(k-2)}-\frac{\veps_1}{4}} \right)\\
    \leq &\ \frac{1}{8}|t|^{-1-2\veps_0}+C(L)\hat C\left(|T|^{-2}+|t|^{-\frac{3}{2}-\veps_0}+ |t|^{-1-\frac{3}{2}\veps_0-\frac{1}{2(k-2)}-\frac{\veps_1}{4}}\right)\\
    \leq &\ \frac{1}{4}|t|^{-1-2\veps_0}
\end{align}
provided that $T_0<0$ is sufficiently negative.

\subsection{Improvement of the stable parameters}\label{subsec:stable}

We will use ~\eqref{eq:lam-hatlam-te-hatte-close}, ~\eqref{eq:lam-hat-modulation} and ~\eqref{eq:te-hat-modulation}.  

Under modulation assumption, we have
\begin{align}
    \frac{1}{\lam_j^2(t)}\left(\frac{\lam_j(t)}{\lam_{j+1}(t)}\right)^k&=\frac{\be_j}{\ka_k |t|}\left(1+O\left(|t|^{-\veps_j}\right)\right),\ 1\leq j\leq N-1\\
    \frac{1}{\lam_j^2(t)}\left(\frac{\lam_{j-1}(t)}{\lam_{j}(t)}\right)^k&\lesssim |t|^{-1-\frac{2}{k-2}}, \ 1\leq j\leq N.
\end{align}
The above estimates and ~\eqref{eq:lam-hat-modulation}  implies
\begin{align}
    |\hat \lam_N(t)-1|\lesssim o_{T_0\to-\infty}(1)\int_{T}^t |s|^{-1-\veps_N}\ud s\lesssim o_{T_0\to-\infty}(1)|t|^{-\veps_N}.
\end{align}
Therefore 
\begin{align}
    \left|\frac{\lam_N(t)}{\lam_{N}^{\infty}(t)}-1\right|&=\left|\lam_N(t)-1\right|\leq|\lam_N(t)-\hat\lam_N(t)|+|\hat \lam_N(t)-1|\lesssim |t|^{-\frac{1}{2}}+ o_{T_0\to-\infty}(1)|t|^{-\veps_N},
\end{align}
which proves ~\eqref{eq:lamN-improved} provided that $T_0<0$ is sufficiently negative.

~\eqref{eq:te-hat-modulation} yields 
\begin{align}
    \left|\hat \te_N'(t)\right|\lesssim o_{T_0\to -\infty}(1)|t|^{-1-\veps_N},
\end{align}
\begin{align}
    \hat \te_j'(t)=-\frac{k\be_j}{|t|}\left(\hat \te_j-\te_j^{\infty}\right)+o_{T_0\to-\infty}(1)|t|^{-1-\veps_j}.
\end{align}
Therefore for $1\leq j\leq N$, we have
\begin{align}
    \frac{\ud}{\ud t}\left[ |t|^{-k\be_j}\left(\hat\te_j-\te_j^{\infty}\right) \right]=o_{T_0\to-\infty}(1)|t|^{-1-\veps_j-k\be_j}.
\end{align}
Then we have
\begin{align}
    \left|\te_j-\te_j^{\infty}\right|\leq |\te_j-\hat \te_j|+|\hat \te_j-\te_j^{\infty}|\lesssim |t|^{-\frac{1}{2}}+o_{T_0\to-\infty}(1)|t|^{-\veps_j},
\end{align}
which proves ~\eqref{eq:te-improved} provided that $T_0<0$ is sufficiently negative.

\subsection{Topological argument for the unstable parameters}\label{subsec:unstable}
In this section we prove that we can choose $\lam_j^0$ such that the related solution exists on $[T,T_0]$ with ~\eqref{eq:lamN-bootstrap}, ~\eqref{eq:te-bootstrap}, ~\eqref{eq:g-H1-bootstrap}, ~\eqref{eq:g-H3-bootstrap} and ~\eqref{eq:lam-improved}     hold for $t\in [T,T_0]$. It suffices to prove the following lemma:

\begin{lem}\label{lem:topological-shooting}
    Under the assumption of Proposition ~\ref{prop:bootstrap}, there exist initial parameters $\lam_j^0$ with $1\leq j\leq N-1$ such that the related solution exists on $[T,T_0]$ with ~\eqref{eq:lamN-bootstrap}, ~\eqref{eq:te-bootstrap}, ~\eqref{eq:g-H1-bootstrap}, ~\eqref{eq:g-H3-bootstrap} and
    \begin{align}\label{eq:lam-improved-1}
        \left|\frac{\hat\lam_j(t)}{\lam_j^{\infty}(t)}-1\right|\leq \frac{1}{4}|t|^{-\veps_j}
    \end{align}
    hold for $t\in [T,T_0]$. 
\end{lem}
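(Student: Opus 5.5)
We argue by contradiction with a Brouwer-type (no-retraction) argument on the unstable directions $\hat\lam_1,\dots,\hat\lam_{N-1}$. Introduce the normalized unstable coordinates
\begin{align}
    \mu_j(t)=|t|^{\veps_j}\left(\frac{\hat\lam_j(t)}{\lam_j^{\infty}(t)}-1\right),\quad 1\leq j\leq N-1,
\end{align}
and parametrize the initial data by $\vec a=(a_1,\dots,a_{N-1})$ in the closed ball $\bar B=\{|\vec a|_\infty\leq \frac14\}$ (after a suitable homeomorphic rescaling), choosing $\lam_j^0$ so that $\mu_j(T)=a_j$. This is possible because at time $T$ we have $g(T)=0$, so $\hat\lam_j(T)=\lam_j(T)=\lam_j^0$ (the orthogonality conditions hold trivially, and Lemma~\ref{lem:static-mod} recovers the same parameters by uniqueness). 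The choice is compatible with the hypothesis $|\lam_j^0/\lam_j^\infty(T)-1|<\frac12|T|^{-\veps_j}$. For each $\vec a$, define the exit time $T^*(\vec a)$ as the supremum of $t\in[T,T_0]$ such that the solution exists on $[T,t]$ and all bootstrap hypotheses \eqref{eq:lamN-bootstrap}--\eqref{eq:g-H3-bootstrap} hold together with $|\mu_j|\leq\frac14$ on $[T,t]$. By the first part of Proposition~\ref{prop:bootstrap}, all hypotheses except the $\mu_j$ bounds are strictly improved on $[T,T^*]$. Note also that \eqref{eq:lam-hatlam-te-hatte-close} together with $\|g\|_{\dot\calH^1}\lesssim|t|^{-1/2-1/(k-2)}\ll|t|^{-\veps_j}$ shows that $|\mu_j|\leq\frac14$ implies \eqref{eq:lamN-bootstrap} with a margin. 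The local Cauchy theory (Lemma~\ref{lem:Cauchy}) then shows that if $T^*<T_0$, some $|\mu_j(T^*)|=\frac14$.

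The key dynamical step is an outgoing property. We differentiate $\mu_j$ using \eqref{eq:lam-hat-modulation}. Linearizing the formal system around $(\lam^\infty,\te^\infty)$, with $\sin(\te_j-\te_{j+1})=\pm1+O(|t|^{-\veps_j})$ from \eqref{eq:te-bootstrap}, the leading term $\lam_j\hat\lam_j'\approx \ka_k(\lam_j/\lam_{j+1})^k$ gives
\begin{align}
    \frac{\ud}{\ud t}\frac{\hat\lam_j}{\lam_j^\infty}=\frac{(k-2)\be_j}{|t|}\left(\frac{\hat\lam_j}{\lam_j^\infty}-1\right)-\frac{k\be_j}{|t|}\left(\frac{\hat\lam_{j+1}}{\lam_{j+1}^\infty}-1\right)+o(|t|^{-1-\veps_j}).
\end{align}
The coefficient signs here should be rechecked against the definition of $q_j$. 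Since $\veps_{j+1}>\veps_j$, the coupling term is $O(|t|^{-1-\veps_{j+1}})=o(|t|^{-1-\veps_j})$. Hence
\begin{align}
    \frac{\ud}{\ud t}\mu_j^2=\frac{2\mu_j^2}{|t|}\big((k-2)\be_j+\veps_j\big)+o(|t|^{-1}),
\end{align}
which is strictly positive when $|\mu_j|=\frac14$. Here one uses $\be_j>0$ for $j\leq N-1$, and the $\veps_j$ contribution from differentiating $|t|^{\veps_j}$ has the right sign as $t$ increases toward $0^-$. Therefore the exit is transversal. As a consequence, $T^*$ is continuous in $\vec a$: we use continuous dependence (second part of Lemma~\ref{lem:Cauchy}), continuity of the modulation parameters from Lemma~\ref{lem:static-mod}, and transversality. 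In addition, $T^*(\vec a)=T$ exactly when $|\vec a|_\infty=\frac14$.

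Suppose no $\vec a$ gives $T^*=T_0$ with the bounds held throughout. Then the map $\vec a\mapsto \vec\mu(T^*(\vec a))$ is continuous from $\bar B$ to $\partial\bar B$ and restricts to the identity on $\partial\bar B$. This is a retraction, contradicting Brouwer's theorem. So some $\vec a$ survives to $T_0$, which gives \eqref{eq:lam-improved-1}; \eqref{eq:lam-improved} then follows via \eqref{eq:lam-hatlam-te-hatte-close}. The main obstacle will be the outgoing computation: checking that the error in \eqref{eq:lam-hat-modulation}, the $\te$ deviations, and the replacement of $\lam_j$ by $\hat\lam_j$ in the nonlinear terms are all genuinely $o(|t|^{-1-\veps_j})$. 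This requires the strict ordering $\veps_j<\veps_{j+1}$ and $T_0$ sufficiently negative. A secondary technical point is justifying the continuity of $T^*$.
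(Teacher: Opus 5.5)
Your argument follows the paper's proof. You parametrize the unstable directions by a cube ($a_j=p_j/4$ in the paper's notation) and use $g(T)=0$ to get $\hat\lam_j(T)=\lam_j^0$, so the exit map is the identity on the boundary. You then derive the outgoing property from \eqref{eq:lam-hat-modulation} and conclude with Brouwer's no-retraction theorem. Two points in your outgoing computation are wrong as written and need correcting.

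\textbf{The sign of the $\veps_j$ term.} For $t<0$ one has $\frac{\ud}{\ud t}|t|^{\veps_j}=-\veps_j|t|^{\veps_j-1}$. The admissible window $\frac14|t|^{-\veps_j}$ widens as $t$ increases toward $T_0$, so this term works against exit, not for it. The correct identity, as in the paper, is
\[
\frac{\ud}{\ud t}\mu_j^2=\frac{2\left((k-2)\be_j-\veps_j\right)}{|t|}\mu_j^2+o(|t|^{-1}).
\]
Positivity therefore needs the extra observation that for $j\leq N-1$,
\[
(k-2)\be_j\geq(k-2)\be_{N-1}=1>\tfrac12>\veps_j .
\]
The conclusion holds, but not for the reason you give.

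\textbf{The $\te$-deviation bound.} The bound $\sin(\te_j-\te_{j+1})=\pm1+O(|t|^{-\veps_j})$ is too weak. Inserted into your linearization it gives an error $\frac{\be_j}{|t|}O(|t|^{-\veps_j})$ in $\frac{\ud}{\ud t}(\hat\lam_j/\lam_j^\infty)$. That is $O(|t|^{-1})$ in $\frac{\ud}{\ud t}\mu_j$, the same order as the leading term when $|\mu_j|=\frac14$, so the sign could not be decided. What saves the argument is that $\te_j^\infty-\te_{j+1}^\infty=\pm\frac{\pi}{2}$ is a critical point of $\sin$. By \eqref{eq:te-bootstrap} the deviation is then $O(|t|^{-2\veps_j})=o(|t|^{-\veps_j})$.

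With these two fixes, the remaining error terms you list are genuinely $o(|t|^{-1-\veps_j})$. The continuity of the exit time via transversality is at the same level of detail as the paper, and your proof goes through.
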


Once Lemma ~\ref{lem:topological-shooting} is proved,  the proof of Proposition ~\ref{prop:bootstrap} is completed with ~\eqref{eq:lam-hatlam-te-hatte-close} if we take $T_0\ll-1$ sufficiently negative.

\begin{proof}[Proof of Lemma ~\ref{lem:topological-shooting}]
     For  parameters $\lam_j^0$ with $1\leq j\leq N-1$, we consider the related solution.  Each group of  parameters $(p_1,\dots,p_{N-1})\in [-1,1]^{N-1}$ uniquely determines a group of parameters $(\lam_1(T),\dots,\lam_{N-1}(T))$ via
    \begin{align}
         \lam_j(T,p_1,\dots,p_{N-1})=\lam_j^{\infty}(T)\left(1+\frac{p_j}{4|T|^{\veps_j}}\right),\ 1\leq j\leq N-1.
    \end{align}
    To highlight the role $\{p_j\}_{1\leq j\leq N-1}$, we write the refined parameters as $\hat \lam_j(t,p_1,\dots,p_{N-1} )$ for $1\leq j\leq N-1$ if the case $j=N$ for ~\eqref{eq:lamN-bootstrap}, ~\eqref{eq:te-bootstrap}, ~\eqref{eq:g-H1-bootstrap}, ~\eqref{eq:g-H3-bootstrap} and ~\eqref{eq:lam-improved-1} hold on $[T,t]$.

    Suppose that Lemma ~\ref{lem:topological-shooting} fails. For each $\{\lam_j^0\}_{1\leq j\leq N-1}$, we consider the related solution $u$ defined on $[T,T_+)$. Let $T_2$ be the infimum of the values of $t\in[T,T_+)$ such that the case $j=N$ for ~\eqref{eq:lamN-bootstrap}, ~\eqref{eq:te-bootstrap}, ~\eqref{eq:g-H1-bootstrap},  ~\eqref{eq:g-H3-bootstrap} or ~\eqref{eq:lam-improved-1} fails. Then Lemma ~\ref{lem:Cauchy} yields $T_2$ exists and satisfied $T_2<T_0$. Otherwise there are two cases: one is that $T_2=T_+$, and another is $T_2\geq T_0$, both are impossible.   ~\eqref{eq:lamN-improved}, ~\eqref{eq:te-improved}, ~\eqref{eq:g-H1-improved} and ~\eqref{eq:g-H3-improved} yield that the case $j=N$ for  ~\eqref{eq:lamN-bootstrap}, ~\eqref{eq:te-bootstrap}, ~\eqref{eq:g-H1-bootstrap} and  ~\eqref{eq:g-H3-bootstrap} still hold at $T_2$.   Since $u$ is continuous in $\dot H^1$ at $T_2$, there exists $\rho_1,\rho_2>0$ such that $|u_3(t,r)|>\frac{1}{2}$ for all $r\in (0,\rho_1]\cup[\rho_2,\infty)$ and $t$ in a small neighborhood of $T_2$. Since $\|\p_t u\|_{L^2}=\|u\wedge\De u\|_{L^2}\lesssim\|u\|_{\dot H^1}^{\frac{1}{2}}\|u\|_{\dot H^3}^{\frac{1}{2}}$,  we have $u(t)\to u(T_2)$ in $L^2_{\rm{loc}}$ as $t\to T_2$, therefore $u_3(t)$ have the same sign near $T_2$ in $(0,\rho_1]$ or in $[\rho_2,\infty)$. Therefore using $|u|=1$ we obtain
    \begin{equation}\label{eq:H1-L2}
        \begin{aligned}
             &\int_0^{\infty}|u_3(T_2,r)-u_3(t,r)|^2r^{-1}\ud r\\
        \lesssim & \rho_1^{-2}\int_{\rho_1}^{\rho_2}|u_3(T_2,r)-u_3(t,r)|^2 \udr\\
        &+\sum_{\ell=1}^2\int_{(0,\rho_1]\cup[\rho_2,\infty) }\frac{\left(u_{\ell}\left(T_2,r\right)-u_{\ell}(t,r)\right)^2\left(u_{\ell}\left(T_2,r\right)+u_{\ell}(t,r)\right)^2}{\left(u_3(T_2,r)+u_3(t,r)\right)^2}r^{-1}\ud r\\
        \lesssim & \rho_1^{-2}\int_{\rho_1}^{\rho_2}|u_3(T_2,r)-u_3(t,r)|^2 \udr+\sum_{\ell=1}^2\int_{(0,\rho_1]\cup[\rho_2,\infty) }\frac{\left(u_{\ell}\left(T_2,r\right)-u_{\ell}(t,r)\right)^2 }{r^2}\udr,
        \end{aligned}
    \end{equation}
    which tends to $0$ as $t\to T_2$. Then the decomposition in Lemma ~\ref{lem:static-mod} still holds in a small neighborhood of $T_2$, which we denote by $I_2$. Since $u$ is continuous in $\dot H^1$, we can shrink $I_2$ if necessary to obtain that ~\eqref{eq:lamN-bootstrap} and  ~\eqref{eq:te-bootstrap} hold in $I_2$.  Since $u(t)=\calU_{\vec \lam(t),\vec \te(t)}+g(t)$ and the parameters $\vec \lam(t), \vec \te(t)$ are continuous, after shrinking $I_2$, we have ~\eqref{eq:g-H1-bootstrap} holds in $I_2$.  By interpolation we have $u(t)\to u(T_2)$ in $H_{\rm{loc}}^2$ as $t\to T_2$, then similarly ~\eqref{eq:g-H3-bootstrap} holds in $I_2$. Therefore, for some $1\leq j\leq N-1$,  ~\eqref{eq:lam-improved-1} holds with equality, which leads to a  map 
    \begin{align}
        \Phi: \left[-1,1\right]^{N-1}&\to \p\left[-1,1\right]^{N-1}\\
          (p_1,\dots,p_{N-1})&\mapsto \left(4|T_2|^{\veps_j}\left(\frac{\hat\lam_j(T_2,p_1,\dots,p_{N-1})}{\lam_j^{\infty}(T_2)}-1\right)\right)_{1\leq j\leq N-1}.
    \end{align}
 
        The map $\Phi$ is continuous due to Lemma~\ref{lem:Cauchy} and the same argument as in ~\eqref{eq:H1-L2}. The equations ~\eqref{eq:lam-hat-modulation} yields if $j\in\{1,\dots,N-1\}$ such that $4|T_2|^{\veps_j}\left(\frac{\hat\lam_j(T_2,p_1,\dots,p_{N-1})}{\lam_j^{\infty}(T_2)}-1\right)=\pm1$, then ~\eqref{eq:lam-hat-modulation} yields
    \begin{align}
        &4|T_2|^{\veps_j}\left(\frac{\hat\lam_j(T_2,p_1,\dots,p_{N-1})}{\lam_j^{\infty}(T_2)}-1\right)\frac{\ud }{\ud t}\Bigg|_{t=T_2}\left(4|t|^{\veps_j}\left(\frac{\hat\lam_j(t,p_1,\dots,p_{N-1})}{\lam_j^{\infty}(t)}-1\right)\right)\\
        =&\frac{(k-2)\be_j-\veps_j+o_{T_0\to-\infty}(1)}{|T_2|}\left[4|T_2|^{\veps_j}\left(\frac{\hat\lam_j(T_2,p_1,\dots,p_{N-1})}{\lam_j^{\infty}(T_2)}-1\right)\right]^2>0,
    \end{align}
    which combined with  $\lam_j(T)=\hat\lam_j(T)$ implies $\Phi|_{ \p\left[-1,1\right]^{N-1}}=\rm{Id}$. This contradicts with Brouwer fixed point theorem.

\end{proof}

\section{Construction by compactness}

\begin{proof}[Proof of Theorem ~\ref{thm:main}]

Let $T_0\ll-1$ given in  Proposition ~\ref{prop:bootstrap}. Take $T_0>T_1>T_2>\cdots$ such that $\lim_{n\to \infty} T_n=-\infty$. For each $n\geq 1$, Proposition ~\ref{prop:bootstrap} yields a solution $u_n$ satisfies
\begin{align}
    \left\|u_n(t)-\calU_{\vec\lam^{\infty}(t),\vec \te^{\infty}}\right\|_{\dot\calH^1}\lesssim |t|^{-\veps_1}.
\end{align}
For any $\ti T<T_0$, there exists a constant $C_{\ti T}$ such that the solutions $\{u_n\}_{n\geq 1}$ satisfy
\begin{align}\label{eq:un-uniform-bounded}
    \sup_{\left\{n:T_n<\ti T\right\}}\left\|u_n\right\|_{L_t^{\infty}\left([\ti T,T_0], \dot H_x^1\cap \dot H_x^3 \right)}\leq C_{\ti T}<\infty.
\end{align}
Therefore for $t\in [\ti T,T_0]$ and $n$ such that $T_n<\ti T$, we have
\begin{align}
    \left\|\p_t u_n(t)\right\|_{L^2}=\left\|u_n(t)\wedge\De u_n(t)\right\|_{L^2}\leq \left\|\De u_n(t)\right\|_{L^2}\leq\|u_n(t)\|_{\dot H^1}^{\frac{1}{2}}\|u_n(t)\|_{\dot H^3}^{\frac{1}{2}}\leq C_{\ti T}.
\end{align}
Fix $R>0$, for $\ti T\leq t_1<t_2\leq T_0$ and $n$ such that $T_n<\ti T$, we have 
\begin{align}
    \|u_n(t_1)-u_n(t_2)\|_{L^2_x(B(0,R))}&\lesssim \int_{t_1}^{t_2}\|\p_t u_n(t)\|_{L_x^2(B(0,R))}\ud t\lesssim_R  C_{\ti T}|t_1-t_2|.
\end{align}

Arzel\`a-Ascoli theorem yields that $\{u_n\}_{n\geq 1}$ has a subsequence converging in $ C_t\left([\ti T,T_0];L^2_x(B(0,R))\right) $. Standard diagonal argument yields a subsequence, still denoted by $\{u_n\}_{n\geq 1}$, converges in\\
$C_{t,\rm{loc}}\left((-\infty,T_0];L_{x,\rm{loc}}^2\right)$. ~\eqref{eq:un-uniform-bounded} yields that for any fixed $t<T_0$, $\{u_n\}_{n\geq 1}$ has a sequence converge weakly in $\dot H_x^1\cap \dot H_x^3$, therefore  there exists a $k$-equivariant function $u\in C_{t,\rm{loc}}\left((-\infty,T_0];L_{x,\rm{loc}}^2\right)\cap L_{t,\rm{loc}}^{\infty}\left((-\infty,T_0];\dot H^1_x\cap \dot H^3_x\right) $ such that 
\begin{align}
    u_n\to u\quad \rm{in}\quad C_{t,\rm{loc}}\left((-\infty,T_0];L_{x,\rm{loc}}^2\right).
\end{align}
Then $u$ satisfies
\begin{align}
    \p_t u=u\wedge \De u
\end{align}
in the sense of distribution.  For any $R>0$, and $t\leq T_0$, 
\begin{align}
    \||u(t)|-1\|_{L^2(B(0,R))}\leq \|u(t)-u_n(t)\|_{L^2(B(0,R))}\to 0,
\end{align}
therefore $|u(t,x)|=1$ for a.e. $x$. Lemma ~\ref{lem:Cauchy} yields that $u\in C_{t,\rm{loc}}\left((-\infty,T_0];\dot H_x^1\cap\dot H_x^3\right)$.   Since $u_n\to u$ in $C_{t,\rm{loc}}\left((-\infty,T_0];L_{x,\rm{loc}}^2\right)$,  for any $R>0$,
\begin{align}
    \left\|u(t)-\calU_{\vec \lam^{\infty}(t),\vec \te^{\infty}}\right\|_{\dot \calH^1\left(R^{-1}<r<R\right)}&\leq\liminf_{n\to \infty}\left\|u_n(t)-\calU_{\vec \lam^{\infty}(t),\vec \te^{\infty}}\right\|_{\dot \calH^1\left(R^{-1}<r<R\right)}\\
    &\leq \liminf_{n\to \infty}\left\|u_n(t)-\calU_{\vec \lam^{\infty}(t),\vec \te^{\infty}}\right\|_{\dot \calH^1\left(\R^2\right)}\\
    &\lesssim |t|^{-\veps_1}.
\end{align}
Since this bound is uniform in $R$, let $R\to \infty$, the monotone convergence theorem yields
\begin{align}
     \left\|u(t)-\calU_{\vec \lam^{\infty}(t),\vec \te^{\infty}}\right\|_{\dot \calH^1}\lesssim |t|^{-\veps_1}\to 0 \  \mas \  t\to-\infty.
\end{align}

We also have $\calU_{\vec \lam^{\infty}(t),\vec \te^{\infty}}-\calQ_{\vec \lam^{\infty}(t),\vec \te^{\infty}}\to 0 $ in $\dot \calH^1$, which finishes the proof.   
    
\end{proof}

\appendix

\section{Some technical estimates}\label{Appendix:estimates}
In this section, we collect all the estimates for quantities with coherent structure that are used in the main text.  Recall the definition of $F_j(t,r)$ in ~\eqref{eq:def-Fj}.

\begin{lem}
    \label{lem:frame-estimate}
    For the orthogonal frame $(f_1,f_2,\calU)$ we used in Section~\ref{sec:high-Sobolev}, there holds
    \begin{align}
        \sum_{j=1}^4 \left|(r\p_r)^j\calU(t,\te,r)\right|+\sum_{j=1}^3 \left|(r\p_r)^jf_1(t,\te,r)\right|+\sum_{j=1}^3 \left|(r\p_r)^jf_2(t,\te,r)\right|&\lesssim \sum_{j=1}^NF_j(t,r), \label{eq:frame-estimate-pointwise}\\
        \sum_{j=0}^3 \left|(r\p_r)^j\p_t\calU(t,\te,r)\right|+\sum_{j=1}^3 \left|(r\p_r)^j\p_tf_1(t,\te,r)\right|+\sum_{j=1}^3 \left|(r\p_r)^j\p_tf_2(t,\te,r)\right|&\lesssim \Ga(t)\sum_{j=1}^NF_j(t,r),\label{eq:frame-t-estimate-pointwise}\\
        |\p_t f_1(t,\te,r)|+|\p_t f_2(t,\te,r)|&\lesssim \Ga(t).\label{eq:frame-t-estimate-pointwise-2}
    \end{align}
\end{lem}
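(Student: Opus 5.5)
The plan is to derive the estimates for $\calU$ first, then transfer them to the Coulomb frame $(f_1,f_2)$ through the ODE \eqref{eq:frame-def}, and finally handle time derivatives by differentiating with respect to the parameters.

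\textbf{Estimates for $\calU$.} Write $\calU=\calQ/|\calQ|$. By \eqref{eq:Q-cal-norm}, $|\calQ|$ is bounded away from zero, and $(r\p_r)^a(|\calQ|^{-1})$ is bounded by $\sum_{\ell<m}F_\ell F_m\lesssim\sum_jF_j$. Since $\calQ=\sum_j(-1)^{j-1}(e^{\te_jR}Q_{\lam_j}-e_3)+e_3$, we can use $(r\p_r)^a\bigl(e^{\te_jR}Q_{\lam_j}\bigr)=O(F_j)$ for $a\ge1$; this follows from the explicit formula, since $(r\p_r)Q=\Lam Q$ together with \eqref{eq:RQ-LamQ-F}. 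Applying the Leibniz rule to $\calQ\cdot|\calQ|^{-1}$ then gives the first sum in \eqref{eq:frame-estimate-pointwise} for $1\le a\le4$.

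\textbf{Estimates for the frame.} Differentiate \eqref{eq:frame-def} equivariantly. We have $r\p_rh_1=-(h_1\cdot r\p_rV)V$, and $h_2=V\wedge h_1$, so $r\p_r h_2=(r\p_rV)\wedge h_1+V\wedge r\p_rh_1$. Because $|h_m|=|V|=1$, we get $|r\p_rh_m|\lesssim|r\p_rV|\lesssim\sum F_j$. Higher derivatives $(r\p_r)^a h_m$ for $a=2,3$ follow by induction: each is a polynomial in $(r\p_r)^bV$ and $(r\p_r)^b h_m$ with $b\le a$, containing at least one factor $(r\p_r)^bV$ with $b\ge1$. Since $F_j\le1$, products of $F$'s are bounded by $\sum F_j$.

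Passing from $h_m$ to $f_m=e^{k\te R}h_m$ only requires noting that the $\te$-rotation commutes with $r\p_r$. Strictly, $(r\p_r)^af_m=e^{k\te R}(r\p_r)^ah_m$, so the pointwise bounds transfer directly.

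\textbf{Time derivatives.} We have $\p_t\calU=\sum_\ell(\lam_\ell'\p_{\lam_\ell}\calU+\te_\ell'\p_{\te_\ell}\calU)$, and by \eqref{eq:partialU-expression},
\[
|\lam_\ell\p_{\lam_\ell}\calU|+|\p_{\te_\ell}\calU|\lesssim F_\ell .
\]
The same holds after applying $(r\p_r)^a$, since $\lam\p_\lam Q_\lam=-\Lam Q_\lam$, $\p_\te e^{\te R}Q=e^{\te R}RQ$, and all functions involved have coherent structure. This yields the first term of \eqref{eq:frame-t-estimate-pointwise}, including $a=0$.

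For the frame, we differentiate the ODE in $t$. Set $w=\p_th_1$. Since $h_1(0)$ is fixed, $w(0)=0$, and
\[
\p_r w=-(w\cdot\p_rV)V-(h_1\cdot\p_r\p_tV)V-(h_1\cdot\p_rV)\p_tV .
\]
Integrating from $0$ and using Gronwall with $\int_0^\infty|\p_rV|\,\ud r\lesssim\int\sum_jF_j\,\frac{\ud r}{r}\lesssim N$, we get
\[
|w|\lesssim\int_0^\infty\Bigl(|\p_r\p_tV|+|\p_rV||\p_tV|\Bigr)\ud r\lesssim\Ga(t)\int_0^\infty\sum_jF_j\,\frac{\ud r}{r}\lesssim\Ga(t).
\]
This proves \eqref{eq:frame-t-estimate-pointwise-2} for $f_1$. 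For $f_2$ we use $\p_th_2=\p_tV\wedge h_1+V\wedge\p_th_1$.

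The derivative bounds for $a\ge1$ then come from the differentiated ODE. Here $r\p_rw$ is expressed pointwise through $r\p_rV$, $r\p_r\p_tV$, $\p_tV$ and $w$, each of which is bounded by $\Ga\sum F_j$ (using $|w|\lesssim\Ga$ and $|r\p_rV|\lesssim\sum F_j$). The higher derivatives follow inductively in the same way.

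\textbf{Main obstacle.} The delicate point is the zeroth-order bound on $\p_tf_m$. It is nonlocal, because $h_1$ is determined by integration from the origin, and it cannot improve to $\Ga\sum F_j$: the rotation accumulated across each bubble persists in the neck regions. That is why \eqref{eq:frame-t-estimate-pointwise-2} is stated only as $\Ga(t)$. The argument works because the scale-invariant measure $\ud r/r$ integrates $F_j$ to $O(1)$, uniformly in the $\lam_j$.
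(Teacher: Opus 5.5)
Your proposal is correct and follows essentially the same route as the paper: you bound $\calU$ using \eqref{eq:Q-rescaled-expansion} and \eqref{eq:Q-cal-norm}, and you bound $(r\p_r)^a f_1$ through the Coulomb ODE \eqref{eq:frame-def}. As in the paper, you then get the zeroth-order bound on $\p_t f_1$ by Gronwall from $r=0$, using $\p_t f_1(t,0)=0$ and $\int_0^\infty F_j\,\frac{\ud r}{r}\lesssim 1$, and differentiate the ODE for the higher $r\p_r$-derivatives. The only cosmetic difference is that you treat $f_2$ via $\p_t h_2=\p_t V\wedge h_1+V\wedge\p_t h_1$ rather than via the analogous ODE $\p_r f_2=-(f_2\cdot\p_r\calU)\calU$, which works equally well.
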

\begin{proof}
    The estimates for $\calU$ follows from ~\eqref{eq:Q-rescaled-expansion} and ~\eqref{eq:Q-cal-norm}. For $f_1$, recall the definition ~\eqref{eq:frame-def}, we have
    \begin{align}
        r\p_r f_1=&-(f_1\cdot r\p_r\calU)\calU, \\
        (r\p_r)^2 f_1=&-(f_1\cdot (r\p_r)^2\calU)\calU-(f_1\cdot r\p_r\calU)r\p_r\calU-((r\p_r f_1)\cdot r\p_r\calU)\calU,
    \end{align} 
    which yields the estimates for $j=1,2$. The cases $j=3$ follows in the same way.
    
    The estimates for $\p_t\calU$ follow from direct calculation. For $\p_t f_1$, we have
    \begin{align}
        \p_r \p_tf_1=-(\p_tf_1\cdot \p_r\calU)\calU-(f_1\cdot \p_t\p_r\calU)\calU-(f_1\cdot \p_r\calU)\p_t\calU,
    \end{align}
    which yields
    \begin{align}
        |\p_r\p_t f_1|\lesssim \frac{1}{r}\sum_{j=1}^NF_j|\p_t f_1|+\frac{\Ga(t)}{r}\sum_{j=1}^NF_j+\frac{\Ga(t)}{r}\left(\sum_{j=1}^NF_j\right)^2.
    \end{align}
    Note that $\p_t f_1(t,0)=0$, the estimates for $\p_t f_1$ follows from Gronwall's inequality.
    For $(r\p_r)^j \p_tf_1$, we have
    \begin{align}
        r\p_r \p_tf_1=-(\p_tf_1\cdot r\p_r\calU)\calU-(f_1\cdot \p_t(r\p_r\calU))\calU-(f_1\cdot r\p_r\calU)\p_t\calU,
    \end{align}
    which implies the estimates for $j=1$. The cases $j=2,3$ follow in the same way. 

     For $f_2$, since $f_2=\calU\wedge f_1$, we have
    \begin{align}
        \p_r f_2=-(f_2\cdot \p_r\calU)\calU,
    \end{align}
    therefore the proof for $f_2$ is analogous and we omit the details.

\end{proof}

\begin{lem}\label{lem:F-estimate}
    For $T_{\calU}(\calU\wedge\De\calU)$ we have the point-wise bounds
    \begin{align}
        \sum_{j=0}^3 |(r\p_r)^j(r^2T_{\calU}(\calU\wedge\De\calU))|&\lesssim \sum_{1\leq m< \ell\leq N}F_m(t,r)F_{\ell}(t,r),\label{eq:F-pointwise}\\
        \sum_{j=0}^3 |(r\p_r)^j(r^2\p_t(T_{\calU}(\calU\wedge\De\calU)))|&\lesssim \Ga(t)\sum_{1\leq m< \ell\leq N}F_m(t,r)F_{\ell}(t,r).\label{eq:Ft-pointwise}
    \end{align}
    There holds
    \begin{align}
        \|T_{\calU}(\calU\wedge\De\calU)\|_{\calH^1}&\lesssim \left(\sum_{j=1}^N\frac{1}{\lam_j^4}\left(\left(\frac{\lam_{j-1}}{\lam_{j}}\right)^{2k}+\left(\frac{\lam_{j}}{\lam_{j+1}}\right)^{2k}\right)\right)^{\frac{1}{2}},\label{eq:F-H1}\\
         \|T_{\calU}(\calU\wedge\De\calU)\|_{(\calH^1)^*}&\lesssim \sum_{j=1}^N\left(\frac{\lam_{j}}{\lam_{j+1}}\right)^{k},\label{eq:F-H-1}\\
         \|\p_t\left(T_{\calU}(\calU\wedge\De\calU)\right)\|_{\calH^1}&\lesssim \Ga(t)\left(\sum_{j=1}^N\frac{1}{\lam_j^4}\left(\left(\frac{\lam_{j-1}}{\lam_{j}}\right)^{2k}+\left(\frac{\lam_{j}}{\lam_{j+1}}\right)^{2k}\right)\right)^{\frac{1}{2}},\label{eq:Ft-H1}\\
         \|\p_t\left(T_{\calU}(\calU\wedge\De\calU)\right)\|_{(\calH^1)^*}&\lesssim \Ga(t)\sum_{j=1}^N\left(\frac{\lam_{j}}{\lam_{j+1}}\right)^{k}.\label{eq:Ft-H-1}
    \end{align}
\end{lem}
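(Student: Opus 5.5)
The plan is to reduce everything to the pointwise expansion \eqref{eq:U-wedge-De-U-expansion-local} of $\calU\wedge\De\calU$, together with the frame bounds of Lemma~\ref{lem:frame-estimate}. Then each norm is obtained by integrating the resulting pointwise bound region by region over $\{\sqrt{\lam_{j-1}\lam_j}\le r\le\sqrt{\lam_j\lam_{j+1}}\}$.

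\textbf{Pointwise bounds.} For \eqref{eq:F-pointwise} I write $\calU\wedge\De\calU=\frac{1}{|\calQ|^2}\calQ\wedge\De\calQ+2|\calQ|^{-1}\p_r(|\calQ|^{-1})\calQ\wedge\p_r\calQ$, as in the proof of the lemma on $\ta(\calU)$. Since each $e^{\te_jR}Q_{\lam_j}$ is harmonic, $\calQ\wedge\De\calQ$ contains only cross terms between different bubbles. Each cross term is bounded by $r^{-2}F_mF_\ell$ after applying $(r\p_r)^a$, using \eqref{eq:RQ-LamQ-F}. The second term is controlled by \eqref{eq:Q-cal-norm}. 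Pairing with $f_1,f_2$ and differentiating the products, $|(r\p_r)^a f_i|$ is bounded by Lemma~\ref{lem:frame-estimate}, and $|f_i|=1$. This gives \eqref{eq:F-pointwise}. For \eqref{eq:Ft-pointwise} I differentiate in $t$ through $\lam_j'\p_{\lam_j}$ and $\te_j'\p_{\te_j}$. The operators $\lam_j\p_{\lam_j}=-\Lam$ acting on bubble $j$ and $\p_{\te_j}$ preserve the structure $F_mF_\ell$, so each such derivative costs a factor $\Ga(t)$. The $\p_tf_i$ contributions are handled by \eqref{eq:frame-t-estimate-pointwise-2}, where $|T_{\calU}(\cdot)|$ already carries the $F_mF_\ell$ factor.

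\textbf{$\calH^1$ norms.} Since $|w|_{-1}\lesssim r^{-1}\sum_{a\le1}|(r\p_r)^aw|$, the bound \eqref{eq:F-pointwise} gives
\begin{align}
\|T_{\calU}(\calU\wedge\De\calU)\|_{\calH^1}^2\lesssim\sum_{m<\ell}\int_0^\infty r^{-6}F_m^2F_\ell^2\udr .
\end{align}
For adjacent indices $m=j$, $\ell=j+1$, the product $F_jF_{j+1}$ peaks near $r\sim\lam_j$ at size $(\lam_j/\lam_{j+1})^k$. Splitting at $\lam_j$ and $\lam_{j+1}$, the integral is dominated by $r\sim\lam_j$ and equals $\lam_j^{-4}(\lam_j/\lam_{j+1})^{2k}$. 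This uses $k\ge3$, so that $r^{2k-6}$ is integrable near $\lam_j$ and $r^{-2k-6}$ decays beyond $\lam_{j+1}$. In the middle range the integrand is $r^{-6}(\lam_j/r)^{2k}(r/\lam_{j+1})^{2k}$, which is largest at $r=\lam_j$. Non-adjacent pairs are smaller. This proves \eqref{eq:F-H1}, and \eqref{eq:Ft-H1} follows identically from \eqref{eq:Ft-pointwise}.

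\textbf{Dual norm.} For \eqref{eq:F-H-1} I use Hardy's inequality $\|w/r\|_{L^2}\lesssim\|w\|_{\calH^1}$, which gives
\begin{align}
|\La T_{\calU}(\calU\wedge\De\calU),w\Ra|\le\|rT_{\calU}(\calU\wedge\De\calU)\|_{L^2}\,\|w/r\|_{L^2}.
\end{align}
So I need $\int r^{-2}F_m^2F_\ell^2\udr\lesssim(\lam_m/\lam_\ell)^{2k}$. The naive bound picks up $|\log(\lam_j/\lam_{j+1})|$ from the middle range. This is the main obstacle: the claimed bound carries no logarithm, unlike the one in the lemma on $\ta(\calU)$.

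To remove it, I will use the finer structure in \eqref{eq:U-wedge-De-U-expansion-local} rather than the crude product bound. In region $j$ the leading term is $(\lam_\ell/r)^k$ or $(r/\lam_\ell)^k$ times $\frac{(r/\lam_j)^k}{1+(r/\lam_j)^{2k}}\,e^{\te_jR}RQ_{\lam_j}$ or $e^{\te_jR}\Lam Q_{\lam_j}$, divided by $r^2$. On the overlap $\lam_j\ll r\ll\lam_{j+1}$, the $j$-side contribution is $r^{-2}(\lam_j/r)^k\cdot(r/\lam_{j+1})^k\cdot(\lam_j/r)^k$. This carries an extra factor $F_j$ compared to the crude bound, because $RQ_{\lam_j}$ and $\Lam Q_{\lam_j}$ themselves decay like $F_j$, and symmetrically on the $(j+1)$ side. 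The remainder is $O(r^{-2}F_\ell^2)$, which also has no logarithm. With this extra factor the $L^2(r^2\cdot r\,dr/r^2)$ integral converges without a logarithm and gives $(\lam_j/\lam_{j+1})^k$.

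If this refinement fails, a fallback is to test against $w$ directly and integrate by parts once using $w\in\calH^1$; I will first try the pointwise refinement. Finally, \eqref{eq:Ft-H-1} follows the same way with the factor $\Ga(t)$.
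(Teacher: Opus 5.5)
Your proposal follows the paper's proof: the pointwise bounds come from the regional expansion \eqref{eq:U-wedge-De-U-expansion-local} together with Lemma~\ref{lem:frame-estimate}, and the $\calH^1$ bounds come from integrating them. The dual bound is obtained, as you anticipated, from Hardy's inequality applied to the refined regional bound $r^{-2}\bigl(F_j^2\sum_{\ell\neq j}F_\ell+\sum_{\ell\neq j}F_\ell^2\bigr)$, whose extra factor $F_j$ removes the logarithm.

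One small caution concerns your remark that ``each cross term is bounded by $r^{-2}F_mF_\ell$.'' Taken globally, a single term such as $e^{\te_mR}Q_{\lam_m}\wedge\De(e^{\te_jR}Q_{\lam_j})$ with $m>j$ is of size $\lam_j^{-2}$ near $r\sim\lam_j$, because of the $\pm e_3$ offsets. The product bound holds only after the region-by-region regrouping that \eqref{eq:U-wedge-De-U-expansion-local} already encodes, so cite that expansion rather than a term-by-term argument.
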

\begin{proof}
    The estimates ~\eqref{eq:F-pointwise} and ~\eqref{eq:Ft-pointwise} follow directly from ~\eqref{eq:U-wedge-De-U-expansion-local}, ~\eqref{eq:frame-estimate-pointwise}, ~\eqref{eq:frame-t-estimate-pointwise} and ~\eqref{eq:frame-t-estimate-pointwise-2}. Then the estimates ~\eqref{eq:F-H1} and ~\eqref{eq:Ft-H1} follow from these two pointwise bounds. For ~\eqref{eq:F-H-1}, using ~\eqref{eq:U-wedge-De-U-expansion-local} we have
    \begin{align}
        &\left|\La  T_{\calU}(\calU\wedge\De\calU),h   \Ra\right|\\
        \lesssim & \sum_{j=1}^N\int_{\sqrt{\lam_{j-1}\lam_j}}^{\sqrt{\lam_j\lam_{j+1}}}\left(F_j^2\sum_{\ell\neq j}F_{\ell}+\sum_{\ell\neq j}F_{\ell}^2\right)|h|\frac{\ud r}{r}\\
        \lesssim & \sum_{j=1}^N\left[\sum_{\ell\neq j}\left(\int_{\sqrt{\lam_{j-1}\lam_j}}^{\sqrt{\lam_j\lam_{j+1}}}\left(F_j^2F_{\ell}+F_{\ell}^2\right)^2\frac{\ud r}{r}\right)^{\frac{1}{2}}\right]\left\|\frac{h}{r}\right\|_{L^2(\udr)}\\
        \lesssim & \left(\sum_{j=1}^{N-1} \left(\frac{\lam_j}{\lam_{j+1}}\right)^k\right)\|h\|_{\dot\calH^1}.
    \end{align}
    The estimate ~\eqref{eq:Ft-H-1} follows in the same way.
\end{proof}

\begin{lem}
    \label{lem:F-tangent-estimate}
    There holds
    \begin{align}
        \sum_{\ell=0}^3\left|(r\p_r)^{\ell}\left(\frac{1}{\lam_j}T_{\calU}\left(\p_{\lam_j}\calU\right)\right)\right|+\sum_{\ell=0}^3\left|(r\p_r)^{\ell}\left(\frac{1}{\lam_j^2}T_{\calU}\left(\p_{\te_j}\calU\right)\right)\right|&\lesssim \frac{1}{\lam_j^2}F_j(t,r)\label{eq:U-tangent-pointwise}\\
        \sum_{\ell=0}^3\left|(r\p_r)^{\ell}\p_t\left(\frac{1}{\lam_j}T_{\calU}\left(\p_{\lam_j}\calU\right)\right)\right|+\sum_{\ell=0}^3\left|(r\p_r)^{\ell}\p_t\left(\frac{1}{\lam_j^2}T_{\calU}\left(\p_{\te_j}\calU\right)\right)\right|&\lesssim \Ga(t)\frac{1}{\lam_j^2}F_j(t,r).\label{eq:U-t-tangent-pointwise}
    \end{align}

    There holds
    \begin{align}
        \|\lam_j^{-1}T_{\calU}(\p_{\lam_j}\calU)\|_{X^*}+\|\lam_j^{-2}T_{\calU}(\p_{\te_j}\calU)\|_{X^*}&\lesssim o_{L\to\infty}(1)+\left(\frac{\lam_{j-1}}{\lam_{j}}\right)^k+\left(\frac{\lam_{j}}{\lam_{j+1}}\right)^k.\label{eq:F-p-X}
    \end{align}
    Here $\|w\|_{X^*}:=\sup_{v\in X,\|v\|_{\calH^1}=1}|\La w,v\Ra|$.
    
\end{lem}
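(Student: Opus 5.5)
The plan is to reduce both pointwise bounds to formulas for $\p_{\lam_j}\calU$ and $\p_{\te_j}\calU$ obtained from \eqref{eq:partialU-expression}, namely
\[
\p_{\lam_j}\calU=|\calQ|^{-1}\bigl(\p_{\lam_j}\calQ-(\calU\cdot\p_{\lam_j}\calQ)\calU\bigr),\qquad \p_{\lam_j}\calQ=(-1)^j\lam_j^{-1}e^{\te_jR}\Lam Q_{\lam_j},
\]
\[
\p_{\te_j}\calU=|\calQ|^{-1}\bigl(\p_{\te_j}\calQ-(\calU\cdot\p_{\te_j}\calQ)\calU\bigr),\qquad \p_{\te_j}\calQ=(-1)^{j-1}e^{\te_jR}RQ_{\lam_j}.
\]
Since $T_{\calU}$ annihilates the $\calU$ component, $\lam_j^{-1}T_{\calU}(\p_{\lam_j}\calU)$ equals $(-1)^j\lam_j^{-2}|\calQ|^{-1}\bigl(e^{\te_jR}\Lam Q_{\lam_j}\cdot f_1+i\,e^{\te_jR}\Lam Q_{\lam_j}\cdot f_2\bigr)$, and $\lam_j^{-2}T_{\calU}(\p_{\te_j}\calU)$ is the analogous expression with $RQ_{\lam_j}$ in place of $\Lam Q_{\lam_j}$.

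For \eqref{eq:U-tangent-pointwise} I will apply $(r\p_r)^\ell$ with $\ell\le3$ via Leibniz. The factor $|(r\p_r)^a\,e^{\te_jR}\Lam Q_{\lam_j}|$ is bounded by $F_j$ by \eqref{eq:RQ-LamQ-F}; the case $a=0$ follows directly from the explicit formula. The bounds on $(r\p_r)^b f_m$ come from \eqref{eq:frame-estimate-pointwise}, and $|f_m|=1$. For $(r\p_r)^c|\calQ|^{-1}$ I use \eqref{eq:Q-cal-norm}, together with the fact that $|\calQ|$ is bounded below. Each product is therefore $\lesssim \lam_j^{-2}F_j$. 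The $\te_j$ term is handled the same way.

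For \eqref{eq:U-t-tangent-pointwise} I differentiate in $t$ before applying $(r\p_r)^\ell$. If $\p_t$ hits $\lam_j^{-2}$ or the profile $e^{\te_jR}\Lam Q_{\lam_j}$, it produces $\lam_j'/\lam_j$ or $\te_j'$ times a profile of the same type; this uses $\p_{\lam}\Lam Q_\lam=-\lam^{-1}\Lam\Lam Q_\lam$, which is still $\lesssim F_j$. If $\p_t$ hits $f_m$, I use \eqref{eq:frame-t-estimate-pointwise} and \eqref{eq:frame-t-estimate-pointwise-2}. If it hits $|\calQ|^{-1}$, then $\p_t|\calQ|^2$ is a sum over $\ell$ of $\Ga(t)$ times profiles bounded by $F_\ell$. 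Each contribution carries one factor $\Ga(t)$ and at least one factor $F_j$, which gives the claimed bound.

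The estimate \eqref{eq:F-p-X} is the main point. For $v\in X$ I expand $\La \lam_j^{-1}T_{\calU}(\p_{\lam_j}\calU),v\Ra=\lam_j^{-1}\La \p_{\lam_j}\calU,\ti T_{\calU}v\Ra$. I then write $\lam_j\p_{\lam_j}\calU=(-1)^j e^{\te_jR}\Lam Q_{\lam_j}+E$, where $E$ is the error from $|\calQ|^{-1}-1$ and from the projection term $(\calU\cdot\p_{\lam_j}\calQ)\calU$. The orthogonality in $X$ gives $\La\ti T_{\calU}v,\chi_{L,j}e^{\te_jR}\Lam Q_{\lam_j}\Ra=0$, so the main term reduces to $\lam_j^{-2}\La\ti T_{\calU}v,(1-\chi_{L,j})e^{\te_jR}\Lam Q_{\lam_j}\Ra$. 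By Cauchy--Schwarz and Hardy this is bounded by $\lam_j^{-2}\|rF_j(1-\chi_{L,j})\|_{L^2}\|v\|_{\calH^1}$, which is only controlled relative to the natural size $\lam_j^{-1}$ set by the weight $r$.

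I expect the scaling normalisation to be the main obstacle. Hardy pairing of a $\lam_j^{-2}F_j$ profile against $\calH^1$ naturally produces $\lam_j^{-1}$ times the tail $\|rF_j/\lam_j(1-\chi_L(r/\lam_j))\|_{L^2(\udr)}$, and this tail is $o_{L\to\infty}(1)$ because $k\ge3$ gives $r^{1-k}\in L^2(r\,\ud r)$ at infinity. I will therefore verify carefully that the paper's normalisation makes the bound scale-invariant, possibly with $\lam_j$ factors absorbed by the choice of unknowns in Step 2. The error term $E$ satisfies $|E|\lesssim F_j\bigl(\sum_{\ell\ne j}F_\ell+\sum_{m<n}F_mF_n\bigr)$ by \eqref{eq:Q-cal-expansion-local} and \eqref{eq:Q-cal-norm}. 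Integrating against $|v|/r$ gives $(\lam_{j-1}/\lam_j)^k+(\lam_j/\lam_{j+1})^k$ from the overlap regions. The $\te_j$ term is treated identically, using the $RQ$ orthogonality condition.
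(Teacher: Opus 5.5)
You handle the two pointwise bounds exactly as the paper does. You write $T_{\calU}(\p_{\lam_j}\calU)$ and $T_{\calU}(\p_{\te_j}\calU)$ via \eqref{eq:partialU-expression}, then apply Leibniz with \eqref{eq:RQ-LamQ-F}, \eqref{eq:frame-estimate-pointwise}--\eqref{eq:frame-t-estimate-pointwise-2} and \eqref{eq:Q-cal-norm}. The skeleton of your $X^*$ argument is also the paper's: subtract the $\chi_{L,j}$-orthogonality, then apply Cauchy--Schwarz against $\|v/r\|_{L^2(\udr)}\lesssim\|v\|_{\calH^1}$. Two steps in the $X^*$ part need repair.

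\textbf{First, the error term.} You put the projection piece $-(\calU\cdot\p_{\lam_j}\calQ)\calU$ into $E$ and bound it in absolute value. That cannot give the stated rate.
\begin{itemize}
\item For $\lam_j\ll r\ll\lam_{j+1}$, the quantity $|\calU\cdot e^{\te_jR}\Lam Q_{\lam_j}|$ is to leading order $4k|\cos(\te_j-\te_{j+1})|F_jF_{j+1}$, and $F_jF_{j+1}=(\lam_j/\lam_{j+1})^k$ is constant on that whole range.
\item For the $RQ$ term the analogous coefficient is $4|\sin(\te_j-\te_{j+1})|$, which does not vanish near $\vec\te^{\infty}$.
\item Since the weight $r^3$ grows,
\[
\lam_j^{-2}\|rF_jF_{j+1}\|_{L^2(\udr)}\sim\lam_j^{-2}\Bigl(\frac{\lam_j}{\lam_{j+1}}\Bigr)^k\lam_{j+1}^2=\Bigl(\frac{\lam_j}{\lam_{j+1}}\Bigr)^{k-2}.
\]
No bound of the form $\lam_j^{-2}\int|E||v|\udr$ can do better: take $|v|$ concentrated at $r\sim\lam_{j+1}$.
\end{itemize}
This weaker rate would be harmless in Step 2 of Lemma~\ref{lem:correction-z-eta}, which only uses smallness, but it does not prove \eqref{eq:F-p-X} as stated.

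The missing observation is that $\ti T_{\calU}v\cdot\calU=0$ pointwise, so the projection piece pairs to exactly zero:
\[
\La\lam_j^{-1}T_{\calU}(\p_{\lam_j}\calU),v\Ra=\frac{(-1)^j}{\lam_j^2}\La|\calQ|^{-1}e^{\te_jR}\Lam Q_{\lam_j},\ti T_{\calU}v\Ra .
\]
The only genuine error is then $\chi_{L,j}(|\calQ|^{-1}-1)e^{\te_jR}\Lam Q_{\lam_j}$. By \eqref{eq:Q-cal-norm} it is bounded by $F_j\sum_{m<n}F_mF_n$. There the adjacent interactions appear as $F_j^2F_{j\pm1}$, concentrated at $r\sim\lam_j$, and these give the power $k$. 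This is the paper's splitting $|\calQ|^{-1}-\chi_{L,j}=(1-\chi_{L,j})|\calQ|^{-1}+\chi_{L,j}(|\calQ|^{-1}-1)$.

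\textbf{Second, the scaling worry you leave open is unfounded.} Nothing has to be absorbed into the unknowns of Step 2. Since $1-\chi_{L,j}$ vanishes for $r\le L\lam_j$, the substitution $r=\lam_j s$ gives
\[
\lam_j^{-2}\bigl\|r(1-\chi_{L,j})F_j\bigr\|_{L^2(\udr)}\lesssim\Bigl(\int_L^{\infty}s^{3-2k}\,\ud s\Bigr)^{1/2}\lesssim L^{2-k},
\]
uniformly in $\lam_j$. In two dimensions, profiles of the form $\lam_j^{-2}f(r/\lam_j)$ have scale-invariant $(\calH^1)^*$ norm. What went wrong is your claim that $\|\lam_j^{-1}rF_j(1-\chi_{L,j})\|_{L^2(\udr)}$ is $o_{L\to\infty}(1)$. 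That quantity is $\lam_j\,O(L^{2-k})$, and its factor $\lam_j$ cancels your prefactor $\lam_j^{-1}$.
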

\begin{proof}
    The estimates ~\eqref{eq:U-tangent-pointwise} and ~\eqref{eq:U-t-tangent-pointwise} follow directly from ~\eqref{eq:partialU-expression}, ~\eqref{eq:frame-estimate-pointwise}, ~\eqref{eq:frame-t-estimate-pointwise} and ~\eqref{eq:frame-t-estimate-pointwise-2}.

    For ~\eqref{eq:F-p-X}, since $v\in X$, we have
    \begin{align}
        \La \lam_j^{-1}T_{\calU}(\p_{\lam_j}\calU),v    \Ra&=\frac{(-1)^j}{\lam_j^2}\La \frac{1}{|\calQ|} e^{\te_j R}\Lam Q_{\lam_j}, \ti T_{\calU}v \Ra=\frac{(-1)^j}{\lam_j^2}\La \left(\frac{1}{|\calQ|} -\chi_{L,j} \right)e^{\te_j R}\Lam Q_{\lam_j}, \ti T_{\calU}v \Ra,\\
        \La \lam_j^{-2}T_{\calU}(\p_{\te_j}\calU),v    \Ra&=\frac{(-1)^{j-1}}{\lam_j^2}\La \frac{1}{|\calQ|} e^{\te_j R}R Q_{\lam_j}, \ti T_{\calU}v \Ra=\frac{(-1)^{j-1}}{\lam_j^2}\La \left(\frac{1}{|\calQ|} -\chi_{L,j} \right)e^{\te_j R}R Q_{\lam_j}, \ti T_{\calU}v \Ra.
    \end{align}
    Write $\frac{1}{|\calQ|} -\chi_{L,j}= \frac{1-\chi_{L,j}}{|\calQ|} +\chi_{L,j}(\frac{1}{|\calQ|}-1)$, using ~\eqref{eq:Q-cal-norm}, the expressions of $\Lam Q,RQ$, Cauchy's inequality and Hardy's inequality yields the estimate ~\eqref{eq:F-p-X}.
\end{proof}

\begin{lem}
    \label{lem:operator-estimate}
    For $1\leq j\leq N$ and $w(r)\in \C$, there holds
    \begin{align}
        |\p_tA_{\lam_j}|&\lesssim \frac{|\lam_j'|}{\lam_j}F_j^2(t,r)\frac{1}{r},\label{eq:A-t-estimate-pointwise}\\
        |\p_r\p_tA_{\lam_j}|&\lesssim \frac{|\lam_j'|}{\lam_j}F_j^2(t,r)\frac{1}{r^2},\label{eq:A-rt-estimate-pointwise}
    \end{align}
    For $1\leq j\leq N$ and $w(r)\in \C$, there holds
\begin{align}
    |\Lam_{\lam_j}w|\lesssim \frac{r}{\lam_j+r}|\p_r w|+\frac{|w|}{\lam_j+r},\label{eq:Lambda-estimate-pointwise}\\
    |\p_t\Lam_{\lam_j}w|\lesssim\frac{|\lam_j'|}{\lam_j}\frac{\lam_j}{(\lam_j+r)^2}(|r\p_r w|+|w|). \label{eq:Lambda-t-estimate-pointwise}
\end{align}

    For $0\leq a\leq 3$, there holds
    \begin{align}
         |((r\p_r)^a(r^2V_{j})) |\lesssim F_j^2(t,r),\ |((r\p_r)^a(r^2\p_tV_{j})) |\lesssim \frac{|\lam_j'|}{\lam_j}F_j^2(t,r). \label{eq:Vj-estimate-pointwise}
    \end{align}

    For $0\leq a\leq 3$ and $w\in \C$, there holds
    \begin{align}
        |((r\p_r)^a(r^2V_{\calU})) (w)|\lesssim |w|,\ |((r\p_r)^a(r^2\p_tV_{\calU})) (w)|\lesssim \Ga(t)|w|\label{eq:VU-bounded-pointwise}
    \end{align}

    For $0\leq a\leq 3$ and $w\in \C$, there holds
    \begin{align}
        |((r\p_r)^a(r^2\E_{\calU})) (w)|&\lesssim \sum_{1\leq m< \ell\leq N}F_m(t,r)F_{\ell}(t,r) |w|,\label{eq:EU-estimate-pointwise}\\
        |((r\p_r)^a(r^2\p_t\E_{\calU})) (w)|&\lesssim \Ga(t)\sum_{1\leq m< \ell\leq N}F_m(t,r)F_{\ell}(t,r) |w|,\label{eq:EU-t-estimate-pointwise}\\
        |[\calH_{\calU},i]w|=|[\calE_{\calU},i]w|&\lesssim \frac{1}{r^2}\sum_{1\leq m< \ell \leq N}F_m(t,r)F_{\ell}(t,r) |w|.\label{eq:commutator-pointwise}
    \end{align}
\end{lem}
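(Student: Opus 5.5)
The estimates in Lemma~\ref{lem:operator-estimate} are all pointwise. The plan is to compute each operator explicitly and apply the expansions \eqref{eq:Q-rescaled-expansion}, \eqref{eq:Q-cal-expansion-local}, \eqref{eq:Q-cal-norm} together with the frame bounds of Lemma~\ref{lem:frame-estimate}.

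\textbf{Step 1: $A_{\lam_j}$ and $V_j$.} Write $A_{\lam_j}=-\p_r+k\cos\phi(r/\lam_j)/r$. Then $\p_tA_{\lam_j}$ is multiplication by
\[
k\,\p_t\cos\phi(r/\lam_j)/r=-k\frac{\lam_j'}{\lam_j}\,\frac{(r/\lam_j)\phi'(r/\lam_j)\sin\phi(r/\lam_j)}{r}.
\]
Since $(r/\lam_j)\phi'(r/\lam_j)$ and $\sin\phi(r/\lam_j)$ are both $O(F_j)$, this gives \eqref{eq:A-t-estimate-pointwise}. One more $r\p_r$ preserves the order, which gives \eqref{eq:A-rt-estimate-pointwise}. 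The same computation applied to $r^2V_j=-2k^2\sin^2\phi(r/\lam_j)$ and to its $(r\p_r)^a$ derivatives gives \eqref{eq:Vj-estimate-pointwise}.

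\textbf{Step 2: $\Lam_{\lam_j}$.} Here $\varphi_{\lam_j}(r)=\lam_jR\psi(r/(\lam_jR))$ and $\psi(x)\sim x$ near $0$, bounded at infinity. Hence $|\varphi_{\lam_j}|\lesssim \lam_j r/(\lam_j+r)$. Likewise $|\p_r\varphi_{\lam_j}|+|\varphi_{\lam_j}/r|\lesssim \lam_j/(\lam_j+r)$, with constants depending on the fixed $R,\de$. After normalizing by the scaling convention (the factor $\lam_j$ is absorbed as in the main text's usage), this yields \eqref{eq:Lambda-estimate-pointwise}. Differentiating in $t$ gives
\[
\p_t\varphi_{\lam_j}=\frac{\lam_j'}{\lam_j}\bigl(\varphi_{\lam_j}-r\p_r\varphi_{\lam_j}\bigr)\quad\text{(up to the scaling factor)}.
\]
Since $\psi(x)-x\psi'(x)=O(x^2/\la x\ra^2)$ near $0$, this produces the extra factor $\lam_j r/(\lam_j+r)^2$ and hence \eqref{eq:Lambda-t-estimate-pointwise}.

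\textbf{Step 3: $V_{\calU}$ and $\E_{\calU}$.} From the explicit formula for $V_{\calU}$, every coefficient of $r^2V_{\calU}$ is a polynomial in $r(f_m\cdot\p_r\calU)$, $Rf_m$, $R\calU$. These are bounded with bounded $(r\p_r)^a$ derivatives by \eqref{eq:frame-estimate-pointwise}, and their time derivatives carry a factor $\Ga(t)$ by \eqref{eq:frame-t-estimate-pointwise}; this gives \eqref{eq:VU-bounded-pointwise}.

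For $\E_{\calU}=V_{\calU}-k^2/r^2-\sum_jV_j$, I would work in each region $\sqrt{\lam_{j-1}\lam_j}\le r\le\sqrt{\lam_j\lam_{j+1}}$. There the frame agrees with the rotated single-bubble frame of $Q_{\lam_j}$ up to errors controlled as in Lemma~\ref{lem:frame-compare}, but quantitatively. Concretely, $\calU-(-1)^{j-1}e^{\te_jR}Q_{\lam_j}=O(\sum_{\ell\ne j}F_\ell)$ by \eqref{eq:Q-cal-expansion-local}, and the frame difference satisfies the same bound: integrate the ODE difference \eqref{eq:derivative-core} with Gronwall.

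For the single bubble the potential is exactly $V_j+k^2/r^2$, so $r^2\E_{\calU}=O\bigl(F_j\sum_{\ell\ne j}F_\ell+\sum_{\ell\ne j}F_\ell^2\bigr)$. In the region, $\sum_{\ell\ne j}F_\ell^2$ is dominated by $F_jF_\ell$ up to the interaction sum, so this is $\lesssim\sum_{m<\ell}F_mF_\ell$. Taking $r\p_r$ and $\p_t$ derivatives gives \eqref{eq:EU-estimate-pointwise} and \eqref{eq:EU-t-estimate-pointwise}.

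Finally, for \eqref{eq:commutator-pointwise}: $-\p_r^2-\p_r/r$, $k^2/r^2$ and each $V_j$ are $\C$-linear, so only the non-$\C$-linear part of $\E_{\calU}$ survives in $[\calH_{\calU},i]$, and the bound is inherited from \eqref{eq:EU-estimate-pointwise}.

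\textbf{Main obstacle.} The delicate step is the frame comparison needed for $\E_{\calU}$. A crude comparison error such as the $L^{-2k}$ in Lemma~\ref{lem:frame-compare} is too weak. The comparison must be pointwise with the bound $\sum_{\ell\ne j}F_\ell$, which requires tracking the accumulated phase of $f_1,f_2$ through the transition regions. Because $|\p_r\calU|\lesssim\sum F_\ell/r$ is integrable in $dr/r$, the Gronwall argument yields exactly this bound, up to a constant rotation matching the sign conventions $(-1)^{[j/2]}$.
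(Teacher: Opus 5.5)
Your overall route is the paper's: explicit formulas plus the expansion of $\calQ$ on $\{\sqrt{\lam_{j-1}\lam_j}\le r\le\sqrt{\lam_j\lam_{j+1}}\}$. Step 1, the bound on $V_{\calU}$ and the commutator argument are fine. Two steps, however, do not work as written.

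\textbf{The estimate for $\E_{\calU}$.} The pointwise frame comparison you rely on does not come out of Gronwall applied to \eqref{eq:derivative-core}. The forcing $|\p_r(\calU-(-1)^{j-1}e^{\te_jR}Q_{\lam_j})|\lesssim r^{-1}\sum_{\ell\neq j}F_\ell$ is first order. Integrating it from a base point $r_0$ therefore measures the bubbles on the far side by their size at $r_0$: for $\ell<j$ and $r>r_0$, $\int_{r_0}^r s^{-1}F_\ell\,\ud s\approx k^{-1}F_\ell(r_0)\gg F_\ell(r)$.

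Fixing the rotation by the sign conventions $(-1)^{[j/2]}$ also brings back the errors $|\te_\ell-\te_\ell^\infty|$ of Lemma~\ref{lem:frame-compare}, which are not pointwise small. Even with a free rotation, propagating from $r=0$ through the transition regions fails already for $N=2$. You enter the second region at $r_0=\sqrt{\lam_1\lam_2}$ with error $(\lam_1/\lam_2)^{k/2}$, and the forcing integral has the same size. So at the core $r\sim\lam_2$ you only get a frame error $(\lam_1/\lam_2)^{k/2}$, whereas the target $F_1F_2$ is of size $(\lam_1/\lam_2)^k$. In a Lipschitz comparison of $r^2V_{\calU}$ the frame enters there with $O(1)$ coefficients. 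The claim can be rescued by restarting in each region with a free rotation fixed at the minimizer of $\sum_{\ell\neq j}F_\ell$ and integrating outward in both directions.

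It is cleaner to avoid frames altogether. Since $k^2r^{-2}+\sum_\ell V_\ell$ is a real scalar, $r^2\E_{\calU}$ is the matrix, in the orthonormal basis $(f_1,f_2)$, of the form on $T_{\calU}\bS^2$
\[
v\mapsto (v\cdot r\p_r\calU)^2-|r\p_r\calU|^2|v|^2-k^2(v\cdot e_3)^2-k^2|R\calU|^2|v|^2+2k^2\sum_{\ell=1}^N\sin^2\phi(r/\lam_\ell)\,|v|^2 ,
\]
so its norm does not depend on the frame. Because the Coulomb frame is parallel along $r\mapsto\calU$, the $(r\p_r)^a$ derivatives of its entries are entries of covariant derivatives of this form. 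In $\p_t$, the non-decaying rotational part $\om=\p_tf_1\cdot f_2$ of $\p_tf_m$ (cf.\ \eqref{eq:frame-t-estimate-pointwise-2}) only produces a commutator with $r^2\E_{\calU}$, since the subtracted scalar commutes with $i$.

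The display also supplies the product structure that your ``so'' skips. A Lipschitz comparison with the single bubble only gives $O(\sum_{\ell\neq j}F_\ell)$, which near the edges of the region is much larger than $\sum_{m<\ell}F_mF_\ell$. But the form vanishes identically for $(-1)^{j-1}e^{\te_jR}Q_{\lam_j}$. Every term is a product of two of $r\p_r\calU$, $v\cdot e_3$, $R\calU$, $\sin\phi(r/\lam_\ell)$, using $|v\cdot e_3|\le|R\calU||v|$ on $T_{\calU}\bS^2$. Each of these is $O(F_j)$ for the single bubble and moves by $O(\sum_{\ell\ne j}F_\ell)$. Hence $|r^2\E_{\calU}|\lesssim F_j\sum_{\ell\neq j}F_\ell+(\sum_{\ell\neq j}F_\ell)^2$, and $F_\ell\le F_j$ in the $j$-th region.

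\textbf{The estimate for $\p_t\Lam_{\lam_j}$.} By the convention $f_\lam(x)=f(x/\lam)$, $\varphi_{\lam_j}(r)=R\psi(r/(\lam_jR))$, with no prefactor $\lam_j$. Then $|\varphi_{\lam_j}|\lesssim r/(\lam_j+r)$ and $|\p_r\varphi_{\lam_j}|+|\varphi_{\lam_j}|/r\lesssim(\lam_j+r)^{-1}$ give \eqref{eq:Lambda-estimate-pointwise} with nothing to absorb. The misreading does matter for \eqref{eq:Lambda-t-estimate-pointwise}. With your normalization, $\p_t\varphi=\frac{\lam_j'}{\lam_j}(\varphi-r\p_r\varphi)$ contains $\lam_j'R\,\psi(r/(\lam_jR))$, and $\psi(x)-x\psi'(x)\to1$ as $x\to\infty$. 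So there is no decay at large $r$, and the factor $\lam_jr/(\lam_j+r)^2$ does not follow. Dividing afterwards by the time-dependent $\lam_j$ is not harmless: it is exactly what removes this term. Correctly,
\[
\p_t\varphi_{\lam_j}=-\frac{\lam_j'}{\lam_j}\,r\p_r\varphi_{\lam_j}=-\frac{\lam_j'}{\lam_j}\,\frac{r}{\lam_j}\,\psi'\Bigl(\frac{r}{\lam_jR}\Bigr),
\]
and $|\psi'(x)|+|x\psi''(x)|\lesssim\la x\ra^{-2}$ yields \eqref{eq:Lambda-t-estimate-pointwise}. The $\de$-term makes $\psi'$ decay only like $x^{-2}$, which is still enough.
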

\begin{proof}
    The proof of these estimates follows from explicit expressions of the operator and the expansion of $\calQ$ in the region $\{\sqrt{\lam_{j-1}\lam_j}\leq r\leq \sqrt{\lam_j\lam_{j+1}}\}$, we omit the details.
\end{proof}

\section{Proof of Morawetz estimates}\label{Appendix:Mor}
In this section we will show the proof of Lemma ~\ref{lem:Morawetz-one}. We will first give a proof of non-localized version.

\begin{lem}\label{lem:Morawetz-nonlocal}
    There holds
    \begin{align}\label{eq:Morawetz-one-nonlocal-Appendix}
       \La \ti H_{1}^2y,\ULam y\Ra\gtrsim 2\pi\int_0^{\infty}\left(|\p_r^2 y|^2+\frac{|\p_r y|^2}{r^2}+\frac{|y|^2}{r^4}\right)\udr.
    \end{align}
\end{lem}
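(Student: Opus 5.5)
The plan is to compute $\La \ti H_1^2 y,\ULam y\Ra$ explicitly as a quadratic form by integrating by parts. I will use the structure $\ti H_1=A_1A_1^*$. Write
\begin{align}
\ti H_1=-\p_r^2-\frac{1}{r}\p_r+\frac{\ti V(r)}{r^2},
\end{align}
where $\ti V=(k\cos\phi+1)^2$ up to the exact expression coming from $A_1A_1^*$. In particular $\ti V\to (k+1)^2$ as $r\to0$ and $\ti V\to (k-1)^2$ as $r\to\infty$, and $\ti V$ is monotone decreasing, so $r\p_r\ti V\le 0$.

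The key observation is that $\ULam=r\p_r+1$ is antisymmetric with respect to $r\,\vd r$ up to the generator of scaling. Indeed, in two dimensions $\ULam$ equals $\frac12(x\cdot\na+\na\cdot x)$ plus $0$, i.e. it is the skew-adjoint dilation generator. Since $\ti H_1$ is the sum of a part of degree $-2$ in scaling and a non-scale-invariant potential, the commutator gives
\begin{align}
[\ti H_1,\ULam]=2\ti H_1+\frac{r\p_r\ti V}{r^2}.
\end{align}

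Using the skew-adjointness and the symmetry of $\ti H_1$, I write
\begin{align}
\La\ti H_1^2y,\ULam y\Ra=\tfrac12\La y,[\ti H_1^2,\ULam]y\Ra.
\end{align}
Expanding $[\ti H_1^2,\ULam]=\ti H_1[\ti H_1,\ULam]+[\ti H_1,\ULam]\ti H_1$ yields
\begin{align}
2\|\ti H_1 y\|_{L^2}^2+\Re\La \ti H_1 y, W y\Ra, \qquad W=-\frac{r\p_r\ti V}{r^2}\ge0,
\end{align}
with the sign convention chosen so that it is nonnegative.

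The first term, $2\|\ti H_1y\|^2$, already controls the right-hand side of \eqref{eq:Morawetz-one-nonlocal-Appendix}. This is because $\ti H_1$ has no zero mode: its would-be kernel element $r^{-1}\sin\phi\cdot$stuff is not admissible, since $A_1^*$ has trivial $L^2$ kernel given that the kernel of $A_1^*$ behaves like $r^{-1-k}$ at $0$. Moreover its potential behaves like $(k\pm1)^2/r^2$ with $k\ge3$, so standard Hardy-type arguments give
\begin{align}
\|\ti H_1y\|_{L^2}^2\gtrsim \int |\p_r^2y|^2+\frac{|\p_r y|^2}{r^2}+\frac{|y|^2}{r^4}.
\end{align}
For this I would argue via the subcoercivity estimate $\|\ti H_1 y\|^2\ge c\|y\|_{\dot H^2\text{-type}}^2-C\int \langle r\rangle^{-N}|y|^2$, followed by a compactness/contradiction argument using the absence of zero-energy resonances, which in turn follows from the explicit kernel of $A_1^*$.

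The main obstacle is the cross term $\Re\La\ti H_1 y,Wy\Ra$, which is not sign-definite a priori. I would first try to integrate it by parts into
\begin{align}
\int W|\p_r y|^2+\int\Big(\tfrac{W\ti V}{r^2}-\tfrac12\Delta W\Big)|y|^2 .
\end{align}
The aim is to check positivity, or at least that its possibly negative part is bounded by $\epsilon\|\ti H_1y\|^2$. This works because $W\lesssim r^{-2}\langle r\rangle^{-2k}r^{2k}$-type localized near $r\sim1$, and $|\Delta W|\lesssim r^{-4}$ times a small-at-endpoints weight. If direct positivity fails, I would bound the cross term by Cauchy–Schwarz:
\begin{align}
|\Re\La\ti H_1y,Wy\Ra|\le \|\ti H_1y\|^2+\tfrac14\|Wy\|^2,
\end{align}
which leaves $\|\ti H_1 y\|^2-\frac14\|Wy\|^2$. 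I would then verify, via the same coercivity-plus-compactness argument, that $\|Wy\|^2\le(4-\epsilon)\|\ti H_1y\|^2$ fails for no nonzero $y$. This last inequality is the genuinely delicate numerical point; it may require using the explicit form of $\ti V$ for $k\ge3$.
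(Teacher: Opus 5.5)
\textbf{Verdict: there is a genuine gap.} The non-sign-definite cross term carries the whole content of the lemma, and neither of your two routes controls it.

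\textbf{What is correct.} Your reduction matches the paper's starting point. Here $\widetilde V=k^2+1+2k\cos\phi$ and $W=2k^2\sin^2\phi/r^2$, and
\[
\La \ti H_1^2y,\ULam y\Ra=2\|\ti H_1y\|^2+\Re\La \ti H_1y,Wy\Ra .
\]
Your displayed commutator should read $[\ti H_1,\ULam]=2\ti H_1-r^{-2}\,r\p_r\widetilde V$, but your final formula is right. Coercivity of $\|\ti H_1y\|^2$ also holds and needs no compactness. Set $x=\log r$, $v=y/r$, $a=k\cos\phi$ and $w=k^2-a^2=k^2\sin^2\phi$. Then $r\ti H_1y=Pv$ with $P=-\p_x^2-2\p_x+k^2+2a$, and $\Re\La Pv,v\Ra\ge k(k-2)\|v\|^2$ directly.

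\textbf{Why the first route fails.} There is no small parameter. $W$ is an $O(1)$ multiple of $r^{-2}$ on $r\sim1$, the same order as the potential of $\ti H_1$. After your integration by parts the zeroth-order coefficient is
\[
\frac{W\widetilde V}{r^2}-\frac12\Delta W=\frac{2k^2\sin^2\phi}{r^4}\bigl(2k^2-1+6k\cos\phi-3k^2\cos^2\phi\bigr).
\]
This is negative wherever $\cos\phi$ is near $-1$, and there it is an $O(1)$ multiple of $r^{-4}$, not small against $\|\ti H_1y\|^2\ge k^2(k-2)^2\|r^{-2}y\|^2$. So it is neither positive nor bounded by $\epsilon\|\ti H_1y\|^2$.

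\textbf{Why the fallback is unproven.} It needs $\|Wy\|^2<4\|\ti H_1y\|^2$, i.e.\ $\|wv\|<\|Pv\|$. This cannot be read off pointwise, because $w-(k^2+2a)=-a(a+2)>0$ for $-2<a<0$. It is therefore a spectral inequality with a sharp constant. A compactness argument can at best turn a strict inequality into a uniform one; it cannot establish the constant $4$. Your proof stops exactly at the step you call delicate.

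\textbf{The missing idea, which is what the paper does.} Complete the square instead of applying Cauchy--Schwarz against $\ti H_1$:
\[
2\|\ti H_1y\|^2+\Re\La \ti H_1y,Wy\Ra=2\big\|(\ti H_1+\tfrac14W)y\big\|^2-\tfrac18\|Wy\|^2 .
\]
In the log variables, $r(\ti H_1+\frac14W)y=\widetilde Pv$, where $\widetilde P=-\p_x^2-2\p_x+\widetilde q$ and $\widetilde q=k^2+2a+\frac w2$. Three pointwise facts then close the argument: $\widetilde q\ge k(k-2)$, $\widetilde q-w=\frac{k^2}{2}+\frac{(a+2)^2}{2}-2\ge 0$, and $w\le k^2$. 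Together they give
\[
\tfrac14\|wv\|^2\le\tfrac{k^2}{4}\int\widetilde q|v|^2\le\tfrac{k^2}{4}\Re\La\widetilde Pv,v\Ra\le\tfrac{k}{4(k-2)}\|\widetilde Pv\|^2\le\tfrac34\|\widetilde Pv\|^2 .
\]
So the negative term is at most three quarters of the positive one. Finally $\|\widetilde Pv\|^2\gtrsim\|v\|_{H^2(\R)}^2$, which controls the right-hand side via $\p_ry=\p_xv+v$ and $r\p_r^2y=\p_x^2v+\p_xv$. The shift by $\frac14W$ raises the potential exactly enough to dominate $W$ pointwise, which fails for $\ti H_1$ itself.
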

\begin{proof}
    We omit the subscript $1$ for convenience. The notation used in this proof is independent of other sections. We have
    \begin{align}
        \La \ti H^2y,\ULam y\Ra&=\La \ti Hy,\ULam\ti H y\Ra+\La \ti H y,[\ti H,\ULam]y \Ra\\
        &=2\|\ti H y\|_{L^2}^2+2\La \ti H y,\frac{k^2\sin^2\phi(r)}{r^2}y \Ra.
    \end{align}
    Let $x=\log r$, $ v(x)=\frac{y(e^x)}{e^x}$, $a(x)=k\cos\phi(e^x)$, then $\p_x a=a^2-k^2$, and
    \begin{align}
        \frac{1}{4\pi} \La \ti H^2y,\ULam y\Ra&=\int_{\R}\left|-\p_x^2v-2\p_xv+(k^2+2a)v\right|^2\ud x\\
        &\quad+\Re\int_{\R}\left(-\p_x^2v-2\p_xv+(k^2+2a)v\right)(k^2-a^2)\bar v\ud x\\
        &=\int_{\R}\left|-\p_x^2v-2\p_xv+\left(k^2+2a+\frac{k^2-a^2}{2}\right)v\right|^2\ud x\\
        &\quad -\frac{1}{4}\int_{\R}(k^2-a^2)^2|v|^2\ud x.
    \end{align}
    We have
    \begin{align}\label{eq:appendix-B-1}
        \Re \int_{\R}\left(-\p_x^2v-2\p_xv+\left(k^2+2a+\frac{k^2-a^2}{2}\right)v\right)\bar v\ud x&=\int_{\R}\left(|\p_x v|^2+\left(k^2+2a+\frac{k^2-a^2}{2}\right)|v|^2\right)\ud x.
    \end{align}
    Since $-k\leq a\leq k$ and $k\geq 3$, we have $k^2+2a+\frac{k^2-a^2}{2}\geq k^2-2k>0 $, which yields
    \begin{align}
       \Re \int_{\R}\left(-\p_x^2v-2\p_xv+\left(k^2+2a+\frac{k^2-a^2}{2}\right)v\right)\bar v\ud x\geq k(k-2)\int_{\R}|v|^2\ud x,
    \end{align}
    then
    \begin{equation}\label{eq:appendix-B-2}
        \begin{aligned}
            k^2(k-2)^2\int_{\R}|v|^2\ud x\leq \int_{\R}\left|-\p_x^2v-2\p_xv+\left(k^2+2a+\frac{k^2-a^2}{2}\right)v\right|^2\ud x.
        \end{aligned}
    \end{equation}   
    Since $k\geq 3$, we have
    \begin{align}
         k^2+2a+\frac{k^2-a^2}{2}-(k^2-a^2)=\frac{k^2}{2}+\frac{(a+2)^2}{2}-2\geq 0,
    \end{align}
    which yields
    \begin{align}
        \frac{1}{4}\int_{\R}(k^2-a^2)^2|v|^2\ud x &\leq \frac{k^2}{4}\int_{\R}(k^2-a^2)|v|^2\ud x\\
        &\leq \frac{k^2}{4}\int_{\R}\left( k^2+2a+\frac{k^2-a^2}{2}\right)|v|^2\ud x\\
        &\leq \frac{k^2}{4}\Re \int_{\R}\left(-\p_x^2v-2\p_xv+\left(k^2+2a+\frac{k^2-a^2}{2}\right)v\right)\bar v\ud x\\
        &\leq \frac{k^2}{4k(k-2)}\int_{\R}\left|-\p_x^2v-2\p_xv+\left(k^2+2a+\frac{k^2-a^2}{2}\right)v\right|^2\ud x\\
        &\leq \frac{3}{4}\int_{\R}\left|-\p_x^2v-2\p_xv+\left(k^2+2a+\frac{k^2-a^2}{2}\right)v\right|^2\ud x.
    \end{align}
    Then we only have to prove that
    \begin{align}
        \int_{\R}\left|-\p_x^2v-2\p_xv+\left(k^2+2a+\frac{k^2-a^2}{2}\right)v\right|^2\ud x\gtrsim \int_0^{\infty}\left(|\p_r^2 y|^2+\frac{|\p_r y|^2}{r^2}+\frac{|y|^2}{r^4}\right)\udr.
    \end{align}
    By definition of $v$ we have $\p_r y=\p_x v+v$, $\p_r^2 y=\frac{1}{r}(\p_x^2v+\p_x v)$, which yields
    \begin{align}
        \int_0^{\infty}\left(|\p_r^2 y|^2+\frac{|\p_r y|^2}{r^2}+\frac{|y|^2}{r^4}\right)\udr\lesssim\int_{\R}\left(|\p_x^2 v|^2+|\p_x v|^2+|v|^2\right)\ud x.
    \end{align}
    It follows from ~\eqref{eq:appendix-B-1} and ~\eqref{eq:appendix-B-2} that
    \begin{align}
        \int_{\R}\left(|\p_x v|^2+|v|^2\right)\ud x\lesssim\int_{\R}\left|-\p_x^2v-2\p_xv+\left(k^2+2a+\frac{k^2-a^2}{2}\right)v\right|^2\ud x,
    \end{align}
    which yields
    \begin{align}
        \int_{\R}|\p_x^2 v|^2\ud x\lesssim & \int_{\R}\left(|\p_x v|^2+|v|^2\right)\ud x+\int_{\R}\left|-\p_x^2v-2\p_xv+\left(k^2+2a+\frac{k^2-a^2}{2}\right)v\right|^2\ud x\\
        \lesssim & \int_{\R}\left|-\p_x^2v-2\p_xv+\left(k^2+2a+\frac{k^2-a^2}{2}\right)v\right|^2\ud x.
    \end{align}

\end{proof}

\begin{proof}[Proof of Lemma ~\ref{lem:Morawetz-one}]
    By scaling we may assume $\lam=1$. The notation used in this proof is independent of other sections. We still define
    \begin{align}
        x=\log r, \ v(x)=\frac{y(e^x)}{e^x},\ a(x)=k\cos \phi(e^x),\ b(x)=\frac{\varphi(e^x)}{e^x}=\frac{1}{\sqrt{1+(e^x/R)^2}}-\de\frac{1}{1+(e^x/R)^2}.
    \end{align}
    We have $\p_x a=a^2-k^2$. 
    By definition we have
     \begin{align}
         \frac{1}{2\pi}\La \ti H_1^2y,\Lam_1 y\Ra =\int_{\R}\left(\be_2(a,b)|\p_x^2v|^2+\be_1(a,b)|\p_x v|^2+\be_0(a,b)|v|^2\right)\ud x,
     \end{align}
    where 
    \begin{align}
        \be_2(a,b)&=2b+2\p_x b,\\
        \be_1(a,b)&=bB_k(a)+(2k^2+4a+4)\p_x b-4\p_x^2 b-\frac{3}{2}\p_x^3 b,\\
        \be_0(a,b)&=bC_k(a)+4(k^2-a^2)\p_x b-(k^2+a^2+4a+4)\p_x^2 b-\left(\frac{k^2}{2}+a+1\right)\p_x^3 b+\p_x^4 b+\frac{1}{4}\p_x^5 b
    \end{align}
    with  
     \begin{equation}
        \begin{aligned}
            B_k(a)&=2\left(-a^2+4a+3k^2+4\right),\\
       C_k(a)&=2\left(3a^4-8a^3-5k^2a^2+8a^2+12k^2a+3k^4-4k^2\right).
        \end{aligned}
    \end{equation}
    Note that when we replace $b$ by $1$, this identity is just
    \begin{align}
        \frac{1}{2\pi}\La \ti H^2y,\ULam y\Ra  =\int_{\R}\left(2|\p_x^2v|^2+B_k(a)|\p_xv|^2+C_k(a)|v|^2 \right)\ud x
    \end{align}
Note that $|a(x)+k|\lesssim_ke^{-2kx}$, for $j=0,1,2$. We define $\ti \be_j\coloneqq \be_j\left(-k,\frac{1}{\sqrt{1+(e^x/R)^2}}\right)$. Let $s=\frac{e^x}{R}$, then
    \begin{align}
        \ti\be_2=\frac{2}{(1+s^2)^{\frac{3}{2}}},
    \end{align}
    \begin{align}
        \ti\be_1=\frac{(4k^2-8k+3)s^6+(16k^2-32k+10)s^4+(20k^2-40k+68)s^2+(8k^2-16k+16)}{2(1+s^2)^{\frac{7}{2}}}.
    \end{align}
    For $k\geq 3$, it can be verified that the coefficients of $s$ in the numerator of $\be_1$ are strictly larger than $0$, which yields
    \begin{align}
        \ti\be_1\gtrsim \frac{1}{(1+s^2)^{\frac{1}{2}}}.
    \end{align}
    A direct calculation yields
    \begin{align}
        \ti\be_0=\frac{1}{4(1+s^2)^{11/2}}\Big[c_5 s^{10}+c_4 s^8+c_3s ^6+c_2 s^4+c_1 s^2+c_0\Big],
    \end{align}
    where
    \begin{align}
        c_5&=8k^4-32k^3+26k^2+12k-9,\\
        c_4&=8(5k^4-20k^3+17k^2+6k-9),\\
        c_3&=2(40k^4-160k^3+157k^2+6k-216),\\
        c_2&=4(20k^4-80k^3+89k^2-18k+144),\\
        c_1&=8k(5k^3-20k^2+23k-6),\\
        c_0&=8k^2(k-2)^2.
    \end{align}
    For $k\geq 3$, it can be verified that $c_j,\ j=0,\dots,5$ are strictly larger than $0$, which yields
    \begin{align}
        \ti\be_0\gtrsim \frac{1}{(1+s^2)^{\frac{1}{2}}}.
    \end{align}
Since for $j\geq 0$, $(s\p_s)^j(\frac{1}{1+s^2})\lesssim\frac{1}{1+s^2}$, there exists $\de(k)>0$ such that for $0<\de<\de(k)$ we still have
    \begin{align}
        \be_1(-k,b),\be_0(-k,b)\gtrsim_{\de} \frac{1}{(1+s^2)^{\frac{1}{2}}}.
    \end{align}
    For $\be_2$, we have
    \begin{align}
        \be_2(-k,b)=2\left(\frac{1}{(1+s^2)^{\frac{3}{2}}}-\de\frac{1-s^2}{(1+s^2)^2}\right)\gtrsim_{\de} \frac{2}{(1+s)^2}.
    \end{align}
    Since $|a(x)+k|\lesssim_ke^{-2kx}$, there exists $M\gg 1$ depending only on $k,\de$ such that for $x\geq M$, 
    \begin{align}\label{eq:be-j-outer}
        \be_j(x)\gtrsim_{\de} \frac{1}{(1+e^x/R)^2},\ j=0,1,2.
    \end{align}
    Fix such $M$, take $\ti M\gg 1$ to be determined later, and take $R\gg e^{M+\ti M}$, then we have uniformly in $\{-\infty<x<M+\ti M\}$,
    \begin{align}
        b=1-\de+O\left(\frac{e^{2x}}{R^2}\right),\ |\p_x^jb|
\lesssim \frac{e^{2x}}{R^2},\ j=1,\dots,5,
    \end{align}
    which yields
    \begin{equation}\label{eq:be-j-inner}
        \begin{aligned}
             \be_2&=2(1-\de)+o_{R\to\infty}(1),\\
    \be_1&=(1-\de)B_k(a)+o_{R\to\infty}(1),\\
    \be_0&=(1-\de)C_k(a)+o_{R\to\infty}(1)
        \end{aligned}
    \end{equation}
    uniformly in $\{-\infty<x<M+\ti M\}$.
    
    Take smooth cutoff functions $\chi_{\pm}$ such that $0\leq \chi_{\pm}\leq 1$ and
    \begin{align}
        \chi_+^2(x)+\chi_-^2(x)\equiv1,\ \chi_-(x)=\begin{cases}
            1,& -\infty<x<M,\\
            0,& M+\ti M<x<\infty.
        \end{cases},\ 
        |\p_x^j\chi_{\pm}(x)|\lesssim \ti M^{-j}.
    \end{align}
    Lemma ~\ref{lem:Morawetz-nonlocal} yields for suitable $w$, there holds
    \begin{align}
        \int_{\R}\left(2|\p_x^2w|^2+B_k(a)|\p_xw|^2+C_k(a)|w|^2 \right)\ud x \gtrsim\int_{\R}\left(|\p_x^2 w|^2+|\p_xw|^2+|w|^2\right)\ud x.
    \end{align}
    Take $w=\chi_-v$, ~\eqref{eq:be-j-inner} yields for $R$ sufficiently large,
    \begin{align}
        \int_{\R}\left(\be_2|\p_x^2(\chi_-v)|^2+\be_1|\p_x(\chi_-v)|^2+\be_0|\chi_-v|^2 \right)\ud x \gtrsim\int_{\R}\left(|\p_x^2 (\chi_-v)|^2+|\p_x(\chi_-v)|^2+|\chi_-v|^2\right)\ud x.
    \end{align}

    On the other hand, ~\eqref{eq:be-j-outer} yields
    \begin{align}
        &\int_{\R}\left(\be_2|\p_x^2(\chi_+v)|^2+\be_1|\p_x(\chi_+v)|^2+\be_0|\chi_+v|^2 \right)\ud x\\ \gtrsim&\int_{\R}\frac{1}{(1+e^x/R)^2}\left(|\p_x^2 (\chi_+v)|^2+|\p_x(\chi_+v)|^2+|\chi_+v|^2\right)\ud x.
    \end{align}
Therefore using $|\p_x^j\chi_{\pm}(x)|\lesssim \ti M^{-j}$ we have
    \begin{align}
        &\int_{\R}\left(\be_2|\p_x^2v|^2+\be_1|\p_x v|^2+\be_0|v|^2\right)\ud x\\
        \gtrsim&  \sum_{\iota=\pm}\int_{\R}\left(\be_2|\p_x^2(\chi_\iota v)|^2+\be_1|\p_x(\chi_{\iota} v)|^2+\be_0|\chi_{\iota}v|^2\right)\ud x\\
        &- o_{\ti M\to \infty}(1)\int_{M}^{M+\ti M}\left(|\p_x^2 v|^2+|\p_x v|^2+|v|^2\right)\ud x.
    \end{align}
    In the region $M\leq x\leq M+\ti M$, since $R\gg e^{M+\ti M}$, we have $(1+e^x/R)^{-2}= 1+o_{R\to\infty}(1)$, therefore we first take $\ti M$ sufficiently large and then take $R\gg e^{M+\ti M}$ to obtain
    \begin{align}
        \frac{1}{2\pi}\La \ti H_1^2y,\Lam_1 y\Ra\gtrsim & \int_{\R}\frac{1}{(1+e^x/R)^2}\left(|\p_x^2 v|^2+|\p_x v|^2+|v|^2\right)\ud x\\
        \gtrsim &\int_{\R}\frac{1}{(1+e^x)^2}\left(|\p_x^2 v|^2+|\p_x v|^2+|v|^2\right)\ud x\\
        \gtrsim&\|y\|_{\rm{Mor}_2(1)}^2,
    \end{align}
    where the last line follows from $\p_r y=\p_x v+v$, $\p_r^2 y=\frac{1}{r}(\p_x^2v+\p_x v)$

\end{proof}

\bibliographystyle{plain}
\bibliography{SM}

\end{document}